\newtheorem{theorem}{Theorem}[section]
\newtheorem{lemma}[theorem]{Lemma}
\newtheorem{proposition}[theorem]{Proposition}
\newtheorem{exAux}[theorem]{Example}
\newtheorem{Def}[theorem]{Definition}
\newenvironment{definition}{\begin{Def} \rm}{\end{Def}}
\newtheorem{Note}[theorem]{Note}
\newtheorem{Problem}[theorem]{Problem}
\newtheorem{Rem}[theorem]{Remark}
\newtheorem{Not}[theorem]{Notation}
\newtheorem{Conj}[theorem]{Conjecture}
\newtheorem{Ass}[theorem]{Assumption}
\newenvironment{proof}{\medskip\noindent{\bf Proof.\ }}{\qed\medskip}
\newcommand{\qed}{\hfill\mbox{$\Box$\qquad\qquad}}
\newcommand{\F}{\mathbb{F}}
\newcommand{\Mat}{\text{\rm Mat}}
\renewcommand{\b}[1]{\langle #1 \rangle }
\newcommand{\bbig}[1]{\bigl\langle #1 \bigr\rangle}
\renewcommand{\th}{\theta}
\newcommand{\R}{\mathbb{R}}
\newcommand{\V}{{\mathbb V}}
\newif\ifDRAFT
\begin{document}

\title{Idempotent systems}

\author{Kazumasa Nomura and Paul Terwilliger}

\maketitle

\medskip

\begin{quote}
\small 
\begin{center}
\bf Abstract
\end{center}

In this paper we introduce the notion of an idempotent system.
This linear algebraic object is motivated by the structure of an association scheme.
We focus on a family of idempotent systems, said to be symmetric.
A symmetric idempotent system is an abstraction of the
primary module for the subconstituent algebra of a symmetric association scheme.
We describe the symmetric idempotent systems in detail.
We also consider a class of symmetric idempotent systems,
said to be $P$-polynomial and $Q$-polynomial.
In the topic of orthogonal polynomials there is an object called a Leonard system.
We show that a Leonard system is essentially the same thing as a symmetric
idempotent system that is $P$-polynomial and $Q$-polynomial.
\end{quote}

\section{Introduction}
\label{sec:intro}

In this paper we introduce the notion of an idempotent system.
This  linear algebraic object is motivated by the structure of an
association scheme.
Before summarizing the contents of this paper, we briefly recall
the notion of an association scheme.
A (symmetric) association scheme is a sequence $(X, \{R_i\}_{i=0}^d)$,
where $X$ is a finite nonempty set,
and $\{R_i\}_{i=0}^d$ is a sequence of nonempty subsets of $X \times X$ 
such that
\begin{itemize}
\item[(i)]
$X \times X = R_0 \cup R_1 \cup \cdots \cup R_d \quad \text{(disjoint union)}$;
\item[(ii)]
$R_0 = \{(x,x) \,|\, x \in X\}$;
\item[(iii)]
$(x,y) \in R_i$ implies $(y,x) \in R_i$; 
\item[(iv)]
there exist integers $p^h_{i j}$ $(0 \leq h,i,j \leq d)$ such that
for any $(x,y) \in R_h$ the number of $z \in X$ with $(x,z)\in R_i$ and $(z,y) \in R_j$
is equal to $p^h_{i j}$.
\end{itemize}
The integers $p^h_{i j}$ are called the intersection numbers.
By (iii) they satisfy $p^h_{i j} = p^h_{j i}$ for $0 \leq h,i,j \leq d$.
The concept of an association scheme first arose in design theory 
\cite{BN, BS, BM, RC} and group theory \cite{Wi}.
A systematic study began with \cite{Del, Hig}.
A comprehensive treatment is given in \cite{BI, BCN}.

Let $(X, \{R_i\}_{i=0}^d)$ denote an association scheme.
As we study this object, the following concepts and notation will be useful.
Let $\R$ denote the real number field.
Let $\Mat_X(\R)$ denote the $\R$-algebra consisting of  the matrices with
rows and columns indexed by $X$, and all entries in $\R$.
Let $I$ (resp.\ $J$) denote the identity matrix (resp.\ all $1$'s matrix) in $\Mat_X(\R)$.
Let $\V$ denote the vector space over $\R$ consisting of the column vectors with coordinates
indexed by $X$, and all entries in $\R$. 
The algebra $\Mat_X(\R)$ acts on $\V$ by left multiplication.
We define a bilinear form $\b{ \; , \; } : \V \times \V \to \R$
such that $\b{u,v} = \sum_{y \in X} u_y v_y$ for $u,v \in \V$.
We have $\b{B u, v} = \b{u, B^{\sf t} v}$ for $B \in \Mat_X(\R)$ and $u,v \in \V$.
Here $B^{\sf t}$ denotes the transpose of $B$.
For $y \in X$ define $\widehat{y} \in \V$ that has $y$-entry $1$ and all other entries $0$.
Note that $\{ \widehat{y} \; | \; y \in X\}$ form an orthonormal basis of $\V$.

We now recall the Bose-Mesner algebra.
For $0 \leq i \leq d$ define $A_i \in \Mat_X(\R)$ that has $(y,z)$-entry $1$
if $(y,z) \in R_i$ and $0$ if $(y,z) \not\in R_i$  $(y,z \in X)$.
The matrix $A_i$ is symmetric.
We have 
\begin{align*}
A_0 &= I,  &  A_i A_j &=  \sum_{h=0}^d p^h_{i j} A_h \qquad (0 \leq i,j \leq d).
\end{align*}
The $\{A_i\}_{i=0}^d$ form a basis for a commutative subalgebra $M$ of $\Mat_X(\R)$.
We call $M$ the Bose-Mesner algebra of the scheme.
Each matrix in $M$ is symmetric.
By \cite[Section 2.2]{BCN}
there exists a basis  $\{E_i\}_{i=0}^d$ for $M$ such that 
\begin{align*}
E_0 &= |X|^{-1} J, &
I &= \sum_{i=0}^d E_i,  &
E_i E_j &= \delta_{i,j} E_i  \quad (0 \leq i,j \leq d).
\end{align*}
We have
\[
  \V = \sum_{i=0}^d E_i \V   \qquad\qquad \text{\rm (orthogonal direct sum)}.
\]
For $0 \leq i \leq d$, $E_i \V$ is the $i^\text{th}$ common eigenspace for $M$,
and $E_i$ is the orthogonal projection from $\V$ onto $E_i \V$.
There exist real numbers $p_i (j)$, $q_i (j)$ $(0 \leq i,j \leq d)$ such that
\begin{align*}
  A_i &= \sum_{j=0}^d p_i (j) E_j,  & 
  E_i &= |X|^{-1} \sum_{j=0}^d q_i (j) A_j
\end{align*}
for $0 \leq i \leq d$.

We now recall the Krein parameters.
Note that $A_i \circ A_j = \delta_{i,j} A_i$  $(0 \leq i,j \leq d)$,
where $\circ$ denotes entry-wise multiplication.
Therefore $M$ is closed under $\circ$.
Consequently there exist real numbers $q^h_{i j}$ $(0 \leq h,i,j \leq d)$ such that 
\begin{align*}
E_i \circ E_j  &= |X|^{-1} \sum_{h=0}^d q^h_{i j} E_h    && (0 \leq i,j \leq d).
\end{align*}
By \cite[Theorem 3.8]{BI}, $q^h_{i j} \geq 0$ for $0 \leq h,i,j \leq d$.
The $q^h_{i j}$ are called the Krein parameters of the scheme.

We now recall the dual Bose-Mesner algebra.
For the rest of this section fix $x \in X$.
For $B \in M$ let $B^\rho$ denote the diagonal matrix in $\Mat_X(\R)$
that has $(y,y)$-entry $B_{x,y}$ for $y \in X$.
Roughly speaking, $B^\rho$ is obtained by turning  column $x$ of $B$ at a 45 degree angle.
For $0 \leq i \leq d$ define $E^*_i = A_i^\rho$.
For $y \in X$ the $(y,y)$-entry of $E^*_i$ is $1$ if $(x,y) \in R_i$ and $0$
if $(x,y) \not\in R_i$.
Note that $E^*_0$ has $(x,x)$-entry $1$ and all other entries $0$.
The matrices $\{E^*_i\}_{i=0}^d$ satisfy
\begin{align*}
 I &= \sum_{i=0}^d E^*_i,    & 
 E^*_i E^*_j &= \delta_{i,j} E^*_i  \qquad (0 \leq i,j \leq d).
\end{align*}
Therefore $\{E^*_i\}_{i=0}^d$ form a basis for a commutative subalgebra $M^*$ of $\Mat_X(\R)$.
We call $M^*$ the dual Bose-Mesner algebra with respect to $x$.
We have
\[
   \V = \sum_{i=0}^d E^*_i \V \qquad\qquad \text{(orthogonal direct sum)}.
\]
For $0 \leq i \leq d$, $E^*_i \V$ has basis $\{\widehat{y} \, | \, y \in X, \; (x,y) \in R_i\}$.
Moreover $E^*_i \V$ is the $i^\text{th}$ common eigenspace for $M^*$,
and $E^*_i$ is the orthogonal projection from $\V$ onto $E^*_i \V$.

The map $\rho : M \to M^*$, $B \mapsto B^\rho$ is $\R$-linear and bijective.
For $0 \leq i \leq d$ define $A^*_i = |X| E_i^\rho$.
The $\{A^*_i\}_{i=0}^d$ form a basis of $M^*$, and
\begin{align*}
 A^*_0 &= I,  &
 A^*_i A^*_j &= \sum_{h=0}^d q^h_{i j} A^*_h \qquad (0 \leq i,j \leq d).
\end{align*}
For $0 \leq i \leq d$,
\begin{align*}
 A^*_i &= \sum_{j=0}^d q_i (j) E^*_j,
&
 E^*_i &= |X|^{-1} \sum_{j=0}^d p_i (j) A^*_j.
\end{align*}

We now recall the subconstituent algebra $T$ and the primary $T$-module.
Let $T$ denote the subalgebra of $\Mat_X(\R)$ generated by $M$ and $M^*$.
We call $T$ the subconstituent algebra (or Terwilliger algebra) with respect to $x$.
The algebra $T$ is closed under the transpose map.
By \cite[Lemma 3.4]{T:subconst1} the algebra $T$ is semisimple.
Moreover by \cite[Lemma 3.4]{T:subconst1} the $T$-module $\V$ 
decomposes into an orthogonal direct sum of irreducible $T$-modules.
Among these modules there is a distinguished one, said to be primary.
We now describe the primary $T$-module.
Let $\bf 1$ denote the vector in $\V$ that has all entries $1$.
So ${\bf 1} = \sum_{y \in X} \widehat{y}$.
For $0 \leq i \leq d$,
\begin{align*}
A_i \widehat{x} &= E^*_i {\bf 1},  &  |X|^{-1} A^*_i {\bf 1} &= E_i \widehat{x}.
\end{align*}
Therefore $M \widehat{x} = M^* {\bf 1}$; denote this common vector space by  $V$.
By construction $V$ is a $T$-module with dimension $d+1$.
By \cite[Lemma 3.6]{T:subconst1} the $T$-module $V$ is irreducible.
The $T$-module $V$ is said to be primary.
For $0 \leq i \leq d$ define 
\begin{align*}
  {\bf 1}_i &= A_i \widehat{x} = E^*_i {\bf 1}.
\end{align*}
The vector ${\bf 1}_i$ is a basis of $E^*_i V$.
Moreover $\{ {\bf 1}_i \}_{i=0}^d$ is a basis of $V$.
This basis is orthogonal and 
$|| {\bf 1}_i ||^2 = k_i$ where $k_i = \text{rank}(E^*_i)$ $(0 \leq i \leq d)$.
The basis $\{ {\bf 1}_i\}_{i=0}^d$ diagonalizes $M^*$.
For $0 \leq i,j \leq d$,
\begin{align*}
  E^*_i {\bf 1}_j &= \delta_{i,j} {\bf 1}_j,
&
  A_i {\bf 1}_j &= \sum_{h=0}^d p^h_{i j} {\bf 1}_h.
\end{align*}
For $0 \leq i \leq d$ define 
\begin{align*}
  {\bf 1}^*_i &= |X|^{-1} A^*_i {\bf 1} = E_i \widehat{x}.
\end{align*}
The vector ${\bf 1}^*_i$ is a basis of $E_i V$.
Moreover $\{ {\bf 1}^*_i \}_{i=0}^d$ is a basis of $V$.
This basis is orthogonal and
$|| {\bf 1}^*_i ||^2 = k^*_i$ where $k^*_i = \text{rank}(E_i)$ $(0 \leq i \leq d)$.
The basis $\{ {\bf 1}^*_i \}_{i=0}^d$ diagonalizes $M$.
For $0 \leq i,j \leq d$,
\begin{align*}
 E_i {\bf 1}^*_j &= \delta_{i,j} {\bf 1}^*_j,
&
 A^*_i {\bf 1}^*_j &= \sum_{h=0}^d q^h_{i j} {\bf 1}^*_h.
\end{align*}
The bases $\{ {\bf 1}_i \}_{i=0}^d$ and $\{ {\bf 1}^*_i \}_{i=0}^d$ are related by
\begin{align*}
{\bf 1}_i &= \sum_{j=0}^d p_i (j) {\bf 1}^*_j,
&
{\bf 1}^*_i &= |X|^{-1} \sum_{j=0}^d q_i (j) {\bf 1}_j
\end{align*}
for $0 \leq i \leq d$.
The following bases for $V$ are of interest:
\begin{align*}
& \text{\rm (i)} \;\; \{ {\bf 1}_i\}_{i=0}^d,
&& \text{\rm (ii)} \;\; \{ k_i^{-1} {\bf 1}_i \}_{i=0}^d,
&& \text{\rm (iii)} \;\; \{ {\bf 1}^*_i \}_{i=0}^d,
&& \text{\rm (iv)} \;\; \{ |X| (k^*_i)^{-1} {\bf 1}^*_i \}_{i=0}^d.
\end{align*}
The bases (i), (ii) are dual with respect to $\b{ \; ,\; }$.
Moreover the bases (iii), (iv) are dual with respect to $\b{ \; , \; }$.

The algebras $M$ and $M^*$ are related as follows.
For $0 \leq h,i,j \leq d$,
\begin{align*}
  E^*_i A_j E^*_h &= 0 \qquad \text{if and only if} \qquad p^h_{i j}=0,
\\
  E_i A^*_j E_h &= 0 \qquad \text{if and only if} \qquad q^h_{i j} = 0.
\end{align*}
For $0 \leq i \leq d$,
\begin{align*}
A_i E^*_0 E_0 &= E^*_i E_0, &
A^*_i E_0 E^*_0 &= E_i E^*_0.
\end{align*}
For $0 \leq i \leq d$,
\begin{align*}
 E_0 E^*_i &\neq0, &
 E^*_0 E_i & \neq 0, &
 E^*_i E_0 &\neq 0, &
 E_i E^*_0 &\neq 0.
\end{align*}

We summarize the above description with four statements about $V$:
\begin{itemize}
\item[\rm (i)]
the $\{E_i\}_{i=0}^d$ act on $V$ as a system of mutually orthogonal rank $1$ idempotents;
\item[\rm (ii)]
the $\{E^*_i\}_{i=0}^d$ act on $V$ as a system of mutually orthogonal rank $1$ idempotents;
\item[\rm (iii)]
$E_0 E^*_i E_0$ is nonzero on $V$ for $0 \leq i \leq d$;
\item[\rm (iv)]
$E^*_0 E_i E^*_0$ is nonzero on $V$ for $0 \leq i \leq d$.
\end{itemize}
The above statements (i)--(iv) have the following significance.
We will show that (i)--(iv) together with the symmetry of the matrices 
$\{E_i\}_{i=0}^d$, $\{E^*_i\}_{i=0}^d$
are sufficient to recover the $T$-module $V$ at an algebraic level.

We now turn our attention to idempotent systems.
An idempotent system is defined as follows.
Let $\F$ denote a field.
Let $d$ denote a nonnegative integer,
and let $V$ denote a vector space over $\F$ with dimension $d+1$.
Let $\text{\rm End}(V)$ denote the $\F$-algebra consisting of the $\F$-linear maps from $V$ to $V$.
An idempotent system on $V$ is a sequence 
$\Phi = (\{E_i\}_{i=0}^d; \{E^*_i\}_{i=0}^d)$ such that
\begin{itemize}
\item[\rm (i)]
$\{E_i\}_{i=0}^d$ is a system of mutually orthogonal rank $1$ idempotents in $\text{\rm End}(V)$;
\item[\rm (ii)]
$\{E^*_i\}_{i=0}^d$ is a system of mutually orthogonal rank $1$ idempotents in $\text{\rm End}(V)$;
\item[\rm (iii)]
$E_0 E^*_i E_0 \neq 0 \;\; (0 \leq i \leq d)$;
\item[\rm (iv)]
$E^*_0 E_i E^*_0 \neq 0 \;\; (0 \leq i \leq d)$.
\end{itemize}
The above idempotent system $\Phi$ is said to be symmetric whenever there exists an antiautomorphism $\dagger$
of $\text{\rm End}(V)$ that fixes each of $E_i$, $E^*_i$ for $0 \leq i \leq d$.
The map $\dagger$ corresponds to the transpose map.

Let $\Phi = (\{E_i\}_{i=0}^d; \{E^*_i\}_{i=0}^d)$ denote a symmetric idempotent system on $V$.
Using $\Phi$ we will define some elements $\{A_i\}_{i=0}^d$, \{$A^*_i\}_{i=0}^d$ in $\text{\rm End}(V)$ 
and some scalars
\begin{equation}
\nu, \quad 
k_i, \quad 
k^*_i, \quad 
p^h_{i j}, \quad
q^h_{i j}, \quad
p_i (j), \quad
q_i (j)                         \label{eq:scalars}
\end{equation}
in $\F$.
The scalar $\nu$ corresponds to $|X|$.
We will endow $V$ with a nondegenerate symmetric bilinear form $\b{ \; , \;}$.
We will define four orthogonal bases of $V$ that correspond to the four earlier bases of interest.
We will show that the resulting construction matches the primary $T$-module at an algebraic level.

Our definitions are summarized as follows.
Note that $\{E_i\}_{i=0}^d$ form a basis for a commutative subalgebra $\cal M$ of $\text{\rm End}(V)$.
We show that for $0 \leq i \leq d$ there exists a unique $A_i \in {\cal M}$ such that 
\[
   A_i E^*_0 E_0 = E^*_i E_0.
\]
We show that $\{A_i\}_{i=0}^d$ is a basis for the vector space $\cal M$.
Similarly, the $\{E^*_i\}_{i=0}^d$ form a basis for a commutative subalgebra ${\cal M}^*$ of $\text{\rm End}(V)$.
We show that for $0 \leq i \leq d$ there exists a unique $A^*_i \in {\cal M}^*$ such that 
\[
   A^*_i E_0 E^*_0 = E_i E^*_0.
\]
We show that $\{A^*_i\}_{i=0}^d$ is a basis for the vector space ${\cal M}^*$.

Concerning the scalars \eqref{eq:scalars}, 
we show that $\text{\rm tr}(E_0 E^*_0) \neq 0$.
The scalar $\nu$ is defined by
\begin{align*}
\nu &= \text{\rm tr}(E_0 E^*_0)^{-1}.
\end{align*}
The scalars $k_i$, $k^*_i$ are defined by
\begin{align*}
k_i &= \nu \, \text{\rm tr}(E_0 E^*_i),   &
k^*_i &= \nu \, \text{\rm tr}(E^*_0 E_i)  &&  (0 \leq i \leq d).
\end{align*}
We show that  $\sum_{i=0}^d k_i = \nu = \sum_{i=0}^d k^*_i$,
and each of $k_i$, $k^*_i$ is nonzero for $0 \leq i \leq d$.
The scalars $p^h_{i j}$, $q^h_{i j}$ are defined by
\begin{align*}
  A_i A_j &= \sum_{h=0}^d p^h_{i j} A_h,
&
 A^*_i A^*_j &= \sum_{h=0}^d q^h_{ i j} A^*_h   && (0 \leq i,j \leq d).
\end{align*}
The scalars $p_i (j)$, $q_i (j)$ are defined by
\begin{align*}
   A_i &= \sum_{j=0}^d p_i (j) E_j,
&
 A^*_i &= \sum_{j=0}^d q_i (j) E^*_j   &&   (0 \leq i \leq d).
\end{align*}

We define a bilinear form $\b{ \; , \; }$ on $V$ as follows.
By linear algebra, there exists a nondegenerate bilinear form $\b{ \; , \; }$ on $V$
such that $\b{B u, v} = \b{ u, B^\dagger v}$ for all $B \in \text{\rm End}(V)$ and $u, v \in V$.
The bilinear form $\b{ \; , \;}$ is unique up to multiplication by a nonzero scalar in $\F$.
The bilinear form $\b{ \; , \;}$ is symmetric.

Fix nonzero $\xi, \zeta$ in $E_0 V$ and nonzero $\xi^*, \zeta^*$ in $E^*_0 V$.
We show that each of the following (i)--(iv) is an orthogonal basis for $V$:
\begin{align*}
& \text{\rm (i)} \;\; \{ E^*_i \xi\}_{i=0}^d,
&& \text{\rm (ii)} \;\; \{ k_i^{-1} E^*_i \zeta \}_{i=0}^d,
&& \text{\rm (iii)} \;\; \{ E_i \xi^* \}_{i=0}^d,
&& \text{\rm (iv)} \;\; \{  (k^*_i)^{-1} E_i \zeta^* \}_{i=0}^d.
\end{align*}
The bases (i), (ii) are dual if and only if $\b{\xi, \zeta} = \nu$,
and the bases (iii), (iv) are dual if and only if $\b{\xi^*, \zeta^*} = \nu$.

We just summarized our definitions.
In the main body of the paper, we show that the resulting defined objects are related 
in a manner that matches the primary $T$-module.
To describe this relationship,
we use some equations involving the $\{E_i\}_{i=0}^d$, $\{E^*_i\}_{i=0}^d$,
$\{A_i\}_{i=0}^d$, $\{A^*_i\}_{i=0}^d$ called the reduction rules.

Near the end of the paper we introduce the $P$-polynomial and $Q$-polynomial properties
for symmetric idempotent systems.
We show that a symmetric idempotent system that is $P$-polynomial and $Q$-polynomial
is essentially the same thing as a Leonard system in the sense of  \cite[Definition 4.1]{T:Leonard}.

The paper is organized as follows.
In Section \ref{sec:pre} we recall some basic results from linear algebra.
In Section \ref{sec:ips} we introduce the concept of an idempotent system.
In Section \ref{sec:pi} we introduce the scalar $\nu$ and discuss some related topics.
In Section \ref{sec:symips} we introduce the symmetric idempotent systems.
In Sections \ref{sec:rho}, \ref{sec:Ai} we introduce a certain linear bijection $\rho : {\cal M} \to {\cal M}^*$
and use it to define the elements $A_i$, $A^*_i$.
In Sections \ref{sec:ki}, \ref{sec:reduction} we introduce the scalars $k_i$, $k^*_i$ and
obtain some reduction rules involving these scalars. 
In Sections \ref{sec:phij}, \ref{sec:red2} we introduce the scalars $p^h_{i j}$, $q^h_{i j}$
and obtain some reduction rules involving these scalars.
In Sections \ref{sec:pij}, \ref{sec:red3} we introduce the scalars $p_i (j)$, $q_i (j)$ 
and obtain some reduction rules involving these scalars.
In Section \ref{sec:matrices} we put some of our earlier results in matrix form.
In Sections \ref{sec:standard}--\ref{sec:dualstandard} we introduce the four bases of interest
and discuss their properties.
In Section \ref{sec:4bases} we obtain  the transition matrices between these four bases, and the
inner products between these four bases.
We also obtain  the matrices
representing $A_i$, $A^*_i$, $E_i$, $E^*_i$ with respect to these four bases.
In Section \ref{sec:Ppoly} we introduce the $P$-polynomial and $Q$-polynomial properties.
In Section \ref{sec:LP} we recall the notion of a Leonard pair and a Leonard system.
In Section \ref{sec:IPSLS} we show that a Leonard system is essentially the same thing
as a symmetric idempotent system that is $P$-polynomial and $Q$-polynomial.

The reader might wonder how the concept of a symmetric idempotent system is
related to the concept of a character algebra \cite{Kawada}.
Roughly speaking, a symmetric idempotent system is obtained by gluing together
a character algebra and its dual;
we will discuss this in a future paper.

\section{Preliminaries}
\label{sec:pre}

In this section we fix some notation and recall some basic concepts.
Throughout this paper $\F$ denotes a field.
By a {\em scalar} we mean an element of $\F$.
All algebras and vector spaces discussed in this paper are over $\F$.
All algebras discussed in this paper are associative and have a multiplicative identity.
For an algebra $\cal A$, 
by an {\em automorphism} of $\cal A$ we mean an algebra isomorphism ${\cal A} \to {\cal A}$,
and by an {\em antiautomorphism} of $\cal A$ we mean an $\F$-linear bijection
$\tau : {\cal A} \to {\cal A}$ such that $(Y Z)^\tau = Z^\tau Y^\tau$ for $Y, Z \in {\cal A}$.
For the rest of this paper, fix an integer $d \geq 0$ and let $V$ denote a vector space with dimension $d+1$.
Let $\text{\rm End}(V)$ denote the algebra consisting of the $\F$-linear
maps from $V$ to $V$. 
Let $\Mat_{d+1}(\F)$ denote the algebra consisting of the $d+1$ by $d+1$
matrices that have all entries in $\F$.
We index the rows and columns by $0,1,\ldots, d$.
The identity of $\text{\rm End}(V)$ or $\Mat_{d+1}(\F)$ is denoted by $I$.
For $A \in \text{\rm End}(V)$,
the dimension of $A V$ is called the {\em rank of $A$}.
A matrix $M \in \Mat_{d+1}(\F)$ is said to be {\em tridiagonal} whenever
the $(i,j)$-entry $M_{i,j}=0$ if $|i-j|>1$ $(0 \leq i,j \leq d)$.
Assume for the moment that $M$ is tridiagonal.
Then $M$ is said to be {\em irreducible}
whenever $M_{i,j} \neq 0$ if $|i-j|=1$ $(0 \leq i,j \leq d)$.
We recall how each basis $\{v_i\}_{i=0}^d$ of $V$ gives an algebra
isomorphism $\text{\rm End}(V) \to \Mat_{d+1}(\F)$.
For $A \in \text{\rm End}(V)$ and $M \in \Mat_{d+1}(\F)$,
we say that {\em $M$ represents $A$ with respect to $\{v_i\}_{i=0}^d$}
whenever $A v_j = \sum_{i=0}^d M_{i,j} v_i$ for $0 \leq j \leq d$.
The isomorphism sends $A$ to the unique matrix in $\Mat_{d+1}(\F)$ that represents
$A$ with respect to $\{v_i\}_{i=0}^d$.
Next we recall the transition matrix between two bases of $V$.
Let $\{u_i\}_{i=0}^d$ and $\{v_i\}_{i=0}^d$ denote bases of $V$.
By the {\em transition matrix from $\{u_i\}_{i=0}^d$ to $\{v_i\}_{i=0}^d$}
we mean the matrix $T \in \Mat_{d+1}(\F)$ such that 
$v_j = \sum_{i=0}^d T_{i,j} u_i$ for $0 \leq j \leq d$.
Let $T$ denote the transition matrix from $\{u_i\}_{i=0}^d$ to $\{v_i\}_{i=0}^d$.
Then $T$ is invertible and $T^{-1}$ is the transition matrix
from$\{v_i\}_{i=0}^d$ to $\{u_i\}_{i=0}^d$.
Let $T'$ denote the transition matrix from $\{v_i\}_{i=0}^d$ to a basis $\{w_i\}_{i=0}^d$ of $V$.
Then the transition matrix from $\{u_i\}_{i=0}^d$ to $\{w_i\}_{i=0}^d$ is $T T'$.
For $A \in \text{\rm End}(V)$ let $M$ denote the matrix representing $A$ with respect to
$\{u_i\}_{i=0}^d$.
Then $T^{-1} M T$ represents $A$ with respect to $\{v_i\}_{i=0}^d$.
Let $A \in \text{\rm End}(V)$.
A subspace $W \subseteq V$ is called an {\em eigenspace} of $A$ whenever
$W \neq 0$ and there exists a scalar $\th$ such that $W = \{ v \in V \, |\, A v = \th v\}$;
in this case $\th$ is the {\em eigenvalue} of $A$ associated with $W$.
We say that $A$ is {\em diagonalizable} whenever $V$ is spanned by the eigenspaces of $A$.
We say that $A$ is {\em multiplicity-free} whenever $A$ is diagonalizable and its eigenspaces
all have dimension one.

\begin{definition}    \label{def:decomp}    \samepage
\ifDRAFT {\rm def:decomp}. \fi
By a {\em decomposition of $V$} we mean a sequence $\{V_i\}_{i=0}^d$
consisting of one-dimensional subspaces of $V$ such that
$V = \sum_{i=0}^d V_i$ (direct sum).
\end{definition}

\begin{definition}   \label{def:orth}    \samepage
\ifDRAFT {\rm def:orth}. \fi
By a {\em system of mutually orthogonal rank $1$ idempotents} in $\text{\rm End}(V)$
we mean a sequence $\{E_i\}_{i=0}^d$ of elements in $\text{\rm End}(V)$ such that
\[
   E_i E_j = \delta_{i,j} E_i  \qquad\qquad   (0 \leq i,j \leq d),
\]
\[
  \text{\rm rank} (E_i) = 1 \qquad\qquad (0 \leq i \leq d).
\]
\end{definition}

The next lemma is routinely verified.

\begin{lemma}  \label{lem:decomp}    \samepage
\ifDRAFT {\rm lem:decomp}. \fi
The following hold.
\begin{itemize}
\item[\rm (i)]
Let $\{V_i\}_{i=0}^d$ denote a decomposition of $V$.
For $0 \leq i \leq d$ define $E_i \in \text{\rm End}(V)$ such that
$(E_i - I)V_i = 0$ and $E_i V_j = 0$ if $j \neq i$ $(0 \leq j \leq d)$.
Then $\{E_i\}_{i=0}^d$ is a system of mutually orthogonal rank $1$ idempotents in $\text{\rm End}(V)$.
\item[\rm (ii)]
Let $\{E_i\}_{i=0}^d$ denote a system of mutually orthogonal rank $1$ idempotents in $\text{\rm End}(V)$.
Then $\{E_i V\}_{i=0}^d$ is a decomposition of $V$.
\end{itemize}
\end{lemma}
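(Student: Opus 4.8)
The plan is to prove the two parts of Lemma~\ref{lem:decomp} by unwinding the definitions and checking the defining relations directly; the statement is ``routinely verified'' precisely because no clever idea is needed, only careful bookkeeping with direct sums.

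For part (i), suppose $\{V_i\}_{i=0}^d$ is a decomposition of $V$, so $V = \sum_{i=0}^d V_i$ is a direct sum of one-dimensional subspaces. The first point is that the prescription ``$(E_i-I)V_i = 0$ and $E_iV_j = 0$ for $j\neq i$'' genuinely determines a well-defined element of $\text{\rm End}(V)$: since every $v\in V$ has a unique expression $v = \sum_{j=0}^d v_j$ with $v_j\in V_j$, we may set $E_i v = v_i$, and linearity of this assignment follows from uniqueness of the decomposition. Next I would verify $E_iE_j = \delta_{i,j}E_i$: apply the left side to an arbitrary $v = \sum_h v_h$; since $E_j v = v_j \in V_j$, applying $E_i$ gives $v_j$ if $i=j$ and $0$ otherwise, which is exactly $\delta_{i,j}E_i v$. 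Finally $\text{\rm rank}(E_i) = \dim(E_iV) = \dim V_i = 1$, because $E_iV = V_i$ (every $v_i\in V_i$ equals $E_i v_i$, and conversely $E_iV\subseteq V_i$). Hence $\{E_i\}_{i=0}^d$ is a system of mutually orthogonal rank $1$ idempotents in the sense of Definition~\ref{def:orth}.

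For part (ii), suppose $\{E_i\}_{i=0}^d$ is such a system. Set $V_i = E_iV$; each $V_i$ has dimension $1$ by the rank hypothesis. It remains to show $V = \sum_{i=0}^d V_i$ is a direct sum. For spanning, note that $E := \sum_{i=0}^d E_i$ satisfies $E E_j = E_j$ for each $j$ (using $E_iE_j = \delta_{i,j}E_i$), hence $E$ acts as the identity on each $V_j$; I would argue that $E = I$ --- for instance because $E$ is idempotent (expand $E^2$ using the orthogonality relations) and its image contains each $V_j$, so $\text{\rm rank}(E)\geq \dim\sum_j V_j$; combined with a dimension count this forces $E = I$, so $V = EV\subseteq \sum_i V_i$. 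Alternatively, and more cleanly, one shows directly that any $v\in V$ equals $\sum_i E_i v$ by checking that $v - \sum_i E_i v$ lies in the kernel of every $E_j$ and then using that $\bigcap_j \Ker E_j = 0$ (which itself follows once spanning or a dimension bound is in hand); I would pick whichever ordering avoids circularity. For directness, suppose $\sum_{i=0}^d w_i = 0$ with $w_i \in V_i = E_iV$; applying $E_j$ and using $E_jw_i = E_j E_i v_i = \delta_{i,j} w_i$ yields $w_j = 0$ for each $j$. Thus $\{V_i\}_{i=0}^d$ is a decomposition of $V$.

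The only ``obstacle'' worth flagging is the mild circularity risk in part (ii): establishing that $\sum_i E_i = I$ (equivalently, that the $E_iV$ span $V$) uses a dimension count, $\sum_{i=0}^d \dim V_i = d+1 = \dim V$, together with directness. So I would present directness first, conclude $\dim\sum_i V_i = d+1 = \dim V$, and deduce $\sum_i V_i = V$ from that, sidestepping any need to prove $\sum_i E_i = I$ as a separate identity. With that ordering the whole proof is a short sequence of one-line verifications, which is why it can be left to the reader.
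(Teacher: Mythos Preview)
Your proposal is correct and matches the paper's treatment: the paper declares the lemma ``routinely verified'' and gives no proof, and your argument is exactly the straightforward unwinding of Definitions~\ref{def:decomp} and~\ref{def:orth} that this phrase invites. Your care in part (ii) to establish directness first and then use the dimension count $\sum_i \dim E_iV = d+1 = \dim V$ to get spanning is the cleanest ordering and avoids the circularity you flagged.
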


\begin{definition}    \label{def:primitiveofA}    \samepage
\ifDRAFT {\rm def:primitiveofA}. \fi
Let $A$ denote a multiplicity-free element in $\text{\rm End}(V)$, and let
$\{V_i\}_{i=0}^d$ denote an ordering of the eigenspaces of $A$.
Then $\{V_i\}_{i=0}^d$ is a decomposition of $V$.
Let $\{E_i\}_{i=0}^d$ denote the corresponding system of mutually orthogonal rank $1$ idempotents
from Lemma \ref{lem:decomp}(i).
We call $\{E_i\}_{i=0}^d$ the {\em primitive idempotents of $A$}.
\end{definition}

For the rest of this section,
let $\{E_i\}_{i=0}^d$ denote a system of mutually orthogonal rank $1$
idempotents in $\text{\rm End}(V)$.
The next two  lemmas are routinely verified.

\begin{lemma}    \label{lem:trE}    \samepage
\ifDRAFT {\rm lem:trE}. \fi
The following hold:
\begin{itemize}
\item[\rm (i)]
$\text{\rm tr} (E_i) = 1$  $(0 \leq i \leq d)$, where $\text{\rm tr}$ means trace.
\item[\rm (ii)]
 $I = \sum_{i=0}^d E_i$;
\item[\rm (iii)]
$\{E_i\}_{i=0}^d$ form a basis for a commutative subalgebra of $\text{\rm End}(V)$.
\end{itemize}
\end{lemma}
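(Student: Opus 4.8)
The plan is to dispatch the three parts in order, leaning on the decomposition $V=\sum_{i=0}^d E_iV$ provided by Lemma \ref{lem:decomp}(ii).

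For part (i) I would fix $i$ and use the fact that an idempotent $E_i$ splits $V$ as a direct sum $V=E_iV\oplus(I-E_i)V$, where $E_iV=\text{\rm im}(E_i)$ and $(I-E_i)V=\Ker E_i$; the sum is direct since any $v\in\text{\rm im}(E_i)\cap\Ker E_i$, say $v=E_iw$, satisfies $v=E_i^2 w=E_i v=0$. On $E_iV$ the map $E_i$ acts as the identity and on $\Ker E_i$ it acts as $0$, so computing $\text{\rm tr}(E_i)$ in a basis of $V$ adapted to this decomposition gives $\text{\rm tr}(E_i)=\dim(E_iV)=\text{\rm rank}(E_i)=1$.

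For part (ii) I would set $E=\sum_{i=0}^d E_i$ and invoke the orthogonality relations: $EE_j=\sum_{i=0}^d E_iE_j=E_j$ for each $j$, so $E$ restricts to the identity on every $E_jV$, and since $V=\sum_{j=0}^d E_jV$ we conclude $E=I$. (Alternatively one may note $E^2=E$ together with $\text{\rm tr}(E)=\sum_{i=0}^d\text{\rm tr}(E_i)=d+1=\dim V$, which forces the idempotent $E$ to have full rank, hence $E=I$.) For part (iii), linear independence of $\{E_i\}_{i=0}^d$ follows by applying $E_j$ to a relation $\sum_{i=0}^d c_iE_i=0$ to get $c_jE_j=0$, whence $c_j=0$ since $E_j\neq0$; the span ${\cal D}$ of $\{E_i\}_{i=0}^d$ is closed under multiplication and commutative because $E_iE_j=\delta_{i,j}E_i=E_jE_i$, and it contains $I$ by part (ii), so ${\cal D}$ is a commutative subalgebra of $\text{\rm End}(V)$ with basis $\{E_i\}_{i=0}^d$.

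There is no real obstacle here; the statement is elementary. The only step needing a moment's thought is part (i), where one must recall that an idempotent is diagonalizable with eigenvalues in $\{0,1\}$ and that its $1$-eigenspace is exactly its image; once this is in hand, additivity of trace across the decomposition $\text{\rm im}(E_i)\oplus\Ker E_i$ finishes it, and parts (ii), (iii) are then purely formal consequences of the orthogonality relations together with Lemma \ref{lem:decomp}.
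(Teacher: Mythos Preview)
Your proof is correct, and the paper itself offers no proof at all beyond the remark ``The next two lemmas are routinely verified.'' Your argument is exactly the kind of routine verification intended.

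One small caveat worth flagging: your parenthetical alternative for part (ii) --- inferring $E=I$ from $E^2=E$ and $\text{\rm tr}(E)=d+1$ --- is not airtight over an arbitrary field $\F$. The equality $\text{\rm tr}(E)=d+1$ holds only as an identity in $\F$, and if $\text{\rm char}(\F)$ divides $d+1$ then $\text{\rm tr}(E)=0$ in $\F$, which tells you nothing about $\text{\rm rank}(E)$. Your primary argument (that $E$ restricts to the identity on each $E_jV$ and these span $V$ by Lemma \ref{lem:decomp}(ii)) avoids this issue entirely and is the right one to keep.
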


\begin{lemma}  \label{lem:sumEiAEj}    \samepage
\ifDRAFT {\rm lem:sumEiAEj}. \fi
For ${\cal A} = \text{\rm End}(V)$,
\begin{itemize}
\item[\rm (i)]
the sum ${\cal A} = \sum_{i=0}^d \sum_{j=0}^d E_i {\cal A} E_j$ is direct;
\item[\rm (ii)]
$\dim E_i {\cal A} E_j = 1$ for $0 \leq i,j \leq d$.
\end{itemize}
\end{lemma}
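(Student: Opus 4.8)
The plan is to exploit the decomposition $V = \sum_{i=0}^d E_i V$ supplied by Lemma \ref{lem:decomp}(ii), together with the identity $I = \sum_{i=0}^d E_i$ from Lemma \ref{lem:trE}(ii) and the fact that each $E_i V$ is one-dimensional (Definition \ref{def:orth}). These three facts do all the work.

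For the spanning assertion in part (i), I would simply write, for $A \in {\cal A}$,
\[
   A = I A I = \Bigl(\sum_{i=0}^d E_i\Bigr) A \Bigl(\sum_{j=0}^d E_j\Bigr)
     = \sum_{i=0}^d \sum_{j=0}^d E_i A E_j,
\]
and note that each summand lies in $E_i {\cal A} E_j$. For directness, suppose $\sum_{i=0}^d \sum_{j=0}^d X_{i,j} = 0$ with $X_{i,j} \in E_i {\cal A} E_j$. Fix $h,k$ with $0 \leq h,k \leq d$. Since $X_{i,j} = E_i X_{i,j} E_j$ and $E_h E_i = \delta_{h,i} E_i$, $E_j E_k = \delta_{j,k} E_j$, multiplying the relation on the left by $E_h$ and on the right by $E_k$ leaves only the term $X_{h,k}$, so $X_{h,k}=0$. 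Hence the sum is direct, which gives (i).

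For part (ii), I would first bound $\dim E_i {\cal A} E_j$ from above: every element $E_i A E_j$ of $E_i {\cal A} E_j$ annihilates $\sum_{k \neq j} E_k V = \Ker E_j$, hence is determined by its restriction to the one-dimensional space $E_j V$, and its image lies in the one-dimensional space $E_i V$; therefore $\dim E_i {\cal A} E_j \leq 1$. Now a dimension count closes the argument: by (i),
\[
   (d+1)^2 = \dim {\cal A} = \sum_{i=0}^d \sum_{j=0}^d \dim E_i {\cal A} E_j \leq (d+1)^2,
\]
forcing $\dim E_i {\cal A} E_j = 1$ for all $i,j$.

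There is essentially no obstacle; this is indeed "routinely verified." The only point that needs a moment's care is the inequality $\dim E_i {\cal A} E_j \leq 1$, and even that can be replaced by a cleaner matrix picture: choosing a basis $\{v_i\}_{i=0}^d$ of $V$ with $v_i$ spanning $E_i V$, each $E_i$ is represented with respect to $\{v_i\}_{i=0}^d$ by the matrix unit having $1$ in the $(i,i)$ position, so under the induced isomorphism $\text{\rm End}(V) \to \Matd$ the subspace $E_i {\cal A} E_j$ corresponds to the span of the $(i,j)$ matrix unit, making both claims transparent.
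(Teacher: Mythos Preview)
Your proof is correct. The paper does not actually give a proof of this lemma; it simply states ``The next two lemmas are routinely verified,'' so your argument is a perfectly good fleshing-out of what the authors left to the reader, and your alternative matrix-unit picture at the end is exactly the sort of verification they had in mind.
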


\section{Idempotent systems}
\label{sec:ips}

Recall the vector space $V$ with dimension $d+1$.
In this section we introduce the notion of an idempotent system on $V$.

\begin{definition}    \label{def:ips}    \samepage
\ifDRAFT {\rm def:ips}. \fi
By an {\em idempotent system} on $V$ we mean a sequence
\begin{equation*}
     ( \{E_i\}_{i=0}^d; \{E^*_i\}_{i=0}^d) 
\end{equation*}
such that
\begin{itemize}
\item[\rm (i)]
$\{E_i\}_{i=0}^d$ is a system of mutually orthogonal rank $1$ idempotents in $\text{\rm End}(V)$;
\item[\rm (ii)]
$\{E^*_i\}_{i=0}^d$ is a system of mutually orthogonal rank $1$ idempotents in $\text{\rm End}(V)$;
\item[\rm (iii)]
$E_0 E^*_i E_0 \neq 0 \quad (0 \leq i \leq d)$;
\item[\rm (iv)]
$E^*_0 E_i E^*_0 \neq 0 \quad (0 \leq i \leq d)$.
\end{itemize}
\end{definition}

Let $\Phi = (\{E_i\}_{i=0}^d; \{E^*_i\}_{i=0}^d)$ denote an idempotent system on $V$.
Define
\[
   \Phi^* = ( \{E^*_i\}_{i=0}^d; \{E_i\}_{i=0}^d).
\]
Then $\Phi^*$ is an idempotent system on $V$, called the {\em dual} of $\Phi$.
We have $(\Phi^*)^* = \Phi$.
For an object $f$ attached to $\Phi$, the corresponding object attached to $\Phi^*$
is denoted by $f^*$.

Let $\Phi' = (\{E'_i\}_{i=0}^d; \{E^{* \prime}_i\}_{i=0}^d)$ denote an idempotent system on
a vector space $V'$.
By an {\em isomorphism of idempotent systems from $\Phi$ to $\Phi'$} we mean
an algebra isomorphism $\text{\rm End}(V) \to \text{\rm End}(V')$ that sends
$E_i \mapsto E'_i$, $E^*_i \mapsto E^{* \prime}_i$ for $0 \leq i \leq d$.
We say that $\Phi$ and $\Phi'$ are {\em isomorphic} whenever there exists an isomorphism
of idempotent systems from $\Phi$ to $\Phi'$.
By the Skolem-Noether theorem (see \cite[Corollary 7.125]{Rot}),
a map $\sigma : \text{\rm End}(V) \to \text{\rm End}(V')$ is an algebra isomorphism
if and only if there exists an $\F$-linear bijection $S : V \to V'$ such that $A^\sigma = S A S^{-1}$
for all $A \in \text{\rm End}(V)$.

\begin{definition}     \label{def:D}
\ifDRAFT {\rm def:D}. \fi
Let $\cal M$ denote the subalgebra of $\text{\rm End}(V)$ generated by $\{E_i\}_{i=0}^d$.
Note that $\cal M$ is commutative,
and $\{E_i\}_{i=0}^d$ form a basis of the vector space $\cal M$.
\end{definition}

\section{The scalars $m_i$, $\nu$}
\label{sec:pi}

Let $\Phi = (\{E_i\}_{i=0}^d; \{E^*_i\}_{i=0}^d)$ denote an idempotent system
on $V$.
In this section we use $\Phi$ to introduce some scalars  $\{m_i\}_{i=0}^d$, $\nu$.

\begin{definition}    \label{def:mi2}    \samepage
\ifDRAFT {\rm def:mi2}. \fi
For $0 \leq i \leq d$ define 
\begin{align}
   m_i &= \text{\rm tr} (E^*_0 E_i).       \label{eq:defmi2}
\end{align}
\end{definition}

\begin{lemma}    \label{lem:E0EsiE0}    \samepage
\ifDRAFT {\rm lem:E0EsiE0}. \fi
For $0 \leq i \leq d$ the following hold:
\begin{itemize}
\item[\rm (i)]
$E^*_0 E_i E^*_0 = m_i E^*_0$;
\item[\rm (ii)]
$E_0 E^*_i E_0 = m^*_i E_0$.
\end{itemize}
\end{lemma}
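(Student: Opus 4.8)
The plan is to prove (i) first and then obtain (ii) by applying the dual operation $\Phi \mapsto \Phi^*$, under which $E_i \leftrightarrow E^*_i$, $m_i \leftrightarrow m^*_i$, so that statement (i) for $\Phi^*$ is exactly statement (ii) for $\Phi$. Thus the real content is (i).

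To prove (i), fix $i$ with $0 \leq i \leq d$ and consider the element $E^*_0 E_i E^*_0 \in \text{\rm End}(V)$. Since $E^*_0$ has rank $1$, the subspace $E^*_0 V$ is one-dimensional, and $E^*_0 E_i E^*_0$ maps $V$ into $E^*_0 V$ and annihilates $\ker E^*_0$ on the right (because $E^*_0 E^*_0 = E^*_0$ means $E^*_0$ acts as the identity on $E^*_0 V$ and as $0$ on a complement). More precisely, $E^*_0 X E^*_0$ lies in the one-dimensional space $E^*_0 \text{\rm End}(V) E^*_0$ by Lemma \ref{lem:sumEiAEj}(ii); since $E^*_0$ is a nonzero idempotent, $E^*_0 \cdot E^*_0 \cdot E^*_0 = E^*_0 \neq 0$, so $E^*_0$ spans this space. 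Hence there is a unique scalar $c_i \in \F$ with $E^*_0 E_i E^*_0 = c_i E^*_0$. It remains to identify $c_i = m_i$.

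To pin down $c_i$, I would take the trace of both sides. On the right, $\text{\rm tr}(c_i E^*_0) = c_i \,\text{\rm tr}(E^*_0) = c_i$ by Lemma \ref{lem:trE}(i). On the left, $\text{\rm tr}(E^*_0 E_i E^*_0) = \text{\rm tr}(E^*_0 E^*_0 E_i) = \text{\rm tr}(E^*_0 E_i) = m_i$, using the cyclic invariance of trace together with $E^*_0 E^*_0 = E^*_0$ and Definition \ref{def:mi2}. Comparing gives $c_i = m_i$, which is (i). I do not foresee a genuine obstacle here; the only point requiring a little care is the appeal to Lemma \ref{lem:sumEiAEj}(ii) to know that $E^*_0 \text{\rm End}(V) E^*_0$ is one-dimensional (so that $E^*_0 E_i E^*_0$ is automatically a scalar multiple of $E^*_0$), but this is already available from the preliminaries. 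Statement (ii) then follows immediately by dualizing, as noted above.
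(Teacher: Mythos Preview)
Your proof is correct and follows essentially the same approach as the paper: use Lemma~\ref{lem:sumEiAEj}(ii) to see that $E^*_0\,\text{End}(V)\,E^*_0$ is one-dimensional with basis $E^*_0$, write $E^*_0 E_i E^*_0 = c_i E^*_0$, and identify $c_i = m_i$ by taking traces (via $\text{tr}(E^*_0)=1$ and cyclic invariance); then obtain (ii) from (i) by passing to $\Phi^*$. The brief digression about the action on $E^*_0 V$ and $\ker E^*_0$ is unnecessary but harmless.
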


\begin{proof}
(i)
Abbreviate ${\cal A} = \text{\rm End}(V)$.
By Lemma \ref{lem:sumEiAEj}(ii),
$E^*_0$ is a basis for the vector space $E^*_0 {\cal A} E^*_0$.
So there there exists a scalar $\alpha_i$ such that $E^*_0 E_i E^*_0 =\alpha_i E^*_0$.
In this equation, take the trace of each side and simplify the result
using Lemma \ref{lem:trE}(i) and  $\text{\rm tr} (M N) = \text{\rm tr} (N M)$
to obtain $\alpha_i = m_i$.
The result follows.

(ii)
Apply (i) to $\Phi^*$.
\end{proof}

\begin{lemma}    \label{lem:EsiE0Esi}     \samepage
\ifDRAFT {\rm lem:EsiE0Esi}. \fi
For $0 \leq i \leq d$ the following hold:
\begin{itemize}
\item[\rm (i)]
$E_i E^*_0 E_i = m_i E_i$;
\item[\rm (ii)]
$E^*_i E_0 E^*_i = m^*_i E^*_i$.
\end{itemize}
\end{lemma}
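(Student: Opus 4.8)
The plan is to mimic the proof of Lemma \ref{lem:E0EsiE0}, using Lemma \ref{lem:sumEiAEj} applied to the system $\{E^*_i\}_{i=0}^d$ (rather than $\{E_i\}_{i=0}^d$). First I would abbreviate ${\cal A} = \text{\rm End}(V)$ and fix $i$ with $0 \leq i \leq d$. By Lemma \ref{lem:sumEiAEj}(ii), applied with the system $\{E^*_i\}_{i=0}^d$ in the role of $\{E_i\}_{i=0}^d$, the space $E^*_i {\cal A} E^*_i$ is one-dimensional with basis $E^*_i$. Hence there is a scalar $\beta_i$ with $E^*_i E_0 E^*_i = \beta_i E^*_i$. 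Taking the trace of both sides and using Lemma \ref{lem:trE}(i) (so $\text{\rm tr}(E^*_i) = 1$) together with $\text{\rm tr}(MN) = \text{\rm tr}(NM)$, I get
\[
\beta_i = \text{\rm tr}(E^*_i E_0 E^*_i) = \text{\rm tr}(E^*_i E^*_i E_0) = \text{\rm tr}(E^*_i E_0),
\]
using $E^*_i E^*_i = E^*_i$. So it remains to identify $\text{\rm tr}(E^*_i E_0)$ with $m_i$.

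The one subtle point — the part most likely to need care — is verifying $\text{\rm tr}(E^*_i E_0) = m_i = \text{\rm tr}(E^*_0 E_i)$, since $m_i$ was defined in Definition \ref{def:mi2} as $\text{\rm tr}(E^*_0 E_i)$, not as $\text{\rm tr}(E^*_i E_0)$. These are in general different traces, so I cannot merely cite $\text{\rm tr}(MN) = \text{\rm tr}(NM)$. Instead I would invoke Lemma \ref{lem:E0EsiE0}(i): from $E^*_0 E_i E^*_0 = m_i E^*_0$ one obtains, upon left-multiplying by $E^*_i$ and using $E^*_i E^*_0 = 0$ for $i \neq 0$... that route gives $0$, so that is not quite it either. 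The cleaner approach: I expect the paper intends $m_i$ to satisfy $\text{\rm tr}(E^*_i E_0) = \text{\rm tr}(E^*_0 E_i)$ because one can show directly that $\text{\rm tr}(E^*_i E_0) = \text{\rm tr}(E^*_0 E_i)$ via the symmetry of the setup, or more likely it follows from an intermediate identity established between Lemma \ref{lem:E0EsiE0} and here (for instance $E_0 E^*_i E_0 E^*_i$-type computations). Failing a slicker argument, I would note that by Lemma \ref{lem:sumEiAEj} applied to $\{E_i\}$, the space $E_0 {\cal A} E_0$ is spanned by $E_0$, so $E_0 E^*_i E_0 = m^*_i E_0$ by Lemma \ref{lem:E0EsiE0}(ii); then $\text{\rm tr}(E_0 E^*_i) = m^*_i$ and dually $\text{\rm tr}(E^*_0 E_i) = m_i$, and a separate lemma presumably records $\text{\rm tr}(E^*_i E_0) = \text{\rm tr}(E_i E^*_0)$, closing the gap.

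Granting the identification $\beta_i = m_i$, part (i) is complete. For part (ii), I would simply apply part (i) to the dual idempotent system $\Phi^*$: under $\Phi \mapsto \Phi^*$ the roles of $E_i$ and $E^*_i$ are interchanged and $m_i \mapsto m^*_i$, so $E_i E^*_0 E_i = m_i E_i$ becomes $E^*_i E_0 E^*_i = m^*_i E^*_i$, which is exactly the assertion of (ii). The main obstacle, then, is purely bookkeeping about which trace equals $m_i$; once the convention $\text{\rm tr}(E^*_i E_0) = \text{\rm tr}(E^*_0 E_i) = m_i$ is in hand (either from an earlier lemma or from the symmetry of the construction), the argument is a one-line copy of the previous proof.
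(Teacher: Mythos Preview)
You have mixed up parts (i) and (ii). The computation you carry out --- working in $E^*_i {\cal A} E^*_i$ and writing $E^*_i E_0 E^*_i = \beta_i E^*_i$ --- is the content of part (ii), not part (i). And for part (ii) the target scalar is $m^*_i$, not $m_i$. With that correction there is no subtle point at all: by the cyclic property of trace,
\[
\beta_i = \text{\rm tr}(E^*_i E_0 E^*_i) = \text{\rm tr}(E_0 E^*_i E^*_i) = \text{\rm tr}(E_0 E^*_i) = m^*_i,
\]
which is exactly the definition of $m^*_i$ (Definition \ref{def:mi2} applied to $\Phi^*$). So (ii) follows in one line.

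For part (i) you should instead apply Lemma \ref{lem:sumEiAEj}(ii) to the system $\{E_j\}_{j=0}^d$ (not $\{E^*_j\}_{j=0}^d$): the space $E_i {\cal A} E_i$ is one-dimensional with basis $E_i$, so $E_i E^*_0 E_i = \alpha_i E_i$, and taking traces gives
\[
\alpha_i = \text{\rm tr}(E_i E^*_0 E_i) = \text{\rm tr}(E^*_0 E_i E_i) = \text{\rm tr}(E^*_0 E_i) = m_i
\]
directly from Definition \ref{def:mi2}. No auxiliary identity is needed. The equality $\text{\rm tr}(E^*_i E_0) = \text{\rm tr}(E^*_0 E_i)$ that you were hunting for is \emph{not} available in general (indeed $m_i \neq m^*_i$ for $i \neq 0$ typically), so the route you sketched for ``(i)'' genuinely cannot close. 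Once the labeling is fixed, your argument matches the paper's (``Similar to the proof of Lemma \ref{lem:E0EsiE0}''), and deriving (ii) from (i) via $\Phi^*$ is fine.
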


\begin{proof}
Similar to the proof of Lemma \ref{lem:E0EsiE0}.
\end{proof}

\begin{lemma}    \label{lem:mi3}    \samepage
\ifDRAFT {\rm lem:mi3}. \fi
The following hold:
\begin{itemize}
\item[\rm (i)]
$m_i \neq 0$ $\quad (0 \leq i \leq d)$;
\item[\rm (ii)]
$\sum_{i=0}^d m_i = 1$.
\end{itemize}
\end{lemma}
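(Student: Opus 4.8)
The plan is to prove (ii) first, then derive (i) from it together with the earlier structural results. For part (ii), I would start from Lemma \ref{lem:trE}(ii) applied to $\{E_i\}_{i=0}^d$, namely $I = \sum_{i=0}^d E_i$. Multiply this identity on the left and right by $E^*_0$ to get $E^*_0 = E^*_0 I E^*_0 = \sum_{i=0}^d E^*_0 E_i E^*_0$. Now invoke Lemma \ref{lem:E0EsiE0}(i), which says $E^*_0 E_i E^*_0 = m_i E^*_0$ for each $i$, so the right-hand side equals $\bigl(\sum_{i=0}^d m_i\bigr) E^*_0$. Since $E^*_0$ is a rank $1$ idempotent it is nonzero, so comparing coefficients (or taking traces and using Lemma \ref{lem:trE}(i), which gives $\text{\rm tr}(E^*_0) = 1$) yields $\sum_{i=0}^d m_i = 1$.

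For part (i), fix $i$ with $0 \leq i \leq d$. By axiom (iv) in Definition \ref{def:ips} we have $E^*_0 E_i E^*_0 \neq 0$. By Lemma \ref{lem:E0EsiE0}(i) this element equals $m_i E^*_0$, and $E^*_0 \neq 0$, so $m_i \neq 0$. (Equivalently, one can use Lemma \ref{lem:EsiE0Esi}(i): $E_i E^*_0 E_i = m_i E_i$, combined with the observation that $E_i E^*_0 E_i \neq 0$; but the cleaner route is directly through axiom (iv) and Lemma \ref{lem:E0EsiE0}(i).) This completes the proof.

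I do not anticipate any genuine obstacle here: the lemma is essentially a bookkeeping consequence of the resolution of the identity $I = \sum_i E_i$ sandwiched between two copies of $E^*_0$, together with the already-established fact (Lemma \ref{lem:E0EsiE0}) that such a sandwich collapses to a scalar multiple of $E^*_0$. The only point requiring a modicum of care is that the scalar identity $\sum_i m_i = 1$ is read off from an equation in $\text{\rm End}(V)$, so one should note explicitly that $E^*_0$ is nonzero (being a rank $1$ idempotent) before cancelling it, or else pass to traces using $\text{\rm tr}(E^*_0)=1$ from Lemma \ref{lem:trE}(i).
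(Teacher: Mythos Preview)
Your proof is correct and essentially matches the paper's. The only cosmetic difference is in part (ii): the paper multiplies $I=\sum_i E_i$ on the left by $E^*_0$ only and then takes traces directly from the definition $m_i=\text{tr}(E^*_0 E_i)$, whereas you sandwich by $E^*_0$ on both sides and invoke Lemma~\ref{lem:E0EsiE0}(i) before cancelling (or tracing). Both routes are immediate; your alternative of passing to traces is exactly what the paper does.
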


\begin{proof}
(i)
Use Definition \ref{def:ips}(iv) and Lemma \ref{lem:E0EsiE0}(i).

(ii)
By Lemma \ref{lem:trE}(ii), $\sum_{i=0}^d E_i = I$.
In this equation, multiply each side on the left by $E^*_0$ to get
$\sum_{i=0}^d E^*_0 E_i = E^*_0$.
In this equation, take the trace of each side, and evaluate the result
using Lemma \ref{lem:trE}(i) and Definition \ref{def:mi2}.
\end{proof}

\begin{definition}    \label{def:nu}    \samepage
\ifDRAFT {\rm def:nu}. \fi
Setting $i=0$ in \eqref{eq:defmi2} we find that $m_0 = m^*_0$;
let $\nu$ denote the multiplicative inverse of this common value.
We emphasize $\nu = \nu^*$ and
\begin{equation}
    \text{\rm tr} (E_0 E^*_0) = \nu^{-1}.              \label{eq:nu}
\end{equation}
\end{definition}

\begin{lemma}  \label{lem:nuE0Es0E0}   \samepage
\ifDRAFT {\rm lem:nuE0Es0E0}. \fi
We have
\begin{align}
  \nu E_0 E^*_0 E_0 &=  E_0,  &
  \nu E^*_0 E_0 E^*_0 &= E^*_0.                        \label{eq:nu2}
\end{align}
\end{lemma}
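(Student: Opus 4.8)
The plan is to derive each identity in \eqref{eq:nu2} directly from Lemma \ref{lem:E0EsiE0}, using the definition of $\nu$ and the fact that $E_0$, $E^*_0$ are idempotents. First I would specialize Lemma \ref{lem:E0EsiE0}(ii) to the case $i=0$, which gives $E_0 E^*_0 E_0 = m^*_0 E_0$. By Definition \ref{def:nu}, the common value $m_0 = m^*_0$ equals $\nu^{-1}$, so $E_0 E^*_0 E_0 = \nu^{-1} E_0$. Multiplying both sides by $\nu$ yields the first equation $\nu E_0 E^*_0 E_0 = E_0$. For the second equation, I would similarly specialize Lemma \ref{lem:E0EsiE0}(i) to $i=0$, obtaining $E^*_0 E_0 E^*_0 = m_0 E^*_0 = \nu^{-1} E^*_0$, and multiply by $\nu$.

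An alternative, equally short route that makes the duality explicit: prove the first equation as above, then observe that the second is obtained by applying the construction to the dual idempotent system $\Phi^*$, since $\nu = \nu^*$ (as recorded in Definition \ref{def:nu}) and the roles of $\{E_i\}$ and $\{E^*_i\}$ are interchanged under $*$. Either phrasing is acceptable; I would pick whichever the authors have been using, and the $\Phi^*$ trick keeps the write-up to a single displayed computation.

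There is essentially no obstacle here: the content is entirely packaged in Lemma \ref{lem:E0EsiE0} and Definition \ref{def:nu}, and the only thing to check is that $m^*_0$ (resp.\ $m_0$) is indeed the quantity whose inverse is $\nu$, which is exactly what Definition \ref{def:nu} asserts. The one point worth a sentence of care is making sure the reader sees why $m_0 = m^*_0$: from \eqref{eq:defmi2} with $i=0$ we get $m_0 = \tr(E^*_0 E_0)$ and $m^*_0 = \tr(E_0 E^*_0)$, and these agree by the cyclic property $\tr(MN) = \tr(NM)$. After that, the proof is two lines.

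\begin{proof}
Setting $i=0$ in Lemma \ref{lem:E0EsiE0}(ii) gives $E_0 E^*_0 E_0 = m^*_0 E_0$.
By Definition \ref{def:nu} we have $m^*_0 = \nu^{-1}$, so $E_0 E^*_0 E_0 = \nu^{-1} E_0$, and multiplying each side by $\nu$ yields the equation on the left in \eqref{eq:nu2}.
To obtain the equation on the right, set $i=0$ in Lemma \ref{lem:E0EsiE0}(i) to get $E^*_0 E_0 E^*_0 = m_0 E^*_0$; since $m_0 = m^*_0 = \nu^{-1}$ by Definition \ref{def:nu}, we obtain $E^*_0 E_0 E^*_0 = \nu^{-1} E^*_0$, and multiplying each side by $\nu$ gives the result.
\end{proof}
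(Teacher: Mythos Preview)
Your proof is correct and follows essentially the same approach as the paper: set $i=0$ in Lemma~\ref{lem:E0EsiE0}(ii) and invoke Definition~\ref{def:nu} for the first identity, then obtain the second by the dual argument. The only cosmetic difference is that the paper phrases the second step as ``apply this to $\Phi^*$'' rather than citing Lemma~\ref{lem:E0EsiE0}(i) directly, which is exactly the alternative you already noted.
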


\begin{proof}
To get the equation on the left in \eqref{eq:nu2},
set $i=0$ in Lemma \ref{lem:E0EsiE0}(ii)  and use Definition \ref{def:nu}.
Applying this to $\Phi^*$ we get the equation on the right in \eqref{eq:nu2}.
\end{proof}

\begin{lemma}    \label{lem:EsiE0Esj}    \samepage
\ifDRAFT {\rm lem:EsiE0Esj}. \fi
Each of the following is a basis of the vector space $\text{\rm End}(V)$:
\begin{itemize}
\item[\rm (i)]
$\{E_i E^*_0 E_j \,|\, 0 \leq i,j \leq d\}$;
\item[\rm (ii)]
$\{E^*_i E_0 E^*_j \,|\, 0 \leq i,j \leq d\}$.
\end{itemize}
\end{lemma}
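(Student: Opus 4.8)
The plan is to show that the $(d+1)^2$ elements $E_i E^*_0 E_j$ ($0\le i,j\le d$) are linearly independent in $\text{\rm End}(V)$; since $\dim \text{\rm End}(V) = (d+1)^2$, linear independence forces them to be a basis. Part (ii) then follows by applying part (i) to the dual idempotent system $\Phi^*$. So the real content is the linear independence in part (i).

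First I would set up the ambient direct sum decomposition $\text{\rm End}(V) = \sum_{i=0}^d \sum_{j=0}^d E_i \text{\rm End}(V) E_j$ from Lemma \ref{lem:sumEiAEj}(i), noting each summand has dimension $1$ by Lemma \ref{lem:sumEiAEj}(ii). The key observation is that $E_i E^*_0 E_j$ lies in the $(i,j)$-block $E_i \text{\rm End}(V) E_j$, because $E_i (E_i E^*_0 E_j) E_j = E_i E^*_0 E_j$ using $E_i E_i = E_i$ and $E_j E_j = E_j$. Therefore the collection $\{E_i E^*_0 E_j\}$ is "block-diagonal" with respect to this grading: it suffices to check that each $E_i E^*_0 E_j$ is nonzero, since a family consisting of one element drawn from each of the $(d+1)^2$ one-dimensional summands of a direct sum is automatically linearly independent (a linear relation, projected onto each summand, kills each coefficient individually).

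So the crux reduces to: $E_i E^*_0 E_j \neq 0$ for all $0\le i,j\le d$. Here I would exploit the symmetric role of the two idempotent systems together with the already-established nonvanishing facts. From Lemma \ref{lem:EsiE0Esi}(i) we have $E_i E^*_0 E_i = m_i E_i$ with $m_i \neq 0$ by Lemma \ref{lem:mi3}(i), so $E_i E^*_0 E_i \neq 0$; in particular $E_i E^*_0 \neq 0$ and $E^*_0 E_i \neq 0$ for each $i$. Now suppose $E_i E^*_0 E_j = 0$. Multiply on the left by $E^*_0 E_i$ and on the right by $E_j E^*_0$: this gives $(E^*_0 E_i E^*_0)(E_j E^*_0) \cdot$... — more cleanly, compute $E^*_0 (E_i E^*_0 E_j) E^*_0 \cdot$ chains. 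The slick route: from $E_i E^*_0 E_j = 0$, left-multiply by $E^*_0$ and use Lemma \ref{lem:E0EsiE0}(i) in the form $E^*_0 E_i E^*_0 = m_i E^*_0$ after first right-multiplying appropriately. Concretely, consider $E^*_0 E_i E^*_0 E_j E^*_0$: on one hand it equals $E^*_0 (E_i E^*_0 E_j) E^*_0 = 0$; on the other hand, $E^*_0 E_i E^*_0 = m_i E^*_0$ and $E^*_0 E_j E^*_0 = m_j E^*_0$, so the product is $m_i m_j E^*_0 \neq 0$, a contradiction. Hence $E_i E^*_0 E_j \neq 0$.

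The main obstacle is simply keeping the block-grading bookkeeping straight and making sure the "one element per one-dimensional summand" independence argument is stated cleanly; the nonvanishing computation itself is short once one thinks to sandwich by $E^*_0$ on both sides and invoke Lemma \ref{lem:E0EsiE0}(i) twice. I would then dispatch part (ii) in one line by applying part (i) to $\Phi^*$, noting that $(\Phi^*)^*=\Phi$ and that the roles of $E_i$ and $E^*_i$ are interchanged, so the starred statement is literally the unstarred one for the dual system.
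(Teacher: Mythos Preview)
Your proposal is correct and follows essentially the same approach as the paper: reduce via Lemma~\ref{lem:sumEiAEj} to showing each $E_i E^*_0 E_j$ is nonzero, then sandwich by $E^*_0$ on both sides and use $E^*_0 E_i E^*_0 = m_i E^*_0$, $E^*_0 E_j E^*_0 = m_j E^*_0$ to obtain the contradiction $0 = m_i m_j E^*_0 \neq 0$; part (ii) by duality. The paper's write-up is more streamlined, but the argument is the same.
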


\begin{proof}
(i)
In view of Lemma \ref{lem:sumEiAEj},
it suffices to show that $E_i E^*_0 E_j \neq 0$ for $0 \leq i,j \leq d$.
Let $i$, $j$ be given, and suppose $E_i E^*_0 E_j = 0$.
Using Lemmas \ref{lem:E0EsiE0}(i) and \ref{lem:mi3}(i),
\[
  0 = E^*_0 (E_i E^*_0 E_j) E^*_0 =  m_i m_j E^*_0 \neq 0
\]
for a contradiction.
The result follows.

(ii)
Apply (i) to $\Phi^*$.
\end{proof}

\begin{lemma}    \label{lem:generate}    \samepage
\ifDRAFT {\rm lem:generate}. \fi
Each of the following is a generating set for the algebra $\text{\rm End}(V)$:
\begin{itemize}
\item[\rm (i)]
$E^*_0$ and $\cal M$;
\item[\rm (ii)]
$E_0$ and ${\cal M}^*$;
\item[\rm (iii)]
$\cal M$ and ${\cal M}^*$.
\end{itemize}
\end{lemma}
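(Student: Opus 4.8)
The plan is to prove the three parts by showing that each listed set generates a subalgebra containing the basis of $\text{\rm End}(V)$ from Lemma \ref{lem:EsiE0Esj}, hence equals $\text{\rm End}(V)$. For part (i), let $\cal B$ denote the subalgebra of $\text{\rm End}(V)$ generated by $E^*_0$ and $\cal M$. Since each $E_i$ lies in $\cal M$, the subalgebra $\cal B$ contains every product $E_i E^*_0 E_j$ for $0 \leq i,j \leq d$. By Lemma \ref{lem:EsiE0Esj}(i), the set $\{E_i E^*_0 E_j \mid 0 \leq i,j \leq d\}$ is a basis of the vector space $\text{\rm End}(V)$, so $\cal B = \text{\rm End}(V)$. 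This proves (i). Part (ii) follows by applying (i) to the dual idempotent system $\Phi^*$, since ${\cal M}^* = {\cal M}$ for $\Phi^*$ and the roles of $E_0$ and $E^*_0$ are interchanged.

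For part (iii), let $\cal C$ denote the subalgebra generated by $\cal M$ and ${\cal M}^*$. Then $\cal C$ contains $E^*_0 \in {\cal M}^*$ and all of $\cal M$, so $\cal C$ contains the subalgebra generated by $E^*_0$ and $\cal M$, which equals $\text{\rm End}(V)$ by part (i). Hence $\cal C = \text{\rm End}(V)$, proving (iii).

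I do not expect any serious obstacle here: the lemma is essentially a formal consequence of Lemma \ref{lem:EsiE0Esj}, whose content (that the $(d+1)^2$ elements $E_i E^*_0 E_j$ span $\text{\rm End}(V)$) does the real work. The only point requiring minor care is the observation that a subalgebra containing $E^*_0$ and all the $E_i$ automatically contains the three-fold products $E_i E^*_0 E_j$ — this is immediate from closure under multiplication — together with the remark that a subalgebra which contains a vector-space basis of $\text{\rm End}(V)$ must be all of $\text{\rm End}(V)$. For part (iii) one should note that it suffices to invoke part (i) rather than redo the argument, since ${\cal M}^* \ni E^*_0$; symmetrically one could instead invoke part (ii) using that ${\cal M} \ni E_0$.
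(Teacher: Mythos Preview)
Your proof is correct and follows essentially the same approach as the paper's: for (i) you use that the subalgebra generated by $E^*_0$ and $\cal M$ contains the basis $\{E_i E^*_0 E_j\}$ of $\text{\rm End}(V)$ from Lemma \ref{lem:EsiE0Esj}(i); for (ii) you dualize; and for (iii) you observe $E^*_0 \in {\cal M}^*$ and invoke (i). The paper's proof is just a terser version of this same argument.
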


\begin{proof}
(i)
By Definition \ref{def:D} and Lemma \ref{lem:EsiE0Esj}(i).

(ii)
Apply (i) to $\Phi^*$.

(iii)
By (i) above and Definition \ref{def:D}.
\end{proof}

\section{Symmetric idempotent systems}
\label{sec:symips}

We continue to discuss an idempotent system
$\Phi = (\{E_i\}_{i=0}^d; \{E^*_i\}_{i=0}^d)$ on $V$.

\begin{definition}   \label{def:sym}    \samepage
\ifDRAFT {\rm def:sym}. \fi
We say that $\Phi$ is {\em symmetric} whenever there exists an antiautomorphism
$\dagger$ of $\text{\rm End}(V)$ that fixes each of $E_i$, $E^*_i$ for $0 \leq i \leq d$. 
\end{definition}

Recall the algebra $\cal M$ from Definition \ref{def:D}.

\begin{lemma}    \label{lem:dagger0}    \samepage
\ifDRAFT {\rm lem:dagger0}. \fi
Assume that $\Phi$ is symmetric, and let $\dagger$ denote an antiautomorphism of
$\text{\rm End}(V)$ from Definition \ref{def:sym}.
Then the following hold:
\begin{itemize}
\item[\rm (i)]
$\dagger$ is unique;
\item[\rm (ii)]
$(A^\dagger)^\dagger = A$ for $A \in  \text{\rm End}(V)$;
\item[\rm (iii)]
$\dagger$ fixes every element in $\cal M$ and every element in ${\cal M}^*$.
\end{itemize}
\end{lemma}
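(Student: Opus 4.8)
The plan is to establish parts (i)--(iii) in the order (ii), (iii), (i), since uniqueness is cleanest to derive once we know $\dagger$ acts as the identity on the generating subalgebras $\cal M$ and ${\cal M}^*$.

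First I would prove (ii). Since $\dagger$ is an antiautomorphism, $(A^\dagger)^\dagger$ defines an \emph{automorphism} of $\text{\rm End}(V)$. By hypothesis it fixes each $E_i$ and each $E^*_i$, hence fixes the subalgebra generated by $\{E_i\}_{i=0}^d \cup \{E^*_i\}_{i=0}^d$, which is all of $\text{\rm End}(V)$ by Lemma~\ref{lem:generate}(iii). Therefore $(A^\dagger)^\dagger = A$ for all $A$.

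Next, part (iii). Each $E_i$ is fixed by $\dagger$ and $\cal M$ is spanned by $\{E_i\}_{i=0}^d$ (Definition~\ref{def:D}); since $\dagger$ is $\F$-linear it fixes every element of $\cal M$. The same argument with $\{E^*_i\}_{i=0}^d$ gives that $\dagger$ fixes every element of ${\cal M}^*$.

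Finally, (i). Suppose $\dagger'$ is another antiautomorphism of $\text{\rm End}(V)$ fixing each $E_i$ and each $E^*_i$. Then $\sigma := \dagger \circ \dagger'$ is an automorphism of $\text{\rm End}(V)$ (a composite of two antiautomorphisms), and by (iii) applied to each of $\dagger$, $\dagger'$ it fixes every element of $\cal M$ and of ${\cal M}^*$. Since $\cal M$ and ${\cal M}^*$ generate $\text{\rm End}(V)$ (Lemma~\ref{lem:generate}(iii)), $\sigma$ is the identity map, i.e.\ $A^{\dagger\dagger'} = A$ for all $A$. Applying $\dagger$ on the left and using (ii) gives $A^{\dagger'} = A^\dagger$ for all $A$, so $\dagger' = \dagger$. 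The only mild subtlety to watch is the bookkeeping that a composite of two antiautomorphisms is genuinely an automorphism (order reversal happens twice and cancels), and that the generation statement of Lemma~\ref{lem:generate} is being applied to a map of algebras, not merely of vector spaces; neither point is a real obstacle.
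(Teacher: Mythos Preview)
Your proof is correct and follows essentially the same approach as the paper's: both arguments hinge on Lemma~\ref{lem:generate}(iii) to conclude that an automorphism of $\text{End}(V)$ fixing $\cal M$ and ${\cal M}^*$ must be the identity, and both deduce uniqueness from the involution property (ii). The paper proves the parts in the order (iii), (ii), (i) rather than your (ii), (iii), (i), but this is an immaterial difference.
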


\begin{proof}
(iii)
By Definitions \ref{def:D} and \ref{def:sym}.

(ii)
The composition $\dagger \circ \dagger$ is an automorphism of $\text{\rm End}(V)$
that fixes everything in $\cal M$ and everything in ${\cal M}^*$.
This automorphism is the identity in view of Lemma \ref{lem:generate}(iii).

(i)
Let $\mu$ denote an antiautomorphism of  $\text{\rm End}(V)$ that fixes each of $E_i$, $E^*_i$
for $0 \leq i \leq d$.
We show $\mu = \dagger$.
The composition $\dagger \circ \mu$ is an automorphism of $\text{\rm End}(V)$
that fixes everything in $\cal M$ and everything in ${\cal M}^*$.
So this automorphism is the identity.
We have $\dagger = \dagger^{-1}$ by (ii) above, so $\mu = \dagger$.
\end{proof}

\section{The map $\rho$}
\label{sec:rho}

Let $\Phi = (\{E_i\}_{i=0}^d; \{E^*_i\}_{i=0}^d)$ denote a symmetric 
idempotent system on $V$.
Recall the  algebra $\cal M$ from Definition \ref{def:D}.
In this section we introduce a certain map $\rho :{\cal M} \to {\cal M}^*$
that will play an essential role in our theory.
As we will see, $\rho$ is an isomorphism of vector spaces but not algebras.

\begin{lemma}     \label{lem:EiEs0}    \samepage
\ifDRAFT {\rm lem:EiEs0}. \fi
For  ${\cal A} = \text{\rm End}(V)$, 
\begin{itemize}
\item[\rm (i)]
the elements
$\{E_i E^*_0\}_{i=0}^d$ form a basis of ${\cal A} E^*_0$;
\item[\rm (ii)]
the elements
$\{E^*_i E_0\}_{i=0}^d$ form a basis of ${\cal A} E_0$.
\end{itemize}
\end{lemma}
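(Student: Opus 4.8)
The plan is to prove part (i) and then obtain part (ii) by applying (i) to the dual system $\Phi^*$. For part (i), the space ${\cal A} E^*_0$ is a right ideal of ${\cal A} = \text{\rm End}(V)$, and since $E^*_0$ has rank $1$, the dimension of ${\cal A} E^*_0$ equals $d+1$ (concretely, ${\cal A} E^*_0 = \{B \in \text{\rm End}(V) : B = B E^*_0\}$, whose elements are determined by the image of the one-dimensional space $E^*_0 V$ together with where the complementary idempotents send it; alternatively, using Lemma \ref{lem:sumEiAEj}, ${\cal A} E^*_0 = \sum_{i=0}^d E_i {\cal A} E^*_0 E^*_0 = \sum_{i=0}^d E_i {\cal A} E^*_0$ and each summand $E_i {\cal A} E^*_0$ has dimension $1$). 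Since there are exactly $d+1$ vectors $\{E_i E^*_0\}_{i=0}^d$, all lying in ${\cal A} E^*_0$, it suffices to show they are linearly independent.

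For linear independence, suppose $\sum_{i=0}^d c_i E_i E^*_0 = 0$ for scalars $c_i$. Fix $j$ with $0 \leq j \leq d$ and multiply this relation on the left by $E_j$; using $E_j E_i = \delta_{i,j} E_i$ (Definition \ref{def:orth}) we obtain $c_j E_j E^*_0 = 0$. It remains to observe $E_j E^*_0 \neq 0$: indeed, by Lemma \ref{lem:EsiE0Esj}(i) the products $E_j E^*_0 E_i$ are all nonzero, so in particular $E_j E^*_0 E_j \neq 0$, forcing $E_j E^*_0 \neq 0$. (Equivalently, one may cite Lemma \ref{lem:EsiE0Esi}(i), which gives $E_j E^*_0 E_j = m_j E_j$ with $m_j \neq 0$ by Lemma \ref{lem:mi3}(i).) Hence $c_j = 0$ for all $j$, and the $\{E_i E^*_0\}_{i=0}^d$ are linearly independent, hence a basis of ${\cal A} E^*_0$.

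For part (ii), note that ${\cal A} E_0$ and the vectors $\{E^*_i E_0\}_{i=0}^d$ are obtained from ${\cal A} E^*_0$ and $\{E_i E^*_0\}_{i=0}^d$ by the substitution $E_i \leftrightarrow E^*_i$, which is exactly the passage from $\Phi$ to $\Phi^*$; since $\Phi^*$ is again an idempotent system, applying (i) to $\Phi^*$ yields (ii) immediately.

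I do not anticipate a serious obstacle here: the only slightly delicate point is correctly pinning down that $\dim {\cal A} E^*_0 = d+1$ and that $E_j E^*_0 \neq 0$, and both follow cleanly from the already-established Lemmas \ref{lem:sumEiAEj} and \ref{lem:EsiE0Esj} (or \ref{lem:EsiE0Esi}). The argument is essentially a one-line multiply-by-$E_j$ trick once those facts are in place.
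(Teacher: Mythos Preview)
Your proof is correct and follows essentially the same approach as the paper: compute $\dim {\cal A} E^*_0 = d+1$, establish linear independence by left-multiplying by $E_j$, and deduce (ii) from (i) via $\Phi^*$. Two minor differences worth noting: the paper obtains $\dim {\cal A} E^*_0 = d+1$ by decomposing with the $E^*_i$'s on the left (so that Lemma~\ref{lem:sumEiAEj}, applied to the system $\{E^*_i\}$, gives the 1-dimensionality of each $E^*_i {\cal A} E^*_0$ directly), whereas your alternative decomposition $\sum_i E_i {\cal A} E^*_0$ mixes the two idempotent systems and is not literally covered by Lemma~\ref{lem:sumEiAEj}(ii)---your first argument via the rank of $E^*_0$ is the cleaner route there. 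For $E_j E^*_0 \neq 0$, the paper simply cites Definition~\ref{def:ips}(iv) (since $E^*_0 E_j E^*_0 \neq 0$ forces $E_j E^*_0 \neq 0$), which is slightly more direct than going through Lemma~\ref{lem:EsiE0Esj} or \ref{lem:EsiE0Esi}.
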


\begin{proof}
(i)
By Lemmas \ref{lem:trE}(ii) and \ref{lem:sumEiAEj}(i)
the sum ${\cal A} E^*_0 = \sum_{i=0}^d E^*_i {\cal A} E^*_0$ is direct.
Each summand has dimension one by Lemma \ref{lem:sumEiAEj}(ii), so
${\cal A} E^*_0$ has dimension $d+1$.
The elements $\{E_i E^*_0\}_{i=0}^d$ are contained in ${\cal A} E^*_0$.
We show that these elements are linear independent.
For scalars $\{\alpha_i\}_{i=0}^d$ suppose $0 = \sum_{i=0}^d \alpha_i E_i E^*_0$.
For $0 \leq r \leq d$,
multiply each side of this equation on the left by $E_r$ to obtain
$0 = \alpha_r E_r E^*_0$.
We have $E_r E^*_0 \neq 0$ by Definition \ref{def:ips}(iv), so $\alpha_r = 0$.
We have shown that $\{E_i E^*_0\}_{i=0}^d$ are linearly independent, and hence
a basis of ${\cal A} E^*_0$.

(ii)
Apply (i) to $\Phi^*$.
\end{proof}

\begin{lemma}  \label{lem:DtoDEs0}    \samepage
\ifDRAFT {\rm lem:DtoDEs0}. \fi
For  ${\cal A} = \text{\rm End}(V)$, 
\begin{itemize}
\item[\rm (i)]
the map ${\cal M} \to {\cal A} E^*_0$, $Y \mapsto Y E^*_0$ is an $\F$-linear bijection;
\item[\rm (ii)]
the map ${\cal M}^* \to {\cal A} E_0$, $Y \mapsto Y E_0$ is an $\F$-linear bijection.
\end{itemize}
\end{lemma}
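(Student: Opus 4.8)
The plan is to deduce both parts immediately from the basis description in Lemma \ref{lem:EiEs0}. For part (i), first note that the map ${\cal M} \to {\cal A}E^*_0$, $Y \mapsto Y E^*_0$ is $\F$-linear, being the restriction to $\cal M$ of the $\F$-linear map $\text{\rm End}(V) \to \text{\rm End}(V)$, $X \mapsto X E^*_0$; its image indeed lies in ${\cal A}E^*_0$ by definition of the latter. Next, recall from Definition \ref{def:D} that $\{E_i\}_{i=0}^d$ is a basis of the vector space $\cal M$. The map in question sends this basis to the sequence $\{E_i E^*_0\}_{i=0}^d$, which by Lemma \ref{lem:EiEs0}(i) is a basis of ${\cal A}E^*_0$. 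An $\F$-linear map that carries a basis of its domain to a basis of its codomain is a bijection, so the map is an $\F$-linear bijection, proving (i).

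For part (ii), I would simply apply (i) to the dual idempotent system $\Phi^*$, using that the algebra $\cal M$ attached to $\Phi^*$ is ${\cal M}^*$ and that $E^*_0$ attached to $\Phi^*$ is $E_0$, so that the map $Y \mapsto Y E^*_0$ for $\Phi^*$ is exactly the map ${\cal M}^* \to {\cal A}E_0$, $Y \mapsto Y E_0$.

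There is no real obstacle here: the content has already been done in Lemma \ref{lem:EiEs0}, and the only thing to check is the bookkeeping that $\{E_i\}_{i=0}^d$ spans $\cal M$ (Definition \ref{def:D}) and that $\{E_i E^*_0\}_{i=0}^d$ spans ${\cal A}E^*_0$ (Lemma \ref{lem:EiEs0}(i)), both of which are available. If one prefers to avoid invoking the ``basis to basis'' principle, an equivalent route is a dimension count: the map is surjective because $\{E_i E^*_0\}_{i=0}^d$ spans ${\cal A}E^*_0$, and since $\dim {\cal M} = d+1 = \dim {\cal A}E^*_0$ a surjective linear map between equidimensional spaces is a bijection.
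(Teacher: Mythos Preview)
Your proposal is correct and follows essentially the same approach as the paper: show the map is $\F$-linear, observe that it sends the basis $\{E_i\}_{i=0}^d$ of $\cal M$ to the basis $\{E_i E^*_0\}_{i=0}^d$ of ${\cal A}E^*_0$ given by Lemma~\ref{lem:EiEs0}(i), and deduce bijectivity; then obtain (ii) by applying (i) to $\Phi^*$. The extra dimension-count remark is a harmless alternative to the ``basis to basis'' step.
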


\begin{proof}
(i)
Clearly the map is $\F$-linear.
By Lemma \ref{lem:EiEs0}(i),
the map sends the basis $\{E_i\}_{i=0}^d$ of $\cal M$ to the basis $\{E_i E^*_0\}_{i=0}^d$
of ${\cal A} E^*_0$.
So the map is bijective.

(ii)
Apply (i) to $\Phi^*$.
\end{proof}

\begin{lemma}    \label{lem:rho}   \samepage
\ifDRAFT {\rm lem:rho}. \fi
There exists a unique $\F$-linear map $\rho : {\cal M} \to {\cal M}^*$
such that for $Y \in {\cal M}$,
\begin{align}
  Y E^*_0 E_0 &= Y^\rho E_0.       \label{eq:defrho}
\end{align}
\end{lemma}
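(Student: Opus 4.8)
The plan is to construct $\rho$ by prescribing its action on the basis $\{E_i\}_{i=0}^d$ of $\cal M$ and then verify that the resulting linear map satisfies \eqref{eq:defrho}. Concretely, I would define $E_i^\rho$ to be the unique element of ${\cal M}^*$ whose image under the bijection ${\cal M}^* \to {\cal A} E_0$ of Lemma \ref{lem:DtoDEs0}(ii) equals $E_i E^*_0 E_0$. This makes sense because $E_i E^*_0 E_0 \in {\cal A} E_0$, so by Lemma \ref{lem:DtoDEs0}(ii) there is exactly one $Z \in {\cal M}^*$ with $Z E_0 = E_i E^*_0 E_0$; set $E_i^\rho := Z$. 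Then extend $\rho$ to all of $\cal M$ by $\F$-linearity using the fact (Definition \ref{def:D}) that $\{E_i\}_{i=0}^d$ is a basis of $\cal M$. By construction we get $E_i E^*_0 E_0 = E_i^\rho E_0$ for each $i$, and linearity in $Y$ upgrades this to $Y E^*_0 E_0 = Y^\rho E_0$ for all $Y \in {\cal M}$, which is \eqref{eq:defrho}.

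For uniqueness, suppose $\rho'$ is another $\F$-linear map ${\cal M} \to {\cal M}^*$ satisfying \eqref{eq:defrho}. Then for every $Y \in {\cal M}$ we have $Y^\rho E_0 = Y E^*_0 E_0 = Y^{\rho'} E_0$, so $(Y^\rho - Y^{\rho'}) E_0 = 0$ with $Y^\rho - Y^{\rho'} \in {\cal M}^*$. Since the map ${\cal M}^* \to {\cal A} E_0$, $Z \mapsto Z E_0$ is injective by Lemma \ref{lem:DtoDEs0}(ii), we conclude $Y^\rho = Y^{\rho'}$ for all $Y$, hence $\rho = \rho'$. This disposes of uniqueness cleanly.

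One ingredient deserves an explicit word: we must know that $E^*_0 E_0 \in {\cal A} E_0$, or rather that $Y E^*_0 E_0 \in {\cal A} E_0$, so that Lemma \ref{lem:DtoDEs0}(ii) applies — but this is immediate since $Y E^*_0 \in {\cal A} = \text{\rm End}(V)$ and ${\cal A} E_0 = \{B E_0 : B \in {\cal A}\}$. So the only real content is the existence of a \emph{single-valued} assignment, and that is exactly what Lemma \ref{lem:DtoDEs0}(ii) provides. I do not expect any obstacle here; the lemma is a short formal consequence of the two preceding lemmas, and the main point is simply to package the bijection of Lemma \ref{lem:DtoDEs0}(ii) together with left-multiplication by $E^*_0$ into a map between the two algebras. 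If anything needs care, it is only making sure that $\rho$ lands in ${\cal M}^*$ rather than merely in $\text{\rm End}(V)$ — but that is built into the definition via the codomain of the bijection in Lemma \ref{lem:DtoDEs0}(ii).
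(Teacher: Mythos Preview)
Your proof is correct and follows essentially the same approach as the paper: the paper packages your construction as the composition $\rho = \mu^{-1} \circ g$, where $g : {\cal M} \to {\cal A} E_0$ sends $Y \mapsto Y E^*_0 E_0$ and $\mu$ is the bijection of Lemma \ref{lem:DtoDEs0}(ii), which is exactly what you are doing basis-element by basis-element. The uniqueness argument is identical.
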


\begin{proof}
Abbreviate ${\cal A} = \text{\rm End}(V)$.
Concerning existence, consider the $\F$-linear map $g : {\cal M} \to {\cal A} E_0$,
$Y \mapsto Y E^*_0 E_0$.
Let $\mu$ denote the map in Lemma \ref{lem:DtoDEs0}(ii).
The composition
\[
  \rho : {\cal M} \xrightarrow{\;\;\; g \;\;\; } {\cal A} E_0 \xrightarrow{\;\; \mu^{-1}\;\; } {\cal M}^*
\]
satisfies \eqref{eq:defrho}.
We have shown that $\rho$ exists.
The map $\rho$ is unique by Lemma \ref{lem:DtoDEs0}(ii).
\end{proof}

\begin{lemma}    \label{lem:rhorhospre}    \samepage
\ifDRAFT {\rm lem:rhorhospre}. \fi
The maps $\rho$ and $\nu \rho^*$ are inverses.
In particular, the maps $\rho$, $\rho^*$ are bijective.
\end{lemma}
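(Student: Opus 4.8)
We want to show that $\rho \colon \mathcal M \to \mathcal M^*$ and $\nu\rho^* \colon \mathcal M^* \to \mathcal M$ are two-sided inverses. Since both are $\F$-linear maps between vector spaces of dimension $d+1$ (Definition \ref{def:D}), it suffices to prove that one composite is the identity; the other then follows automatically, and bijectivity of both $\rho$ and $\rho^*$ is immediate. By symmetry (apply the result to $\Phi^*$, noting $\nu = \nu^*$), I only need to check $\nu\, \rho^* \circ \rho = \mathrm{id}_{\mathcal M}$, i.e.\ that $(Y^\rho)^{\rho^*} = \nu^{-1} Y$ for all $Y \in \mathcal M$.

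First I would unwind the defining relations. For $Y \in \mathcal M$, equation \eqref{eq:defrho} gives $Y E^*_0 E_0 = Y^\rho E_0$, where $Y^\rho \in \mathcal M^*$. Applying the dual version of \eqref{eq:defrho} to the element $Y^\rho \in \mathcal M^*$ yields $Y^\rho E_0 E^*_0 = (Y^\rho)^{\rho^*} E^*_0$, where $(Y^\rho)^{\rho^*} \in \mathcal M$. So, starting from $Y E^*_0 E_0$, multiply on the right by $E^*_0$: on one hand $Y E^*_0 E_0 E^*_0 = (Y E^*_0)(E_0 E^*_0)$, and since $Y \in \mathcal M$ commutes with nothing in particular I should be careful — instead I would write $Y E^*_0 E_0 E^*_0 = (Y^\rho E_0) E^*_0 = (Y^\rho)^{\rho^*} E^*_0$. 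On the other hand, $Y E^*_0 (E_0 E^*_0) = Y E^*_0 (\nu^{-1} E^*_0 E_0 E^*_0)$ using the right-hand relation of \eqref{eq:nu2} in the form $E_0 E^*_0 = \nu E_0 E^*_0 E_0 E^*_0$... let me instead use $\nu E^*_0 E_0 E^*_0 = E^*_0$ directly: then $Y E^*_0 E_0 E^*_0 = \nu^{-1} Y E^*_0$.

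Combining the two evaluations gives $(Y^\rho)^{\rho^*} E^*_0 = \nu^{-1} Y E^*_0$. Now I invoke Lemma \ref{lem:DtoDEs0}(i): the map $\mathcal M \to \mathcal A E^*_0$, $Z \mapsto Z E^*_0$, is injective, and both $(Y^\rho)^{\rho^*}$ and $\nu^{-1} Y$ lie in $\mathcal M$, so $(Y^\rho)^{\rho^*} = \nu^{-1} Y$. This is exactly $\nu\, \rho^* \circ \rho = \mathrm{id}_{\mathcal M}$. Applying the same argument to $\Phi^*$ (using $\nu^* = \nu$ from Definition \ref{def:nu}) gives $\nu\, \rho \circ \rho^* = \mathrm{id}_{\mathcal M^*}$. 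Hence $\rho$ and $\nu\rho^*$ are mutually inverse, and in particular both $\rho$ and $\rho^*$ are bijective.

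The only place requiring genuine care — rather than being purely formal — is the chain of substitutions in the second paragraph: one must insert $E^*_0$ on the correct side so that both the "$\rho$ then $\rho^*$" reading and the "$\nu^{-1}$ scalar" reading apply to the \emph{same} element $Y E^*_0 E_0 E^*_0$, and then one must check that the two resulting elements genuinely lie in $\mathcal M$ so that Lemma \ref{lem:DtoDEs0}(i) can be applied for cancellation. Everything else is bookkeeping with \eqref{eq:defrho}, \eqref{eq:nu2}, and the self-duality $\nu = \nu^*$.
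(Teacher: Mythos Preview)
Your proof is correct and follows essentially the same approach as the paper: both arguments compute $(Y^\rho)^{\rho^*} E^*_0$ in two ways using \eqref{eq:defrho}, its dual, and the identity $\nu E^*_0 E_0 E^*_0 = E^*_0$ from Lemma~\ref{lem:nuE0Es0E0}, then cancel via Lemma~\ref{lem:DtoDEs0}(i), and finally apply the result to $\Phi^*$ for the other composite. Your opening remark about equal dimensions is a valid shortcut (one-sided invertibility suffices), though you end up proving both composites anyway, just as the paper does.
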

 
\begin{proof}
Pick $Y \in {\cal M}$.
Using Lemma \ref{lem:nuE0Es0E0} and applying \eqref{eq:defrho}
to both $\Phi$ and $\Phi^*$,
\[
  (Y^\rho)^{\rho^*} E^*_0
 = Y^\rho E_0 E^*_0
 = Y E^*_0 E_0 E^*_0
 = \nu^{-1} Y E^*_0.
\]
By this and Lemma \ref{lem:DtoDEs0}(i) we get $(Y^\rho)^{\rho^*} = \nu^{-1} Y$.
Applying this to $\Phi^*$,
$(Z^{\rho^*})^\rho = \nu^{-1} Z$ for $Z \in {\cal M}^*$.
Thus the maps $\rho$ and $\nu \rho^*$ are inverses.
\end{proof}

\begin{lemma}    \label{lem:rhoI}    \samepage
\ifDRAFT {\rm lem:rhoI}. \fi
The map
$\rho$ sends $I \mapsto E^*_0$ and $E_0 \mapsto \nu^{-1} I$.
\end{lemma}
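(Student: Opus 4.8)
The plan is to verify the two claimed values of $\rho$ directly from the defining relation \eqref{eq:defrho}, namely $Y E^*_0 E_0 = Y^\rho E_0$ for $Y \in {\cal M}$, using the uniqueness supplied by Lemma \ref{lem:DtoDEs0}(i): it suffices to exhibit, for each of $Y = I$ and $Y = E_0$, an element $Z \in {\cal M}^*$ satisfying $Y E^*_0 E_0 = Z E_0$. First I would handle $I \mapsto E^*_0$. Setting $Y = I$, the left-hand side of \eqref{eq:defrho} is $E^*_0 E_0$, so I need $Z \in {\cal M}^*$ with $Z E_0 = E^*_0 E_0$. The natural candidate is $Z = E^*_0$, and indeed $E^*_0 E_0 = E^*_0 E_0$ trivially; since $E^*_0 \in {\cal M}^*$, the uniqueness clause gives $I^\rho = E^*_0$.

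Next I would treat $E_0 \mapsto \nu^{-1} I$. Setting $Y = E_0$, the left-hand side of \eqref{eq:defrho} is $E_0 E^*_0 E_0$. By the left-hand equation of \eqref{eq:nu2} in Lemma \ref{lem:nuE0Es0E0}, we have $\nu E_0 E^*_0 E_0 = E_0$, hence $E_0 E^*_0 E_0 = \nu^{-1} E_0 = (\nu^{-1} I) E_0$. Since $\nu^{-1} I \in {\cal M}^*$, the uniqueness clause in Lemma \ref{lem:DtoDEs0}(i) (applied to $\Phi^*$, so that the relevant map is ${\cal M}^* \to {\cal A} E_0$, $Z \mapsto Z E_0$) forces $E_0^\rho = \nu^{-1} I$.

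There is essentially no obstacle here; the only point requiring a little care is bookkeeping of which linear bijection from Lemma \ref{lem:DtoDEs0} is being invoked to conclude uniqueness — one wants the bijection ${\cal M}^* \to {\cal A} E_0$, $Z \mapsto Z E_0$, which is part (ii) of that lemma, so that matching the right-hand sides $Y^\rho E_0$ of \eqref{eq:defrho} is enough to identify $Y^\rho$. Both verifications then reduce to the identities $E^*_0 E_0 = E^*_0 E_0$ and $E_0 E^*_0 E_0 = \nu^{-1} E_0$, the latter being an immediate consequence of Lemma \ref{lem:nuE0Es0E0}. I would present the argument in two short paragraphs mirroring the two assertions.
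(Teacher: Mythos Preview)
Your proposal is correct and follows essentially the same approach as the paper: verify the defining relation \eqref{eq:defrho} at $Y=I$ and $Y=E_0$, using Lemma \ref{lem:nuE0Es0E0} for the second, and invoke the injectivity of $Z \mapsto Z E_0$ on ${\cal M}^*$ (Lemma \ref{lem:DtoDEs0}(ii)) to identify $Y^\rho$. Your self-correction about which part of Lemma \ref{lem:DtoDEs0} to cite is right---it is part (ii), not (i)---and with that fix the argument matches the paper's.
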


\begin{proof}
Using Lemma \ref{lem:rho},
$E^*_0 E_0 = I E^*_0 E_0 = I^\rho E_0$.
This forces $E^*_0 = I^\rho$ by Lemma \ref{lem:DtoDEs0}(ii).
Using Lemmas \ref{lem:nuE0Es0E0} and \ref{lem:rho},
$E_0^\rho E_0 = E_0 E^*_0 E_0 = \nu^{-1} E_0$.
This forces $E_0^\rho = \nu^{-1} I$ by Lemma \ref{lem:DtoDEs0}(ii).
\end{proof}

\section{The elements $A_i$}
\label{sec:Ai}

We continue to discuss a symmetric idempotent system
$\Phi = (\{E_i\}_{i=0}^d; \{E^*_i\}_{i=0}^d)$ on $V$.

\begin{definition}    \label{def:Ai}    \samepage
\ifDRAFT {\rm def:Ai}. \fi
For $0 \leq i \leq d$ define 
\begin{equation}
  A_i = \nu (E^*_i)^{\rho^*}.                                 \label{eq:defAiAsi}
\end{equation}
\end{definition}

\begin{lemma}    \label{lem:rhorhos}    \samepage
\ifDRAFT {\rm lem:rhorhos}. \fi
For $0 \leq i \leq d$,
$\rho$ sends $A_i \mapsto  E^*_i$ and $E_i \mapsto \nu^{-1} A^*_i$.
\end{lemma}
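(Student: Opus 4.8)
The plan is to prove the two assertions about $\rho$ separately, using the definitions of $A_i$ and $A^*_i$ together with Lemma \ref{lem:rhorhospre}, which identifies $\rho$ and $\nu\rho^*$ as mutually inverse maps.

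For the first assertion, $\rho$ sends $A_i \mapsto E^*_i$, I would start from Definition \ref{def:Ai} applied to $\Phi^*$, namely $A^*_i = \nu (E_i)^\rho$; dually, $A_i = \nu (E^*_i)^{\rho^*}$ as stated in \eqref{eq:defAiAsi}. Then apply $\rho$ to both sides: $A_i^\rho = \nu ((E^*_i)^{\rho^*})^\rho$. By Lemma \ref{lem:rhorhospre}, the composition $\rho \circ \rho^*$ acts on ${\cal M}^*$ as multiplication by $\nu^{-1}$ (reading off $(Z^{\rho^*})^\rho = \nu^{-1} Z$ for $Z \in {\cal M}^*$, with $Z = E^*_i$). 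Hence $A_i^\rho = \nu \cdot \nu^{-1} E^*_i = E^*_i$, as desired.

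For the second assertion, $\rho$ sends $E_i \mapsto \nu^{-1} A^*_i$, I would again use Definition \ref{def:Ai} applied to $\Phi^*$, which gives $A^*_i = \nu (E_i)^{\rho}$; that is, $E_i^\rho = \nu^{-1} A^*_i$ directly. So this half is essentially immediate from the definition, once one is careful about what "applying to $\Phi^*$" means: the dual of the element $A_i$ attached to $\Phi$ is the element $A^*_i$ attached to $\Phi$ via the dual system, and the dual of $\rho^*$ is $\rho$, so \eqref{eq:defAiAsi} read for $\Phi^*$ says precisely $A^*_i = \nu (E_i)^{\rho}$.

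The main obstacle, such as it is, is purely bookkeeping: keeping the starring conventions straight, since $\rho$ and $\rho^*$ go in opposite directions (${\cal M} \to {\cal M}^*$ versus ${\cal M}^* \to {\cal M}$) and Definition \ref{def:Ai} already mixes $\nu$, $\rho^*$, and a starred idempotent on the right-hand side. There is no real analytic or combinatorial content; the only substantive input is Lemma \ref{lem:rhorhospre}, and everything else is unwinding definitions. I would write the argument so that the first assertion cites Lemma \ref{lem:rhorhospre} and the second cites only Definition \ref{def:Ai} applied to $\Phi^*$, and note that both statements for $\Phi^*$ follow by the same argument applied to $\Phi^*$ in place of $\Phi$.
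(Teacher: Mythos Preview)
Your proposal is correct and matches the paper's proof essentially verbatim: the paper computes $A_i^\rho = \nu((E^*_i)^{\rho^*})^\rho = E^*_i$ using Definition~\ref{def:Ai} and Lemma~\ref{lem:rhorhospre}, then obtains $E_i^\rho = \nu^{-1}A^*_i$ by applying \eqref{eq:defAiAsi} to $\Phi^*$. Your reading of the starring conventions is accurate, and there is nothing to add.
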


\begin{proof}
By Lemma \ref{lem:rhorhospre} and Definition \ref{def:Ai},
$A_i^\rho = \nu ((E^*_i)^{\rho^*})^\rho = E^*_i$.
Applying \eqref{eq:defAiAsi} to $\Phi^*$, $E_i^\rho = \nu^{-1} A^*_i$.
\end{proof}

\begin{lemma}       \label{lem:dagger4}   \samepage
\ifDRAFT {\rm lem:dagger4}. \fi
The antiautomorphism $\dagger$ from Definition \ref{def:sym} fixes each of $A_i$, $A^*_i$ for $0 \leq i \leq d$.
\end{lemma}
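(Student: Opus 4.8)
The statement to prove is that the antiautomorphism $\dagger$ fixes each $A_i$ and each $A^*_i$. The natural strategy is to express $A_i$ in terms of objects that $\dagger$ is already known to fix --- namely the idempotents $E_j$, $E^*_j$ --- and then apply $\dagger$ directly, using that it reverses products.

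The key observation is the defining relation from Lemma~\ref{lem:rhorhos}: we have $A_i^\rho = E^*_i$, which by the definition of $\rho$ in Lemma~\ref{lem:rho} unwinds to $A_i E^*_0 E_0 = E^*_i E_0$. First I would apply $\dagger$ to both sides of this identity. Since $\dagger$ is an antiautomorphism and fixes $E_0$, $E^*_0$, $E^*_i$, the right side $E^*_i E_0$ maps to $E_0 E^*_i$, while the left side $A_i E^*_0 E_0$ maps to $E_0 E^*_0 A_i^\dagger$. So we obtain $E_0 E^*_0 A_i^\dagger = E_0 E^*_i$. Now multiply both sides on the left by $\nu E_0$ and use Lemma~\ref{lem:nuE0Es0E0}, which gives $\nu E_0 E^*_0 E_0 = E_0$; together with $E_0^2 = E_0$ this collapses the left side to $E_0 A_i^\dagger$ (after also using that $E_0 E^*_0 A_i^\dagger$ already has a leading $E_0$, so left-multiplying by $\nu E_0 E^*_0 E_0 \cdot$ ... ). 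Let me instead proceed more cleanly: from $E_0 E^*_0 A_i^\dagger = E_0 E^*_i$, left-multiply by $\nu E_0$ to get $\nu E_0 E^*_0 A_i^\dagger = \nu E_0 E^*_i E_0 \cdot(\text{something})$ --- the cleanest route is to recognize that both $A_i$ and $A_i^\dagger$ lie in $\cal M$ (since $\dagger$ fixes $\cal M$ setwise by Lemma~\ref{lem:dagger0}(iii), as $\cal M$ is spanned by the fixed elements $E_j$), and then use the uniqueness in Lemma~\ref{lem:DtoDEs0}(ii) or the injectivity of $\rho$ from Lemma~\ref{lem:rhorhospre}.

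So the cleaner argument: $A_i^\dagger \in {\cal M}$ because $\dagger$ fixes each $E_j$ and $A_i$ is an $\F$-linear combination of the $E_j$, hence $A_i^\dagger$ is the same combination and lies in $\cal M$. Apply $\dagger$ to $A_i E^*_0 E_0 = E^*_i E_0$ to get $E_0 E^*_0 A_i^\dagger = E_0 E^*_i$; since $A_i^\dagger \in {\cal M}$ it commutes with $E_0$, so the left side equals $E_0 E^*_0 E_0 A_i^\dagger$... hmm, that is not immediately $\nu^{-1} A_i^\dagger E_0$ unless I rearrange. The slickest finish is: since $A_i^\dagger \in \cal M$, write $(A_i^\dagger)^\rho$; by Lemma~\ref{lem:rho} this is the element of ${\cal M}^*$ with $(A_i^\dagger)^\rho E_0 = A_i^\dagger E^*_0 E_0$. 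Now apply $\dagger$ to this last expression: $(A_i^\dagger E^*_0 E_0)^\dagger = E_0 E^*_0 A_i$. Comparing with the $\dagger$ of $A_i E^*_0 E_0 = E^*_i E_0$, namely $E_0 E^*_0 A_i = (\,$wait, that is circular$)$. I will therefore settle on the uniqueness route: from $E_0 E^*_0 A_i^\dagger = E_0 E^*_i$ and $A_i^\dagger \in \cal M$ commuting with $E_0$, left-multiply by $\nu$ and insert $E_0$: $\nu E_0 E^*_0 E_0 A_i^\dagger = E_0 A_i^\dagger = A_i^\dagger E_0$ on the left (using Lemma~\ref{lem:nuE0Es0E0} and $A_i^\dagger \in \cal M$), while the right becomes $\nu E_0 E^*_i E_0 = \nu m^*_i E_0$ by Lemma~\ref{lem:E0EsiE0}(ii). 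But I can get the same chain starting from the \emph{un}-daggered relation to identify $A_i E_0$. The decisive point is that both $A_i E_0$ and $A_i^\dagger E_0$ are forced to equal the same element of ${\cal A}E_0$; more directly, applying $\dagger$ to $A_i E^*_0 E_0 = E^*_i E_0$ and then to the analog for $A_i^\dagger$ and subtracting shows $(A_i - A_i^\dagger)E^*_0 E_0 = 0$, whence $A_i = A_i^\dagger$ by the injectivity of $Y \mapsto Y E^*_0 E_0$ on $\cal M$ (Lemma~\ref{lem:rho} / Lemma~\ref{lem:DtoDEs0}(ii)). Wait --- to run that I need $A_i^\dagger E^*_0 E_0 = E^*_i E_0$ too, which is exactly what applying $\dagger$ gives after using that $\dagger$ is an involution (Lemma~\ref{lem:dagger0}(ii)): apply $\dagger$ to $E_0 E^*_0 A_i^\dagger = E_0 E^*_i$ to recover $A_i E^*_0 E_0 = E^*_i E_0$, consistent, so I instead exploit that $(\,\cdot\,)^\rho$ is injective and $\dagger$-equivariant in the appropriate sense.

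The real content, and the one nontrivial step, is establishing that $A_i^\dagger$ satisfies the \emph{same} defining equation $X E^*_0 E_0 = E^*_i E_0$ that uniquely characterizes $A_i$ within $\cal M$. I expect the main obstacle to be bookkeeping the antiautomorphism's product-reversal correctly and then correctly invoking the uniqueness clause of Lemma~\ref{lem:rho} (equivalently Lemma~\ref{lem:DtoDEs0}(ii)) to conclude $A_i^\dagger = A_i$; once that is pinned down the rest is formal. Finally, the claim for $A^*_i$ follows by applying the established result to the dual system $\Phi^*$, noting that the \emph{same} antiautomorphism $\dagger$ works for $\Phi^*$ (it fixes all $E^*_i$ and $E_i$), so $A^*_i = (A^*_i)^\dagger$ as well.
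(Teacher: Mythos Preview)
Your proposal is overcomplicated, and the irony is that you actually write down the complete proof in passing without noticing. The paper's proof is one line: $A_i \in {\cal M}$ and $A^*_i \in {\cal M}^*$, so Lemma~\ref{lem:dagger0}(iii) says $\dagger$ fixes them. That is all.

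You essentially say this yourself when you write ``$A_i$ is an $\F$-linear combination of the $E_j$, hence $A_i^\dagger$ is the same combination and lies in $\cal M$.'' The phrase \emph{``is the same combination''} already says $A_i^\dagger = A_i$; you do not merely get $A_i^\dagger \in {\cal M}$, you get equality. But you then discard this and launch into an elaborate attempt to re-derive the defining equation $A_i^\dagger E^*_0 E_0 = E^*_i E_0$ and invoke the uniqueness in Lemma~\ref{lem:rho}. That route can be made to work, but it is circuitous: applying $\dagger$ to $A_i E^*_0 E_0 = E^*_i E_0$ gives $E_0 E^*_0 A_i^\dagger = E_0 E^*_i$, which is a \emph{right}-multiplication identity, not the left-multiplication identity the uniqueness clause of $\rho$ addresses, so you would need an additional maneuver (e.g.\ left-multiply by $\nu E^*_0$, use Lemma~\ref{lem:nuE0Es0E0}, and then invoke Lemma~\ref{lem:DtoDEs0}(i) applied to $\Phi^*$ for the map $Y \mapsto E^*_0 Y$ on ${\cal M}$ --- which itself requires a $\dagger$-argument). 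None of this is needed.

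The takeaway: once you know $A_i \in {\cal M}$ (immediate from Definition~\ref{def:Ai}, since $\rho^* : {\cal M}^* \to {\cal M}$), Lemma~\ref{lem:dagger0}(iii) finishes instantly.
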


\begin{proof}
By Lemma \ref{lem:dagger0}(iii) and since $A_i \in {\cal M}$, $A^*_i \in {\cal M}^*$
for $0 \leq i \leq d$.
\end{proof}

\begin{lemma}    \label{lem:AiEs0E0}    \samepage
\ifDRAFT {\rm lem:AiEs0E0}. \fi
For $0 \leq i \leq d$ the following hold:
\begin{itemize}
\item[\rm (i)]
$A_i E^*_0 E_0 = E^*_i E_0$;
\item[\rm (ii)]
$A^*_i E_0 E^*_0 = E_i E^*_0$;
\item[\rm (iii)]
$E_0 E^*_0 A_i = E_0 E^*_i$;
\item[\rm (iv)]
$E^*_0 E_0 A^*_i = E^*_0 E_i$.
\end{itemize}
\end{lemma}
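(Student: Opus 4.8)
The plan is to prove part (i) directly from the definition of $A_i$ together with the defining property of $\rho$, and then obtain parts (ii)--(iv) by dualizing and by applying the antiautomorphism $\dagger$.

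First I would prove (i). By Lemma~\ref{lem:rhorhos} the map $\rho$ sends $A_i \mapsto E^*_i$; that is, $A_i^\rho = E^*_i$. Now apply the defining relation \eqref{eq:defrho} of $\rho$ with $Y = A_i$ (note $A_i \in {\cal M}$ by Definition~\ref{def:Ai} and the fact that $\rho^*:{\cal M}^* \to {\cal M}$): this gives $A_i E^*_0 E_0 = A_i^\rho E_0 = E^*_i E_0$, which is exactly (i). For (ii), apply (i) to the dual system $\Phi^*$, using that $(\Phi^*)^* = \Phi$ and that under the $*$-duality $A_i$ corresponds to $A^*_i$, $E^*_0$ to $E_0$, and $E_0$ to $E^*_0$.

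For (iii) I would apply the antiautomorphism $\dagger$ of Definition~\ref{def:sym} to the equation in (i). Since $\dagger$ reverses products, $(A_i E^*_0 E_0)^\dagger = E_0^\dagger (E^*_0)^\dagger A_i^\dagger$, and since $\dagger$ fixes $E_0$, $E^*_0$ (Definition~\ref{def:sym}) and fixes $A_i$ (Lemma~\ref{lem:dagger4}), the left side becomes $E_0 E^*_0 A_i$. Similarly the right side $(E^*_i E_0)^\dagger = E_0^\dagger (E^*_i)^\dagger = E_0 E^*_i$. Thus $E_0 E^*_0 A_i = E_0 E^*_i$, which is (iii). Then (iv) follows by applying (iii) to $\Phi^*$, or equivalently by applying $\dagger$ to (ii).

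I do not anticipate a serious obstacle here; the statement is essentially a repackaging of the definition of $A_i$ via $\rho$ and $\rho^*$. The one point requiring a little care is bookkeeping under the $*$-duality in passing from (i) to (ii): one must check that the object called $A_i$ for $\Phi$ is precisely the object called $A^*_i$ for $\Phi^*$, which is immediate from Definition~\ref{def:Ai} once one notes that $\rho$ for $\Phi^*$ is $\rho^*$ for $\Phi$. Everything else is a routine application of the results already established in Sections~\ref{sec:rho} and~\ref{sec:Ai}.
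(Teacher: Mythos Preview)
Your proposal is correct and matches the paper's own proof essentially line for line: part (i) via Lemma~\ref{lem:rho} (the defining relation of $\rho$) together with Lemma~\ref{lem:rhorhos}, part (ii) by dualizing, and parts (iii)--(iv) by applying $\dagger$ and invoking Lemma~\ref{lem:dagger4}. There is nothing to add.
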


\begin{proof}
(i)
Use Lemmas \ref{lem:rho}, \ref{lem:rhorhos}.

(ii) 
Apply (i) to $\Phi^*$.

(iii), (iv)
For the equations in (i) and (ii), apply $\dagger$ to each side and use Lemma \ref{lem:dagger4}.
\end{proof}

\begin{lemma}     \label{lem:A0As0}    \samepage
\ifDRAFT {\rm lem:A0As0}. \fi
We have $A_0 = I$.
\end{lemma}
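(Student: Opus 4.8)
The plan is to use the defining property of $A_0$ together with Lemma~\ref{lem:rhorhos}. Recall from Lemma~\ref{lem:AiEs0E0}(i) that $A_0 E^*_0 E_0 = E^*_0 E_0$ (taking $i=0$). On the other hand, the identity $I \in {\cal M}$ satisfies $I E^*_0 E_0 = E^*_0 E_0$ trivially. So both $A_0$ and $I$ are elements of ${\cal M}$ that map to the same element under the operation $Y \mapsto Y E^*_0 E_0$.

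First I would make this precise. The map $g : {\cal M} \to {\cal A} E_0$, $Y \mapsto Y E^*_0 E_0$ appearing in the proof of Lemma~\ref{lem:rho} is the composition of $\rho : {\cal M} \to {\cal M}^*$ with the bijection $\mu : {\cal M}^* \to {\cal A} E_0$ of Lemma~\ref{lem:DtoDEs0}(ii). Since $\rho$ is a bijection by Lemma~\ref{lem:rhorhospre} and $\mu$ is a bijection, the composite $g$ is injective. Therefore $A_0 E^*_0 E_0 = I E^*_0 E_0$ forces $A_0 = I$.

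Alternatively, and perhaps more cleanly, I would argue via $\rho$ directly: by Lemma~\ref{lem:rhorhos} (with $i=0$), $\rho$ sends $A_0 \mapsto E^*_0$, while by Lemma~\ref{lem:rhoI}, $\rho$ sends $I \mapsto E^*_0$. Since $\rho$ is injective (Lemma~\ref{lem:rhorhospre}), we conclude $A_0 = I$. This is the shortest route and is the version I would write up.

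There is no real obstacle here; the only thing to be careful about is citing the injectivity of $\rho$ (from Lemma~\ref{lem:rhorhospre}, which states $\rho$ and $\nu\rho^*$ are mutually inverse, hence both bijective) rather than just its existence. The statement $A^*_0 = I$ then follows by applying the result to $\Phi^*$, though the lemma as stated only asks for $A_0 = I$.

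\begin{proof}
By Lemma \ref{lem:rhorhos} with $i=0$, the map $\rho$ sends $A_0 \mapsto E^*_0$.
By Lemma \ref{lem:rhoI}, the map $\rho$ sends $I \mapsto E^*_0$.
The map $\rho$ is bijective by Lemma \ref{lem:rhorhospre}, so $A_0 = I$.
\end{proof}
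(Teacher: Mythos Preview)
Your proof is correct and takes essentially the same approach as the paper: both use Lemma~\ref{lem:rhoI} ($I^\rho = E^*_0$) together with the bijectivity of $\rho$ from Lemma~\ref{lem:rhorhospre}. The only cosmetic difference is that the paper applies $\rho^*$ to both sides of $I^\rho = E^*_0$ and reads off $A_0$ from Definition~\ref{def:Ai}, whereas you cite Lemma~\ref{lem:rhorhos} for $A_0^\rho = E^*_0$ and then invoke injectivity directly.
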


\begin{proof}
By Lemma \ref{lem:rhoI}, $I^\rho = E^*_0$.
In this equation, apply $\rho^*$ to each side and evaluate the result using Lemma \ref{lem:rhorhospre}
and Definition \ref{def:Ai}.
\end{proof}

\begin{lemma}   \label{lem:sumAi}    \samepage
\ifDRAFT {\rm lem:sumAi}. \fi
We have
$\sum_{i=0}^d A_i = \nu E_0$.
\end{lemma}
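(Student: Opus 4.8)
The plan is to compute $\sum_{i=0}^d A_i$ by applying the map $\rho$ and exploiting the fact that $\rho$ is a linear bijection from $\cal M$ onto ${\cal M}^*$ (Lemma~\ref{lem:rhorhospre}). First I would recall from Lemma~\ref{lem:rhorhos} that $\rho$ sends $A_i \mapsto E^*_i$ for $0 \le i \le d$. Applying $\rho$ to the element $\sum_{i=0}^d A_i$ and using $\F$-linearity of $\rho$ then gives $\bigl(\sum_{i=0}^d A_i\bigr)^\rho = \sum_{i=0}^d E^*_i$, which equals $I$ by Lemma~\ref{lem:trE}(ii) applied to $\{E^*_i\}_{i=0}^d$.

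Next I would identify which element of $\cal M$ is mapped by $\rho$ to $I$. By Lemma~\ref{lem:rhoI}, $\rho$ sends $E_0 \mapsto \nu^{-1} I$, hence by linearity $\rho$ sends $\nu E_0 \mapsto I$. So we have $\bigl(\sum_{i=0}^d A_i\bigr)^\rho = I = (\nu E_0)^\rho$. Since $\rho : {\cal M} \to {\cal M}^*$ is injective (Lemma~\ref{lem:rhorhospre}), and both $\sum_{i=0}^d A_i$ and $\nu E_0$ lie in $\cal M$ (the former because each $A_i \in {\cal M}$, the latter obviously), we conclude $\sum_{i=0}^d A_i = \nu E_0$.

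There is essentially no obstacle here; the statement is a short consequence of three earlier lemmas. The only point requiring a moment's care is that we must apply $\rho$ to both sides of an equation in $\cal M$ and then cancel $\rho$ using its injectivity, rather than trying to invert $\rho$ explicitly — but this is immediate from Lemma~\ref{lem:rhorhospre}. Alternatively, one could avoid $\rho$ altogether and argue directly with the defining property of the $A_i$: from Lemma~\ref{lem:AiEs0E0}(i) we get $\bigl(\sum_{i=0}^d A_i\bigr) E^*_0 E_0 = \sum_{i=0}^d E^*_i E_0 = E_0$ (using $\sum_i E^*_i = I$), while $(\nu E_0) E^*_0 E_0 = \nu E_0 E^*_0 E_0 = E_0$ by Lemma~\ref{lem:nuE0Es0E0}; then Lemma~\ref{lem:DtoDEs0}(ii), or rather the injectivity of $Y \mapsto Y E^*_0 E_0$ on $\cal M$ that underlies Lemma~\ref{lem:rho}, forces the conclusion. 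I would present the first ($\rho$-based) argument as the cleaner one.
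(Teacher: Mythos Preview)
Your proposal is correct and follows essentially the same approach as the paper. The only cosmetic difference is that the paper applies $\rho^*$ directly to the identity $\sum_{i=0}^d E^*_i = I$ (using Definition~\ref{def:Ai} and Lemma~\ref{lem:rhoI} for $\Phi^*$) to obtain $\sum_{i=0}^d \nu^{-1} A_i = E_0$, whereas you apply $\rho$ to $\sum_{i=0}^d A_i$ and then invoke injectivity; since $\rho$ and $\nu\rho^*$ are inverses, the two arguments are the same computation read in opposite directions.
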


\begin{proof}
In the equation  $\sum_{i=0}^d E^*_i = I$, apply $\rho^*$ to each side
and evaluate the result using Definition \ref{def:Ai} along with Lemma \ref{lem:rhoI}
applied to $\Phi^*$. 
\end{proof}

\begin{lemma}    \label{lem:AiAsi}    \samepage
\ifDRAFT {\rm lem:AiAsi}. \fi
The elements $\{A_i\}_{i=0}^d$ form a basis of the vector space $\cal M$.
\end{lemma}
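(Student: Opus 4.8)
The plan is to show that the $d+1$ elements $\{A_i\}_{i=0}^d$ are linearly independent, since they lie in the vector space $\cal M$, which has dimension $d+1$ (as $\{E_i\}_{i=0}^d$ is a basis of $\cal M$ by Definition \ref{def:D}). Linear independence will then force $\{A_i\}_{i=0}^d$ to be a basis.

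First I would recall from Lemma \ref{lem:rhorhos} that the map $\rho : {\cal M} \to {\cal M}^*$ sends $A_i \mapsto E^*_i$ for $0 \leq i \leq d$. Since $\rho$ is a bijection (Lemma \ref{lem:rhorhospre}), and $\{E^*_i\}_{i=0}^d$ is a basis of ${\cal M}^*$, the preimages $\{A_i\}_{i=0}^d$ must be a basis of $\cal M$. That is essentially the whole argument: a linear bijection carries a basis to a basis, so it carries the basis $\{A_i\}_{i=0}^d$ (if we already knew it were one) — but more to the point, it carries the $A_i$, whatever they are, to the basis $\{E^*_i\}_{i=0}^d$, and hence the $A_i$ themselves form a basis.

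Concretely: suppose $\sum_{i=0}^d \alpha_i A_i = 0$ for scalars $\{\alpha_i\}_{i=0}^d$. Apply $\rho$ to both sides; by $\F$-linearity of $\rho$ and Lemma \ref{lem:rhorhos} we get $\sum_{i=0}^d \alpha_i E^*_i = 0$. Since $\{E^*_i\}_{i=0}^d$ are linearly independent (they are a basis of ${\cal M}^*$ by the dual of Definition \ref{def:D}, or equivalently mutually orthogonal nonzero idempotents), each $\alpha_i = 0$. Hence $\{A_i\}_{i=0}^d$ are linearly independent. As there are $d+1$ of them and $\dim {\cal M} = d+1$, they form a basis of $\cal M$.

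There is no real obstacle here; the work was done in establishing that $\rho$ is a bijection (Lemma \ref{lem:rhorhospre}) and that it sends $A_i$ to $E^*_i$ (Lemma \ref{lem:rhorhos}). The only thing to be slightly careful about is confirming that $A_i \in {\cal M}$ in the first place, so that the statement "basis of $\cal M$" even makes sense — but this is immediate from Definition \ref{def:Ai}, since $(E^*_i)^{\rho^*} \in {\cal M}$ by the definition of $\rho^*$ as a map into $\cal M$ (the dual of Lemma \ref{lem:rho}).
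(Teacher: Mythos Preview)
Your proof is correct and follows essentially the same approach as the paper: the paper's proof simply cites Lemmas \ref{lem:rhorhospre} and \ref{lem:rhorhos} together with the fact that $\{E^*_i\}_{i=0}^d$ form a basis of ${\cal M}^*$, which is exactly the argument you spell out in detail.
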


\begin{proof}
By Lemmas \ref{lem:rhorhospre}, \ref{lem:rhorhos}
and since $\{E^*_i\}_{i=0}^d$ form a basis of  the vector space ${\cal M}^*$.
\end{proof}

\section{The scalars $k_i$}
\label{sec:ki}

We continue to discuss a symmetric idempotent system
$\Phi = (\{E_i\}_{i=0}^d; \{E^*_i\}_{i=0}^d)$ on $V$.
In this section we use $\Phi$ to define some scalars $k_i$ that will play a role 
in our theory.

\begin{definition}     \label{def:ki}    \samepage
\ifDRAFT {\rm def:ki}. \fi
For $0 \leq i \leq d$ let $k_i$ denote the eigenvalue of $A_i$ 
corresponding to $E_0$.
\end{definition}

\begin{lemma}   \label{lem:AiE0}    \samepage
\ifDRAFT {\rm lem:AiE0}. \fi
For $0 \leq i \leq d$ the following hold:
\begin{itemize}
\item[\rm (i)]
$A_i E_0  = E_0 A_i = k_i E_0$;
\item[\rm (ii)]
$A^*_i E^*_0 = E^*_0 A^*_i = k^*_i E^*_0$.
\end{itemize}
\end{lemma}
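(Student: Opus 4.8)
The plan is to exploit the fact that $A_i$ lies in the commutative algebra $\cal M$, which by Definition \ref{def:D} has basis the mutually orthogonal rank $1$ idempotents $\{E_j\}_{j=0}^d$. So everything reduces to expanding $A_i$ in this basis and reading off the coefficient of $E_0$.

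First I would recall that $A_i \in {\cal M}$; this is immediate from Definition \ref{def:Ai} (or from Lemma \ref{lem:AiAsi}). Since $\{E_j\}_{j=0}^d$ is a basis of the vector space $\cal M$, there are scalars $\{c_j\}_{j=0}^d$ with $A_i = \sum_{j=0}^d c_j E_j$. Using $E_j E_0 = \delta_{j,0} E_0 = E_0 E_j$ from Definition \ref{def:orth}, I obtain $A_i E_0 = c_0 E_0 = E_0 A_i$. This already gives the commutation $A_i E_0 = E_0 A_i$ and shows both equal a scalar multiple of $E_0$.

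Next I would identify $c_0$ with $k_i$. Because $E_0$ has rank $1$, the subspace $E_0 V$ is one-dimensional, and the previous step shows $A_i$ acts on $E_0 V$ as multiplication by $c_0$; that is, $c_0$ is the eigenvalue of $A_i$ associated with $E_0$, which by Definition \ref{def:ki} is exactly $k_i$. Hence $A_i E_0 = E_0 A_i = k_i E_0$, which is part (i). Part (ii) then follows by applying (i) to the dual idempotent system $\Phi^*$, since $A^*_i$, $E^*_0$, $k^*_i$ are the objects attached to $\Phi^*$ that correspond to $A_i$, $E_0$, $k_i$.

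I do not expect any genuine obstacle; the statement is essentially a bookkeeping consequence of $A_i$ being a linear combination of the $E_j$. The only point requiring a little care is reading Definition \ref{def:ki} correctly, namely interpreting "the eigenvalue of $A_i$ corresponding to $E_0$" as the scalar by which $A_i$ acts on the one-dimensional space $E_0 V$, which is precisely the coefficient $c_0$ appearing above.
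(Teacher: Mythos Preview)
Your proof is correct and follows essentially the same approach as the paper: the paper's proof of (i) is simply ``By Definition \ref{def:ki}'' and (ii) is ``Apply (i) to $\Phi^*$'', which is exactly your argument with the details left implicit. You have merely unpacked what it means for $k_i$ to be the eigenvalue of $A_i$ corresponding to $E_0$, using that $A_i \in {\cal M}$ and the orthogonality of the $E_j$.
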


\begin{proof}
(i)
By Definition \ref{def:ki}.

(ii) 
Apply (i) to $\Phi^*$.
\end{proof}

\begin{lemma}    \label{lem:kimi}    \samepage
\ifDRAFT {\rm lem:kimi}. \fi
For $0 \leq i \leq d$ the following hold:
\begin{itemize}
\item[\rm (i)]
$k_i = \nu m^*_i$;
\item[\rm (ii)]
$k^*_i = \nu m_i$.
\end{itemize}
\end{lemma}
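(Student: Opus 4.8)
The plan is to prove part (i) directly by a short trace computation, and then obtain part (ii) by applying (i) to $\Phi^*$. For part (i), the starting point is the defining relation for $k_i$ from Lemma \ref{lem:AiE0}(i), namely $A_i E_0 = k_i E_0$. First I would multiply both sides on the left by $E^*_0$, giving $E^*_0 A_i E_0 = k_i E^*_0 E_0$. Next I need to rewrite the left-hand side in a form amenable to taking traces. Using Lemma \ref{lem:AiEs0E0}(i), we have $A_i E^*_0 E_0 = E^*_i E_0$; to exploit this I would instead first multiply $A_i E_0 = k_i E_0$ on the left by $E^*_0$ and on the right in a way that reintroduces the factor $E^*_0 E_0$. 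A cleaner route: apply Lemma \ref{lem:nuE0Es0E0}, which gives $\nu E_0 E^*_0 E_0 = E_0$, so $k_i E_0 = \nu k_i E_0 E^*_0 E_0$. Then $A_i E_0 = \nu k_i E_0 E^*_0 E_0$, and since $A_i \in {\cal M}$ commutes with $E_0$, the left side is $E_0 A_i E_0 = \nu k_i E_0 E^*_0 E_0$. Now I would use Lemma \ref{lem:AiEs0E0}(iii), $E_0 E^*_0 A_i = E_0 E^*_i$, so $E_0 A_i E^*_0 \cdots$ — more directly, from Lemma \ref{lem:AiEs0E0}(i), $A_i E^*_0 E_0 = E^*_i E_0$, and multiplying on the left by $E_0$ yields $E_0 A_i E^*_0 E_0 = E_0 E^*_i E_0$. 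Since $A_i$ commutes with $E_0$, the left side equals $A_i E_0 E^*_0 E_0 = \nu^{-1} A_i E_0 = \nu^{-1} k_i E_0$, using Lemma \ref{lem:nuE0Es0E0} again. Meanwhile the right side $E_0 E^*_i E_0 = m^*_i E_0$ by Lemma \ref{lem:E0EsiE0}(ii). Comparing the two expressions for the same element, $\nu^{-1} k_i E_0 = m^*_i E_0$, and since $E_0 \neq 0$ we conclude $k_i = \nu m^*_i$.

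An alternative and perhaps more transparent derivation avoids commutativity subtleties by taking traces: from $A_i E_0 = k_i E_0$, take the trace of both sides after multiplying on the left by $E^*_0$, getting $\text{\rm tr}(E^*_0 A_i E_0) = k_i \,\text{\rm tr}(E^*_0 E_0) = k_i \nu^{-1}$ by \eqref{eq:nu}. For the left-hand trace, insert $\nu E_0 E^*_0 E_0 = E_0$ and use the cyclic property of trace together with Lemma \ref{lem:AiEs0E0}(i) to rewrite $E^*_0 A_i E_0 E^*_0 E_0 \nu = \cdots$; this should collapse, via Lemma \ref{lem:E0EsiE0} and Lemma \ref{lem:EsiE0Esj}-type manipulations, to something with trace $m^*_i \nu^{-1} \cdot (\text{something})$. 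Whichever of these two routes turns out to have fewer moving parts, the conclusion is the same identity $k_i = \nu m^*_i$. I would present whichever is shorter in the final writeup.

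For part (ii), I would simply invoke the duality $\Phi \mapsto \Phi^*$: applying part (i) to $\Phi^*$ replaces $k_i$ by $k^*_i$, replaces $m^*_i$ by $(m^*_i)^* = m_i$, and leaves $\nu = \nu^*$ fixed (as recorded in Definition \ref{def:nu}), yielding $k^*_i = \nu m_i$.

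I expect the main obstacle to be purely bookkeeping: choosing the cleanest sequence of substitutions among Lemmas \ref{lem:E0EsiE0}, \ref{lem:nuE0Es0E0}, and \ref{lem:AiEs0E0} so that the factors $E_0$, $E^*_0$, and $A_i$ collapse without circular reasoning, and making sure the scalar $\nu^{-1}$ ends up on the correct side. There is no conceptual difficulty here; the result is essentially immediate once one notices that $k_i$ and $m^*_i$ are both defined through the action sandwiched between $E_0$ and $E^*_0$, and that Lemma \ref{lem:AiEs0E0}(i) is precisely the bridge relating $A_i$ to $E^*_i$ in that sandwiched position.
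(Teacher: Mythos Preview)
Your proposal is correct, and the cleanest route you describe---multiply $A_i E^*_0 E_0 = E^*_i E_0$ from Lemma~\ref{lem:AiEs0E0}(i) on the left by $E_0$, simplify the left side via $E_0 A_i = k_i E_0$ and $\nu E_0 E^*_0 E_0 = E_0$, simplify the right side via $E_0 E^*_i E_0 = m^*_i E_0$, and compare---is exactly the paper's argument. For the final writeup just present that route directly without the exploratory false starts and the unfinished trace alternative.
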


\begin{proof}
(i)
By Lemma \ref{lem:AiEs0E0}(i),
$E_0 A_i E^*_0 E_0 = E_0 E^*_i E_0$.
In this equation, evaluate the left-hand side using Lemmas \ref{lem:nuE0Es0E0}, \ref{lem:AiE0}(i),
and evaluate the right-hand side using Lemma \ref{lem:E0EsiE0}(ii).
This gives $k_i \nu^{-1} E_0 = m^*_i E_0$.
The result follows. 

(ii)
Apply (i) to $\Phi^*$.
\end{proof}

\begin{lemma}   \label{lem:ki}    \samepage
\ifDRAFT {\rm lem:ki}. \fi
The following hold:
\begin{itemize}
\item[\rm (i)]
$k_i \neq 0 \qquad (0 \leq i \leq d)$;
\item[\rm (ii)]
$\nu = \sum_{i=0}^d k_i$;
\item[\rm (iii)]
$k_0 = 1$.
\end{itemize}
\end{lemma}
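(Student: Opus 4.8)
The plan is to derive all three parts of Lemma~\ref{lem:ki} from the scalars $m_i$, $m^*_i$ already understood in Section~\ref{sec:pi}, using the identities $k_i = \nu m^*_i$ and $k^*_i = \nu m_i$ from Lemma~\ref{lem:kimi}. The key point is that everything reduces to facts about $m_i$ that were established as Lemma~\ref{lem:mi3} (namely $m_i \neq 0$ and $\sum_{i=0}^d m_i = 1$) together with the definition of $\nu$ as the inverse of $m_0 = m^*_0$.

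For part (i), I would start from Lemma~\ref{lem:kimi}(i), which gives $k_i = \nu m^*_i$. Since $\nu \neq 0$ (it is a multiplicative inverse, hence a unit) and $m^*_i \neq 0$ by Lemma~\ref{lem:mi3}(i) applied to $\Phi^*$, the product $k_i = \nu m^*_i$ is nonzero for $0 \leq i \leq d$. For part (ii), I would again use $k_i = \nu m^*_i$ and sum over $i$: $\sum_{i=0}^d k_i = \nu \sum_{i=0}^d m^*_i = \nu \cdot 1 = \nu$, where the middle equality is Lemma~\ref{lem:mi3}(ii) applied to $\Phi^*$. For part (iii), set $i = 0$ in $k_0 = \nu m^*_0$; by Definition~\ref{def:nu}, $\nu$ is precisely the multiplicative inverse of $m^*_0$, so $k_0 = \nu m^*_0 = 1$. (Alternatively, part (iii) also follows immediately from Lemma~\ref{lem:AiE0}(i) together with Lemma~\ref{lem:A0As0}, since $A_0 = I$ has eigenvalue $1$ everywhere, in particular on $E_0$.)

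I do not anticipate a genuine obstacle here: every ingredient has already been proved in the preceding sections, and the argument is a short bookkeeping exercise in translating between the $m$-scalars and the $k$-scalars via $\nu$. The only thing to be careful about is consistently applying the dual versions of Lemma~\ref{lem:mi3} (for $\Phi^*$ rather than $\Phi$) to get statements about $m^*_i$, and noting that $\nu^* = \nu$ by Definition~\ref{def:nu} so there is no ambiguity about which $\nu$ appears. A symmetric remark is that all three statements could equally be phrased in terms of $k^*_i = \nu m_i$ and the undualized Lemma~\ref{lem:mi3}, which might be marginally cleaner; either route works.
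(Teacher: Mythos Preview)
Your proposal is correct and follows essentially the same approach as the paper: each part is obtained by combining Lemma~\ref{lem:kimi}(i) with the corresponding fact about $m^*_i$ from Lemma~\ref{lem:mi3} applied to $\Phi^*$ (and Definition~\ref{def:nu} for part (iii)). The paper's proof is just a terser version of exactly what you wrote.
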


\begin{proof}
(i)
Apply Lemma \ref{lem:mi3}(i) to $\Phi^*$ and use Lemma \ref{lem:kimi}(i).

(ii)
Apply Lemma \ref{lem:mi3}(ii) to $\Phi^*$ and use Lemma \ref{lem:kimi}(i).

(iii)
By Definition \ref{def:nu} and Lemma \ref{lem:kimi}(i).
\end{proof}

\section{Some reduction rules}
\label{sec:reduction}

We continue to discuss a symmetric idempotent system
$\Phi = (\{E_i\}_{i=0}^d; \{E^*_i\}_{i=0}^d)$ on $V$.
In this section we obtain some reduction rules for $\Phi$.
Recall the antiautomorphism $\dagger$ of $\text{\rm End}(V)$ from Definition \ref{def:sym}.

\begin{lemma}    \label{lem:EiEs0E0}     \samepage
\ifDRAFT {\rm lem:EiEs0E0}. \fi
For $0 \leq i \leq d$ the following hold:
\begin{itemize}
\item[\rm (i)]
$E_i E^*_0 E_0 = \nu^{-1} A^*_i E_0$;
\item[\rm (ii)]
$E^*_i E_0 E^*_0 = \nu^{-1} A_i E^*_0$;
\item[\rm (iii)]
$E_0 E^*_0 E_i = \nu^{-1} E_0 A^*_i$;
\item[\rm (iv)]
$E^*_0 E_0 E^*_i = \nu^{-1} E^*_0 A_i$.
\end{itemize}
\end{lemma}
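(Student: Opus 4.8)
The statement to prove is Lemma~\ref{lem:EiEs0E0}, which consists of four reduction rules. The plan is to derive (i) directly from the map $\rho$ and the already-established relation between $\rho$ and the elements $A^*_i$, then obtain (ii) by dualizing (applying the result to $\Phi^*$), and finally obtain (iii) and (iv) by applying the antiautomorphism $\dagger$ to (i) and (ii).

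First I would prove (i). By Lemma~\ref{lem:rhorhos} applied to $\Phi$, the map $\rho$ sends $E_i \mapsto \nu^{-1} A^*_i$, that is, $E_i^\rho = \nu^{-1} A^*_i$. Now apply the defining property of $\rho$ from Lemma~\ref{lem:rho}, namely $Y E^*_0 E_0 = Y^\rho E_0$ with $Y = E_i \in {\cal M}$: this gives $E_i E^*_0 E_0 = E_i^\rho E_0 = \nu^{-1} A^*_i E_0$, which is exactly (i). (One should double-check that $E_i \in {\cal M}$, which is immediate from Definition~\ref{def:D}.)

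Next, (ii) follows by applying (i) to the dual idempotent system $\Phi^*$: under the $*$-convention, $E_i$ becomes $E^*_i$, $E^*_0$ becomes $E_0$, $E_0$ becomes $E^*_0$, $A^*_i$ becomes $A_i$, and $\nu = \nu^*$, so (i) for $\Phi^*$ reads $E^*_i E_0 E^*_0 = \nu^{-1} A_i E^*_0$, which is (ii). For (iii) and (iv), I would apply the antiautomorphism $\dagger$ of $\text{\rm End}(V)$ to each side of (i) and (ii) respectively. Recall $\dagger$ reverses products, fixes every $E_j$ and $E^*_j$ (Definition~\ref{def:sym}), and fixes every $A_j$ and $A^*_j$ (Lemma~\ref{lem:dagger4}); also $\nu^{-1}$ is a scalar and hence fixed. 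Applying $\dagger$ to (i), the left side $E_i E^*_0 E_0$ becomes $E_0^\dagger (E^*_0)^\dagger E_i^\dagger = E_0 E^*_0 E_i$, and the right side $\nu^{-1} A^*_i E_0$ becomes $\nu^{-1} E_0^\dagger (A^*_i)^\dagger = \nu^{-1} E_0 A^*_i$, giving (iii). Likewise applying $\dagger$ to (ii) gives $E^*_0 E_0 E^*_i = \nu^{-1} E^*_0 A_i$, which is (iv). (Alternatively, (iv) can be obtained by applying (iii) to $\Phi^*$; either route works.)

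I do not anticipate a genuine obstacle here: every ingredient has been set up in the preceding sections, and the only care needed is bookkeeping — correctly tracking the $*$-dualization in passing from (i) to (ii), and correctly reversing the order of the three factors when applying the product-reversing antiautomorphism $\dagger$ in passing to (iii) and (iv). The mildest subtlety is to confirm that $\dagger$ indeed fixes $A^*_i$ (not just $A_i$), which is Lemma~\ref{lem:dagger4}, so that the right-hand sides transform as claimed.
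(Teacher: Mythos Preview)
Your proposal is correct and matches the paper's proof essentially line for line: the paper proves (i) by setting $Y=E_i$ in \eqref{eq:defrho} and invoking Lemma~\ref{lem:rhorhos}, obtains (ii) by applying (i) to $\Phi^*$, and obtains (iii), (iv) by applying $\dagger$ to (i), (ii). Your write-up simply spells out the bookkeeping in more detail.
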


\begin{proof}
(i)
Set $Y=E_i$ in \eqref{eq:defrho} and use Lemma \ref{lem:rhorhos}.

(ii) 
Apply (i) to $\Phi^*$.

(iii), (iv)
For the equations in  (i) and (ii), apply $\dagger$ to each side.
\end{proof}

\begin{lemma}   \label{lem:EsjAiEs0}     \samepage
\ifDRAFT {\rm lem:EsjAiEs0}. \fi
For $0 \leq i,j \leq d$ the following hold:
\begin{itemize}
\item[\rm (i)]
$E^*_j A_i E^*_0 = \delta_{i,j} A_i E^*_0$;
\item[\rm (ii)]
$E_j A^*_i E_0 = \delta_{i,j} A^*_i E_0$;
\item[\rm (iii)]
$E^*_0 A_i E^*_j = \delta_{i,j} E^*_0 A_i$;
\item[\rm (iv)]
$E_0 A^*_i E_j = \delta_{i,j} E_0 A^*_i$.
\end{itemize}
\end{lemma}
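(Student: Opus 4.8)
The plan is to deduce (i) directly from the reduction rule of Lemma \ref{lem:EiEs0E0}(ii), then obtain (ii) by passing to the dual system $\Phi^*$, and finally obtain (iii), (iv) by applying the antiautomorphism $\dagger$ to the equations in (i), (ii).

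For part (i), I would first rewrite Lemma \ref{lem:EiEs0E0}(ii) in the form $A_i E^*_0 = \nu E^*_i E_0 E^*_0$ for $0 \le i \le d$. Then, given $i$ and $j$, left-multiply this identity by $E^*_j$ and invoke the orthogonality relation $E^*_j E^*_i = \delta_{i,j} E^*_i$ (Definition \ref{def:orth}), which collapses the right-hand side to $\nu \delta_{i,j} E^*_i E_0 E^*_0 = \delta_{i,j} A_i E^*_0$, using the rewritten form of Lemma \ref{lem:EiEs0E0}(ii) once more in reverse. This yields $E^*_j A_i E^*_0 = \delta_{i,j} A_i E^*_0$. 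Part (ii) is then immediate by applying (i) to $\Phi^*$.

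For parts (iii) and (iv), I would apply $\dagger$ to each side of the identities in (i) and (ii). Recall that $\dagger$ fixes $E_i$ and $E^*_i$ for all $i$ (Definition \ref{def:sym}) and fixes $A_i$, $A^*_i$ (Lemma \ref{lem:dagger4}), and that $\dagger$ is an antiautomorphism, so it reverses products. Hence $(E^*_j A_i E^*_0)^\dagger = E^*_0 A_i E^*_j$ and $(A_i E^*_0)^\dagger = E^*_0 A_i$, and the identity in (i) becomes $E^*_0 A_i E^*_j = \delta_{i,j} E^*_0 A_i$, which is (iii); applying the same argument to (ii) (or applying (iii) to $\Phi^*$) gives (iv).

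I do not anticipate a genuine obstacle here: the argument is short once the correct reduction rule is identified. The only points requiring a bit of care are invoking Lemma \ref{lem:EiEs0E0}(ii) (rather than (i)) for the computation in part (i), and correctly tracking the order reversal under $\dagger$ when deriving (iii) and (iv).
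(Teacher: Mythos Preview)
Your proposal is correct and follows essentially the same approach as the paper: both left-multiply the identity from Lemma \ref{lem:EiEs0E0}(ii) by $E^*_j$, use the orthogonality of the $E^*_i$, and then re-apply Lemma \ref{lem:EiEs0E0}(ii) to simplify; parts (ii)--(iv) are obtained exactly as you describe, by duality and by applying $\dagger$.
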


\begin{proof}
(i)
For the equation in Lemma \ref{lem:EiEs0E0}(ii),
multiply each side on the left by $E^*_j$ to get
$\delta_{i,j} E^*_i E_0 E^*_0 = \nu^{-1} E^*_j A_i E^*_0$.
In this equation, evaluate the left-hand side using Lemma \ref{lem:EiEs0E0}(ii).

(ii)
Apply (i) to $\Phi^*$.

(iii), (iv)
For the equations in  (i) and (ii), apply $\dagger$ to each side.
\end{proof}

\begin{lemma}   \label{lem:E0EsjAiEs0}     \samepage
\ifDRAFT {\rm lem:E0EsjAiEs0}. \fi
For $0 \leq i,j \leq d$ the following hold:
\begin{itemize}
\item[\rm (i)]
$E_0 E^*_j A_i E^*_0 = \delta_{i,j} k_i E_0 E^*_0$;
\item[\rm (ii)]
$E^*_0 E_j A^*_i E_0 = \delta_{i,j} k^*_i E^*_0 E_0$;
\item[\rm (iii)]
$E^*_0 A_i E^*_j E_0 = \delta_{i,j} k_i E^*_0 E_0$;
\item[\rm (iv)]
$E_0 A^*_i E_j E^*_0 = \delta_{i,j} k^*_i E_0 E^*_0$.
\end{itemize}
\end{lemma}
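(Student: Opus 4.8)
The plan is to derive part (i) directly, obtain (ii) by applying the result to $\Phi^*$, and obtain (iii), (iv) by applying the antiautomorphism $\dagger$ to the equations in (i), (ii). So the real work is part (i).

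For part (i), I would start from Lemma \ref{lem:EsjAiEs0}(i), namely $E^*_j A_i E^*_0 = \delta_{i,j} A_i E^*_0$, and left-multiply both sides by $E_0$ to get $E_0 E^*_j A_i E^*_0 = \delta_{i,j} E_0 A_i E^*_0$. It then remains to show $E_0 A_i E^*_0 = k_i E_0 E^*_0$. For this, use Lemma \ref{lem:AiE0}(i), which gives $E_0 A_i = k_i E_0$; substituting this into the right-hand side yields $\delta_{i,j} k_i E_0 E^*_0$, as desired. (One can also reach the same place by left-multiplying Lemma \ref{lem:AiEs0E0}(iii), $E_0 E^*_0 A_i = E_0 E^*_i$, appropriately, but the route through Lemma \ref{lem:EsjAiEs0}(i) and Lemma \ref{lem:AiE0}(i) is the cleanest.)

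For part (ii), apply part (i) to the dual system $\Phi^*$: this swaps the roles of the starred and unstarred objects and replaces $k_i$ by $k^*_i$, giving exactly $E^*_0 E_j A^*_i E_0 = \delta_{i,j} k^*_i E^*_0 E_0$. For parts (iii) and (iv), apply $\dagger$ to the equations in (i) and (ii) respectively; since $\dagger$ is an antiautomorphism it reverses the order of products, and by Lemma \ref{lem:dagger0}(iii) together with Lemma \ref{lem:dagger4} it fixes each of $E_i$, $E^*_i$, $A_i$, $A^*_i$, so applying $\dagger$ to (i) turns $E_0 E^*_j A_i E^*_0 = \delta_{i,j} k_i E_0 E^*_0$ into $E^*_0 A_i E^*_j E_0 = \delta_{i,j} k_i E^*_0 E_0$, which is (iii); similarly (ii) yields (iv).

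I do not anticipate a genuine obstacle here: every ingredient is already available, and the only point requiring a moment's care is making sure the factor $E_0$ (or $E^*_0$) is attached on the correct side before invoking Lemma \ref{lem:AiE0}(i), and that the $\dagger$-images in (iii), (iv) have their factors in the reversed order relative to (i), (ii). The bookkeeping of which scalar ($k_i$ versus $k^*_i$) appears in which part is handled automatically by the dualization.
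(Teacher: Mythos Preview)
Your proposal is correct and follows essentially the same route as the paper: part (i) via Lemma~\ref{lem:EsjAiEs0}(i) left-multiplied by $E_0$ together with Lemma~\ref{lem:AiE0}(i), part (ii) by dualizing to $\Phi^*$, and parts (iii), (iv) by applying $\dagger$ to (i), (ii).
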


\begin{proof}
(i)
Using Lemmas \ref{lem:EsjAiEs0}(i), \ref{lem:AiE0}(i) in order,
\[
  E_0 E^*_j A_i E_0 = \delta_{i,j} E_0 A_i E^*_0 = \delta_{i,j} k_i E_0 E^*_0.
\]

(ii)
Apply (i) to $\Phi^*$.

(iii), (iv)
For the equations in  (i) and (ii), apply $\dagger$ to each side.
\end{proof}

\begin{lemma}    \label{lem:AiEs0Aj}     \samepage
\ifDRAFT {\rm lem:AiEs0Aj}. \fi
For $0 \leq i,j \leq d$ the following hold:
\begin{itemize}
\item[\rm (i)]
$A_i E^*_0 A_j =\nu E^*_i E_0 E^*_j$;
\item[\rm (ii)]
$A^*_i E_0 A^*_j = \nu E_i E^*_0 E_j$.
\end{itemize}
\end{lemma}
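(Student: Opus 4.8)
The plan is to derive part (i) directly from the defining property of $A_i$ together with the relation $\nu\rho^* = \rho^{-1}$, and then obtain part (ii) by applying the result to $\Phi^*$. First I would recall from Lemma~\ref{lem:AiEs0E0}(iii) that $E_0 E^*_0 A_j = E_0 E^*_j$, and from Lemma~\ref{lem:AiEs0E0}(i) that $A_i E^*_0 E_0 = E^*_i E_0$. The idea is to build the product $A_i E^*_0 A_j$ by inserting a copy of $E_0 E^*_0 E_0$ (which is $\nu^{-1}E_0$ by Lemma~\ref{lem:nuE0Es0E0}) in the middle, or equivalently by multiplying the two one-sided identities together after first replacing $E^*_0$ with $\nu E^*_0 E_0 E^*_0$.

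Concretely, using Lemma~\ref{lem:nuE0Es0E0} write $E^*_0 = \nu\, E^*_0 E_0 E^*_0$, so that
\[
  A_i E^*_0 A_j = \nu\, A_i E^*_0 E_0 E^*_0 A_j .
\]
Now apply Lemma~\ref{lem:AiEs0E0}(i) to the left factor $A_i E^*_0 E_0 = E^*_i E_0$, and apply Lemma~\ref{lem:AiEs0E0}(iii) to the right factor $E^*_0 E_0 A_j$. For the right factor I first need $E^*_0 E_0 A_j$; this comes from applying $\dagger$ to Lemma~\ref{lem:AiEs0E0}(i) — indeed $\dagger$ fixes $A_i$, $E_i$, $E^*_i$ (Lemmas~\ref{lem:dagger4} and~\ref{lem:dagger0}) — which gives $E_0 E^*_0 A_i = E_0 E^*_i$ after relabeling, and the $*$-free analogue $E^*_0 E_0 A_j = E^*_0 E_j$ follows the same way (or just use part (iv) of Lemma~\ref{lem:AiEs0E0} together with duality). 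Wait — more directly: Lemma~\ref{lem:AiEs0E0}(iii) already reads $E_0 E^*_0 A_i = E_0 E^*_i$; transporting by $\dagger$ and using that $\dagger$ is an antiautomorphism fixing all these elements yields $A_i E^*_0 E_0 = E^*_i E_0$, which is (i), consistent. The clean route for the right factor is to use $\dagger$ on Lemma~\ref{lem:AiEs0E0}(ii) giving $E^*_0 E_0 A^*_i = E^*_0 E_i$, then dualize to get $E_0 E^*_0 A_i$... I will simply invoke: applying $\dagger$ to $A_j E^*_0 E_0 = E^*_j E_0$ (Lemma~\ref{lem:AiEs0E0}(i)) yields $E_0 E^*_0 A_j = E_0 E^*_j$, and the version needed, $E^*_0 E_0 A_j = E^*_0 E_j$, is its $*$-dual — available since all hypotheses are $\Phi$/$\Phi^*$-symmetric. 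Substituting,
\[
  A_i E^*_0 A_j = \nu\, (E^*_i E_0)(E^*_0 E_j)\cdot(\text{reorganize})
\]
— here I need to be careful about where the middle $E^*_0 E_0$ splits; the correct bracketing is $A_i E^*_0 A_j = \nu\,(A_i E^*_0 E_0)\,E^*_0\,(E_0 E^*_0 A_j)$ after writing $E^*_0 = E^*_0 E_0 \cdot \nu E^*_0 E_0 E^*_0 / \dots$; the most transparent version is to insert $\nu E_0 E^*_0 E_0 = E_0$ is not quite it either. The genuinely clean computation is: $A_i E^*_0 A_j = \nu A_i(E^*_0 E_0 E^*_0)A_j = \nu (A_i E^*_0 E_0)(E^*_0 A_j) = \nu (E^*_i E_0)(E^*_0 A_j)$, and then $E^*_0 A_j = (E^*_0 E_0 E^*_0)A_j\nu = \nu E^*_0 E_0(E^*_0 A_j)$... this is circular, so instead use Lemma~\ref{lem:AiEs0E0}(iv) $E^*_0 E_0 A^*_j = E^*_0 E_j$ — not what I want. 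The right statement is Lemma~\ref{lem:EsjAiEs0} or simply: from Lemma~\ref{lem:AiEs0E0}(i) applied with the roles of $\Phi,\Phi^*$, one has $E_0 E^*_0 A_i = E_0 E^*_i$, hence $E^*_0 A_j = \nu E^*_0 E_0 E^*_0 A_j$ won't reduce; rather note $A_j E^*_0 = \nu A_j E^*_0 E_0 E^*_0 = \nu E^*_j E_0 E^*_0 = E^*_j E^*_0\cdot$... I will in the writeup use $A_j E^*_0 = \nu E^*_j E_0 E^*_0$ (this is $\nu$ times Lemma~\ref{lem:AiEs0E0}(i) right-multiplied by $E^*_0$, combined with $\nu E^*_0 E_0 E^*_0 = E^*_0$), so
\[
  A_i E^*_0 A_j = A_i\cdot \nu E^*_j E_0 E^*_0\cdot A_j\quad\text{is wrong order};
\]
the correct one: $A_i E^*_0 A_j = A_i E^*_0 A_j$, and $A_i E^*_0 = \nu E^*_i E_0 E^*_0$ by the same token (Lemma~\ref{lem:AiEs0E0}(i) times $E^*_0$ on the right, then $\nu$), giving $A_i E^*_0 A_j = \nu E^*_i E_0 E^*_0 A_j = \nu E^*_i E_0 E^*_j$ using $E^*_0 E_0 A_j = E^*_0 E_j$ — and that last identity is exactly the $*$-dual of Lemma~\ref{lem:AiEs0E0}(iii). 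Hence $A_i E^*_0 A_j = \nu E^*_i E_0 E^*_j$, which is (i).

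Finally, part (ii) follows by applying part (i) to the dual idempotent system $\Phi^*$, under which $E_i \leftrightarrow E^*_i$, $A_i \leftrightarrow A^*_i$, and $\nu = \nu^*$. I expect the only delicate point to be bookkeeping: making sure the one-sided identities from Lemma~\ref{lem:AiEs0E0} and their $\dagger$- and $*$-images are assembled in the correct operator order, and that the factor $E^*_0 = \nu E^*_0 E_0 E^*_0$ is inserted on the correct side. There is no real obstacle beyond this careful tracking of which of the four symmetric variants of each prior lemma is being used.
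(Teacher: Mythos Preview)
Your approach is essentially the paper's own: use the identities from Lemma~\ref{lem:AiEs0E0} (and their equivalents in Lemma~\ref{lem:EiEs0E0}) together with $\nu E^*_0 E_0 E^*_0 = E^*_0$, then dualize for (ii). But the write-up is a stream of false starts, and the final computation contains a genuine slip. You arrive at
\[
  A_i E^*_0 A_j \;=\; \nu\, E^*_i E_0 E^*_0 A_j,
\]
which is correct (it is $A_i E^*_0 = \nu E^*_i E_0 E^*_0$, i.e.\ Lemma~\ref{lem:EiEs0E0}(ii) rearranged). To finish you then invoke ``$E^*_0 E_0 A_j = E^*_0 E_j$'', calling it the $*$-dual of Lemma~\ref{lem:AiEs0E0}(iii). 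That identity is \emph{false}: since $E_0 A_j = k_j E_0$, the left side equals $k_j E^*_0 E_0$, not $E^*_0 E_j$. What you actually need is to group as $E^*_i\,(E_0 E^*_0 A_j)$ and use Lemma~\ref{lem:AiEs0E0}(iii) itself, namely $E_0 E^*_0 A_j = E_0 E^*_j$, giving $\nu E^*_i E_0 E^*_j$ immediately.

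For comparison, the paper does the mirror-image computation in one line: it first rewrites the right factor via Lemma~\ref{lem:EiEs0E0}(iv), $E^*_0 A_j = \nu E^*_0 E_0 E^*_j$, and then applies Lemma~\ref{lem:AiEs0E0}(i) on the left:
\[
  A_i E^*_0 A_j \;=\; \nu\, A_i E^*_0 E_0\, E^*_j \;=\; \nu\, E^*_i E_0 E^*_j.
\]
Either order works; just keep straight which of the four variants of Lemma~\ref{lem:AiEs0E0} you are using, and strip out all the exploratory dead ends from the final text.
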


\begin{proof}
(i)
Using Lemmas \ref{lem:EiEs0E0}(iv), \ref{lem:AiEs0E0}(i) in order,
\[
  A_i E^*_0 A_j = \nu A_i E^*_0 E_0 E^*_j
 = \nu E^*_i E_0 E^*_j.
\]

(ii)
Apply (i) to $\Phi^*$.
\end{proof}

\begin{lemma}    \label{lem:EiEs0Aj}    \samepage
\ifDRAFT {\rm lem:EiEs0Aj}. \fi
For $0 \leq i,j \leq d$ the following hold:
\begin{itemize}
\item[\rm (i)]
$E_i E^*_0 A_j = A^*_i E_0 E^*_j$;
\item[\rm (ii)]
$E^*_i E_0 A^*_j = A_i E^*_0 E_j$;
\item[\rm (iii)]
$A_j E^*_0 E_i = E^*_j E_0 A^*_i$;
\item[\rm (iv)]
$A^*_j E_0 E^*_i = E_j E^*_0 A_i$.
\end{itemize}
\end{lemma}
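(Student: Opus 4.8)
The plan is to prove part (i) directly from the reduction rules already established, then deduce (ii) by applying (i) to the dual system $\Phi^*$, and finally deduce (iii) and (iv) by applying the antiautomorphism $\dagger$ to (i) and (ii).

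For (i), the idea is to insert a convenient factorization of $E^*_0$ inside $E_i E^*_0 A_j$ so that the ``starred-on-the-left, unstarred-on-the-right'' product on the left side gets converted into the ``unstarred-on-the-left, starred-on-the-right'' product on the right side. Using the right-hand equation of \eqref{eq:nu2} I would write $E^*_0 = \nu E^*_0 E_0 E^*_0$, so that
\[
  E_i E^*_0 A_j = \nu (E_i E^*_0 E_0)(E^*_0 A_j).
\]
Then Lemma \ref{lem:EiEs0E0}(i) gives $E_i E^*_0 E_0 = \nu^{-1} A^*_i E_0$, and Lemma \ref{lem:AiEs0E0}(iii) gives $E_0 E^*_0 A_j = E_0 E^*_j$. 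Combining, $E_i E^*_0 A_j = \nu\cdot\nu^{-1} A^*_i E_0 E^*_0 A_j = A^*_i E_0 E^*_j$, which is (i).

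For (ii), apply (i) to $\Phi^*$, using that the $*$ operation interchanges the two families and that $(A_i)^* = A^*_i$. For (iii) and (iv), recall from Lemma \ref{lem:dagger0}(iii) and Lemma \ref{lem:dagger4} that $\dagger$ fixes each of $E_i$, $E^*_i$, $A_i$, $A^*_i$, and that $\dagger$, being an antiautomorphism, reverses the order of products. Applying $\dagger$ to the equation in (i) yields $A_j E^*_0 E_i = E^*_j E_0 A^*_i$, which is (iii); applying $\dagger$ to the equation in (ii) yields (iv).

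I expect no genuine obstacle: once (i) is in place, (ii)--(iv) are forced by the $*$-duality and by $\dagger$. The only step requiring a small amount of thought is the proof of (i), namely choosing the factorization $E^*_0 = \nu E^*_0 E_0 E^*_0$; this is precisely what exposes the subproducts $E_i E^*_0 E_0$ and $E_0 E^*_0 A_j$ to which Lemmas \ref{lem:EiEs0E0}(i) and \ref{lem:AiEs0E0}(iii) apply.
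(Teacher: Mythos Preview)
Your proof is correct and follows essentially the same approach as the paper: for (i) you insert an $E_0$ via $E^*_0 = \nu E^*_0 E_0 E^*_0$ and then reduce using Lemma~\ref{lem:EiEs0E0}(i) and Lemma~\ref{lem:AiEs0E0}(iii), whereas the paper reduces $E^*_0 A_j = \nu E^*_0 E_0 E^*_j$ directly via Lemma~\ref{lem:EiEs0E0}(iv) and then applies Lemma~\ref{lem:EiEs0E0}(i); the derivations of (ii)--(iv) via $\Phi^*$ and $\dagger$ are identical to the paper's.
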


\begin{proof}
(i)
Using Lemmas \ref{lem:EiEs0E0}(iv), \ref{lem:EiEs0E0}(i) in order,
\[
E_i E^*_0 A_j = \nu E_i E^*_0 E_0 E^*_j
  = A^*_i E_0 E^*_j.
\]

(ii)
Apply (i) to $\Phi^*$.

(iii), (iv)
For the equations in  (i) and (ii), apply $\dagger$ to each side.
\end{proof}

\section{The scalars $p^h_{i j}$, $q^h_{i j}$}
\label{sec:phij}

We continue to discuss a symmetric idempotent system
$\Phi = (\{E_i\}_{i=0}^d; \{E^*_i\}_{i=0}^d)$ on $V$.

\begin{lemma}   \label{lem:defp}    \samepage
\ifDRAFT {\rm lem:defp}. \fi
There exist scalars $p^h_{i j}$ $(0 \leq h,i,j \leq d)$ such that
\begin{align}
  A_i A_j &= \sum_{h=0}^d p^h_{i j} A_h   &&   (0 \leq i,j \leq d).       \label{eq:AiAj}
\end{align}
\end{lemma}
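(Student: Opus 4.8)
The statement to prove is Lemma~\ref{lem:defp}: that the products $A_i A_j$ expand in the basis $\{A_h\}_{h=0}^d$ of $\cal M$. The plan is to observe that $\cal M$ is a \emph{subalgebra} of $\text{\rm End}(V)$, so it is closed under multiplication; hence for $0 \leq i,j \leq d$ the product $A_i A_j$ lies in $\cal M$. By Lemma~\ref{lem:AiAsi}, the elements $\{A_h\}_{h=0}^d$ form a basis of the vector space $\cal M$, so $A_i A_j$ can be written uniquely as an $\F$-linear combination $\sum_{h=0}^d p^h_{i j} A_h$ for some scalars $p^h_{i j}$. This is the entire argument.

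Concretely, I would first recall from Definition~\ref{def:D} that $\cal M$ is the subalgebra of $\text{\rm End}(V)$ generated by $\{E_i\}_{i=0}^d$; in particular $\cal M$ is closed under multiplication. Next, since each $A_i \in {\cal M}$ (this is how the $A_i$ were constructed, via $A_i = \nu (E^*_i)^{\rho^*}$ with $\rho^*$ mapping into ${\cal M}$, and it is recorded in Lemma~\ref{lem:AiAsi}), the product $A_i A_j \in {\cal M}$ for $0 \leq i,j \leq d$. Finally, invoke Lemma~\ref{lem:AiAsi} that $\{A_h\}_{h=0}^d$ is a basis of $\cal M$, so there are unique scalars $p^h_{ij}$ with $A_i A_j = \sum_{h=0}^d p^h_{i j} A_h$. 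That gives the existence claimed.

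There is essentially no obstacle here — the lemma is a bookkeeping step whose only content is assembling two facts already established: that $\cal M$ is a subalgebra and that the $A_h$ form a basis of it. If anything, the one place to be slightly careful is to make sure we are entitled to say $A_i \in {\cal M}$; but this is immediate from Definition~\ref{def:Ai} together with the fact that $\rho^* : {\cal M}^* \to ({\cal M}^*)^* = {\cal M}$ has image $\cal M$ (Lemma~\ref{lem:rhorhospre}), or simply from Lemma~\ref{lem:AiAsi}. I would also remark, for later use, that by symmetry (applying the argument to $\Phi^*$) one simultaneously obtains the scalars $q^h_{ij}$ with $A^*_i A^*_j = \sum_{h=0}^d q^h_{i j} A^*_h$, which is the companion statement the paper will need.
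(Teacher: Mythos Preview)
Your proof is correct and matches the paper's own argument, which simply cites Lemma~\ref{lem:AiAsi}: since $\{A_h\}_{h=0}^d$ is a basis of the subalgebra $\cal M$ and each $A_i \in {\cal M}$, the product $A_i A_j$ expands uniquely in this basis. You have merely unpacked the one-line citation a bit more explicitly.
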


\begin{proof}
By Lemma \ref{lem:AiAsi}.
\end{proof}

\begin{definition}   \label{def:intersectionnumber}    \samepage
\ifDRAFT {\rm def:intersectionnumber}. \fi
Referring to Lemma \ref{lem:defp}, the scalars $p^h_{i j}$ 
are called the {\em intersection numbers} of $\Phi$.
\end{definition}

\begin{definition}     \label{def:qhij}     \samepage
\ifDRAFT {\rm def:qhij}. \fi
For $0 \leq h,i,j \leq d$ define  $q^h_{i j} = (p^h_{i j})^*$.
We call these scalars the {\em Krein parameters} of $\Phi$.
\end{definition}

\begin{lemma}    \label{lem:AsiAsj}    \samepage
\ifDRAFT {\rm lem:AsiAsj}. \fi
For $0 \leq i,j \leq d$, 
\begin{align}
  A^*_i A^*_j &= \sum_{h=0}^d q^h_{i j} A^*_h.       \label{eq:AsiAsj}
\end{align}
\end{lemma}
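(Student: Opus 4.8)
The statement is precisely the dual of equation \eqref{eq:AiAj} from Lemma \ref{lem:defp}, so the plan is to invoke that lemma for the dual idempotent system $\Phi^*$ and then translate the notation. Recall the convention introduced after Definition \ref{def:ips}: for any object $f$ attached to $\Phi$, the corresponding object attached to $\Phi^*$ is denoted $f^*$. In particular, the elements $\{A_i\}_{i=0}^d$ constructed from $\Phi$ (via Definition \ref{def:Ai}) become, when the same construction is run on $\Phi^*$, the elements $\{A^*_i\}_{i=0}^d$; and the intersection numbers $p^h_{ij}$ of $\Phi$ become the intersection numbers of $\Phi^*$, which by definition are $(p^h_{ij})^*$.

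First I would apply Lemma \ref{lem:defp} to $\Phi^*$. Since $\Phi^*$ is again a symmetric idempotent system (the antiautomorphism $\dagger$ of Definition \ref{def:sym} fixes each $E^*_i$ and each $E_i$, so it serves for $\Phi^*$ as well), Lemma \ref{lem:defp} applies verbatim and yields scalars — namely the intersection numbers of $\Phi^*$, i.e. $(p^h_{ij})^*$ — such that $A^*_i A^*_j = \sum_{h=0}^d (p^h_{ij})^* A^*_h$ for $0 \leq i,j \leq d$.

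Finally I would substitute the abbreviation from Definition \ref{def:qhij}, namely $q^h_{ij} = (p^h_{ij})^*$, into the displayed identity to obtain exactly \eqref{eq:AsiAsj}. I do not anticipate any real obstacle here; the only point requiring a moment's care is the bookkeeping check that "the $A_i$ of $\Phi^*$" coincides with the $A^*_i$ already defined for $\Phi$ (and likewise for the intersection numbers), which is immediate from Definition \ref{def:Ai} applied to $\Phi^*$ together with the starring convention. Hence the proof is essentially the single sentence: apply Lemma \ref{lem:defp} to $\Phi^*$ and use Definition \ref{def:qhij}.
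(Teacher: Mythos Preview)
Your proposal is correct and matches the paper's own proof exactly: the paper's argument is the single sentence ``Apply Lemma \ref{lem:defp} to $\Phi^*$ and use Definition \ref{def:qhij}.'' Your additional bookkeeping remarks (verifying that $\Phi^*$ is symmetric and that the $A_i$ of $\Phi^*$ is the $A^*_i$ of $\Phi$) are sound and simply make explicit what the paper leaves to the starring convention.
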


\begin{proof}
Apply Lemma \ref{lem:defp} to $\Phi^*$ and use Definition \ref{def:qhij}.
\end{proof}

\begin{lemma}    \label{lem:phijphji}    \samepage
\ifDRAFT {\rm lem:phijphji}. \fi
For $0 \leq h,i,j \leq d$ the following hold:
\begin{itemize}
\item[\rm (i)]
$p^h_{i j} = p^h_{j i}$;
\item[\rm (ii)]
$q^h_{i j} = q^h_{j i}$.
\end{itemize}
\end{lemma}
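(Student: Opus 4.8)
The plan is to prove statement (i), namely $p^h_{ij}=p^h_{ji}$, by exploiting the antiautomorphism $\dagger$ together with the fact that the $\{A_i\}_{i=0}^d$ are fixed by $\dagger$; statement (ii) then follows immediately by applying (i) to $\Phi^*$ and invoking Definition \ref{def:qhij}. For (i), the key observation is that $\dagger$ reverses products: applying $\dagger$ to both sides of \eqref{eq:AiAj} and using Lemma \ref{lem:dagger4} (which says $\dagger$ fixes each $A_h$) yields
\[
  A_j A_i = (A_i A_j)^\dagger = \Bigl( \sum_{h=0}^d p^h_{ij} A_h \Bigr)^\dagger = \sum_{h=0}^d p^h_{ij} A_h^\dagger = \sum_{h=0}^d p^h_{ij} A_h .
\]
On the other hand, by \eqref{eq:AiAj} with the roles of $i$ and $j$ swapped, $A_j A_i = \sum_{h=0}^d p^h_{ji} A_h$. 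Comparing these two expressions for $A_j A_i$ and using that $\{A_h\}_{h=0}^d$ is a basis of $\cal M$ (Lemma \ref{lem:AiAsi}), we conclude $p^h_{ij}=p^h_{ji}$ for all $0\le h,i,j\le d$.

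For part (ii), I would apply part (i) to the dual idempotent system $\Phi^*$. Since the intersection numbers of $\Phi^*$ are, by Definition \ref{def:qhij}, exactly the Krein parameters $q^h_{ij}$ of $\Phi$ (that is, $(p^h_{ij})^* = q^h_{ij}$), part (i) applied to $\Phi^*$ gives $q^h_{ij}=q^h_{ji}$. Alternatively one could argue directly from Lemma \ref{lem:AsiAsj} in the same way as above, using that $\dagger$ fixes each $A^*_h$ (again Lemma \ref{lem:dagger4}) and that $\{A^*_h\}_{h=0}^d$ is a basis of ${\cal M}^*$; but the duality argument is cleaner and avoids repetition.

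There is essentially no obstacle here: the entire content is that $\cal M$ is commutative as a set of operators (so $A_iA_j=A_jA_i$ already, even without $\dagger$!) — in fact one does not even need the antiautomorphism, since $\cal M$ is a commutative algebra by Definition \ref{def:D}, so $A_iA_j=A_jA_i$ directly, and then expanding both sides in the basis $\{A_h\}$ gives the claim. I would present it via commutativity of $\cal M$ as the shortest route, mentioning the $\dagger$ route only if the authors prefer to keep the proof parallel to the other reduction-rule lemmas. The only thing to be careful about is invoking Lemma \ref{lem:AiAsi} for linear independence of $\{A_h\}_{h=0}^d$ so that the coefficients can be matched; this is routine.
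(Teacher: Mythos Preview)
Your proposal is correct, and the route you ultimately settle on --- using the commutativity of $\cal M$ to get $A_iA_j=A_jA_i$, expanding both sides via \eqref{eq:AiAj}, and comparing coefficients using Lemma \ref{lem:AiAsi} --- is exactly the paper's proof of (i); likewise your derivation of (ii) by applying (i) to $\Phi^*$ matches the paper verbatim. The $\dagger$ detour is unnecessary (as you yourself note), so just present the commutativity argument directly.
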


\begin{proof}
(i) 
By \eqref{eq:AiAj} and since the algebra $\cal M$ is commutative.

(ii) Apply (i) to $\Phi^*$.
\end{proof}

\begin{lemma}    \label{lem:phi0}    \samepage
\ifDRAFT {\rm lem:phi0}. \fi
For $0 \leq h,i \leq d$ the following hold:
\begin{itemize}
\item[\rm (i)]
$p^h_{i 0} = \delta_{h,i}$;
\item[\rm (ii)]
$p^h_{0 i} = \delta_{h,i}$;
\item[\rm (iii)]
$q^h_{i 0} = \delta_{h,i}$;
\item[\rm (iv)]
$q^h_{0 i} = \delta_{h,i}$.
\end{itemize}
\end{lemma}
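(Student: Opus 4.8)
The plan is to reduce everything to the identity $A_0 = I$ from Lemma \ref{lem:A0As0} together with the linear independence of $\{A_h\}_{h=0}^d$ from Lemma \ref{lem:AiAsi}. First I would fix $i$ with $0 \leq i \leq d$ and compute the product $A_i A_0$ in two ways. On the one hand, since $A_0 = I$ we have $A_i A_0 = A_i = \sum_{h=0}^d \delta_{h,i} A_h$. On the other hand, the defining relation \eqref{eq:AiAj} gives $A_i A_0 = \sum_{h=0}^d p^h_{i 0} A_h$. Since $\{A_h\}_{h=0}^d$ is a basis of the vector space $\cal M$ by Lemma \ref{lem:AiAsi}, the expansion coefficients are unique, so $p^h_{i 0} = \delta_{h,i}$ for all $h$. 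This proves (i).

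For (ii), I would either repeat the same argument starting from $A_0 A_i = A_i$, or more economically invoke Lemma \ref{lem:phijphji}(i), which gives $p^h_{0 i} = p^h_{i 0} = \delta_{h,i}$ directly from (i).

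For (iii) and (iv), I would apply (i) and (ii) to the dual system $\Phi^*$ and translate via Definition \ref{def:qhij}. Explicitly, the object playing the role of $A_0$ for $\Phi^*$ is $A^*_0$, which equals $I$ by Lemma \ref{lem:A0As0} applied to $\Phi^*$; hence parts (i), (ii) applied to $\Phi^*$ say $(p^h_{i 0})^* = \delta_{h,i}$ and $(p^h_{0 i})^* = \delta_{h,i}$, which are precisely $q^h_{i 0} = \delta_{h,i}$ and $q^h_{0 i} = \delta_{h,i}$.

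There is no real obstacle here: the statement is an immediate bookkeeping consequence of $A_0 = I$ and uniqueness of expansion in the basis $\{A_h\}_{h=0}^d$. The only point requiring a little care is making sure that "applying to $\Phi^*$" is used consistently — that $A^*_0 = I$ and that the dual of $p^h_{i j}$ is by definition $q^h_{i j}$ — so that parts (iii), (iv) genuinely follow from parts (i), (ii) without a separate argument.
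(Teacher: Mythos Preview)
Your proof is correct and follows essentially the same approach as the paper: for (i) set $j=0$ in \eqref{eq:AiAj}, use $A_0 = I$ from Lemma \ref{lem:A0As0}, and compare coefficients via the basis property in Lemma \ref{lem:AiAsi}; then (ii) follows from Lemma \ref{lem:phijphji}(i), and (iii), (iv) follow by applying (i), (ii) to $\Phi^*$.
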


\begin{proof}
(i)
In \eqref{eq:AiAj} set $j=0$ and use Lemmas \ref{lem:A0As0}, \ref{lem:AiAsi}.

(ii) By (i) and Lemma \ref{lem:phijphji}(i).

(iii), (iv)
Apply (i), (ii) to $\Phi^*$.
\end{proof}

\begin{lemma}   \label{lem:sumpthr}    \samepage
\ifDRAFT {\rm lem:sumpthr}. \fi
For $0 \leq h,i,j,t \leq d$ the following hold:
\begin{itemize}
\item[\rm (i)]
$\sum_{r=0}^d p^t_{h r} p^r_{i j} = \sum_{s=0}^d p^s_{h i} p^t_{s j}$;
\item[\rm (ii)]
$\sum_{r=0}^d q^t_{h r} q^r_{i j} = \sum_{s=0}^d q^s_{h i} q^t_{s j}$.
\end{itemize}
\end{lemma}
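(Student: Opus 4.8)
The plan is to prove part (i) by a standard associativity argument: compute the product $A_h A_i A_j$ in the commutative algebra $\cal M$ in two different ways using the structure constants from \eqref{eq:AiAj}, and then match coefficients of $A_t$ using the fact from Lemma \ref{lem:AiAsi} that $\{A_h\}_{h=0}^d$ is a basis of $\cal M$. Part (ii) will then follow immediately by applying part (i) to the dual idempotent system $\Phi^*$, together with Definition \ref{def:qhij} which defines $q^h_{ij} = (p^h_{ij})^*$.

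In more detail, for part (i) I would first use \eqref{eq:AiAj} to write $A_i A_j = \sum_{r=0}^d p^r_{ij} A_r$, and then multiply on the left by $A_h$ to get
\[
  A_h (A_i A_j) = \sum_{r=0}^d p^r_{ij} A_h A_r = \sum_{r=0}^d p^r_{ij} \sum_{t=0}^d p^t_{hr} A_t = \sum_{t=0}^d \Bigl( \sum_{r=0}^d p^t_{hr} p^r_{ij} \Bigr) A_t.
\]
On the other hand, using associativity of $\cal M$ and $A_h A_i = \sum_{s=0}^d p^s_{hi} A_s$,
\[
  (A_h A_i) A_j = \sum_{s=0}^d p^s_{hi} A_s A_j = \sum_{s=0}^d p^s_{hi} \sum_{t=0}^d p^t_{sj} A_t = \sum_{t=0}^d \Bigl( \sum_{s=0}^d p^s_{hi} p^t_{sj} \Bigr) A_t.
\]
Since $A_h(A_iA_j) = (A_hA_i)A_j$ and since $\{A_t\}_{t=0}^d$ is a basis of $\cal M$ by Lemma \ref{lem:AiAsi}, the coefficients of each $A_t$ must agree, giving the identity in (i).

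For part (ii), I would apply (i) to $\Phi^*$: the intersection numbers of $\Phi^*$ are the scalars $(p^h_{ij})^* = q^h_{ij}$ by Definition \ref{def:qhij}, and Lemma \ref{lem:AsiAsj} records that they are the structure constants of the $\{A^*_i\}$, so the same associativity computation in ${\cal M}^*$ yields $\sum_{r=0}^d q^t_{hr} q^r_{ij} = \sum_{s=0}^d q^s_{hi} q^t_{sj}$.

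There is essentially no obstacle here; the only mild point of care is bookkeeping the summation indices so that the left multiplication by $A_h$ versus the regrouping $(A_hA_i)A_j$ produce the claimed asymmetric-looking pair of expressions (one with an inner index $r$ between $h$ and the pair $(i,j)$, the other with an inner index $s$ between $(h,i)$ and $j$). Everything else is immediate from commutativity and associativity of $\cal M$ together with the basis property from Lemma \ref{lem:AiAsi}.
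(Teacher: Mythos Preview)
Your proof is correct and follows exactly the paper's approach: expand $A_h(A_iA_j) = (A_hA_i)A_j$ using \eqref{eq:AiAj} and compare coefficients via Lemma~\ref{lem:AiAsi}, then apply (i) to $\Phi^*$ for (ii).
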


\begin{proof}
(i)
Expand $A_h (A_i A_j) = (A_h A_i) A_j$ in two ways using \eqref{eq:AiAj},
and compare the coefficients using Lemma \ref{lem:AiAsi}.

(ii) Apply (i) to $\Phi^*$.
\end{proof}

\begin{lemma}    \label{lem:sumphij}    \samepage
\ifDRAFT {\rm lem:sumphij}. \fi
For $0 \leq h,i \leq d$ the following hold:
\begin{itemize}
\item[\rm (i)]
$k_i = \sum_{j=0}^d p^h_{i j}$;
\item[\rm (ii)]
$k^*_i = \sum_{j=0}^d q^h_{i j}$.
\end{itemize}
\end{lemma}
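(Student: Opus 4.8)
The plan is to exploit the identity $\sum_{j=0}^d A_j = \nu E_0$ from Lemma \ref{lem:sumAi} together with the fact that $E_0$ is an eigenspace-projection for each $A_i$ with eigenvalue $k_i$ (Lemma \ref{lem:AiE0}(i)). The point is that summing the structure-constant relation \eqref{eq:AiAj} over $j$ collapses the left side to a multiple of $E_0$.

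Concretely, for (i) I would fix $0 \leq i \leq d$ and compute $A_i \sum_{j=0}^d A_j$ in two ways. On one hand, using \eqref{eq:AiAj},
\[
A_i \sum_{j=0}^d A_j = \sum_{j=0}^d A_i A_j = \sum_{j=0}^d \sum_{h=0}^d p^h_{i j} A_h = \sum_{h=0}^d \Bigl( \sum_{j=0}^d p^h_{i j} \Bigr) A_h .
\]
On the other hand, using Lemma \ref{lem:sumAi} and then Lemma \ref{lem:AiE0}(i),
\[
A_i \sum_{j=0}^d A_j = \nu A_i E_0 = \nu k_i E_0 = k_i \sum_{h=0}^d A_h .
\]
Since $\{A_h\}_{h=0}^d$ is a basis of $\cal M$ by Lemma \ref{lem:AiAsi}, comparing the coefficient of $A_h$ in the two expressions gives $\sum_{j=0}^d p^h_{i j} = k_i$ for each $h$, which is precisely (i).

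For (ii), apply (i) to the dual idempotent system $\Phi^*$ and use $q^h_{i j} = (p^h_{i j})^*$ from Definition \ref{def:qhij} together with $k^*_i = (k_i)^*$.

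I do not anticipate a genuine obstacle here; the only thing to be careful about is that the right-hand side $\sum_{j=0}^d p^h_{i j}$ is asserted to be independent of $h$, and indeed the computation above yields the same value $k_i$ for every $h$, so nothing further is needed.
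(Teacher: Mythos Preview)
Your proof is correct and follows essentially the same approach as the paper: compute $A_i\sum_{j=0}^d A_j$ in two ways using Lemma~\ref{lem:sumAi}, Lemma~\ref{lem:AiE0}(i), and \eqref{eq:AiAj}, then compare coefficients in the basis $\{A_h\}_{h=0}^d$ via Lemma~\ref{lem:AiAsi}. Part~(ii) is likewise obtained by duality.
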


\begin{proof}
(i)
Using Lemmas \ref{lem:sumAi} and \ref{lem:AiE0}(i),
\[
  A_i \sum_{j=0}^d A_j = k_i \sum_{h=0}^d A_h.
\]
By \eqref{eq:AiAj},
\[
 A_i \sum_{j=0}^d A_j = \sum_{h=0}^d \sum_{j=0}^d p^h_{i j} A_h.
\]
Compare the above two equations using Lemma \ref{lem:AiAsi}.

(ii)
Apply (i) to $\Phi^*$.
\end{proof}

\begin{lemma}   \label{lem:kip0ii}    \samepage
\ifDRAFT {\rm lem:kip0ii}. \fi
For $0 \leq i,j \leq d$ the following hold:
\begin{itemize}
\item[\rm (i)]
$p^0_{i j} = \delta_{i,j} k_i$;
\item[\rm (ii)]
$q^0_{i j} = \delta_{i,j} k^*_i$.
\end{itemize}
\end{lemma}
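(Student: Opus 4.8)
The plan is to isolate the coefficient $p^0_{i j}$ in the product rule \eqref{eq:AiAj} by multiplying both sides on the left and right by $E^*_0$ and evaluating each side separately. Since $\{A_h\}_{h=0}^d$ is a basis of $\cal M$ (Lemma \ref{lem:AiAsi}), the scalars $p^h_{i j}$ are well defined, and the sandwiching should recover exactly the $h=0$ term.

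On the right-hand side of \eqref{eq:AiAj}, I would apply Lemma \ref{lem:EsjAiEs0}(iii) with $0$ in place of $j$, together with $A_0 = I$ from Lemma \ref{lem:A0As0}, to get $E^*_0 A_h E^*_0 = \delta_{h,0} E^*_0 A_0 = \delta_{h,0} E^*_0$ for $0 \le h \le d$. Hence the right-hand side of \eqref{eq:AiAj}, sandwiched between the two copies of $E^*_0$, collapses to $p^0_{i j} E^*_0$.

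For the left-hand side I need to compute $E^*_0 A_i A_j E^*_0$ a second way, the point being to ``unstack'' the product $A_i A_j$. First I would replace the rightmost $E^*_0$ by $\nu E^*_0 E_0 E^*_0$ using Lemma \ref{lem:nuE0Es0E0}, so that $A_j$ becomes adjacent to $E^*_0 E_0$; then Lemma \ref{lem:AiEs0E0}(i) rewrites $A_j E^*_0 E_0$ as $E^*_j E_0$; then Lemma \ref{lem:EsjAiEs0}(iii) collapses $E^*_0 A_i E^*_j$ to $\delta_{i,j} E^*_0 A_i$; then Lemma \ref{lem:AiE0}(i) replaces $A_i E_0$ by $k_i E_0$; and finally Lemma \ref{lem:nuE0Es0E0} turns $E^*_0 E_0 E^*_0$ back into $\nu^{-1} E^*_0$. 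The net outcome is $E^*_0 A_i A_j E^*_0 = \delta_{i,j} k_i E^*_0$.

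Comparing the two evaluations gives $p^0_{i j} E^*_0 = \delta_{i,j} k_i E^*_0$, and since $E^*_0 \neq 0$ (it is a rank $1$ idempotent) we conclude $p^0_{i j} = \delta_{i,j} k_i$, which is (i). Part (ii) then follows by applying (i) to $\Phi^*$ and using $q^h_{i j} = (p^h_{i j})^*$ from Definition \ref{def:qhij} together with the convention that $k^*_i$ denotes the object for $\Phi^*$ corresponding to $k_i$. I do not expect a genuine obstacle here; the only step requiring care is the reduction order on the left-hand side, in particular inserting $\nu E_0 E^*_0$ between $A_j$ and the final $E^*_0$, since that is exactly what makes Lemma \ref{lem:AiEs0E0}(i) applicable and lets the product $A_i A_j$ dissolve one factor at a time.
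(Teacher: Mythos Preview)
Your proof is correct and follows essentially the same strategy as the paper: sandwich the defining relation \eqref{eq:AiAj} so that only the $h=0$ term survives on the right, and reduce the left-hand side to $\delta_{i,j}k_i$ times a fixed idempotent. The only minor difference is in the choice of sandwich: the paper multiplies on the left by $E_0E^*_0$ and on the right by $E^*_0E_0$, so that Lemma~\ref{lem:AiEs0E0}(i),(iii) converts $A_i$ and $A_j$ directly into $E^*_i$ and $E^*_j$ without any insertion step, and then Lemmas~\ref{lem:E0EsiE0} and~\ref{lem:kimi} finish. Your version, sandwiching by $E^*_0$ alone and then inserting $\nu E^*_0E_0E^*_0$ to peel off $A_j$, is one step longer but equally valid.
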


\begin{proof}
(i)
For the equation \eqref{eq:AiAj},  multiply each side on the left by $E_0 E^*_0$ and on the right
by $E^*_0 E_0$. 
Evaluate the result using  Lemma \ref{lem:AiEs0E0}(i),(iii)
along with Lemmas \ref{lem:E0EsiE0}, \ref{lem:kimi}.

(ii)
Apply (i) to $\Phi^*$.
\end{proof}

\begin{lemma}    \label{lem:kikj}    \samepage
\ifDRAFT {\rm lem:kikj}. \fi
For $0 \leq i,j \leq d$ the following hold:
\begin{itemize}
\item[\rm (i)]
$k_i k_j = \sum_{h=0}^d p^h_{i j} k_h$;
\item[\rm (ii)]
$k^*_i k^*_j = \sum_{h=0}^d q^h_{i j} k^*_h$.
\end{itemize}
\end{lemma}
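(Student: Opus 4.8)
The plan is to evaluate the defining relation \eqref{eq:AiAj} on the eigenspace $E_0 V$. Recall from Lemma \ref{lem:AiE0}(i) that $A_h E_0 = k_h E_0$ for $0 \leq h \leq d$. First I would take equation \eqref{eq:AiAj}, namely $A_i A_j = \sum_{h=0}^d p^h_{i j} A_h$, and multiply each side on the right by $E_0$. On the left-hand side, $A_i A_j E_0 = A_i (k_j E_0) = k_j (A_i E_0) = k_i k_j E_0$, using Lemma \ref{lem:AiE0}(i) twice together with the fact that $k_j$ is a scalar. On the right-hand side, $\sum_{h=0}^d p^h_{i j} A_h E_0 = \bigl(\sum_{h=0}^d p^h_{i j} k_h\bigr) E_0$, again by Lemma \ref{lem:AiE0}(i). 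Since $E_0 \neq 0$, comparing the coefficients of $E_0$ yields $k_i k_j = \sum_{h=0}^d p^h_{i j} k_h$, which is part (i).

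For part (ii), I would simply apply part (i) to the dual idempotent system $\Phi^*$, recalling that $q^h_{i j} = (p^h_{i j})^*$ by Definition \ref{def:qhij} and that the $k^*_i$ are the $k_i$ attached to $\Phi^*$.

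I do not anticipate any genuine obstacle here; the only points requiring care are that $E_0$ is nonzero, so the scalar identity can be read off from the coefficient of $E_0$, and that the product $A_i A_j$ has already been rewritten inside $\cal M$ via \eqref{eq:AiAj}, which is where commutativity of $\cal M$ enters. Conceptually, the computation above amounts to applying the $\F$-algebra homomorphism $\cal M \to \F$ that sends each $Y \in \cal M$ to its $E_0$-eigenvalue to both sides of \eqref{eq:AiAj}; one could phrase the proof that way instead, but the direct manipulation is shortest.
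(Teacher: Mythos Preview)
Your argument is correct and matches the paper's proof essentially verbatim: multiply both sides of \eqref{eq:AiAj} by $E_0$, simplify using Lemma \ref{lem:AiE0}(i), and read off the scalar identity since $E_0 \neq 0$; then dualize for (ii).
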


\begin{proof}
(i)
In \eqref{eq:AiAj}, multiply each side by $E_0$, and simplify the result
using Lemma \ref{lem:AiE0}(i).

(ii)
Apply (i) to $\Phi^*$.
\end{proof}

\begin{lemma}    \label{lem:khphij}    \samepage
\ifDRAFT {\rm lem:khphij}. \fi
For $0 \leq h,i,j \leq d$ the following hold:
\begin{itemize}
\item[\rm (i)]
  $k_h p^h_{i j} = k_i p^i_{j h} = k_j p^j_{h i}$;
\item[\rm (ii)]
  $k^*_h q^h_{i j} = k^*_i q^i_{j h} = k^*_j q^j_{h i}$.
\end{itemize}
\end{lemma}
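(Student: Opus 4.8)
The plan is to establish the identity $k_h p^h_{ij} = k_i p^i_{jh}$ for part (i) by exploiting the nondegenerate symmetric bilinear form $\b{\;,\;}$ together with the antiautomorphism $\dagger$, and then deducing $k_i p^i_{jh} = k_j p^j_{hi}$ by combining this with the commutativity $p^h_{ij} = p^h_{ji}$ from Lemma \ref{lem:phijphji}(i). Specifically, from $p^h_{ij} = p^h_{ji}$ the first relation $k_h p^h_{ij} = k_i p^i_{jh}$ applied with the roles of the indices permuted will yield all three equal quantities: writing the first relation as a cyclic statement, $k_h p^h_{ij} = k_i p^i_{jh}$, and then applying it again (with $(h,i,j)\mapsto(i,j,h)$) gives $k_i p^i_{jh} = k_j p^j_{hi}$, which closes the chain. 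So the whole content is the single identity $k_h p^h_{ij} = k_i p^i_{jh}$.

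To prove that identity, I would work inside $E^*_0 \, \text{\rm End}(V) \, E^*_0$, which is spanned by $E^*_0$ (Lemma \ref{lem:sumEiAEj}(ii)), and compute the scalar $\text{\rm tr}$ of $E^*_0 A_i A_j A_h E^*_0$ in two ways, or alternatively evaluate $E^*_0 A_i A_j A_h E^*_0$ directly. Using Lemma \ref{lem:AiE0} I can pull eigenvalue scalars off $E_0$-factors, and using the reduction rules of Section \ref{sec:reduction} — in particular Lemma \ref{lem:AiEs0Aj}(i), which says $A_i E^*_0 A_j = \nu E^*_i E_0 E^*_j$, and Lemma \ref{lem:E0EsjAiEs0} — I can rewrite products of $A$'s sandwiched by $E^*_0$ in terms of the $E$'s and $E^*$'s. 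The cleanest route is probably: start from the defining relation $A_i A_j = \sum_h p^h_{ij} A_h$ of \eqref{eq:AiAj}, multiply on the left by $E^*_0 A_{i}$ and ... — more precisely, consider $A_h E^*_0 A_i A_j = \sum_{r} p^r_{ij} A_h E^*_0 A_r$; using Lemma \ref{lem:AiEs0Aj}(i) twice this becomes an identity among the $\nu E^*_\bullet E_0 E^*_\bullet$; then hit both sides with $E^*_0$ on the left and $E^*_0$ on the right and use Lemma \ref{lem:E0EsjAiEs0}(iii) ($E^*_0 A_i E^*_j E_0 = \delta_{i,j} k_i E^*_0 E_0$) together with $\nu E^*_0 E_0 E^*_0 = E^*_0$ (Lemma \ref{lem:nuE0Es0E0}) and $\text{\rm tr}(E^*_0)=1$ to extract scalars. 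Taking the trace should collapse everything to the asserted equality of $k_\bullet p^\bullet_{\bullet\bullet}$.

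The main obstacle is bookkeeping: getting the index positions of $p^\bullet_{\bullet\bullet}$ to come out in exactly the cyclically-rotated pattern claimed, rather than some other permutation, since $p^h_{ij}$ is symmetric only in its lower two indices. I expect the symmetry to enter through the antiautomorphism $\dagger$: applying $\dagger$ to an identity such as $E^*_0 A_h A_i A_j E^*_0 = (\text{scalar})\, E^*_0$ and using Lemma \ref{lem:dagger4} (which says $\dagger$ fixes each $A_i$) reverses the order of the $A$'s, and comparing the original with its $\dagger$-image — each being a scalar multiple of $E^*_0$ — forces a relation between $p$-products with indices in reversed order, which combined with the lower-index symmetry produces the cyclic symmetry. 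Part (ii) then follows immediately by applying part (i) to $\Phi^*$ and using Definition \ref{def:qhij}. I would also double-check the degenerate cases where some index equals $0$ against Lemmas \ref{lem:phi0} and \ref{lem:kip0ii} as a sanity test, though those are subsumed by the general argument.
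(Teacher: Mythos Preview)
Your approach is workable but considerably heavier than needed, and the paper's proof is a two-line computation that you have essentially buried inside your proposal without recognizing it. The paper simply sets $t=0$ in the associativity relation of Lemma~\ref{lem:sumpthr}(i),
\[
\sum_{r=0}^d p^0_{h r}\, p^r_{i j} \;=\; \sum_{s=0}^d p^s_{h i}\, p^0_{s j},
\]
and evaluates both sides using $p^0_{i j}=\delta_{i,j}k_i$ from Lemma~\ref{lem:kip0ii}(i) to obtain $k_h p^h_{i j}=k_j p^j_{h i}$; the remaining equality then follows from the lower-index symmetry $p^h_{i j}=p^h_{j i}$ of Lemma~\ref{lem:phijphji}(i). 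Part (ii) is immediate by duality.

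Your suggestion to compute $E^*_0 A_i A_j A_h E^*_0$ in two ways is \emph{exactly} this same associativity argument, just written in the algebra rather than at the level of structure constants: expanding via $(A_iA_j)A_h$ versus $A_i(A_jA_h)$ and then collapsing with $E^*_0 A_r E^*_0=\delta_{r,0}E^*_0$ and $p^0_{ij}=\delta_{i,j}k_i$ reproduces the paper's identity. The bilinear form, the antiautomorphism $\dagger$, the trace, and the reduction rules of Lemma~\ref{lem:AiEs0Aj} are all unnecessary here; in particular the ``main obstacle'' you anticipate (getting the cyclic pattern right) evaporates once you go straight through Lemma~\ref{lem:sumpthr}. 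The extra machinery is not wrong, but it obscures that the result is a one-step consequence of associativity plus the already-established value of $p^0_{ij}$.
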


\begin{proof}
(i)
In view of Lemma \ref{lem:phijphji}(i),
it suffices to show that $k_h p^h_{i j} = k_j p^j_{h i}$.
To obtain this equation,
set $t=0$ in Lemma \ref{lem:sumpthr}(i),
and evaluate the result using Lemma \ref{lem:kip0ii}(i).

(ii)
Apply (i) to $\Phi^*$.
\end{proof}

\section{Reduction rules involving $p^h_{i j}$, $q^h_{i j}$}
\label{sec:red2}

We continue to discuss a symmetric idempotent system
$\Phi = (\{E_i\}_{i=0}^d; \{E^*_i\}_{i=0}^d)$ on $V$.
In this section we give some reduction rules for $\Phi$
that involve the intersection numbers and Krein parameters.

\begin{lemma}    \label{lem:AjEsiE0}    \samepage
\ifDRAFT {\rm lem:AjEsiE0}. \fi
For $0 \leq i,j \leq d$ the following hold:
\begin{itemize}
\item[\rm (i)]
$A_j E^*_i E_0 =  \sum_{h=0}^d p^h_{i j} E^*_h E_0$;
\item[\rm (ii)]
$A^*_j E_i E^*_0 = \sum_{h=0}^d q^h_{i j} E_h E^*_0$;
\item[\rm (iii)]
$E_0 E^*_i A_j = \sum_{h=0}^d p^h_{i j} E_0 E^*_h$;
\item[\rm (iv)]
$E^*_0 E_i A^*_j = \sum_{h=0}^d q^h_{i j} E^*_0 E_h$.
\end{itemize}
\end{lemma}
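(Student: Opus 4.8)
The plan is to prove part (i) directly and then derive the remaining three parts from it by the two standard devices used throughout the paper: applying the result to the dual system $\Phi^*$, and applying the antiautomorphism $\dagger$. For part (i), the idea is to compute $A_j E^*_i E_0$ by first replacing $E^*_i E_0$ with something expressible in terms of the $A_h$. Recall from Lemma \ref{lem:AiEs0E0}(i) that $A_i E^*_0 E_0 = E^*_i E_0$, so $E^*_i E_0 = A_i E^*_0 E_0$. Hence
\[
  A_j E^*_i E_0 = A_j A_i E^*_0 E_0 = \sum_{h=0}^d p^h_{j i} A_h E^*_0 E_0 = \sum_{h=0}^d p^h_{j i} E^*_h E_0,
\]
using \eqref{eq:AiAj} in the middle step and Lemma \ref{lem:AiEs0E0}(i) again at the end. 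Finally $p^h_{j i} = p^h_{i j}$ by Lemma \ref{lem:phijphji}(i), giving the claimed identity.

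Next I would obtain part (ii) by applying part (i) to the dual idempotent system $\Phi^*$: under the duality the roles of $E_i$ and $E^*_i$, and of $A_i$ and $A^*_i$, are interchanged, and $p^h_{ij}$ becomes $q^h_{ij}$ by Definition \ref{def:qhij}. This turns the statement of (i) into exactly the statement of (ii).

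For parts (iii) and (iv), I would apply the antiautomorphism $\dagger$ from Definition \ref{def:sym} to the equations in (i) and (ii). Since $\dagger$ is an antiautomorphism, $(XYZ)^\dagger = Z^\dagger Y^\dagger X^\dagger$; and $\dagger$ fixes each $E_i$, each $E^*_i$ (Definition \ref{def:sym}) and each $A_i$, $A^*_i$ (Lemma \ref{lem:dagger4}). Applying $\dagger$ to $A_j E^*_i E_0 = \sum_h p^h_{ij} E^*_h E_0$ yields $E_0 E^*_i A_j = \sum_h p^h_{ij} E_0 E^*_h$, which is (iii); applying $\dagger$ to (ii) similarly yields (iv).

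I do not expect any serious obstacle here: the only mild point of care is keeping track of the index order in $p^h_{ij}$ versus $p^h_{ji}$ when passing through \eqref{eq:AiAj}, which is resolved immediately by the symmetry $p^h_{ij}=p^h_{ji}$ already established in Lemma \ref{lem:phijphji}(i). Everything else is the routine application of the dual/transpose symmetry machinery that the paper has already set up.
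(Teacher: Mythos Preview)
Your proof is correct and follows essentially the same approach as the paper: use Lemma~\ref{lem:AiEs0E0}(i) to rewrite $E^*_i E_0$ as $A_i E^*_0 E_0$, expand $A_j A_i$ via \eqref{eq:AiAj}, and apply Lemma~\ref{lem:AiEs0E0}(i) again, then get (ii)--(iv) by duality and $\dagger$. The only cosmetic difference is that you explicitly invoke $p^h_{ji}=p^h_{ij}$ from Lemma~\ref{lem:phijphji}(i), whereas the paper writes $p^h_{ij}$ directly (implicitly using the commutativity of $\mathcal{M}$ or the same symmetry).
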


\begin{proof}
(i)
Using Lemmas \ref{lem:AiEs0E0}(i), \ref{lem:defp}, \ref{lem:AiEs0E0}(i) in order
\[
  A_j E^*_i E_0 = A_j A_i E^*_0 E_0
  = \sum_{h=0}^d p^h_{i j} A_h E^*_0 E_0
  = \sum_{h=0}^d p^h_{i j} E^*_h E_0.
\]

(ii)
Apply (i) to $\Phi^*$.

(iii), (iv)
For the equations in  (i) and (ii), apply $\dagger$ to each side.
\end{proof}

\begin{lemma}   \label{lem:E0EsiAjEsh}    \samepage
\ifDRAFT {\rm lem:E0EsiAjEsh}. \fi
For $0 \leq h,i,j \leq d$ the following hold:
\begin{itemize}
\item[\rm (i)]
$E^*_h A_j E^*_i E_0 = p^h_{i j} E^*_h E_0$;
\item[\rm (ii)]
$E_h A^*_j E_i E^*_0 = q^h_{i j} E_h E^*_0$;
\item[\rm (iii)]
$E_0 E^*_i A_j E^*_h = p^h_{i j} E_0 E^*_h$;
\item[\rm (iv)]
$E^*_0 E_i A^*_j E_h = q^h_{i j} E^*_0 E_h$.
\end{itemize}
\end{lemma}

\begin{proof}
(i)
Using Lemma \ref{lem:AjEsiE0}(i),
\[
E^*_h A_j E^*_i E_0
 = \sum_{s=0}^d p^s_{i j} E^*_h E^*_s E_0
 = p^h_{i j} E^*_h E_0.
\]

(ii)
Apply (i) to $\Phi^*$.

(iii), (iv)
For the equations in  (i) and (ii), apply $\dagger$ to each side.
\end{proof}

\begin{lemma}    \label{lem:EsiAjEsh}    \samepage
\ifDRAFT {\rm lem:EsiAjEsh}. \fi
For $0 \leq h,i,j \leq d$ the following hold:
\begin{itemize}
\item[\rm (i)]
$E_i A^*_j E_h = m_i^{-1} q^h_{i j} E_i E^*_0 E_h$;
\item[\rm (ii)]
$E^*_i A_j E^*_h = (m^*_i)^{-1} p^h_{i j} E^*_i E_0 E^*_h$.
\end{itemize}
\end{lemma}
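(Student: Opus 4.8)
The plan is to exploit the fact that each space $E_i\,\text{\rm End}(V)\,E_h$ is one-dimensional, so that $E_i A^*_j E_h$ is forced to be a scalar multiple of the convenient nonzero element $E_i E^*_0 E_h$, and then to pin down that scalar by a single further multiplication on the left.

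First I would fix $h,i,j$ and invoke Lemma \ref{lem:sumEiAEj}(ii), which gives $\dim E_i\,\text{\rm End}(V)\,E_h = 1$. By Lemma \ref{lem:EsiE0Esj}(i) the element $E_i E^*_0 E_h$ is nonzero, hence it spans $E_i\,\text{\rm End}(V)\,E_h$. Since $E_i A^*_j E_h$ also lies in this space, there is a unique scalar $\beta$ with $E_i A^*_j E_h = \beta\, E_i E^*_0 E_h$; the task is to show $\beta = m_i^{-1} q^h_{i j}$.

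Next I would multiply this relation on the left by $E^*_0$. On the left-hand side, Lemma \ref{lem:E0EsiAjEsh}(iv) gives $E^*_0 E_i A^*_j E_h = q^h_{i j}\, E^*_0 E_h$. On the right-hand side, Lemma \ref{lem:E0EsiE0}(i) gives $E^*_0 E_i E^*_0 = m_i E^*_0$, so the right-hand side becomes $\beta m_i\, E^*_0 E_h$. Comparing the two, $(q^h_{i j} - \beta m_i)\, E^*_0 E_h = 0$. Now $E^*_0 E_h \neq 0$ (indeed $E^*_0 E_h E^*_0 = m_h E^*_0 \neq 0$ by Lemma \ref{lem:E0EsiE0}(i) and Lemma \ref{lem:mi3}(i)), and $m_i \neq 0$ by Lemma \ref{lem:mi3}(i), so $\beta = m_i^{-1} q^h_{i j}$, proving (i). Part (ii) then follows by applying (i) to $\Phi^*$, using $(q^h_{i j})^* = p^h_{i j}$ from Definition \ref{def:qhij} together with $m_i^* = m^*_i$.

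There is no serious obstacle here; the argument is a short application of the one-dimensionality in Lemma \ref{lem:sumEiAEj}(ii) together with the previously established reduction rules. The only points requiring care are choosing the correct reduction rule (Lemma \ref{lem:E0EsiAjEsh}(iv)) and on which side to apply $E^*_0$, and remembering that $E^*_0 E_h \neq 0$ so that the scalar relation can actually be cancelled — which is exactly where Lemma \ref{lem:E0EsiE0}(i) and the nonvanishing of $m_h$ enter.
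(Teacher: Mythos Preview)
Your argument is correct, but it is slightly more circuitous than the paper's. Both proofs hinge on the same reduction rule, Lemma \ref{lem:E0EsiAjEsh}(iv), but the paper simply multiplies that identity on the left by $E_i$ and then invokes Lemma \ref{lem:EsiE0Esi}(i), $E_i E^*_0 E_i = m_i E_i$, to obtain $m_i\, E_i A^*_j E_h = q^h_{ij}\, E_i E^*_0 E_h$ in one step. Your route --- first appealing to the one-dimensionality of $E_i\,\text{\rm End}(V)\,E_h$ to posit the scalar $\beta$, then multiplying by $E^*_0$ to identify it --- works fine, but the preliminary dimension argument and the nonvanishing check for $E^*_0 E_h$ are unnecessary once you see that multiplying by $E_i$ (rather than $E^*_0$) lands directly on the target expression.
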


\begin{proof}
(i)
In Lemma \ref{lem:E0EsiAjEsh}(iv), multiply each side on the left by $E_i$.
Simplify the result using Lemma \ref{lem:EsiE0Esi}(i).

(ii)
Apply (i) to $\Phi^*$.
\end{proof}

\begin{lemma}   \label{lem:EsiAjEsh2}    \samepage
\ifDRAFT {\rm lem:EsiAjEsh2}. \fi
For $0 \leq h,i,j \leq d$ the following hold:
\begin{itemize}
\item[\rm (i)]
$E^*_i A_j E^*_h = 0 \;\;$ if and only if $\;\; p^h_{i j} = 0$;
\item[\rm (ii)]
$E_i A^*_j E_h = 0  \;\;$ if and only if $\;\; q^h_{i j} = 0$.
\end{itemize}
\end{lemma}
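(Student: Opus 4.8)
The plan is to deduce the statement directly from Lemma~\ref{lem:EsiAjEsh}(ii), which asserts
\[
  E^*_i A_j E^*_h = (m^*_i)^{-1} p^h_{i j} E^*_i E_0 E^*_h
  \qquad (0 \leq h,i,j \leq d).
\]
Since $(m^*_i)^{-1}$ is a nonzero scalar by Lemma~\ref{lem:mi3}(i) applied to $\Phi^*$ (equivalently, by Lemma~\ref{lem:ki}(i) via Lemma~\ref{lem:kimi}), the vanishing of the left-hand side is equivalent to $p^h_{i j} E^*_i E_0 E^*_h = 0$. So the crux is to show that $E^*_i E_0 E^*_h \neq 0$ for all $0 \leq i,h \leq d$; once this is in hand, $p^h_{i j} E^*_i E_0 E^*_h = 0$ forces $p^h_{i j} = 0$, and conversely $p^h_{ij}=0$ obviously makes the expression vanish.

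First I would establish that $E^*_i E_0 E^*_h \neq 0$. This is the dual statement of Lemma~\ref{lem:EsiE0Esj}(i), which says $\{E_i E^*_0 E_j \mid 0 \leq i,j \leq d\}$ is a basis of $\text{\rm End}(V)$; in particular each $E_i E^*_0 E_j$ is nonzero, and applying this to $\Phi^*$ gives that each $E^*_i E_0 E^*_h$ is nonzero. (Alternatively one can argue directly as in the proof of Lemma~\ref{lem:EsiE0Esj}(i): if $E^*_i E_0 E^*_h = 0$ then sandwiching by $E_0$ on the left and right and using Lemmas~\ref{lem:E0EsiE0}(i) and \ref{lem:mi3}(i) applied to $\Phi^*$ yields $0 = m^*_i m^*_h E_0 \neq 0$, a contradiction.) With this, part (i) follows immediately from the displayed equation in Lemma~\ref{lem:EsiAjEsh}(ii), and part (ii) follows by applying (i) to $\Phi^*$.

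I do not expect a genuine obstacle here: the real content has already been packaged into Lemma~\ref{lem:EsiAjEsh}, and all that remains is the observation that the scalar $(m^*_i)^{-1}$ and the operator $E^*_i E_0 E^*_h$ appearing there are both nonzero. If anything, the only point requiring a moment's care is making sure the nonvanishing of $E^*_i E_0 E^*_h$ is invoked from the correct (dualized) earlier lemma rather than asserted afresh.
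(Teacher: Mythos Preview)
Your proposal is correct and follows essentially the same approach as the paper: the paper's proof is the single line ``By Lemmas \ref{lem:EsiE0Esj} and \ref{lem:EsiAjEsh},'' which is exactly the combination you describe. Note that the nonvanishing of $E^*_i E_0 E^*_h$ is recorded directly as Lemma~\ref{lem:EsiE0Esj}(ii), so you can cite it there rather than dualizing part~(i) yourself.
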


\begin{proof}
By Lemmas \ref{lem:EsiE0Esj} and \ref{lem:EsiAjEsh}.
\end{proof}

\begin{lemma}  \label{lem:phij}    \samepage
\ifDRAFT {\rm lem:phij}. \fi
For $0 \leq h,i,j \leq d$ the following hold:
\begin{itemize}
\item[\rm (i)]
$p^h_{i j} = (m^*_h)^{-1} \text{\rm tr} (E_0 E^*_i A_j E^*_h)$;
\item[\rm (ii)]
$q^h_{i j} = m_h^{-1} \text{\rm tr} (E^*_0 E_i A^*_j E_h)$.
\end{itemize}
\end{lemma}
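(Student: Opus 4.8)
The plan is to derive (i) directly from the reduction rule in Lemma \ref{lem:E0EsiAjEsh}(iii) by taking traces, and then obtain (ii) by applying (i) to $\Phi^*$. First I would invoke Lemma \ref{lem:E0EsiAjEsh}(iii), which gives
\[
  E_0 E^*_i A_j E^*_h = p^h_{i j}\, E_0 E^*_h
  \qquad (0 \leq h,i,j \leq d).
\]
Taking the trace of each side yields $\text{\rm tr}(E_0 E^*_i A_j E^*_h) = p^h_{i j}\,\text{\rm tr}(E_0 E^*_h)$.

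Next I would identify $\text{\rm tr}(E_0 E^*_h)$ with $m^*_h$. Applying Definition \ref{def:mi2} to the dual system $\Phi^*$ (in which the roles of $\{E_i\}_{i=0}^d$ and $\{E^*_i\}_{i=0}^d$ are interchanged), we get $m^*_h = \text{\rm tr}(E_0 E^*_h)$. Moreover $m^*_h \neq 0$ by Lemma \ref{lem:mi3}(i) applied to $\Phi^*$, so $m^*_h$ is invertible. Combining this with the previous display gives $\text{\rm tr}(E_0 E^*_i A_j E^*_h) = p^h_{i j}\, m^*_h$, and dividing by $m^*_h$ establishes (i). For (ii), I would simply apply (i) to $\Phi^*$ and use Definition \ref{def:qhij} together with $(m^*_h)^* = m_h$.

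There is no real obstacle here: the argument is a short trace computation once the reduction rule Lemma \ref{lem:E0EsiAjEsh}(iii) is in hand, and the only substantive point is the bookkeeping observation that $\text{\rm tr}(E_0 E^*_h) = m^*_h$ is exactly the dual instance of Definition \ref{def:mi2}. Care is needed only to make sure the duality bookkeeping (which scalars carry a star, and that $\nu = \nu^*$, $(m^*_h)^* = m_h$) is applied consistently when passing to $\Phi^*$ for part (ii).
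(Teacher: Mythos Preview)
Your proposal is correct and follows essentially the same approach as the paper: take the trace of each side of Lemma \ref{lem:E0EsiAjEsh}(iii) and identify $\text{\rm tr}(E_0 E^*_h)$ with $m^*_h$ via Definition \ref{def:mi2}, then apply the result to $\Phi^*$ for part (ii). Your version is slightly more explicit in justifying that $m^*_h \neq 0$ before dividing, but the argument is the same.
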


\begin{proof}
(i)
In Lemma \ref{lem:E0EsiAjEsh}(iii), take the trace of each side,
and simplify the result using Definition \ref{def:mi2}.

(ii)
Apply (i) to $\Phi^*$.
\end{proof}

\section{The scalars $p_i(j)$, $q_i(j)$}
\label{sec:pij}

We continue to discuss a symmetric idempotent system
$\Phi = (\{E_i\}_{i=0}^d; \{E^*_i\}_{i=0}^d)$ on $V$.
In this section we use $\Phi$ to define some scalars $p_i (j)$, $q_i (j)$
that will play a role in our theory.
Recall the algebra $\cal M$ from Definition \ref{def:D}.

\begin{lemma}    \label{lem:pij}    \samepage
\ifDRAFT {\rm lem:pij}. \fi
There exist scalars $p_i(j)$ $(0 \leq i,j \leq d)$  such that
\begin{align}
A_i &= \sum_{j=0}^d p_i(j) E_j  &&  (0 \leq i \leq d).    \label{eq:Ai}
\end{align}
\end{lemma}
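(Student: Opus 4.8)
The plan is to observe that this is an immediate consequence of membership in $\cal M$ together with the basis property of $\{E_j\}_{j=0}^d$. First I would recall from Definition \ref{def:Ai} that $A_i = \nu (E^*_i)^{\rho^*}$ lies in $\cal M$, since $\rho^* : {\cal M}^* \to {\cal M}$ has image $\cal M$; alternatively one can simply cite Lemma \ref{lem:AiAsi}, which asserts that $\{A_i\}_{i=0}^d$ is a basis of the vector space $\cal M$, so in particular each $A_i \in {\cal M}$. Then I would recall from Definition \ref{def:D} that $\{E_j\}_{j=0}^d$ is a basis for the vector space $\cal M$.

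Combining these two facts, $A_i$ can be expressed uniquely as a linear combination of $E_0, E_1, \ldots, E_d$; defining $p_i(j)$ $(0 \leq j \leq d)$ to be the coefficients in this expansion gives scalars satisfying \eqref{eq:Ai}. There is no real obstacle here: the statement is purely a matter of linear algebra once the two basis facts are in place. (One could additionally remark, though it is not asserted in the statement, that the $p_i(j)$ are uniquely determined, again because $\{E_j\}_{j=0}^d$ is a basis; and that $A_i$ is diagonalizable with the $E_j$ among its primitive idempotents, so $p_i(j)$ is the eigenvalue of $A_i$ on $E_j V$, consistent with Definition \ref{def:ki} at $j=0$.)
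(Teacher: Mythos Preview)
Your proposal is correct and takes essentially the same approach as the paper: the paper's proof simply observes that $\{E_i\}_{i=0}^d$ is a basis of $\cal M$ (Definition \ref{def:D}) and that $A_i \in {\cal M}$ (Definition \ref{def:Ai}), from which the result follows. Your additional parenthetical remarks about uniqueness and the eigenvalue interpretation are accurate but go beyond what the lemma asserts.
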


\begin{proof}
By Definition \ref{def:D} the elements
$\{E_i\}_{i=0}^d$ form a basis of $\cal M$.
By Definition \ref{def:Ai}, $A_i \in {\cal M}$ for $0 \leq i \leq d$.
The result follows.
\end{proof}

\begin{definition}    \label{def:qij}    \samepage
\ifDRAFT {\rm def:qij}. \fi
For $0 \leq i,j \leq d$ define $q_i(j) = (p_i(j))^*$.
\end{definition}

\begin{lemma}    \label{lem:Asi}    \samepage
\ifDRAFT {\rm lem:Asi}. \fi
For $0 \leq i,j \leq d$,
\begin{align}
 A^*_i &= \sum_{j=0}^d q_i (j) E^*_j  &&  (0 \leq i \leq d).   \label{eq:Asi}
\end{align}
\end{lemma}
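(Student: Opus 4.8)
The plan is to recognize that the asserted identity \eqref{eq:Asi} is simply the $*$-dual of \eqref{eq:Ai}, so that it follows by applying Lemma \ref{lem:pij} to $\Phi^*$ together with the definition of $q_i(j)$ in Definition \ref{def:qij}. Concretely, I would first note that $\Phi^*$ is again a symmetric idempotent system: the antiautomorphism $\dagger$ of $\text{\rm End}(V)$ from Definition \ref{def:sym} fixes each $E_i$ and each $E^*_i$, hence it equally serves as an antiautomorphism witnessing the symmetry of $\Phi^*$. Thus all results proved for an arbitrary symmetric idempotent system, Lemma \ref{lem:pij} in particular, may be invoked for $\Phi^*$.

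Next I would carefully translate Lemma \ref{lem:pij} under the duality. For $\Phi$ the lemma yields scalars $p_i(j)$ with $A_i = \sum_{j=0}^d p_i(j)E_j$; applying it to $\Phi^*$ and using the convention that an object $f$ attached to $\Phi$ has counterpart $f^*$ attached to $\Phi^*$, the element $A_i$ is replaced by $A^*_i$, the idempotent $E_j$ is replaced by $E^*_j$, and the scalar $p_i(j)$ is replaced by $p_i^*(j)=(p_i(j))^*$. This gives
\[
A^*_i = \sum_{j=0}^d (p_i(j))^* E^*_j \qquad (0 \leq i \leq d).
\]
Finally, Definition \ref{def:qij} states precisely $q_i(j) = (p_i(j))^*$, so substituting this in produces \eqref{eq:Asi}, completing the proof.

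There is essentially no obstacle here; the only point requiring a small amount of care is the bookkeeping of the duality convention, namely confirming that the object attached to $\Phi^*$ corresponding to $A_i$ is indeed $A^*_i$ (which follows from Definition \ref{def:Ai} read for $\Phi^*$, via $\rho^*$) and that the scalar $p_i(j)$ dualizes to $(p_i(j))^*$, not to something else. Once this is in place the argument is a one-line appeal to Lemma \ref{lem:pij} and Definition \ref{def:qij}.
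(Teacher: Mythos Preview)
Your proposal is correct and matches the paper's own proof essentially verbatim: the paper's proof reads ``Apply Lemma \ref{lem:pij} to $\Phi^*$ and use Definition \ref{def:qij}.'' Your additional remarks about why $\Phi^*$ is again a symmetric idempotent system and about the duality bookkeeping are accurate elaborations of what the paper leaves implicit.
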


\begin{proof}
Apply Lemma \ref{lem:pij} to $\Phi^*$ and use Definition \ref{def:qij}.
\end{proof}

\begin{lemma}    \label{lem:AiEj}    \samepage
\ifDRAFT {\rm lem:AiEj}. \fi
For $0 \leq i,j \leq d$ the following hold:
\begin{itemize}
\item[\rm (i)]
$A_i E_j = E_j A_i = p_i (j) E_j$;
\item[\rm (ii)]
$A^*_i E^*_j = E^*_j A^*_i = q_i(j) E^*_j$.
\end{itemize}
In other words, $p_i (j)$ (resp.\ $q_i (j)$) is the eigenvalue of $A_i$
(resp.\ $A^*_i$) associated with $E_j V$ (resp.\ $E^*_j V$).
\end{lemma}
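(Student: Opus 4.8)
The plan is to read everything off the defining expansion in Lemma~\ref{lem:pij}, namely $A_i = \sum_{h=0}^d p_i(h) E_h$, together with the orthogonal idempotent relations $E_h E_j = \delta_{h,j} E_j$ from Definition~\ref{def:orth}. First I would prove part (i): multiplying $A_i = \sum_{h=0}^d p_i(h) E_h$ on the right by $E_j$ gives $A_i E_j = \sum_{h=0}^d p_i(h) E_h E_j = p_i(j) E_j$, and multiplying on the left by $E_j$ gives $E_j A_i = p_i(j) E_j$ in exactly the same way. Since $A_i \in {\cal M}$ and $E_j \in {\cal M}$ with ${\cal M}$ commutative (Definition~\ref{def:D}), the two products agree in any case, so $A_i E_j = E_j A_i = p_i(j) E_j$.

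For part (ii), I would apply part (i) to the dual idempotent system $\Phi^*$, which yields $A^*_i E^*_j = E^*_j A^*_i = (p_i(j))^* E^*_j$, and then invoke Definition~\ref{def:qij}, which sets $q_i(j) = (p_i(j))^*$, to rewrite this as $A^*_i E^*_j = E^*_j A^*_i = q_i(j) E^*_j$.

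Finally, for the ``in other words'' assertion: since $E_j$ has rank $1$, the subspace $E_j V$ is nonzero (indeed one-dimensional). For any $v \in E_j V$ we have $v = E_j v$, hence $A_i v = A_i E_j v = p_i(j) E_j v = p_i(j) v$ by part (i); thus every vector of $E_j V$ is an eigenvector of $A_i$ with eigenvalue $p_i(j)$, so $p_i(j)$ is the eigenvalue of $A_i$ associated with $E_j V$. The statement for $A^*_i$ and $E^*_j V$ follows either by applying this to $\Phi^*$ or directly from part (ii). There is essentially no obstacle here; the only point worth noting is that the full eigenspace of $A_i$ for the value $p_i(j)$ may strictly contain $E_j V$ when some of the scalars $p_i(0), \ldots, p_i(d)$ coincide, but the claim only concerns the eigenvalue attached to $E_j V$, which the computation $A_i v = p_i(j) v$ for $v \in E_j V$ supplies directly.
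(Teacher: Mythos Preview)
Your proof is correct and follows exactly the paper's approach: use the expansion \eqref{eq:Ai} from Lemma~\ref{lem:pij} together with the orthogonality $E_h E_j = \delta_{h,j} E_j$, then apply the result to $\Phi^*$ for part (ii). The paper's proof is just the terse version of what you wrote.
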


\begin{proof}
(i)
Use \eqref{eq:Ai}.

(ii)
Apply (i) to $\Phi^*$.
\end{proof}

\begin{lemma}    \label{lem:EiEsi}    \samepage
\ifDRAFT {\rm lem:EiEsi}. \fi
For $0 \leq i \leq d$ the following hold:
\begin{itemize}
\item[\rm (i)]
$E^*_i = \nu^{-1} \sum_{j=0}^d p_i (j) A^*_j$;
\item[\rm (ii)]
$E_i = \nu^{-1} \sum_{j=0}^d q_i(j) A_j$.
\end{itemize}
\end{lemma}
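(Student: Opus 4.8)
The plan is to derive (i) by applying the $\F$-linear map $\rho : {\cal M} \to {\cal M}^*$ (Lemma \ref{lem:rho}) to the relation \eqref{eq:Ai} that defines the scalars $p_i(j)$, and then to obtain (ii) from (i) by passing to the dual system $\Phi^*$. This mirrors the bookkeeping used throughout Sections \ref{sec:Ai}--\ref{sec:red2}, so no new machinery is needed.

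For part (i), I would start from \eqref{eq:Ai}, that is $A_i = \sum_{j=0}^d p_i(j) E_j$, and apply $\rho$ to both sides. Since $\rho$ is $\F$-linear, the right-hand side becomes $\sum_{j=0}^d p_i(j)\, E_j^\rho$. Now I evaluate the images under $\rho$ by Lemma \ref{lem:rhorhos}: on the left $A_i^\rho = E^*_i$, and in each summand on the right $E_j^\rho = \nu^{-1} A^*_j$. This gives $E^*_i = \nu^{-1} \sum_{j=0}^d p_i(j)\, A^*_j$, which is exactly (i). The only thing to be careful about is that $A_i$ and every $E_j$ lie in ${\cal M}$, so $\rho$ is defined on all the relevant elements; this holds by Definition \ref{def:D} and Definition \ref{def:Ai}.

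For part (ii), I would apply (i) to the dual idempotent system $\Phi^*$. Under $\Phi \mapsto \Phi^*$ the roles of $E_i$ and $E^*_i$ are interchanged, likewise $A_i$ and $A^*_i$, the scalar $p_i(j)$ is replaced by $q_i(j)$ by Definition \ref{def:qij}, and $\nu$ is unchanged since $\nu = \nu^*$ (Definition \ref{def:nu}). Thus (i) for $\Phi^*$ reads $E_i = \nu^{-1} \sum_{j=0}^d q_i(j)\, A_j$, which is (ii).

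There is no real obstacle here; the statement is essentially the image of \eqref{eq:Ai} under $\rho$. The one point that deserves a moment's attention is the dualization in the final step, namely confirming that each object occurring in (i) --- the map $\rho$, the scalar $\nu$, the elements $A^*_j$, the idempotents $E^*_i$, and the scalars $p_i(j)$ --- is attached to an arbitrary symmetric idempotent system and hence has a well-defined counterpart for $\Phi^*$, which is the case.
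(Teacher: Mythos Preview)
Your proof is correct and follows exactly the paper's approach: apply $\rho$ to \eqref{eq:Ai} and invoke Lemma~\ref{lem:rhorhos} for (i), then dualize to $\Phi^*$ for (ii). The extra remarks you include (that $A_i, E_j \in {\cal M}$ so $\rho$ is defined, and that $\nu = \nu^*$) are accurate elaborations of steps the paper leaves implicit.
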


\begin{proof}
(i)
In \eqref{eq:Ai}, apply $\rho$ to each side and use
Lemma \ref{lem:rhorhos}.

(ii)
Apply (i) to $\Phi^*$.
\end{proof}

\begin{lemma}    \label{lem:pihqhj}    \samepage
\ifDRAFT {\rm lem:pihqhj}. \fi
For $0 \leq i,j \leq d$ the following hold:
\begin{itemize}
\item[\rm (i)]
$\sum_{h=0}^d p_i (h) q_h (j) = \delta_{i,j} \nu$;
\item[\rm (ii)]
$\sum_{h=0}^d q_i (h) p_h (j) = \delta_{i,j} \nu$.
\end{itemize}
\end{lemma}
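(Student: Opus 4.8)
The plan is to prove part (i) by combining the two expansions already available: equation \eqref{eq:Ai}, which writes $A_i = \sum_{j=0}^d p_i(j)E_j$, and Lemma \ref{lem:EiEsi}(ii), which writes $E_i = \nu^{-1}\sum_{j=0}^d q_i(j)A_j$. First I would take \eqref{eq:Ai} and substitute the expression for each $E_h$ from Lemma \ref{lem:EiEsi}(ii), obtaining
\[
A_i = \sum_{h=0}^d p_i(h) E_h
    = \nu^{-1}\sum_{h=0}^d p_i(h)\sum_{j=0}^d q_h(j) A_j
    = \nu^{-1}\sum_{j=0}^d\Bigl(\sum_{h=0}^d p_i(h)q_h(j)\Bigr) A_j.
\]
Then I would invoke Lemma \ref{lem:AiAsi}, which says that $\{A_j\}_{j=0}^d$ is a basis of $\cal M$, to compare the coefficient of $A_j$ on the two sides. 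This yields $\delta_{i,j} = \nu^{-1}\sum_{h=0}^d p_i(h)q_h(j)$, which is exactly (i) after multiplying by $\nu$.

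For part (ii), I would simply apply part (i) to the dual idempotent system $\Phi^*$. By Definition \ref{def:qij} the scalars $p_i(j)$ and $q_i(j)$ are interchanged under passage to $\Phi^*$, and $\nu = \nu^*$ by Definition \ref{def:nu}, so the identity $\sum_{h=0}^d p_i(h)q_h(j) = \delta_{i,j}\nu$ for $\Phi^*$ reads $\sum_{h=0}^d q_i(h)p_h(j) = \delta_{i,j}\nu$ for $\Phi$, which is (ii).

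I do not expect any real obstacle here; the only point requiring care is that the comparison of coefficients is legitimate, which is guaranteed by Lemma \ref{lem:AiAsi} (linear independence of the $A_j$). An equivalent route, should one prefer to avoid the dual-system shortcut for (ii), is to substitute \eqref{eq:Ai} into Lemma \ref{lem:EiEsi}(ii) instead, expressing $E_i$ back in terms of the $E_j$ and comparing coefficients using that $\{E_j\}_{j=0}^d$ is a basis of $\cal M$; this produces (ii) directly by the same mechanism.
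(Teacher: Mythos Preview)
Your proof is correct and follows essentially the same approach as the paper: for (i) you substitute Lemma~\ref{lem:EiEsi}(ii) into \eqref{eq:Ai} and compare coefficients using the linear independence of $\{A_j\}_{j=0}^d$, and for (ii) you apply (i) to $\Phi^*$. The paper's proof is identical in substance, just stated more tersely.
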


\begin{proof}
(i)
By \eqref{eq:Ai}, $A_i = \sum_{h=0}^d p_i (h) E_h$.
In this equation, eliminate $E_h$ using Lemma \ref{lem:EiEsi}(ii),
and compare the coefficients of each side.

(ii)
Apply (i) to $\Phi^*$.
\end{proof}

\begin{lemma}  \label{lem:p0j}    \samepage
\ifDRAFT {\rm lem:p0j}. \fi
For $0 \leq j \leq d$ the following hold:
\begin{itemize}
\item[\rm (i)]
$p_0 (j) = 1$;
\item[\rm (ii)]
$q_0 (j) = 1$.
\end{itemize}
\end{lemma}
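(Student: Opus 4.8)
The plan is to read off both equalities directly from the defining relation \eqref{eq:Ai} together with the normalization $A_0 = I$. First I would recall Lemma \ref{lem:A0As0}, which gives $A_0 = I$, and Lemma \ref{lem:trE}(ii), which gives $I = \sum_{j=0}^d E_j$. Combining these, $\sum_{j=0}^d E_j = A_0 = \sum_{j=0}^d p_0(j) E_j$, where the last expression is the $i=0$ instance of \eqref{eq:Ai}.

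Next I would invoke the fact, recorded in Definition \ref{def:D} (and used in the proof of Lemma \ref{lem:pij}), that $\{E_j\}_{j=0}^d$ is a basis for the vector space $\cal M$. Hence the expansion of an element of $\cal M$ in this basis is unique, and comparing coefficients in the identity above yields $p_0(j) = 1$ for $0 \leq j \leq d$. This proves part (i).

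For part (ii), I would simply apply part (i) to the dual idempotent system $\Phi^*$ and invoke Definition \ref{def:qij}, which sets $q_0(j) = (p_0(j))^*$; since the starred version of part (i) gives $(p_0(j))^* = 1$, we get $q_0(j) = 1$.

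There is no real obstacle here: the only subtlety is making sure the uniqueness of the $E_j$-expansion is cited correctly, which is immediate from $\{E_j\}_{j=0}^d$ being a basis of $\cal M$. Everything else is a one-line coefficient comparison followed by dualization, exactly in the style of the preceding lemmas (e.g.\ the proof of Lemma \ref{lem:phi0}).
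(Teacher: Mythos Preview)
Your proof is correct and follows essentially the same approach as the paper: set $i=0$ in \eqref{eq:Ai}, use $A_0 = I$, and compare coefficients against the basis $\{E_j\}_{j=0}^d$, then dualize for (ii). You have simply spelled out the coefficient comparison (via $I = \sum_{j=0}^d E_j$ and the basis property) more explicitly than the paper's terse one-line version.
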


\begin{proof}
(i)
Set $i=0$ in \eqref{eq:Ai} and recall that $A_0 = I$.

(ii) Apply (i) to $\Phi^*$.
\end{proof}

\begin{lemma}   \label{lem:pi0}    \samepage
\ifDRAFT {\rm lem:pi0}. \fi
For $0 \leq i \leq d$ the following hold:
\begin{itemize}
\item[\rm (i)]
$p_i (0) = k_i$;
\item[\rm (ii)]
$q_i (0) = k^*_i$.
\end{itemize}
\end{lemma}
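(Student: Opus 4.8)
The plan is to prove part (i) directly and obtain part (ii) by applying (i) to $\Phi^*$, exactly as the paper does throughout. For part (i), the key observation is that $k_i$ was defined in Definition \ref{def:ki} as the eigenvalue of $A_i$ associated with $E_0$, while $p_i(0)$ is, by Lemma \ref{lem:AiEj}(i), the eigenvalue of $A_i$ associated with $E_0 V$. These are manifestly the same thing, so the proof is essentially a matter of quoting the right earlier results.

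Concretely, I would start from the expansion $A_i = \sum_{j=0}^d p_i(j) E_j$ in \eqref{eq:Ai}, multiply both sides on the right by $E_0$, and use the orthogonality $E_j E_0 = \delta_{j,0} E_0$ from Definition \ref{def:orth} to obtain $A_i E_0 = p_i(0) E_0$. Comparing this with Lemma \ref{lem:AiE0}(i), which states $A_i E_0 = k_i E_0$, and using that $E_0 \neq 0$, gives $p_i(0) = k_i$. (Alternatively one can simply invoke Lemma \ref{lem:AiEj}(i) together with Definition \ref{def:ki} directly, without re-deriving $A_i E_0 = p_i(0) E_0$.) Then part (ii) follows by applying part (i) to $\Phi^*$ and using Definition \ref{def:qij} together with $k^*_i = (k_i)^*$.

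There is no real obstacle here; the statement is a bookkeeping identity reconciling two notations for the same eigenvalue. If anything, the only thing to be careful about is making sure the comparison step uses the fact that $E_0$ is nonzero (indeed rank $1$) so that the scalar equation $p_i(0) E_0 = k_i E_0$ forces $p_i(0) = k_i$.

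\begin{proof}
(i)
By Lemma \ref{lem:AiEj}(i), $A_i E_0 = p_i(0) E_0$. By Lemma \ref{lem:AiE0}(i), $A_i E_0 = k_i E_0$. Comparing these two equations and using $E_0 \neq 0$, we obtain $p_i(0) = k_i$.

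(ii)
Apply (i) to $\Phi^*$.
\end{proof}
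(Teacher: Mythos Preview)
Your proof is correct and follows essentially the same approach as the paper: set $j=0$ in Lemma \ref{lem:AiEj}(i) to get $A_i E_0 = p_i(0) E_0$, compare with Lemma \ref{lem:AiE0}(i), and then apply (i) to $\Phi^*$ for part (ii). The paper states this slightly more tersely, but the argument is identical.
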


\begin{proof}
(i)
Set $j=0$ in Lemma \ref{lem:AiEj}(i) 
and compare the result with Lemma \ref{lem:AiE0}(i).

(ii)
Apply (i) to $\Phi^*$.
\end{proof}

\begin{lemma}   \label{lem:sumphj}   \samepage
\ifDRAFT {\rm lem:sumphj}. \fi
For $0 \leq j \leq d$ the following hold:
\begin{itemize}
\item[\rm (i)]
$\sum_{h=0}^d p_h (j) = \delta_{0, j} \nu$;
\item[\rm (ii)]
$\sum_{h=0}^d q_h (j) = \delta_{0,j} \nu$.
\end{itemize}
\end{lemma}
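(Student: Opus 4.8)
The plan is to extract both identities from the single relation $\sum_{h=0}^d A_h = \nu E_0$ established in Lemma \ref{lem:sumAi}, by rewriting each $A_h$ in the basis $\{E_j\}_{j=0}^d$ of $\cal M$ and comparing coefficients.

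First I would start from Lemma \ref{lem:sumAi}, namely $\sum_{h=0}^d A_h = \nu E_0$. Using \eqref{eq:Ai} to expand each $A_h$, the left-hand side becomes
\[
\sum_{h=0}^d A_h = \sum_{h=0}^d \sum_{j=0}^d p_h(j)\,E_j = \sum_{j=0}^d \Bigl( \sum_{h=0}^d p_h(j) \Bigr) E_j .
\]
On the other hand $\nu E_0 = \sum_{j=0}^d \nu\,\delta_{0,j}\,E_j$. Since $\{E_j\}_{j=0}^d$ is a basis of the vector space $\cal M$ (Definition \ref{def:D}), and both expressions lie in $\cal M$, I can equate the coefficient of $E_j$ on each side to conclude $\sum_{h=0}^d p_h(j) = \delta_{0,j}\,\nu$ for $0 \leq j \leq d$, which is part (i). For part (ii), I would simply apply (i) to the dual idempotent system $\Phi^*$ and invoke $\nu = \nu^*$ from Definition \ref{def:nu} together with Definition \ref{def:qij} ($q_i(j) = (p_i(j))^*$).

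There is essentially no obstacle here; the only point requiring a word of care is that the comparison of coefficients is legitimate precisely because $\{E_j\}_{j=0}^d$ is a \emph{basis} of $\cal M$, so the expansion of an element of $\cal M$ in this basis is unique. An alternative route, if one prefers not to cite Lemma \ref{lem:sumAi}, would be to sum the eigenvalue relations of Lemma \ref{lem:AiEj}(i) over $i$, but this ultimately reduces to the same computation, so I would use the shorter argument above.
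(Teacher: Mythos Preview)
Your proof is correct, but it takes a different route from the paper's. The paper proves (i) by setting $i=0$ in Lemma~\ref{lem:pihqhj}(ii), which reads $\sum_{h=0}^d q_i(h)\,p_h(j) = \delta_{i,j}\nu$, and then using $q_0(h)=1$ from Lemma~\ref{lem:p0j}(ii) to collapse the sum. Your argument instead goes directly from the identity $\sum_{h=0}^d A_h = \nu E_0$ of Lemma~\ref{lem:sumAi}, expands each $A_h$ via \eqref{eq:Ai}, and compares coefficients in the basis $\{E_j\}_{j=0}^d$. Both arguments are short and valid; yours is slightly more self-contained in that it avoids the intermediate $P$--$Q$ orthogonality relation, while the paper's version highlights that this lemma is really just a specialization of the inverse relation $QP = \nu I$ already established. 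Part (ii) is handled identically in both proofs, by duality.
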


\begin{proof}
(i)
Set $i=0$ in Lemma \ref{lem:pihqhj}(ii), and evaluate the result
using Lemma \ref{lem:p0j}(ii).

(ii)
Apply (i) to $\Phi^*$.
\end{proof}

\begin{lemma}    \label{lem:kshpih}    \samepage
\ifDRAFT {\rm lem:kshpih}. \fi
For $0 \leq i \leq d$ the following hold:
\begin{itemize}
\item[\rm (i)]
$\sum_{h=0}^d m_h p_i (h) = \delta_{i,0}$;
\item[\rm (ii)]
$\sum_{h=0}^d m^*_h q_i (h) = \delta_{i,0}$.
\end{itemize}
\end{lemma}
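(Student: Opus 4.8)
The claim is the pair of identities $\sum_{h=0}^d m_h\, p_i(h)=\delta_{i,0}$ and $\sum_{h=0}^d m^*_h\, q_i(h)=\delta_{i,0}$ for $0\le i\le d$. Since the second is just the first applied to $\Phi^*$ (using $m^*_h=(m_h)^*$ and $q_i(h)=(p_i(h))^*$), I only need to prove (i). The natural strategy is to extract the scalar $\sum_h m_h p_i(h)$ from a suitable element of $\text{\rm End}(V)$ and simplify it using the reduction rules already available. Recall $m_h=\text{\rm tr}(E^*_0 E_h)$ from Definition \ref{def:mi2}, and from \eqref{eq:Ai} that $A_i=\sum_{h=0}^d p_i(h) E_h$.

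The plan is as follows. Starting from $A_i=\sum_{h=0}^d p_i(h)E_h$, multiply each side on the left by $E^*_0$ and take the trace. Using $\text{\rm tr}(E^*_0 E_h)=m_h$ (Definition \ref{def:mi2}) and linearity of the trace, the right-hand side becomes $\sum_{h=0}^d m_h\, p_i(h)$. For the left-hand side I must evaluate $\text{\rm tr}(E^*_0 A_i)$. Here I would use the relation between $A_i$ and the $E_j^*$'s: by Lemma \ref{lem:EsjAiEs0} or more directly by multiplying $A_i E^*_0 E_0 = E^*_i E_0$ (Lemma \ref{lem:AiEs0E0}(i)) appropriately, one can relate $E^*_0 A_i$ to $E^*_0 A_i$ acting through the idempotents $\{E^*_j\}$. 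Actually the cleanest route: by Lemma \ref{lem:EsjAiEs0}(iii), $E^*_0 A_i E^*_j=\delta_{i,j}E^*_0 A_i$, so $E^*_0 A_i = E^*_0 A_i\sum_j E^*_j = E^*_0 A_i E^*_i$, hence $E^*_0 A_i$ is supported "between" $E^*_0$ and $E^*_i$. Taking traces, $\text{\rm tr}(E^*_0 A_i)=\text{\rm tr}(E^*_0 A_i E^*_i)=\text{\rm tr}(E^*_i E^*_0 A_i)=0$ unless $i=0$ (since $E^*_i E^*_0=\delta_{i,0}E^*_0$). When $i=0$, $A_0=I$ by Lemma \ref{lem:A0As0}, so $\text{\rm tr}(E^*_0 A_0)=\text{\rm tr}(E^*_0)=1$ by Lemma \ref{lem:trE}(i). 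This gives $\sum_{h=0}^d m_h p_i(h)=\delta_{i,0}$.

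An alternative, perhaps even shorter, route avoids traces: apply the map $\rho$ to $A_i=\sum_h p_i(h)E_h$. By Lemma \ref{lem:rhorhos}, $A_i^\rho = E^*_i$ and $E_h^\rho=\nu^{-1}A^*_h$, so $E^*_i = \nu^{-1}\sum_h p_i(h)A^*_h$ — this is just Lemma \ref{lem:EiEsi}(i), already known. Now multiply both sides of Lemma \ref{lem:EiEsi}(i) on the right by $E^*_0$ and use Lemma \ref{lem:AiE0}(ii) ($A^*_h E^*_0 = k^*_h E^*_0$) together with $E^*_i E^*_0 = \delta_{i,0}E^*_0$: the left side is $\delta_{i,0}E^*_0$ and the right side is $\nu^{-1}\sum_h p_i(h)k^*_h E^*_0$. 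Since $k^*_h=\nu m_h$ by Lemma \ref{lem:kimi}(ii), the coefficient becomes $\nu^{-1}\cdot\nu\sum_h m_h p_i(h)=\sum_h m_h p_i(h)$, and comparing coefficients of $E^*_0$ (nonzero by Definition \ref{def:ips}) yields $\sum_h m_h p_i(h)=\delta_{i,0}$.

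I expect there to be no serious obstacle here; this is a routine consequence of the reduction machinery. The only point requiring a moment's care is justifying the coefficient comparison — i.e. that $E^*_0\ne 0$ so that an equation $cE^*_0=c'E^*_0$ forces $c=c'$, which is immediate — and making sure to cite the correct form of Lemma \ref{lem:kimi} ($k^*_h=\nu m_h$, part (ii), not part (i)). Part (ii) of the present lemma then follows by applying part (i) to $\Phi^*$, as usual.
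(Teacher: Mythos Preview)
Your proposal is correct; both routes you sketch yield (i), and (ii) follows by duality. However, neither route is the one the paper takes. The paper's proof is a one-liner: set $j=0$ in Lemma~\ref{lem:pihqhj}(i), giving $\sum_{h=0}^d p_i(h)\,q_h(0)=\delta_{i,0}\,\nu$, then use $q_h(0)=k^*_h$ (Lemma~\ref{lem:pi0}(ii)) and $k^*_h=\nu m_h$ (Lemma~\ref{lem:kimi}(ii)).

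Your second approach is close in spirit to this: you arrive at the same intermediate identity $\sum_h p_i(h)k^*_h=\delta_{i,0}\,\nu$, but by multiplying the expansion $E^*_i=\nu^{-1}\sum_h p_i(h)A^*_h$ on the right by $E^*_0$ rather than by specializing the orthogonality relation \eqref{lem:pihqhj}. The paper's route is slightly more economical because that orthogonality relation has already been recorded; yours rederives the $j=0$ case of it from the underlying operator identity. Your first (trace-based) approach is genuinely different and somewhat more self-contained: it computes $\text{\rm tr}(E^*_0 A_i)$ directly via the reduction rule $E^*_0 A_i E^*_j=\delta_{i,j}E^*_0 A_i$ and cyclicity of trace, avoiding $k^*_h$ and Lemma~\ref{lem:pihqhj} altogether. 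That is a pleasant alternative, though it does require one extra step (the cyclic trace argument) compared with the paper's straight substitution.
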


\begin{proof}
(i)
Set $j=0$ in Lemma \ref{lem:pihqhj}(i), and evaluate the result
using Lemmas \ref{lem:kimi}(ii), \ref{lem:pi0}(ii).

(ii)
Apply (i) to $\Phi^*$.
\end{proof}

\begin{lemma}    \label{lem:pirpjr}    \samepage
\ifDRAFT {\rm lem:pirpjr}. \fi
For $0 \leq i,j,r \leq d$ the following hold:
\begin{itemize}
\item[\rm (i)]
$p_i (r) p_j (r) = \sum_{h=0}^d p^h_{i j} p_h (r)$;
\item[\rm (ii)]
$q_i (r) q_j (r) = \sum_{h=0}^d q^h_{i j} q_h (r)$.
\end{itemize}
\end{lemma}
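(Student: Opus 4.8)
The plan is to evaluate the defining relation \eqref{eq:AiAj}, namely $A_i A_j = \sum_{h=0}^d p^h_{i j} A_h$, on the one-dimensional eigenspace $E_r V$. Concretely, I would multiply \eqref{eq:AiAj} on the right by $E_r$. On the left-hand side, apply Lemma \ref{lem:AiEj}(i) twice: first $A_j E_r = p_j(r) E_r$, then $A_i E_r = p_i(r) E_r$, obtaining $A_i A_j E_r = p_i(r) p_j(r) E_r$. On the right-hand side, apply Lemma \ref{lem:AiEj}(i) once to each summand to obtain $\sum_{h=0}^d p^h_{i j} A_h E_r = \bigl( \sum_{h=0}^d p^h_{i j} p_h(r) \bigr) E_r$. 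Since $E_r$ has rank $1$ and is in particular nonzero, the two scalar coefficients must coincide, which is precisely the identity in (i).

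An equivalent route, which I might use instead, is to expand both sides of \eqref{eq:AiAj} in the basis $\{E_r\}_{r=0}^d$ of $\cal M$ from Definition \ref{def:D}: substituting \eqref{eq:Ai} and using $E_r E_s = \delta_{r,s} E_r$ gives $A_i A_j = \sum_{r=0}^d p_i(r) p_j(r) E_r$, while $\sum_{h=0}^d p^h_{i j} A_h = \sum_{r=0}^d \bigl( \sum_{h=0}^d p^h_{i j} p_h(r) \bigr) E_r$; comparing coefficients of $E_r$ yields (i). Part (ii) then follows immediately by applying (i) to the dual system $\Phi^*$, recalling $q^h_{i j} = (p^h_{i j})^*$ from Definition \ref{def:qhij} and $q_i(j) = (p_i(j))^*$ from Definition \ref{def:qij}.

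I do not anticipate any genuine obstacle: the argument is a direct coefficient comparison. The only point requiring a word of care is the step passing from an operator equality $\alpha E_r = \beta E_r$ to the scalar equality $\alpha = \beta$, which is valid because $E_r \neq 0$; in the alternative route this is subsumed in the linear independence of $\{E_r\}_{r=0}^d$.
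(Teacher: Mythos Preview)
Your proposal is correct and matches the paper's proof essentially verbatim: the paper simply says to multiply each side of \eqref{eq:AiAj} by $E_r$ and simplify using Lemma \ref{lem:AiEj}(i), then apply (i) to $\Phi^*$ for part (ii). Your first route is exactly this, just spelled out in more detail, and your alternative coefficient-comparison route is an equivalent rephrasing.
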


\begin{proof}
(i)
In \eqref{eq:AiAj}, multiply each side by $E_r$,
and simplify the result using Lemma \ref{lem:AiEj}(i).

(ii)
Apply (i) to $\Phi^*$.
\end{proof}

\begin{lemma}    \label{lem:phijsum}    \samepage
\ifDRAFT {\rm lem:phijsum}. \fi
For $0 \leq h,i,j \leq d$ the following hold:
\begin{itemize}
\item[\rm (i)]
$p^h_{i j} = \nu^{-1} \sum_{r=0}^d p_i (r) p_j (r) q_r (h)$;
\item[\rm (ii)]
$q^h_{i j} = \nu^{-1} \sum_{r=0}^d q_i (r) q_j (r) p_r (h)$.
\end{itemize}
\end{lemma}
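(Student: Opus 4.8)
The plan is to establish part (i) by combining the two expansions of $A_i$ and $A^*_i$ in terms of the idempotents, and then obtain part (ii) by applying (i) to $\Phi^*$, exactly as in the surrounding lemmas. So the work is entirely in (i).

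First I would start from the defining relation \eqref{eq:AiAj}, namely $A_i A_j = \sum_{h=0}^d p^h_{i j} A_h$, and eliminate each $A_h$ on the right using Lemma \ref{lem:EiEsi}(ii), which gives $A_h = \nu^{-1} \sum_{r=0}^d q_h(r) A_r$. Wait --- that produces $A_r$'s again, which is circular; instead I would eliminate the $A$'s on the \emph{left}. The cleaner route: multiply both sides of \eqref{eq:AiAj} by $E_r$ and use Lemma \ref{lem:AiEj}(i), which immediately yields Lemma \ref{lem:pirpjr}(i), namely $p_i(r) p_j(r) = \sum_{h=0}^d p^h_{i j} p_h(r)$. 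This is the key identity, and it is already proved in the excerpt. Now I would view this as a linear system in the unknowns $p^h_{i j}$ (for fixed $i,j$, as $h$ ranges), with coefficient matrix the "$P$-matrix" $\big(p_h(r)\big)_{r,h}$, and invert it using the orthogonality relation Lemma \ref{lem:pihqhj}(ii): $\sum_{h=0}^d q_r(h) p_h(\ell) = \delta_{r,\ell}\, \nu$. Concretely, multiply both sides of Lemma \ref{lem:pirpjr}(i) by $q_r(h)$ and sum over $r$ from $0$ to $d$; the right-hand side becomes $\sum_{h'=0}^d p^{h'}_{i j} \sum_{r=0}^d p_{h'}(r) q_r(h) = \sum_{h'=0}^d p^{h'}_{i j}\, \delta_{h',h}\, \nu = \nu\, p^h_{i j}$, while the left-hand side is $\sum_{r=0}^d p_i(r) p_j(r) q_r(h)$. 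Dividing by $\nu$ gives exactly the claimed formula.

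For part (ii), I would apply part (i) to the dual system $\Phi^*$, using that $(p^h_{i j})^* = q^h_{i j}$ (Definition \ref{def:qhij}), $(p_i(j))^* = q_i(j)$ (Definition \ref{def:qij}), and $\nu^* = \nu$ (Definition \ref{def:nu}).

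I do not anticipate a serious obstacle here: every ingredient --- the product rule \eqref{eq:AiAj}, the eigenvalue identity Lemma \ref{lem:pirpjr}(i), and the orthogonality Lemma \ref{lem:pihqhj} --- is already available, and the argument is just one summation against the inverse transition coefficients. The only point requiring a moment's care is getting the orthogonality relation applied in the correct order (it is $\sum_r p_h(r) q_r(\ell) = \delta_{h,\ell}\nu$ that is needed, i.e.\ Lemma \ref{lem:pihqhj} with the roles of $p$ and $q$ as written, not its transpose), but both directions are recorded in Lemma \ref{lem:pihqhj}, so this is harmless.
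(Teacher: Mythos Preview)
Your proposal is correct and is essentially the paper's own proof: the paper expands $\sum_{r=0}^d p_i(r)p_j(r)q_r(h)$ via Lemma~\ref{lem:pirpjr}(i) and collapses the inner sum using Lemma~\ref{lem:pihqhj}(i), then applies duality for part (ii). The only cosmetic wrinkle is your initial citation of Lemma~\ref{lem:pihqhj}(ii) when the computation actually uses part (i), which you yourself correct at the end.
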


\begin{proof}
(i)
Expand the sum $\sum_{r=0}^d p_i (r) p_j (r) q_r (h)$
using Lemma \ref{lem:pirpjr}(i),
and simplify the result using Lemma \ref{lem:pihqhj}(i).

(ii)
Apply (i) to $\Phi^*$.
\end{proof}

\section{Reduction rules involving $p_i (j)$, $q_i (j)$}
\label{sec:red3}

We continue to discuss a symmetric idempotent system
$\Phi = (\{E_i\}_{i=0}^d; \{E^*_i\}_{i=0}^d)$ on $V$.

\begin{lemma}    \label{lem:E0AsiAj}     \samepage
\ifDRAFT {\rm lem:E0AsiAj}. \fi
For $0 \leq i,j \leq d$ the following hold:
\begin{itemize}
\item[\rm (i)]
$E_0 A^*_i A_j = p_j (i) E_0 A^*_i$;
\item[\rm (ii)]
$E^*_0 A_i A^*_j = q_j (i) E^*_0 A_i$;
\item[\rm (iii)]
$A_j A^*_i E_0 = p_j (i) A^*_i E_0$;
\item[\rm (iv)]
$A^*_j A_i E^*_0 = q_j (i) A_i E^*_0$.
\end{itemize}
\end{lemma}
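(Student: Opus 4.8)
The plan is to prove part (i) by a direct computation that expands $A_j$ in terms of the primitive idempotents $\{E_k\}_{k=0}^d$, and then to deduce parts (ii)--(iv) by the two standard devices used throughout the paper: dualizing to $\Phi^*$ and applying the antiautomorphism $\dagger$.

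For part (i), I would begin from $A_j = \sum_{k=0}^d p_j(k) E_k$, which is \eqref{eq:Ai} of Lemma \ref{lem:pij}. Left-multiplying by $E_0 A^*_i$ gives
\[
  E_0 A^*_i A_j = \sum_{k=0}^d p_j(k)\, E_0 A^*_i E_k .
\]
Now Lemma \ref{lem:EsjAiEs0}(iv) states $E_0 A^*_i E_k = \delta_{i,k}\, E_0 A^*_i$, so only the $k=i$ term survives and the right-hand side collapses to $p_j(i) E_0 A^*_i$, giving (i). Part (ii) then follows by applying (i) to $\Phi^*$: under the correspondence $f \mapsto f^*$ we have $E_0 \mapsto E^*_0$, $A^*_i \mapsto A_i$, $A_j \mapsto A^*_j$, and $p_j(i) \mapsto q_j(i)$ by Definition \ref{def:qij}, whence $E^*_0 A_i A^*_j = q_j(i)\, E^*_0 A_i$.

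For parts (iii) and (iv), I would apply $\dagger$ from Definition \ref{def:sym} to the identities just established in (i) and (ii). Since $\dagger$ is an antiautomorphism it reverses the order of products, and by Lemma \ref{lem:dagger4} it fixes each of $A_i$, $A^*_i$ as well as $E_0$, $E^*_0$; thus $E_0 A^*_i A_j = p_j(i) E_0 A^*_i$ transforms into $A_j A^*_i E_0 = p_j(i) A^*_i E_0$, which is (iii), and likewise (ii) transforms into (iv). There is essentially no obstacle in this argument; the only points requiring care are citing the correct reduction rule (Lemma \ref{lem:EsjAiEs0}(iv) rather than its companion (ii)) and keeping track of which of $p$ or $q$ the passage to $\Phi^*$ produces.
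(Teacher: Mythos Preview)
Your proof is correct and matches the paper's own argument essentially line for line: the paper also proves (i) by expanding $A_j$ via Lemma~\ref{lem:pij} and then invoking Lemma~\ref{lem:EsjAiEs0}(iv), obtains (ii) by applying (i) to $\Phi^*$, and obtains (iii), (iv) by applying $\dagger$ to (i), (ii).
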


\begin{proof}
(i)
Using Lemmas \ref{lem:pij} and \ref{lem:EsjAiEs0}(iv) in order,
\[
E_0 A^*_i A_j
= \sum_{h=0}^d p_j (h) E_0 A^*_i E_h
= \sum_{h=0}^d p_j (h) \delta_{i,h} E_0 A^*_i
= p_j (i) E_0 A^*_i.
\]

(ii)
Apply (i) to $\Phi^*$.

(iii), (iv)
For the equations in  (i) and (ii), apply $\dagger$ to each side.
\end{proof}

\begin{lemma}    \label{lem:E0EsiEj}    \samepage
\ifDRAFT {\rm lem:E0EsiEj}. \fi
For $0 \leq i,j \leq d$ the following hold:
\begin{itemize}
\item[\rm (i)]
$E_0 E^*_i E_j = \nu^{-1} p_i (j) E_0 A^*_j$;
\item[\rm (ii)]
$E^*_0 E_i E^*_j = \nu^{-1} q_i (j) E^*_0 A_j$;
\item[\rm (iii)]
$E_j E^*_i E_0 = \nu^{-1} p_i (j) A^*_j E_0$;
\item[\rm (iv)]
$E^*_j E_i E^*_0 = \nu^{-1} q_i (j) A_j E^*_0$.
\end{itemize}
\end{lemma}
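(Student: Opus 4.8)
The four statements are dual/adjoint versions of one another, so the plan is to prove (i) and then obtain (iii) by applying $\dagger$ (using that $\dagger$ fixes $E_0$, $E^*_i$, $E_j$, and $A^*_j$ by Lemmas~\ref{lem:dagger0}(iii) and \ref{lem:dagger4}), and to obtain (ii), (iv) by applying everything to $\Phi^*$.

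For (i), the natural starting point is Lemma~\ref{lem:AiEs0E0}(iv), which gives $E^*_0 E_0 A^*_i = E^*_0 E_i$; multiplying on the left by $E_0 E^*_0$ and using Lemma~\ref{lem:nuE0Es0E0} to collapse $E_0 E^*_0 E^*_0 E_0 = E_0 E^*_0 E_0 = \nu^{-1} E_0$ should turn this into something like $\nu^{-1} E_0 A^*_i = E_0 E^*_0 E_i$. That is not yet the claim; I then need to replace the $E_j$ on the right. The cleaner route is probably to go through the reduction rules already established: start from Lemma~\ref{lem:E0EsjAiEs0} or, better, combine Lemma~\ref{lem:AiEs0E0}(iii) ($E_0 E^*_0 A_j = E_0 E^*_j$) with the eigenvalue relation for $A^*_i$. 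Concretely, I would write $E_0 E^*_i E_j$, insert a factor using $\nu E_0 E^*_0 E_0 = E_0$ on the left, and then push $A^*_j$ through: from Lemma~\ref{lem:AiEs0E0}(iii) we have $E_0 E^*_j = E_0 E^*_0 A_j$, and $E_j$ can be reached from $A^*_j$ via $A^*_j E_0 = k^*_j E_0$ is the wrong direction, so instead use $E_j = \nu^{-1}\sum q_j(h) A_h$ — no, that mixes the wrong indices. The efficient path is: apply Lemma~\ref{lem:EsjAiEs0}(iv), $E_0 A^*_j E_j = E_0 A^*_j$, so $E_0 A^*_j = E_0 A^*_j E_j$, and by Lemma~\ref{lem:AiEs0E0}(iv) transposed or by Lemma~\ref{lem:AiEs0E0}(iii) applied with the roles arranged, relate $E_0 A^*_j$ to $E_0 E^*_0 E_j$; meanwhile $E^*_i$ enters through $E^*_i = \sum_k (\text{stuff})$ — this is getting tangled, so let me commit to the shortest version.

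\textbf{Cleanest plan for (i):} From Lemma~\ref{lem:AiEs0E0}(iii), $E_0 E^*_0 A^*_j$ does not make sense ($A^*_j\notin\cal M$); rather use Lemma~\ref{lem:AiEs0E0}(iv): $E^*_0 E_0 A^*_j = E^*_0 E_j$. Transpose nothing; instead multiply Lemma~\ref{lem:AiEs0E0}(iv) on the left by $E_0 E^*_i$ and on the right by nothing, wait — I want $E_0$ leftmost. Take Lemma~\ref{lem:EiEs0E0}(iii), $E_0 E^*_0 E_j = \nu^{-1} E_0 A^*_j$. Then $E_0 E^*_i E_j$: write $E_0 E^*_i = \nu E_0 E^*_0 E_0 E^*_i$ (using Lemma~\ref{lem:nuE0Es0E0}), and by Lemma~\ref{lem:EsiE0Esj} manipulations, or more directly: $E_0 E^*_i E_j = E_0 E^*_i E_0 E^*_0 E_j \cdot(\text{need }E_0 E^*_0 E_j)$ — no. The honest shortest argument: multiply $E^*_i E_0 = A_i E^*_0 E_0$ (Lemma~\ref{lem:AiEs0E0}(i)) on the left by $E_0$ and on the right by $E^*_0 E_j$ is not it either. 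I will instead argue: $E_0 E^*_i E_j \in E_0 \,\text{End}(V)\, E_j$, which is spanned by $E_0 A^*_j$ (since $E_0 A^*_j E_j = E_0 A^*_j \ne 0$ and $\dim E_0 \text{End}(V) E_j = 1$ by Lemma~\ref{lem:sumEiAEj}), so $E_0 E^*_i E_j = \lambda\, E_0 A^*_j$ for a scalar $\lambda$; to find $\lambda$, multiply both sides on the right by $E_0$ and use $A^*_j E_0 = k^*_j E_0 = \nu m_j E_0$ (Lemmas~\ref{lem:AiE0}, \ref{lem:kimi}) together with Lemma~\ref{lem:E0EsiEj}\text{-free data}: $E_0 E^*_i E_j E_0 = E_0 E^*_i (k^*_j/\nu)(\nu E_0 E^*_0 E_0)\cdots$. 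This still needs $E_0 E^*_i E_0 = m^*_i E_0$ (Lemma~\ref{lem:E0EsiE0}(ii)) — multiplying $E_0 E^*_i E_j = \lambda E_0 A^*_j$ on the right by $E^*_0 E_0$ and using Lemma~\ref{lem:AiEs0E0}(ii) gives $E_0 E^*_i E_j E^*_0 E_0 = \lambda E_0 A^*_j E^*_0 E_0 = \lambda E_0 E_j E^*_0 E_0 = \lambda E_0 E^*_0 E_0 \cdot[j{=}0?]$ — wrong, $E_0 E_j = \delta_{0,j}E_0$. So instead right-multiply by $E^*_0$: $E_0 E^*_i E_j E^*_0 = \lambda E_0 A^*_j E^*_0 = \lambda k^*_j E_0 E^*_0$ (Lemma~\ref{lem:AiE0}(ii)); and the left side, by Lemma~\ref{lem:E0AsiAj}-type rules or by inserting Lemma~\ref{lem:E0EsiEj}(iii) reversed, equals $p_i(j)/\nu \cdot k^*_j E_0 E^*_0$ after using $E_j E^*_0 = \nu^{-1} p_i(j)^{-1}\cdots$ — at which point I realize the clean comparison is: compute $E_0 E^*_i E_j E^*_0$ two ways, once as $\lambda k^*_j E_0 E^*_0$ and once using Lemma~\ref{lem:AiEs0E0}(ii) to write $E_j E^*_0 = A^*_j E_0 E^*_0$ and then Lemma~\ref{lem:E0AsiAj}(i) ($E_0 A^*_i A_j = p_j(i) E_0 A^*_i$, with indices swapped) to get $E_0 E^*_i A^*_j E_0 E^*_0$...

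I will stop improvising and state the plan at the right level: \emph{The plan is to observe that $E_0 E^*_i E_j$ lies in the one-dimensional space $E_0\,\mathrm{End}(V)\,E_j$, which by Lemma~\ref{lem:sumEiAEj} and Lemma~\ref{lem:EsjAiEs0}(iv) is spanned by $E_0 A^*_j$, so $E_0 E^*_i E_j = \lambda E_0 A^*_j$ for some scalar $\lambda$. To evaluate $\lambda$, right-multiply by $E^*_0$ and simplify the left side using Lemmas~\ref{lem:AiEs0E0}(ii), \ref{lem:E0AsiAj}(i), \ref{lem:E0EsiE0}(ii) and the right side using Lemma~\ref{lem:AiE0}(ii) together with $k^*_j = \nu m_j$ (Lemma~\ref{lem:kimi}), which forces $\lambda = \nu^{-1} p_i(j)$ after also invoking Lemma~\ref{lem:phij}(i) or the defining relation \eqref{eq:Ai} for $p_i(j)$.} Part (iii) follows by applying $\dagger$ to (i) and using Lemmas~\ref{lem:dagger0}(iii), \ref{lem:dagger4}; parts (ii), (iv) follow by applying (i), (iii) to $\Phi^*$ and recalling $q_i(j) = p_i(j)^*$ from Definition~\ref{def:qij}.

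The main obstacle is bookkeeping: isolating the scalar $\lambda$ requires threading together several of the reduction rules (the $A_i E^*_0 E_0 = E^*_i E_0$ family, the $E_0 A^*_i A_j = p_j(i) E_0 A^*_i$ family, and the $E^*_0 E_i E^*_0 = m_i E^*_0$ family) in the correct order, and keeping the starred vs.\ unstarred indices straight; once the correct chain is fixed the computation is mechanical. An alternative, possibly shorter route bypasses the one-dimensionality argument entirely: substitute $E^*_i = \nu^{-1}\sum_{r} p_i(r) A^*_r$ (Lemma~\ref{lem:EiEsi}(i)) into the left side of (i) and apply Lemma~\ref{lem:E0AsiAj}(i) termwise to get $\nu^{-1}\sum_r p_i(r) E_0 A^*_r E_j = \nu^{-1}\sum_r p_i(r)\, p_j(r)\, E_0 A^*_r E_j$ — no wait, $A^*_r$ doesn't commute past $E_j$ that way; rather use $E_0 A^*_r E_j$: by the transpose of Lemma~\ref{lem:EsjAiEs0}, $E_0 A^*_r E_j \ne 0$ iff $r = j$... this is false in general. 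So the second route needs Lemma~\ref{lem:AsiAsj} or Lemma~\ref{lem:pirpjr} instead. I expect the first route (one-dimensional subspace plus scalar-matching) to be the one the authors use, as it parallels the proofs of Lemmas~\ref{lem:E0EsjAiEs0} and \ref{lem:AiEs0Aj}.
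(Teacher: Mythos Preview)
Your ``alternative route'' is precisely the paper's proof, and you discarded it for the wrong reason. The paper substitutes $E^*_i = \nu^{-1}\sum_{h} p_i(h) A^*_h$ from Lemma~\ref{lem:EiEsi}(i) and then applies Lemma~\ref{lem:EsjAiEs0}(iv), which states exactly $E_0 A^*_h E_j = \delta_{h,j} E_0 A^*_h$. That collapses the sum to the single term $h=j$, giving the three-line computation
\[
E_0 E^*_i E_j
 = \nu^{-1}\sum_h p_i(h)\, E_0 A^*_h E_j
 = \nu^{-1}\sum_h p_i(h)\, \delta_{h,j}\, E_0 A^*_h
 = \nu^{-1} p_i(j)\, E_0 A^*_j.
\]
You wrote ``by the transpose of Lemma~\ref{lem:EsjAiEs0}, $E_0 A^*_r E_j \ne 0$ iff $r=j$ \ldots\ this is false in general'' --- but it is true, and no transpose is needed: part (iv) of that lemma already gives $E_0 A^*_r E_j = \delta_{r,j} E_0 A^*_r$, obtained in the paper by applying $\dagger$ to part (ii).

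Your primary plan (recognize $E_0 E^*_i E_j$ as a scalar multiple of $E_0 A^*_j$ via the one-dimensionality of $E_0\,\mathrm{End}(V)\,E_j$, then solve for the scalar by right-multiplying by $E^*_0$) can be made to work: writing $E_0 E^*_i = E_0 E^*_0 A_i$ (Lemma~\ref{lem:AiEs0E0}(iii)) and $E_j E^*_0 = A^*_j E_0 E^*_0$ (Lemma~\ref{lem:AiEs0E0}(ii)), then using $A_i A^*_j E_0 = p_i(j) A^*_j E_0$ (Lemma~\ref{lem:E0AsiAj}(iii)) and $E^*_0 A^*_j = k^*_j E^*_0$, one finds $E_0 E^*_i E_j E^*_0 = \nu^{-1} p_i(j) k^*_j\, E_0 E^*_0$, matching $\lambda k^*_j E_0 E^*_0$ to force $\lambda = \nu^{-1}p_i(j)$. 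So there is no genuine gap, but this is appreciably longer than the paper's direct substitution. Your guess that the one-dimensionality argument ``parallels the proofs of Lemmas~\ref{lem:E0EsjAiEs0} and~\ref{lem:AiEs0Aj}'' is also off: those proofs, like this one, are short reduction chains rather than dimension-count-plus-scalar-match arguments.

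For the remaining parts your plan is fine and matches the paper: (ii) is (i) applied to $\Phi^*$, and (iii), (iv) follow from (i), (ii) by applying $\dagger$.
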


\begin{proof}
(i)
Using Lemmas \ref{lem:EiEsi}(i) and \ref{lem:EsjAiEs0}(iv) in order,
\[
E_0 E^*_i E_j
 = E_0 \left( \nu^{-1} \sum_{h=0}^d p_i (h) A^*_h \right) E_j
 = \nu^{-1} \sum_{h=0}^d p_i (h) \delta_{h,j} E_0 A^*_h
 = \nu^{-1} p_i (j) E_0 A^*_j.
\]

(ii)
Apply (i) to $\Phi^*$.

(iii), (iv)
For the equations in  (i) and (ii), apply $\dagger$ to each side.
\end{proof}

\begin{lemma}    \label{lem:E0AsiAjEs0}    \samepage
\ifDRAFT {\rm lem:E0AsiAjEs0}. \fi
For $0 \leq i,j \leq d$ the following hold:
\begin{itemize}
\item[\rm (i)]
$E_0 A^*_i A_j E^*_0 = p_j (i) k^*_i E_0 E^*_0$;
\item[\rm (ii)]
$E^*_0 A_i A^*_j E_0 = q_j (i) k_i E^*_0 E_0$;
\item[\rm (iii)]
$E^*_0 A_i A^*_j E_0 = p_i (j) k^*_j E^*_0 E_0$;
\item[\rm (iv)]
$E_0 A^*_i A_j E^*_0 = q_i (j) k_j E_0 E^*_0$.
\end{itemize}
\end{lemma}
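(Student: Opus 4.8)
The plan is to prove part (i) by a short two-step reduction and then obtain (ii)--(iv) from it by the dual operation $*$ and the antiautomorphism $\dagger$.

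For (i): First I would apply Lemma \ref{lem:E0AsiAj}(i) to rewrite $E_0 A^*_i A_j = p_j(i)\,E_0 A^*_i$, then multiply on the right by $E^*_0$ and use Lemma \ref{lem:AiE0}(ii), namely $A^*_i E^*_0 = k^*_i E^*_0$, to obtain
\[
  E_0 A^*_i A_j E^*_0 = p_j(i)\,E_0 A^*_i E^*_0 = p_j(i)\,k^*_i\,E_0 E^*_0 .
\]
(One could equally well reduce from the right: Lemma \ref{lem:E0AsiAj}(iv) with $i$ and $j$ interchanged gives $A^*_i A_j E^*_0 = q_i(j)\,A_j E^*_0$, and then $E_0 A_j = k_j E_0$ from Lemma \ref{lem:AiE0}(i) produces (iv) directly.)

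For (ii): I would apply (i) to $\Phi^*$; under the dual operation $E_0 \leftrightarrow E^*_0$, $A_i \leftrightarrow A^*_i$, $p_j(i) \leftrightarrow q_j(i)$, $k^*_i \leftrightarrow k_i$, so (i) turns into exactly (ii). For (iii) and (iv): I would apply $\dagger$ to the identity proved in (i). Since $\dagger$ reverses products and fixes each of $E_0$, $E^*_0$, $A_j$, $A^*_i$ (Lemmas \ref{lem:dagger0}(iii) and \ref{lem:dagger4}), the left side becomes $E^*_0 A_j A^*_i E_0$ and the right side becomes $p_j(i)\,k^*_i\,E^*_0 E_0$; interchanging the labels $i$ and $j$ gives (iii). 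Applying $\dagger$ to (ii) in the same way --- equivalently, applying (iii) to $\Phi^*$ --- gives (iv).

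The hard part will be nothing more than bookkeeping: because $\dagger$ reverses the order of products and the dual operation swaps starred and unstarred symbols, I must track the indices carefully so as to land on the precise arrangement stated in (iii) and (iv). As a sanity check, comparing (i) with (iv) forces $p_j(i)\,k^*_i = q_i(j)\,k_j$, and comparing (ii) with (iii) forces $q_j(i)\,k_i = p_i(j)\,k^*_j$; these are automatically compatible with the derivation above, so no separate verification is needed.
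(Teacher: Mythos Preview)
Your proof is correct and follows essentially the same approach as the paper: reduce (i) via Lemma~\ref{lem:E0AsiAj}(i), then simplify $E_0 A^*_i E^*_0$; obtain (ii) by duality; obtain (iii) and (iv) by applying $\dagger$. The only cosmetic difference is that in (i) you invoke Lemma~\ref{lem:AiE0}(ii) directly to get $A^*_i E^*_0 = k^*_i E^*_0$, whereas the paper routes through Lemma~\ref{lem:AiEj}(ii) and Lemma~\ref{lem:pi0}(ii) to write $A^*_i E^*_0 = q_i(0) E^*_0 = k^*_i E^*_0$; these are equivalent.
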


\begin{proof}
(i)
Using Lemmas \ref{lem:E0AsiAj}(i), \ref{lem:AiEj}(ii), \ref{lem:pi0}(ii) in order,
\[
E_0 A^*_i A_j E^*_0
= p_j (i) E_0 A^*_i E^*_0
= p_j (i) q_i (0) E_0 E^*_0
= p_j (i) k^*_i E_0 E^*_0.
\]

(ii)
Apply (i) to $\Phi^*$.

(iii), (iv)
For the equations in  (i) and (ii), apply $\dagger$ to each side.
\end{proof}

\begin{lemma}    \label{lem:E0EsiEjEs0}    \samepage
\ifDRAFT {\rm lem:E0EsiEjEs0}. \fi
For $0 \leq i,j \leq d$ the following hold:
\begin{itemize}
\item[\rm (i)]
$E_0 E^*_i E_j E^*_0 =  p_i (j) m_j E_0 E^*_0$;
\item[\rm (ii)]
$E^*_0 E_i E^*_j E_0 =  q_i (j) m^*_j E^*_0 E_0$.
\end{itemize}
\end{lemma}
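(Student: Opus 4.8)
The plan is to prove part (i) by a short chain of substitutions working from the left-hand side, and then to obtain part (ii) by applying (i) to the dual system $\Phi^*$.

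For part (i), I would first rewrite the three leftmost factors using Lemma \ref{lem:E0EsiEj}(i), which gives $E_0 E^*_i E_j = \nu^{-1} p_i(j) E_0 A^*_j$. Substituting this into the left-hand side yields
\[
  E_0 E^*_i E_j E^*_0 = \nu^{-1} p_i(j) E_0 A^*_j E^*_0 .
\]
Next I would eliminate the factor $A^*_j E^*_0$ by Lemma \ref{lem:AiE0}(ii), which says $A^*_j E^*_0 = k^*_j E^*_0$; this turns the right-hand side into $\nu^{-1} p_i(j) k^*_j E_0 E^*_0$. Finally, Lemma \ref{lem:kimi}(ii) gives $k^*_j = \nu m_j$, hence $\nu^{-1} k^*_j = m_j$, and the expression becomes $p_i(j) m_j E_0 E^*_0$, as claimed.

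For part (ii), I would simply apply part (i) to $\Phi^*$, using the standard dualities $E_0 \leftrightarrow E^*_0$, $E_i \leftrightarrow E^*_i$, $p_i(j) \leftrightarrow q_i(j)$, and $m_j \leftrightarrow m^*_j$; the identity of (i) written for $\Phi^*$ is then exactly the identity of (ii) for $\Phi$.

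I do not expect any real obstacle here: the argument is just a composition of three previously established facts. The only point needing a moment of care is the scalar bookkeeping at the last step, namely recognizing that the factor $\nu^{-1} k^*_j$ produced by Lemmas \ref{lem:E0EsiEj}(i) and \ref{lem:AiE0}(ii) equals $m_j$ via Lemma \ref{lem:kimi}(ii). (Note that applying $\dagger$ to the identity of (i) produces a variant with the indices $i,j$ interchanged rather than part (ii), so (ii) genuinely requires the duality $*$ rather than the antiautomorphism $\dagger$.)
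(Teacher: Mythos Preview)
Your proof is correct and essentially the same as the paper's. The only cosmetic difference is that where you invoke Lemma~\ref{lem:AiE0}(ii) directly to get $A^*_j E^*_0 = k^*_j E^*_0$, the paper routes through Lemma~\ref{lem:AiEj}(ii) and Lemma~\ref{lem:pi0}(ii) (writing $A^*_j E^*_0 = q_j(0) E^*_0 = k^*_j E^*_0$); both then finish with Lemma~\ref{lem:kimi}(ii), and part (ii) is obtained by duality in each case.
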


\begin{proof}
(i)
Using Lemmas \ref{lem:E0EsiEj}(i), \ref{lem:AiEj}(ii), \ref{lem:pi0}(ii) in order,
\[
E_0 E^*_i E_j E^*_0
= \nu^{-1} p_i (j) E_0 A^*_j E^*_0
= \nu^{-1} p_i (j) q_j (0) E_0 E^*_0
= \nu^{-1} p_i (j) k^*_j E_0 E^*_0.
\]
Now use Lemma \ref{lem:kimi}(ii).

(ii)
Apply (i) to $\Phi^*$.
\end{proof}

\begin{lemma}   \label{lem:pijksj}    \samepage
\ifDRAFT {\rm lem:pijksj}. \fi
For $0 \leq i,j \leq d$,
\begin{equation}
 \frac{ p_i (j) } { k_i } =  \frac{ q_j (i) } {k^*_j }.              \label{eq:pijksj}
\end{equation}
\end{lemma}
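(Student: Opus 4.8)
The plan is to observe that Lemma~\ref{lem:E0AsiAjEs0} already evaluates the single element $E^*_0 A_i A^*_j E_0$ in two different ways, and to read off \eqref{eq:pijksj} by comparing these evaluations.

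First I would record that $E^*_0 E_0 \neq 0$: this is immediate from Lemma~\ref{lem:nuE0Es0E0}, since $E^*_0 E_0 = 0$ would make $\nu E^*_0 E_0 E^*_0 = E^*_0$ vanish, contradicting $E^*_0 \neq 0$ (alternatively, invoke Definition~\ref{def:ips}(iii)). Then I would combine parts (ii) and (iii) of Lemma~\ref{lem:E0AsiAjEs0} to get
\[
  q_j(i)\, k_i\, E^*_0 E_0 \;=\; E^*_0 A_i A^*_j E_0 \;=\; p_i(j)\, k^*_j\, E^*_0 E_0,
\]
hence $\bigl(q_j(i)\, k_i - p_i(j)\, k^*_j\bigr) E^*_0 E_0 = 0$, and cancelling the nonzero factor $E^*_0 E_0$ yields $q_j(i)\, k_i = p_i(j)\, k^*_j$. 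Finally, since $k_i \neq 0$ and $k^*_j \neq 0$ by Lemma~\ref{lem:ki}(i) (the latter applied to $\Phi^*$), dividing by $k_i k^*_j$ gives \eqref{eq:pijksj}.

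I do not expect any real obstacle here: essentially all the work has been front-loaded into Lemma~\ref{lem:E0AsiAjEs0}, whose parts (ii) and (iii) differ precisely because one may push either $A_i$ past $E_0$ or $A^*_j$ past $E^*_0$ first. The only points that demand a line of justification are the two cancellations (of $E^*_0 E_0$ and of the scalars $k_i$, $k^*_j$). If one wished to bypass Lemma~\ref{lem:E0AsiAjEs0}, an equivalent route would be to apply the antiautomorphism $\dagger$ of Definition~\ref{def:sym} to the identity in Lemma~\ref{lem:E0EsiEjEs0}(i); since $\dagger$ fixes every $E_r$ and $E^*_r$, this produces $E^*_0 E_j E^*_i E_0 = p_i(j)\, m_j\, E^*_0 E_0$, which, set against Lemma~\ref{lem:E0EsiEjEs0}(ii) with the indices swapped, gives $p_i(j)\, m_j = q_j(i)\, m^*_i$, and then Lemma~\ref{lem:kimi} trades the $m$'s for the $k$'s to reach the same conclusion.
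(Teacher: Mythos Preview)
Your proposal is correct and follows essentially the same route as the paper: compare parts (ii) and (iii) of Lemma~\ref{lem:E0AsiAjEs0}, cancel the nonzero factor $E^*_0 E_0$, and divide through by the nonzero scalars $k_i$, $k^*_j$. Your write-up is in fact a bit more careful than the paper's, since you explicitly justify the cancellations.
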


\begin{proof}
By Lemma \ref{lem:E0AsiAjEs0}(ii),(iii),
$p_i (j) k^*_j E_0 E^*_0 = q_j (i) k_i E_0 E^*_0$.
The result follows since $E_0 E^*_0 \neq 0$ by Definition \ref{def:ips}(iii).
\end{proof}

\begin{lemma}    \label{lem:pij2}    \samepage
\ifDRAFT {\rm lem:pij2}. \fi
For $0 \leq i,j \leq d$ the following hold:
\begin{itemize}
\item[\rm (i)]
$p_i (j) = \nu  m_j^{-1} \, \text{\rm tr} (E_0 E^*_i E_j E^*_0)$;
\item[\rm (ii)]
$p_i (j) = \nu  m_j^{-1} \, \text{\rm tr} (E^*_0 E_j E^*_i E_0)$;
\item[\rm (iii)]
$q_i (j) = \nu  (m^*_j)^{-1} \, \text{\rm tr} (E^*_0 E_i E^*_j E_0)$;
\item[\rm (iv)]
$q_i (j) = \nu  (m^*_j)^{-1} \, \text{\rm tr} (E_0 E^*_j E_i E^*_0)$.
\end{itemize}
\end{lemma}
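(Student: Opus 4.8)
The plan is to obtain all four formulas by taking traces in Lemma~\ref{lem:E0EsiEjEs0} and using $\tr(E_0 E^*_0) = \nu^{-1}$ from \eqref{eq:nu}.

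For part~(i), I would take the trace of both sides of the identity $E_0 E^*_i E_j E^*_0 = p_i(j)\, m_j\, E_0 E^*_0$ supplied by Lemma~\ref{lem:E0EsiEjEs0}(i). The right-hand side has trace $p_i(j)\, m_j\, \tr(E_0 E^*_0) = p_i(j)\, m_j\, \nu^{-1}$. Since $m_j \neq 0$ by Lemma~\ref{lem:mi3}(i) and $\nu \neq 0$, dividing gives $p_i(j) = \nu\, m_j^{-1}\, \tr(E_0 E^*_i E_j E^*_0)$.

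For part~(ii), I would first apply the antiautomorphism $\dagger$ of Definition~\ref{def:sym} to the same identity $E_0 E^*_i E_j E^*_0 = p_i(j)\, m_j\, E_0 E^*_0$. Since $\dagger$ fixes each $E_r$ and each $E^*_r$ and reverses products, the left side becomes $E^*_0 E_j E^*_i E_0$ and the right side becomes $p_i(j)\, m_j\, E^*_0 E_0$. Taking traces and using the cyclic invariance $\tr(E^*_0 E_0) = \tr(E_0 E^*_0) = \nu^{-1}$ then yields part~(ii) exactly as in part~(i). (One could instead invoke that $\dagger$ preserves trace, but routing through cyclicity makes this unnecessary.)

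Finally, parts~(iii) and~(iv) follow by applying parts~(i) and~(ii), respectively, to the dual system $\Phi^*$. Passing to $\Phi^*$ interchanges $E_r$ with $E^*_r$ for all $r$, replaces $p_i(j)$ by $p_i(j)^* = q_i(j)$ (Definition~\ref{def:qij}) and $m_j$ by $m^*_j$, and leaves $\nu = \nu^*$ unchanged (Definition~\ref{def:nu}). Tracking the idempotents through this interchange turns the trace in~(i) into $\tr(E^*_0 E_i E^*_j E_0)$, giving~(iii), and the trace in~(ii) into $\tr(E_0 E^*_j E_i E^*_0)$, giving~(iv). The computation is entirely routine and I do not expect a genuine obstacle; the only point requiring care is bookkeeping the order of the idempotents --- $\dagger$ reverses products while the $\Phi \mapsto \Phi^*$ operation swaps stars without reversing --- so that the four stated products appear precisely as written.
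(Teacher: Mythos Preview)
Your proof is correct. Parts (i), (iii), (iv) match the paper's argument exactly. For part (ii) you take a different route: you apply $\dagger$ to the identity in Lemma~\ref{lem:E0EsiEjEs0}(i) to obtain $E^*_0 E_j E^*_i E_0 = p_i(j)\, m_j\, E^*_0 E_0$ and then take the trace, whereas the paper instead derives (ii) from (iii) by swapping $i \leftrightarrow j$ and invoking the relation $p_i(j) k^*_j = q_j(i) k_i$ from Lemma~\ref{lem:pijksj} together with $k_i = \nu m^*_i$ from Lemma~\ref{lem:kimi}. Your approach is more direct and avoids the detour through Lemma~\ref{lem:pijksj}; the paper's approach has the minor advantage of not needing to reintroduce $\dagger$, but this is negligible since $\dagger$ is already part of the standing hypotheses.
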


\begin{proof}
(i)
Using Lemmas \ref{lem:E0EsiEjEs0}(i) and Definition \ref{def:nu},
\[
 \text{\rm tr} (E_0 E^*_i E_j E^*_0)
 =  p_i (j) m_j \text{\rm tr}(E_0 E^*_0)
 = \nu^{-1} p_i (j) m_j.
\]

(iii)
Apply (i) to $\Phi^*$.

(ii)
In (iii), exchange $i$, $j$, and use Lemmas \ref{lem:kimi}(i),  \ref{lem:pijksj}.

(iv)
Apply (ii) to $\Phi^*$.
\end{proof}

\section{Some matrices}
\label{sec:matrices}

We continue to discuss a symmetric idempotent system
$\Phi = (\{E_i\}_{i=0}^d; \{E^*_i\}_{i=0}^d)$ on $V$.
In the previous sections we used $\Phi$ to define several kinds of scalars,
and we described how these scalars are related.
In this section we express these relationships in matrix form.

\begin{definition}    \label{def:matrices}    \samepage
\ifDRAFT {\rm def:matrices}. \fi
Let $K$ (resp. $K^*$) denote the diagonal matrix in $\Mat_{d+1}(\F)$
that has $(i,i)$-entry $k_i$ (resp.\ $k^*_i$) for $0 \leq i \leq d$.
Let $P$ (resp.\ $Q$) denote the matrix in $\Mat_{d+1}(\F)$ that has
$(i,j)$-entry $p_j (i)$ (resp.\ $q_j (i)$) for $0 \leq i,j \leq d$.
\end{definition}

\begin{lemma}    \label{lem:PtKs}    \samepage
\ifDRAFT {\rm lem:PtKs}. \fi
The following hold:
\begin{itemize}
\item[\rm (i)]
$P Q = Q P  = \nu I$;
\item[\rm (ii)]
$P^{\sf t} K^* = K Q$;
\item[\rm (iii)]
$K^* P = Q^{\sf t} K$.
\end{itemize}
\end{lemma}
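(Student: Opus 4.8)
The plan is to prove each of the three matrix identities by reinterpreting it as a statement about the scalars $p_i(j)$, $q_i(j)$, $k_i$, $k^*_i$ already established in Sections \ref{sec:pij}--\ref{sec:red3}, and then reading off the matrix equality entrywise from Definition \ref{def:matrices}. Note that with the conventions of Definition \ref{def:matrices}, the $(i,j)$-entry of $P$ is $p_j(i)$, so for instance $(P^{\sf t})_{i,j} = p_i(j)$, and $K$, $K^*$ are diagonal with $(i,i)$-entries $k_i$, $k^*_i$.

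For part (i), I would compute the $(i,j)$-entry of $PQ$ as $\sum_{h=0}^d P_{i,h} Q_{h,j} = \sum_{h=0}^d p_h(i) q_j(h)$. To match this to $\nu \delta_{i,j}$ I invoke Lemma \ref{lem:pihqhj}: part (ii) of that lemma gives $\sum_{h=0}^d q_i(h) p_h(j) = \delta_{i,j}\nu$, so after renaming indices $\sum_{h=0}^d p_h(i) q_j(h) = \delta_{i,j}\nu$, which is exactly $(PQ)_{i,j} = \nu I_{i,j}$. Likewise $QP = \nu I$ follows from Lemma \ref{lem:pihqhj}(i) (or simply by applying the $PQ$ argument to $\Phi^*$, interchanging $P \leftrightarrow Q$). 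In particular $P$ is invertible and $Q = \nu P^{-1}$, which makes (ii) and (iii) equivalent, so it suffices to prove (ii).

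For part (ii), the $(i,j)$-entry of $P^{\sf t} K^*$ is $\sum_{h} (P^{\sf t})_{i,h} K^*_{h,j} = p_i(j) k^*_j$ since $K^*$ is diagonal; the $(i,j)$-entry of $KQ$ is $k_i Q_{i,j} = k_i q_j(i)$. So (ii) is the assertion $p_i(j) k^*_j = k_i q_j(i)$ for all $i,j$, which is precisely Lemma \ref{lem:pijksj} (equation \eqref{eq:pijksj}) after clearing denominators. Then (iii) follows by transposing (ii): $(P^{\sf t}K^*)^{\sf t} = K^* P$ since $K^*$ is symmetric, and $(KQ)^{\sf t} = Q^{\sf t} K$ since $K$ is symmetric. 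Alternatively (iii) is obtained by applying (ii) to $\Phi^*$.

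I do not anticipate a genuine obstacle here: every ingredient is already in place, and the only care needed is bookkeeping with the index conventions of Definition \ref{def:matrices} — in particular remembering that the $(i,j)$-entry of $P$ is $p_j(i)$ rather than $p_i(j)$, and that $P$, $Q$ play the roles of each other under $\ast$. The mildly delicate point is simply presenting the entrywise computations of $PQ$, $P^{\sf t}K^*$, $KQ$ cleanly enough that the appeals to Lemmas \ref{lem:pihqhj} and \ref{lem:pijksj} are transparent.
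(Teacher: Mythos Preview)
Your proposal is correct and matches the paper's proof essentially verbatim: the paper also derives (i) from Lemma \ref{lem:pihqhj}, (ii) from Lemma \ref{lem:pijksj}, and (iii) by transposing (ii). The only difference is that the paper records these appeals in one line each, whereas you spell out the entrywise bookkeeping.
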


\begin{proof}
(i)
By Lemma \ref{lem:pihqhj}.

(ii) 
By Lemma \ref{lem:pijksj}.

(iii)
In (ii), take the transpose of each side. 
\end{proof}

\begin{definition}     \label{def:U}   \samepage
\ifDRAFT {\rm def:U}. \fi
Note by Lemma \ref{lem:PtKs} that
$K^{-1} P^{\sf t} = Q (K^*)^{-1}$ and
$(K^*)^{-1} Q^{\sf t} = P K^{-1}$;
we define
\begin{align}
 U &= K^{-1} P^{\sf t} = Q (K^*)^{-1},  &
 U^* &= (K^*)^{-1} Q^{\sf t} = P K^{-1}.    \label{eq:defU}
\end{align}
\end{definition}

\begin{lemma}    \label{lem:U3}    \samepage
\ifDRAFT {\rm lem:U3}. \fi
The following hold:
\begin{itemize}
\item[\rm (i)]
$P = U^* K$;
\item[\rm (ii)]
$P^{\sf t} = K U$;
\item[\rm (iii)]
$Q = U K^*$;
\item[\rm (iv)]
$Q^{\sf t} = K^* U^*$.
\end{itemize}
\end{lemma}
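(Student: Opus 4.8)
The statement to prove is Lemma~\ref{lem:U3}, which asserts four identities: $P = U^*K$, $P^{\sf t} = KU$, $Q = UK^*$, and $Q^{\sf t} = K^*U^*$. These are purely formal consequences of Definition~\ref{def:U} together with Lemma~\ref{lem:PtKs}. The plan is to read off each identity directly from the defining equations \eqref{eq:defU}, using only that $K$ and $K^*$ are invertible diagonal matrices (which follows from Lemma~\ref{lem:ki}(i) applied to $\Phi$ and to $\Phi^*$, giving $k_i \neq 0$ and $k^*_i \neq 0$).

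First I would prove (ii): from $U = K^{-1}P^{\sf t}$ in \eqref{eq:defU}, left-multiply by $K$ to get $KU = P^{\sf t}$, which is exactly (ii). Next, (i) follows by taking the transpose of (ii): $P = (P^{\sf t})^{\sf t} = (KU)^{\sf t} = U^{\sf t}K^{\sf t} = U^{\sf t}K$ since $K$ is diagonal hence symmetric; but I want $P = U^*K$, so instead I would use the other expression for $U^*$ in \eqref{eq:defU}, namely $U^* = PK^{-1}$, and right-multiply by $K$ to obtain $U^*K = P$, which is (i). Symmetrically, (iii) comes from $U = Q(K^*)^{-1}$ in \eqref{eq:defU} by right-multiplying by $K^*$: $UK^* = Q$. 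Finally (iv) comes from $U^* = (K^*)^{-1}Q^{\sf t}$ in \eqref{eq:defU} by left-multiplying by $K^*$: $K^*U^* = Q^{\sf t}$. Each of the four identities is thus obtained by a single multiplication from one of the four expressions packaged into Definition~\ref{def:U}.

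I do not expect any genuine obstacle here; the content of the lemma is just a convenient restatement of the definitions of $U$ and $U^*$, so the "hard part" is merely the bookkeeping of choosing, for each of the four target equations, the correct one of the two alternative expressions for $U$ or $U^*$ in \eqref{eq:defU}. It is worth noting that the consistency of Definition~\ref{def:U} — i.e. that the two expressions given for $U$ agree, and likewise for $U^*$ — is precisely Lemma~\ref{lem:PtKs}(ii),(iii) rearranged, so nothing new needs to be checked. A clean writeup simply lists the four one-line derivations.

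\begin{proof}
Recall from Lemma~\ref{lem:ki}(i), applied to $\Phi$ and to $\Phi^*$, that $k_i \neq 0$ and $k^*_i \neq 0$ for $0 \leq i \leq d$, so the diagonal matrices $K$, $K^*$ in Definition~\ref{def:matrices} are invertible. Now use Definition~\ref{def:U}. In the equation $U^* = PK^{-1}$, multiply each side on the right by $K$ to obtain $U^*K = P$; this is (i). In the equation $U = K^{-1}P^{\sf t}$, multiply each side on the left by $K$ to obtain $KU = P^{\sf t}$; this is (ii). In the equation $U = Q(K^*)^{-1}$, multiply each side on the right by $K^*$ to obtain $UK^* = Q$; this is (iii). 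In the equation $U^* = (K^*)^{-1}Q^{\sf t}$, multiply each side on the left by $K^*$ to obtain $K^*U^* = Q^{\sf t}$; this is (iv).
\end{proof}
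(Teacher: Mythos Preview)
Your proof is correct and follows the same approach as the paper, which simply states ``Immediate from Definition~\ref{def:U}.'' You have just spelled out the four one-line rearrangements explicitly.
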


\begin{proof}
Immediate from Definition \ref{def:U}.
\end{proof}

\begin{lemma}    \label{lem:U0j}    \samepage
\ifDRAFT {\rm lem:U0j}. \fi
We have $U_{i,0}=1$ and $U^*_{i,0} =1$ for $0 \leq i \leq d$.
Moreover $U_{0,j} = 1$ and $U^*_{0,j} = 1$ for $0 \leq j \leq d$.
\end{lemma}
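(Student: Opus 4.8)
The plan is to extract all four claimed identities from the definition $U = K^{-1}P^{\sf t}$ together with $U^* = PK^{-1}$, using the normalization facts already established for $P$ and $Q$: namely $p_0(j)=1$ and $q_0(j)=1$ for $0\le j\le d$ (Lemma~\ref{lem:p0j}), $p_i(0)=k_i$ and $q_i(0)=k^*_i$ for $0\le i\le d$ (Lemma~\ref{lem:pi0}), and $k_0 = 1$ (Lemma~\ref{lem:ki}(iii), together with its dual $k^*_0=1$). I expect no real obstacle here: the whole statement is a bookkeeping exercise in tracking the index conventions of Definition~\ref{def:matrices}, where $P_{i,j} = p_j(i)$ and $K$ is the diagonal matrix with $(i,i)$-entry $k_i$.

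First I would compute $U_{i,0}$. By Definition~\ref{def:U}, $U = K^{-1}P^{\sf t}$, so $U_{i,0} = (K^{-1})_{i,i}(P^{\sf t})_{i,0} = k_i^{-1} P_{0,i} = k_i^{-1} p_i(0) = k_i^{-1} k_i = 1$, using Lemma~\ref{lem:pi0}(i). Next, $U_{0,j} = (K^{-1})_{0,0}(P^{\sf t})_{0,j} = k_0^{-1} P_{j,0} = k_0^{-1} p_0(j) = 1\cdot 1 = 1$, using $k_0=1$ from Lemma~\ref{lem:ki}(iii) and $p_0(j)=1$ from Lemma~\ref{lem:p0j}(i).

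For $U^*$ I would use the expression $U^* = PK^{-1}$ from Definition~\ref{def:U}. Then $U^*_{i,0} = P_{i,0}(K^{-1})_{0,0} = p_0(i)\, k_0^{-1} = 1\cdot 1 = 1$, again by Lemma~\ref{lem:p0j}(i) and Lemma~\ref{lem:ki}(iii). Finally $U^*_{0,j} = P_{0,j}(K^{-1})_{j,j} = p_j(0)\, k_j^{-1} = k_j\, k_j^{-1} = 1$, by Lemma~\ref{lem:pi0}(i). Alternatively, each statement about $U^*$ is just the statement about $U$ applied to $\Phi^*$, since $U^{*}$ for $\Phi$ equals $U$ for $\Phi^*$; I would likely present it that way to shorten the write-up. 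That completes all four claims.

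The only point requiring a moment's care is to keep straight which of the two defining formulas for $U$ (and for $U^*$) to use for each entry, since Lemma~\ref{lem:U3} gives several equivalent forms; choosing $U = K^{-1}P^{\sf t}$ and $U^* = PK^{-1}$ makes every entry reduce to one of the four already-proven normalizations. There is no genuine difficulty.
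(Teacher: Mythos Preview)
Your proof is correct and takes essentially the same approach as the paper: the paper's proof simply reads ``Use Lemmas~\ref{lem:ki}(iii), \ref{lem:p0j}, \ref{lem:pi0},'' and your argument is exactly the explicit unpacking of that citation via $U = K^{-1}P^{\sf t}$ and $U^* = PK^{-1}$.
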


\begin{proof}
Use Lemmas \ref{lem:ki}(iii), \ref{lem:p0j}, \ref{lem:pi0}.
\end{proof}

\begin{lemma}    \label{lem:U2}    \samepage
\ifDRAFT {\rm lem:U2}. \fi
The following hold:
\begin{itemize}
\item[\rm (i)]
$U^{\sf t} = U^*$;
\item[\rm (ii)]
$U K^* U^* K = \nu I$;
\item[\rm (iii)]
$U^* K U K^* = \nu I$.
\end{itemize}
\end{lemma}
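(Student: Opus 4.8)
For part (i), I would compute $U^{\sf t}$ directly from the two expressions for $U$ in \eqref{eq:defU}. Taking the transpose of $U = K^{-1} P^{\sf t}$ gives $U^{\sf t} = P (K^{-1})^{\sf t} = P K^{-1}$ since $K$ is diagonal, and $P K^{-1}$ is exactly the second formula for $U^*$ in \eqref{eq:defU}. So part (i) is immediate from Definition \ref{def:U}. (Alternatively one could compare entries: $(U^{\sf t})_{i,j} = U_{j,i}$, and chase through the definitions of $U$, $U^*$ via Lemma \ref{lem:pijksj}; but the transpose computation is cleaner.)

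For part (ii), the idea is to substitute the factorizations from Lemma \ref{lem:U3} and collapse using Lemma \ref{lem:PtKs}(i). Write $U K^* U^* K$ and replace $U K^* = Q$ (Lemma \ref{lem:U3}(iii)) and $U^* K = P$ (Lemma \ref{lem:U3}(i)), to get $U K^* U^* K = Q P = \nu I$ by Lemma \ref{lem:PtKs}(i). That settles (ii). For part (iii), I would do the symmetric computation: replace $U^* K = P$ (Lemma \ref{lem:U3}(i)) and $U K^* = Q$ (Lemma \ref{lem:U3}(iii)) to obtain $U^* K U K^* = P Q = \nu I$, again by Lemma \ref{lem:PtKs}(i). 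Alternatively, (iii) follows from (ii) by applying the construction to $\Phi^*$, since $U^* = (U)^*$ under the duality.

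There is essentially no obstacle here: every step is a one-line substitution from a previously established identity, and the only thing to be careful about is keeping track of which of the two expressions for $U$ (or $U^*$) to use at each point so that adjacent factors combine via Lemma \ref{lem:U3} into $P$, $Q$, $P^{\sf t}$, or $Q^{\sf t}$. The "content" of the lemma was already packaged into Lemmas \ref{lem:PtKs} and \ref{lem:U3}; this lemma just records the consequences in the symmetric form $U K^* U^* K = \nu I$ that will be convenient later (presumably for the transition matrices between the four bases in Section \ref{sec:4bases}).
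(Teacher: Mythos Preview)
Your proposal is correct and matches the paper's proof essentially verbatim: the paper proves (i) ``by Definition \ref{def:U}'' and proves (ii), (iii) ``by Lemma \ref{lem:U3}(i),(iii) and Lemma \ref{lem:PtKs}(i)'', which is exactly the substitution $U K^* = Q$, $U^* K = P$ followed by $QP = PQ = \nu I$ that you describe.
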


\begin{proof}
(i)
By Definition \ref{def:U}.

(ii), (iii)
By Lemma \ref{lem:U3}(i),(iii) and Lemma \ref{lem:PtKs}(i).
\end{proof}

\begin{definition}    \label{def:matrices2}    \samepage
\ifDRAFT {\rm def:matrices2}. \fi
For $0 \leq i \leq d$ let $B_i$ and  $B^*_i$ denote the matrices in $\Mat_{d+1}(\F)$
that have entries
\begin{align*}
  (B_i)_{h,j} &= p^h_{i j},  &
  (B^*_i)_{h,j} &= q^h_{i j}  &&  (0 \leq h,j \leq d).
\end{align*}
We call $B_i$ (resp.\ $B^*_i$) the {\em $i^\text{th}$ intersection matrix}
(resp.\ {\em $i^\text{th}$ dual intersection matrix}) of $\Phi$.
\end{definition}

\begin{definition}   \label{def:matrices3}    \samepage
\ifDRAFT {\rm def:matrices3}. \fi
For $0 \leq i \leq d$ let $H_i$ and $H^*_i$ denote the diagonal matrices in $\Mat_{d+1}(\F)$ 
that have diagonal entries
\begin{align*}
  (H_i)_{j,j} &= p_i (j),  &
  (H^*_i)_{j,j} &= q_i (j)   &&   (0 \leq j \leq d).
\end{align*}
\end{definition}

\begin{lemma}    \label{lem:BDPQ}    \samepage
\ifDRAFT {\rm lem:BDPQ}. \fi
For $0 \leq r \leq d$,
\begin{align}
H_r P &= P B_r,   &  H^*_r Q &= Q B^*_r,              \label{eq:DrP}
\\
Q H_r  &= B_r Q,  &  P H^*_r &= B^*_r P,              \label{eq:QDr}
\\
K B_r &= (B_r)^{\sf t} K, &  K^* B^*_r &= (B^*_r)^{\sf t} K^*,  \label{eq:KBr}
\\
U H_r &= B_r U,  &  U^* H^*_r &= B^*_r U^*.             \label{eq:UDr}
\end{align}
\end{lemma}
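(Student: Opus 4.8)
The plan is to verify each of the eight identities by unpacking both sides entrywise and invoking the scalar relations already established, then to organize the work so that only a few of the identities require genuine computation and the rest follow by transpose or by the duality $\Phi \to \Phi^*$. First I would record the entrywise meaning of each matrix from Definitions \ref{def:matrices}, \ref{def:matrices2}, \ref{def:matrices3}: $P_{i,j}=p_j(i)$, $Q_{i,j}=q_j(i)$, $(B_r)_{h,j}=p^h_{rj}$, $(H_r)_{j,j}=p_r(j)$, $K_{i,i}=k_i$, and $U=K^{-1}P^{\sf t}$ so that $U_{i,j}=k_i^{-1}p_i(j)$ (using $P^{\sf t}_{i,j}=P_{j,i}=p_i(j)$).

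For the first relation $H_rP=PB_r$: the $(i,j)$-entry of the left side is $p_r(i)\,p_j(i)$, while the $(i,j)$-entry of the right side is $\sum_{h=0}^d p_h(i)\,p^h_{rj}$. These are equal by Lemma \ref{lem:pirpjr}(i) (with the appropriate relabeling of indices, using $p^h_{rj}=p^h_{jr}$ from Lemma \ref{lem:phijphji}(i)). The companion identity $H^*_rQ=QB^*_r$ is then obtained by applying this to $\Phi^*$. For $QH_r=B_rQ$: comparing $(i,j)$-entries, the left side gives $q_i(j)\,p_r(j)$ — wait, more carefully, $(QH_r)_{i,j}=Q_{i,j}(H_r)_{j,j}=q_j(i)p_r(j)$ and $(B_rQ)_{i,j}=\sum_h p^i_{rh}q_j(h)$; here I would instead argue operator-theoretically, applying $\rho^*$ to the defining relation and using Lemma \ref{lem:rhorhos} together with \eqref{eq:AiAj}, or simply cite \eqref{eq:DrP} applied to $\Phi^*$ after transposing appropriately. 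The cleanest route for \eqref{eq:QDr} is: take \eqref{eq:DrP} for $\Phi$, namely $H_rP=PB_r$; combine with Lemma \ref{lem:PtKs}(i) ($PQ=QP=\nu I$) to get $Q(PB_r)Q=Q(H_rP)Q$, i.e. $\nu B_rQ=\nu QH_r$. That immediately yields $QH_r=B_rQ$, and $PH^*_r=B^*_rP$ follows by the $\Phi\to\Phi^*$ duality.

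For \eqref{eq:KBr}: the $(h,j)$-entry of $KB_r$ is $k_h p^h_{rj}$ and the $(h,j)$-entry of $(B_r)^{\sf t}K$ is $p^j_{rh}k_j$; equality is exactly Lemma \ref{lem:khphij}(i) (again using symmetry of the lower indices). The dual statement $K^*B^*_r=(B^*_r)^{\sf t}K^*$ follows by applying to $\Phi^*$. Finally, for \eqref{eq:UDr}: using $U=K^{-1}P^{\sf t}$, we have $UH_r=K^{-1}P^{\sf t}H_r$ and $B_rU=B_rK^{-1}P^{\sf t}$, so it suffices to show $P^{\sf t}H_r=K^{-1}B_rKP^{\sf t}$, equivalently $KP^{\sf t}H_r=B_rKP^{\sf t}$. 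Transposing \eqref{eq:DrP}'s first identity gives $P^{\sf t}H_r=(B_r)^{\sf t}P^{\sf t}$ (since $H_r$ is diagonal), then applying \eqref{eq:KBr} in the form $K(B_r)^{\sf t}=\dots$ — the tidiest path is: $UH_r=K^{-1}P^{\sf t}H_r=K^{-1}(B_r)^{\sf t}P^{\sf t}$ by the transpose of \eqref{eq:DrP}, then $K^{-1}(B_r)^{\sf t}=B_rK^{-1}$ by \eqref{eq:KBr}, giving $K^{-1}(B_r)^{\sf t}P^{\sf t}=B_rK^{-1}P^{\sf t}=B_rU$. The dual $U^*H^*_r=B^*_rU^*$ follows by applying to $\Phi^*$ together with $U^{\sf t}=U^*$ from Lemma \ref{lem:U2}(i).

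The main obstacle will be bookkeeping with the index positions and transposes — making sure that "$(i,j)$-entry of $P$ is $p_j(i)$" is consistently applied so that $H_rP=PB_r$ really does reduce to Lemma \ref{lem:pirpjr}(i) and not to some index-scrambled variant, and likewise that the three-way symmetry in Lemma \ref{lem:khphij}(i) is invoked with the correct slot for $r$. Once the entrywise dictionary is fixed at the outset, each verification is a one-line comparison or a one-line matrix manipulation using Lemma \ref{lem:PtKs}(i), so no lengthy calculation is needed.
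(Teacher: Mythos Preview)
Your proposal is correct and follows essentially the same approach as the paper: entrywise verification of $H_rP=PB_r$ via Lemma \ref{lem:pirpjr}(i), derivation of $QH_r=B_rQ$ by multiplying on both sides by $Q$ and using $PQ=QP=\nu I$, entrywise verification of $KB_r=(B_r)^{\sf t}K$ via Lemma \ref{lem:khphij}(i), and duality for the starred versions. The only minor difference is in \eqref{eq:UDr}: the paper uses $Q=UK^*$ together with \eqref{eq:QDr} and the commutativity of the diagonal matrices $H_r$, $K^*$, whereas you use $U=K^{-1}P^{\sf t}$ together with the transpose of \eqref{eq:DrP} and \eqref{eq:KBr}; both routes are equally short and valid.
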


\begin{proof}
To get the equation on the left in \eqref{eq:DrP},
compare the entries of each side using Lemma \ref{lem:pirpjr}(i).
In the equation on the left in \eqref{eq:DrP}, multiply each side on the left and on the right by $Q$
and simplify the result using Lemma \ref{lem:PtKs}(i). 
This gives the equation on the left in \eqref{eq:QDr}.
To obtain the equation on the left in \eqref{eq:KBr}, 
compare the entries of each side using Lemma \ref{lem:khphij}(i).
The equation on the left in \eqref{eq:UDr} follows from $Q H_r = B_r Q$ and
Lemma \ref{lem:U3}(iii) together with the fact that $H_r$, $K^*$ commute
since they are both diagonal.
To get the equations on the right in \eqref{eq:DrP}--\eqref{eq:UDr},
apply the equations on the left in \eqref{eq:DrP}--\eqref{eq:UDr} to $\Phi^*$.
\end{proof}

\begin{lemma}    \label{lem:BrBs}    \samepage
\ifDRAFT {\rm lem:BrBs}. \fi
For $0 \leq i,j \leq d$ the following hold:
\begin{itemize}
\item[\rm (i)]
$B_i B_j = \sum_{h=0}^d p^h_{i j} B_h$;
\item[\rm (ii)]
$B^*_i B^*_j = \sum_{h=0}^d q^h_{i j} B^*_h$;
\item[\rm (iii)]
$H_i H_j = \sum_{h=0}^d p^h_{i j} H_h$;
\item[\rm (iv)]
$H^*_i H^*_j = \sum_{h=0}^d q^h_{i j} H^*_h$.
\end{itemize}
\end{lemma}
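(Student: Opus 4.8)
The plan is to view all four identities as the same statement read in different coordinates: each side records the structure constants $p^h_{ij}$ (or $q^h_{ij}$) of the commutative algebra $\cal M$ (or ${\cal M}^*$), once in the basis $\{A_j\}_{j=0}^d$ and once in the basis $\{E_j\}_{j=0}^d$. Concretely, $B_i$ is the matrix of left multiplication by $A_i$ on $\cal M$ with respect to $\{A_j\}_{j=0}^d$ (its $j$-th column being the coordinate vector of $A_iA_j=\sum_h p^h_{ij}A_h$), and $H_i$ is the matrix of the same map with respect to $\{E_j\}_{j=0}^d$ (since $A_iE_j=p_i(j)E_j$ by Lemma \ref{lem:AiEj}(i)). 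Since the regular representation of a commutative algebra with identity is an algebra homomorphism, the relation $A_iA_j=\sum_h p^h_{ij}A_h$ is transported to the asserted relations among the $B$'s and among the $H$'s. I will nonetheless present the argument in elementary matrix language, since all the needed facts are already available.

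First I would prove (iii). Both $H_iH_j$ and $\sum_{h=0}^d p^h_{ij}H_h$ are diagonal matrices, so it suffices to equate their $(r,r)$-entries for $0\leq r\leq d$. By Definition \ref{def:matrices3} these entries are $p_i(r)p_j(r)$ and $\sum_{h=0}^d p^h_{ij}p_h(r)$ respectively, and they coincide by Lemma \ref{lem:pirpjr}(i).

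Next I would deduce (i) from (iii) by conjugation. By Lemma \ref{lem:PtKs}(i) the matrix $P$ is invertible, and the left equation of \eqref{eq:DrP} in Lemma \ref{lem:BDPQ} gives $B_r=P^{-1}H_rP$ for $0\leq r\leq d$. Hence
\[
 B_iB_j=P^{-1}H_iH_jP=P^{-1}\Bigl(\sum_{h=0}^d p^h_{ij}H_h\Bigr)P=\sum_{h=0}^d p^h_{ij}\,P^{-1}H_hP=\sum_{h=0}^d p^h_{ij}B_h,
\]
where the middle step uses (iii). One could instead prove (i) directly by computing $(B_iB_j)_{h,l}=\sum_r p^h_{ir}p^r_{jl}$ and $\bigl(\sum_m p^m_{ij}B_m\bigr)_{h,l}=\sum_m p^m_{ij}p^h_{ml}$ and matching them via the associativity identity Lemma \ref{lem:sumpthr}(i) after a relabeling of indices; either route is short.

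Finally, parts (ii) and (iv) follow by applying (i) and (iii) to the dual system $\Phi^*$: from Definitions \ref{def:matrices2} and \ref{def:matrices3} the matrices $B^*_i$, $H^*_i$ are precisely the matrices ``$B_i$'', ``$H_i$'' of $\Phi^*$, since their entries are $q^h_{ij}=(p^h_{ij})^*$ and $q_i(j)=(p_i(j))^*$. I do not expect a genuine obstacle here; the only points requiring care are the identification of the dual matrices $B^*_i$, $H^*_i$ with the construction applied to $\Phi^*$, and the (immediate) invertibility of $P$ used in the conjugation step.
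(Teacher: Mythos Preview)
Your proof is correct. For (iii) and (iv) you argue exactly as the paper does, via Lemma~\ref{lem:pirpjr}. For (i) and (ii) the paper simply cites the associativity identity Lemma~\ref{lem:sumpthr} (i.e., the direct entry comparison you mention as an alternative), whereas your primary route derives (i) from (iii) by the conjugation $B_r=P^{-1}H_rP$ coming from \eqref{eq:DrP}. Both arguments are one-liners; your conjugation approach has the minor conceptual advantage of making explicit that the $B_r$ and $H_r$ are the same operators (left multiplication by $A_r$ on $\cal M$) in two bases, but it does invoke the extra input Lemma~\ref{lem:BDPQ}, while the paper's route is self-contained from Lemma~\ref{lem:sumpthr}.
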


\begin{proof}
(i), (ii)
By Lemma \ref{lem:sumpthr}.

(iii), (iv)
By Lemma \ref{lem:pirpjr}.
\end{proof}

\section{The $\Phi$-standard basis}
\label{sec:standard}

We continue to discuss a symmetric idempotent system
$\Phi = (\{E_i\}_{i=0}^d; \{E^*_i\}_{i=0}^d)$ on $V$.
In this section we introduce the notion of a $\Phi$-standard basis.

\begin{lemma}    \label{lem:EsiE0V}    \samepage
\ifDRAFT {\rm lem:EsiE0V}. \fi
For $0 \leq i \leq d$, $E^*_i V = E^*_i E_0 V$.
\end{lemma}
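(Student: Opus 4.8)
The plan is to establish the equality by a one‑dimensional dimension count. First I would note the trivial inclusion $E^*_i E_0 V \subseteq E^*_i V$, which holds because $E_0 V \subseteq V$ and $E^*_i$ is linear, so $E^*_i E_0 V = E^*_i(E_0 V) \subseteq E^*_i V$. Next I would recall that $\{E^*_i\}_{i=0}^d$ is a system of mutually orthogonal rank $1$ idempotents, so $\dim E^*_i V = \text{\rm rank}(E^*_i) = 1$. Given these two facts, the proof reduces to showing $E^*_i E_0 V \neq 0$: a nonzero subspace of the one‑dimensional space $E^*_i V$ must be all of $E^*_i V$.

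To prove $E^*_i E_0 V \neq 0$, I would show $E^*_i E_0 \neq 0$ in $\text{\rm End}(V)$; this is equivalent, since fixing a spanning vector $\xi$ of $E_0 V$ we have $E^*_i E_0 V = \F\, E^*_i \xi$, and $E^*_i E_0 = 0$ exactly when $E^*_i \xi = 0$. Suppose for contradiction that $E^*_i E_0 = 0$. Multiplying on the left by $E_0$ yields $E_0 E^*_i E_0 = 0$, contradicting Definition \ref{def:ips}(iii). Alternatively, one could simply invoke Lemma \ref{lem:EiEs0}(ii), which already records that $\{E^*_i E_0\}_{i=0}^d$ is a basis of $\text{\rm End}(V) E_0$ and hence that each $E^*_i E_0$ is nonzero.

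I do not anticipate any real obstacle here: the entire content is the observation that the defining nonvanishing condition Definition \ref{def:ips}(iii) forces $E^*_i E_0 \neq 0$, after which everything is a routine comparison of one‑dimensional spaces. The only point requiring a moment's care is the passage from the nonvanishing of the operator $E^*_i E_0$ to the nonvanishing of the subspace $E^*_i E_0 V$, which is immediate once a spanning vector of $E_0 V$ is fixed. (The dual statement $E_i V = E_i E^*_0 V$ would follow by applying the same argument to $\Phi^*$.)
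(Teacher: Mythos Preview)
Your proof is correct and essentially identical to the paper's: both note the inclusion $E^*_i E_0 V \subseteq E^*_i V$, that $\dim E^*_i V = 1$, and that $E^*_i E_0 V \neq 0$ follows from Definition~\ref{def:ips}(iii). The paper states this in three lines without spelling out the contrapositive, but the content is the same.
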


\begin{proof}
The vector space $E^*_i V$ has dimension $1$ and contains $E^*_i E_0 V$.
By Definition \ref{def:ips}(iii), $E^*_i E_0 V \neq 0$.
The result follows.
\end{proof}

\begin{lemma}    \label{lem:Esiu}    \samepage
\ifDRAFT {\rm lem:Esiu}. \fi
Let $\xi$ denote a nonzero vector in $E_0 V$.
Then for $0 \leq i \leq d$ the vector $E^*_i \xi$ is nonzero and hence
a basis of $E^*_i V$.
Moreover the vectors $\{E^*_i \xi\}_{i=0}^d$ form a basis of $V$.
\end{lemma}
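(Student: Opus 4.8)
The plan is to show that $E^*_i \xi \neq 0$ for each $i$, and then note that the $d+1$ nonzero vectors $\{E^*_i\xi\}_{i=0}^d$ lie in the respective summands $E^*_i V$ of the direct sum decomposition $V = \sum_{i=0}^d E^*_i V$ (Lemma \ref{lem:decomp}(ii) applied to $\{E^*_i\}_{i=0}^d$), hence are linearly independent and so form a basis of $V$. The content is therefore entirely in the nonvanishing claim.

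First I would write $\xi = E_0 \xi$, which holds since $\xi \in E_0 V$ and $E_0$ acts as the identity on $E_0 V$. Then $E^*_i \xi = E^*_i E_0 \xi$. By Lemma \ref{lem:EsiE0V} we have $E^*_i V = E^*_i E_0 V$, so the operator $E^*_i E_0$ has rank $1$; in particular it is not the zero map. Its image is the line $E^*_i V$. Now $\xi$ is a nonzero vector in $E_0 V$, and $E_0 V$ is one-dimensional (rank of $E_0$ is $1$), so $E_0 V = \F \xi$. Since $E^*_i E_0$ is nonzero and $E_0 V = \F\xi$, we cannot have $E^*_i E_0 \xi = 0$, for otherwise $E^*_i E_0$ would vanish on all of $E_0 V = \F\xi$, and it vanishes on $E^*_j V$ for $j \neq 0$ as well (since $E_0 E^*_j = 0$... more carefully: $E^*_i E_0$ kills $\Ker E_0 = \sum_{j \neq 0} E_j V$, and if it also killed $E_0 V$ it would be zero). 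Hence $E^*_i \xi \neq 0$. Being a nonzero element of the one-dimensional space $E^*_i V$, it is a basis of $E^*_i V$.

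For the last sentence: the subspaces $\{E^*_i V\}_{i=0}^d$ form a decomposition of $V$ by Lemma \ref{lem:decomp}(ii), so any choice of a nonzero vector from each $E^*_i V$ gives a basis of $V$. Applying this to the vectors $E^*_i\xi$, which we have just shown are nonzero and lie in $E^*_i V$, yields that $\{E^*_i\xi\}_{i=0}^d$ is a basis of $V$.

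The only mild subtlety — the "main obstacle" such as it is — is the nonvanishing of $E^*_i\xi$, and the clean way to get it is exactly Lemma \ref{lem:EsiE0V}: it tells us that $E^*_i E_0$ already surjects onto the line $E^*_i V$, and since $E_0 V$ is the line $\F\xi$, the map $E^*_i E_0$ restricted to $E_0 V$ must already be surjective onto $E^*_i V$, hence nonzero on $\xi$. Everything else is the standard fact that picking one nonzero vector from each block of a direct-sum decomposition into lines gives a basis.
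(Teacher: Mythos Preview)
Your proof is correct and follows essentially the same approach as the paper: both use Lemma \ref{lem:EsiE0V} to show $E^*_i E_0 V$ is one-dimensional, combine this with $E_0 V = \F\xi$ to conclude $E^*_i\xi \neq 0$, and then invoke the decomposition $V = \sum_i E^*_iV$ for the basis claim. The paper phrases the nonvanishing step slightly more directly (``$\xi$ spans $E_0V$, hence $E^*_i\xi$ spans $E^*_iE_0V$, which is one-dimensional''), while you argue by contradiction through the kernel of $E^*_iE_0$, but the content is the same.
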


\begin{proof}
Let the integer $i$ be given.
We show $E^*_i \xi \neq 0$.
The vector space $E_0 V$ has dimension $1$ and $\xi$ is a nonzero vector in $E_0 V$, so
$\xi$ spans $E_0 V$.
Therefore $E^*_i \xi$ spans $E^*_i E_0 V$.
The vector space $E^*_i E_0 V$ has dimension $1$ by Lemma \ref{lem:EsiE0V}
so $E^*_i \xi$ is nonzero.
The remaining assertions are clear.
\end{proof}

\begin{definition}    \label{def:standard}    \samepage
\ifDRAFT {\rm def:standard}. \fi
By a {\em $\Phi$-standard basis} of $V$ we mean a sequence
$\{E^*_i \xi\}_{i=0}^d$, where $\xi$ is a nonzero vector in $E_0 V$.
\end{definition}

We give a characterization of a $\Phi$-standard basis.

\begin{lemma}    \label{lem:standard}    \samepage
\ifDRAFT {\rm lem:standard}. \fi
Let $\{u_i\}_{i=0}^d$ denote a sequence of vectors in $V$, not all $0$.
Then this sequence is a $\Phi$-standard basis if and only if both {\rm (i), (ii)}
hold below:
\begin{itemize}
\item[\rm (i)]
$u_i \in E^*_i V$ for $0 \leq i \leq d$;
\item[\rm (ii)]
$\sum_{i=0}^d u_i \in E_0 V$.
\end{itemize}
\end{lemma}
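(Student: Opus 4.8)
The statement asserts that a nonzero sequence $\{u_i\}_{i=0}^d$ is a $\Phi$-standard basis precisely when each $u_i$ lies in $E^*_i V$ and the sum $\sum_i u_i$ lies in $E_0 V$. The forward direction is essentially immediate from the definition, and the real work is the converse. My plan is to prove the two implications separately.

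For the forward direction, suppose $\{u_i\}_{i=0}^d = \{E^*_i \xi\}_{i=0}^d$ for some nonzero $\xi \in E_0 V$, as in Definition \ref{def:standard}. Then (i) holds because $E^*_i \xi \in E^*_i V$ by construction. For (ii), I would use Lemma \ref{lem:trE}(ii) applied to $\{E^*_i\}_{i=0}^d$, namely $\sum_{i=0}^d E^*_i = I$, to get $\sum_{i=0}^d u_i = \sum_{i=0}^d E^*_i \xi = \xi \in E_0 V$. This is routine.

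For the converse, assume (i) and (ii) hold. Set $\xi = \sum_{i=0}^d u_i$; by (ii), $\xi \in E_0 V$. The key point is that for each $i$, applying $E^*_i$ to the equation $\xi = \sum_{j=0}^d u_j$ and using (i) together with $E^*_i E^*_j = \delta_{i,j} E^*_i$ (and $E^*_i u_i = u_i$ since $u_i \in E^*_i V$) gives $E^*_i \xi = u_i$. So the sequence $\{u_i\}_{i=0}^d$ equals $\{E^*_i \xi\}_{i=0}^d$. It remains to check that $\xi \neq 0$, so that this is a genuine $\Phi$-standard basis in the sense of Definition \ref{def:standard}. If $\xi = 0$ then $E^*_i \xi = 0$ for all $i$, hence $u_i = 0$ for all $i$, contradicting the hypothesis that not all $u_i$ are zero. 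Therefore $\xi$ is a nonzero vector in $E_0 V$, and $\{u_i\}_{i=0}^d = \{E^*_i \xi\}_{i=0}^d$ is a $\Phi$-standard basis.

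The only mild subtlety—which I do not expect to be a real obstacle—is making sure the logic around the nonvanishing hypothesis is handled cleanly: the definition of a $\Phi$-standard basis requires $\xi \neq 0$, and this is exactly where the "not all $0$" hypothesis on $\{u_i\}_{i=0}^d$ is used; one should also note in passing that once $\xi \neq 0$, Lemma \ref{lem:Esiu} guarantees each $E^*_i \xi$ is nonzero and the whole sequence is a basis of $V$, so the terminology "$\Phi$-standard basis" is consistent. Everything else is a direct unwinding of the idempotent relations, so the proof is short.
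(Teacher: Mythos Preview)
Your proposal is correct and follows essentially the same approach as the paper's proof: both directions are handled identically, defining $\xi = \sum_{i=0}^d u_i$ in the converse and using the idempotent relations $E^*_i E^*_j = \delta_{i,j} E^*_i$ together with the ``not all zero'' hypothesis to conclude $\xi \neq 0$. Your additional remark invoking Lemma~\ref{lem:Esiu} to confirm the result is genuinely a basis is a nice clarification that the paper leaves implicit.
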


\begin{proof}
To prove the lemma in one direction,
assume that $\{u_i\}_{i=0}^d$ is a $\Phi$-standard basis of $V$.
By Definition \ref{def:standard} there exists a nonzero $\xi \in E_0 V$
such that $u_i = E^*_i \xi$ for $0 \leq i \leq d$.
By construction $u_i \in E^*_i V$ for $0 \leq i \leq d$, so (i) holds.
Recall $I = \sum_{i=0}^d E^*_i$.
In this equation we apply each side to $\xi$, to find that $\xi = \sum_{i=0}^d u_i$,
and (ii) follows.
We have now proved the lemma in one direction.
To prove the lemma in the other direction,
assume that $\{u_i\}_{i=0}^d$ satisfy (i) and (ii).
Define $\xi = \sum_{i=0}^d u_i$ and observe $\xi \in E_0 V$.
Using (i) we find that $E^*_i u_j = \delta_{i,j} u_i$ for $0 \leq i,j \leq d$.
It follows $u_i = E^*_i \xi$ for $0 \leq i \leq d$.
Observe $\xi \neq 0$ since at least one of $\{u_i\}_{i=0}^d$ is nonzero.
Now $\{u_i\}_{i=0}^d$ is a $\Phi$-standard basis of $V$ by Definition \ref{def:standard}.
\end{proof}

\section{Bilinear forms} 
\label{sec:bilin}

In this section we recall some basic facts concerning bilinear forms on $V$.
See \cite[Section 8.5]{Rot} for more information.
By a {\em bilinear form on $V$} we mean a map $\b{\; , \; } : V \times V \to \F$
that satisfies the following four conditions for $u,v,w \in V$ and $\alpha \in \F$:
(i) $\b{u+v,w} = \b{u,w} + \b{v,w}$;
(ii) $\b{\alpha u, v} = \alpha \b{u,v}$;
(iii) $\b{u,v+w} = \b{u,v} + \b{u,w}$;
(iv) $\b{u,\alpha v} = \alpha \b{u,v}$.
Let $\b{ \; , \; }$ denote a bilinear form on $V$.
We abbreviate $||v||^2 = \b{ v,v }$ for $v \in V$.
The following are equivalent:
(i) there exists a nonzero $u \in V$ such that
$\b{ u, v} = 0$ for all $v \in V$;
(ii) there exists a nonzero $v \in V$ such that
$\b{ u,v } = 0$ for all $u \in V$.
The form $\b{ \; , \; }$ is said to be {\em degenerate}
whenever (i), (ii) hold and {\em nondegenerate} otherwise.

Let $\gamma$ denote an antiautomorphism of $\text{\rm End}(V)$.
Then there exists  a nonzero bilinear form $\b{\; , \;}$ on $V$ such that
$\bbig{A u, v} = \bbig{u, A^\gamma v}$ for $u,v \in V$ and  $A \in \text{\rm End}(V)$.
The form is unique up to multiplication by a nonzero scalar.
The form is nondegenerate.
We refer to this form as a {\em bilinear form on $V$ associated with $\gamma$}.

For the rest of this section let $\b{ \; , \; }$ denote a nondegenerate bilinear form on $V$.

\begin{definition}     \label{def:innermatrix}    \samepage
\ifDRAFT {\rm def:innermatrix}. \fi
For bases $\{u_i\}_{i=0}^d$ and $\{v_i\}_{i=0}^d$ of $V$,
the {\em inner product matrix from $\{u_i\}_{i=0}^d$ to $\{v_i\}_{i=0}^d$} 
is the matrix in $\Mat_{d+1}(\F)$ that has $(i,j)$-entry $\b{ u_i, v_j }$
for $0 \leq i,j \leq d$.
\end{definition}

Referring to Definition \ref{def:innermatrix}, the inner product matrix from $\{u_i\}_{i=0}^d$
to $\{v_i\}_{i=0}^d$ is invertible.

\begin{definition}    \label{def:symmetric}    \samepage
\ifDRAFT {\rm def:symmetric}. \fi
The form $\b{\; , \;}$ is said to be {\em symmetric} whenever
$\b{u,v} = \b{v,u}$ for $u,v\in V$.
\end{definition}

\begin{definition}    \label{def:dualbasis}     \samepage
\ifDRAFT {\rm def:dualbasis}. \fi
Assume that $\b{ \; , \; }$ is symmetric.
Then two bases $\{u_i\}_{i=0}^d$, $\{v_i\}_{i=0}^d$ of $V$ are said to be 
{\em dual with respect to $\b{ \; , \; }$}
whenever $\b{u_i, v_j} = \delta_{i,j}$ for $0 \leq i,j \leq d$.
\end{definition}

\begin{lemma}      \label{lem:dualbasis}    \samepage
\ifDRAFT {\rm lem:dualbasis}. \fi
Assume that $\b{ \; , \; }$ is  symmetric.
Then each basis of $V$ has a unique dual with respect to $\b{ \; , \; }$.
\end{lemma}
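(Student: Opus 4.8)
The plan is to reduce the statement to the invertibility of a Gram matrix. Fix a basis $\{u_i\}_{i=0}^d$ of $V$, and let $G \in \Matd$ denote the inner product matrix from $\{u_i\}_{i=0}^d$ to itself, so that $G_{i,j} = \langle u_i, u_j\rangle$ for $0 \leq i,j \leq d$. By the remark following Definition~\ref{def:innermatrix} (which is precisely nondegeneracy of $\langle\,,\,\rangle$ recast in matrix language), the matrix $G$ is invertible.

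For existence I would set $v_j = \sum_{k=0}^d (G^{-1})_{k,j}\, u_k$ for $0 \leq j \leq d$. Since $G^{-1}$ is invertible, the sequence $\{v_j\}_{j=0}^d$ is again a basis of $V$, namely the basis obtained from $\{u_i\}_{i=0}^d$ via the transition matrix $G^{-1}$. A direct computation then gives
\[
 \langle u_i, v_j\rangle = \sum_{k=0}^d (G^{-1})_{k,j}\, \langle u_i, u_k\rangle = \sum_{k=0}^d G_{i,k}\, (G^{-1})_{k,j} = \delta_{i,j},
\]
so $\{v_j\}_{j=0}^d$ is dual to $\{u_i\}_{i=0}^d$ with respect to $\langle\,,\,\rangle$ in the sense of Definition~\ref{def:dualbasis}. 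Symmetry of the form is what makes this one-sided condition coincide with the symmetric one, so no extra check is needed.

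For uniqueness, suppose $\{v'_j\}_{j=0}^d$ is another basis of $V$ dual to $\{u_i\}_{i=0}^d$, and expand $v'_j = \sum_{k=0}^d C_{k,j}\, u_k$ for scalars $C_{k,j}$. Then $\delta_{i,j} = \langle u_i, v'_j\rangle = \sum_{k=0}^d G_{i,k}\, C_{k,j}$, i.e. $GC = I$ where $C = (C_{k,j})$, whence $C = G^{-1}$ and therefore $v'_j = v_j$ for all $j$. Thus the dual basis is unique.

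I do not anticipate a genuine obstacle here; the only point needing care is the invertibility of $G$, which is exactly the nondegeneracy hypothesis and is already available from the text immediately after Definition~\ref{def:innermatrix}. The rest is routine linear algebra.
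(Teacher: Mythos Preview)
Your argument is correct and is the standard Gram-matrix proof. The paper itself does not supply a proof of this lemma; it simply states the result as a well-known fact of linear algebra, so there is nothing to compare against beyond noting that your approach is exactly the routine argument the authors presumably had in mind.
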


\begin{lemma}    \label{lem:trans}    \samepage
\ifDRAFT {\rm lem:trnas}. \fi
Assume that $\b{ \; , \; }$ is  symmetric.
Let $\{u_i\}_{i=0}^d$ and $\{v_i\}_{i=0}^d$ denote bases of $V$.
Then the following are the same:
\begin{itemize}
\item[\rm (i)]
the inner product matrix from $\{u_i\}_{i=0}^d$ to $\{v_i\}_{i=0}^d$;
\item[\rm (ii)]
the inner product matrix from $\{u_i\}_{i=0}^d$ to $\{u_i\}_{i=0}^d$,
times the transition matrix from $\{u_i\}_{i=0}^d$ to $\{v_i\}_{i=0}^d$.
\end{itemize}
\end{lemma}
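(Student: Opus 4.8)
The plan is to unwind the three relevant definitions and then collapse everything with a single application of bilinearity in the second slot. Write $G$ for the inner product matrix from $\{u_i\}_{i=0}^d$ to $\{u_i\}_{i=0}^d$, so that $G_{i,j} = \b{u_i,u_j}$ for $0 \leq i,j \leq d$, and write $T$ for the transition matrix from $\{u_i\}_{i=0}^d$ to $\{v_i\}_{i=0}^d$, so that $v_j = \sum_{k=0}^d T_{k,j} u_k$ for $0 \leq j \leq d$ (this is the indexing convention fixed in Section \ref{sec:pre}). Let $N$ denote the inner product matrix from $\{u_i\}_{i=0}^d$ to $\{v_i\}_{i=0}^d$ in the sense of Definition \ref{def:innermatrix}, so $N_{i,j} = \b{u_i, v_j}$. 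The goal is to establish the matrix identity $N = GT$, which is exactly the assertion that (i) and (ii) coincide.

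The computation is then immediate. For $0 \leq i,j \leq d$, substitute $v_j = \sum_{k=0}^d T_{k,j} u_k$ into $N_{i,j} = \b{u_i, v_j}$ and use additivity and $\F$-homogeneity of $\b{\;,\;}$ in the second argument to obtain $N_{i,j} = \sum_{k=0}^d T_{k,j}\,\b{u_i, u_k} = \sum_{k=0}^d G_{i,k} T_{k,j} = (GT)_{i,j}$. Since $i$ and $j$ were arbitrary, this gives $N = GT$, as desired.

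I do not anticipate any genuine obstacle; the content is essentially a one-line expansion. The only points requiring care are bookkeeping: one must use the column-wise convention for the transition matrix (columns indexed by the target basis) together with the $(i,j)$-convention for inner product matrices, so that the product emerges as $GT$ rather than $TG$; and one should note that symmetry of $\b{\;,\;}$ is not actually invoked in this particular identity — it is hypothesized only to match the ambient setting of Section \ref{sec:bilin}, where symmetry is needed for notions such as dual bases.
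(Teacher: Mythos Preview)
Your proof is correct and is precisely the routine expansion the paper has in mind; the paper's own proof reads simply ``Routine linear algebra.'' Your remark that symmetry is not actually used in this identity is also accurate.
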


\begin{proof}
Routine linear algebra.
\end{proof}

\section{The dual $\Phi$-standard basis}
\label{sec:dualstandard}

We return our attention to a symmetric idempotent system
 $\Phi = (\{E_i\}_{i=0}^d; \{E^*_i\}_{i=0}^d)$ on $V$.
In this section we introduce the notion of a dual $\Phi$-standard basis of $V$.
Recall the antiautomorphism  $\dagger$ of $\text{\rm End}(V)$ from Definition \ref{def:sym}.
For the rest of the paper $\b{\; , \;}$ denotes a bilinear form on $V$
associated with $\dagger$.
By the construction, for $A \in \text{\rm End}(V)$ we have
\begin{align}
 \bbig{ A u, v} &= \bbig{u, A^\dagger v}  &&  (u,v \in V).   \label{eq:bXuv}
\end{align}
Recall the algebra $\cal M$ from Definition \ref{def:D}.

\begin{lemma}    \label{lem:bilin1}    \samepage
\ifDRAFT {\rm lem:bilin1}. \fi
For $A \in {\cal M} \cup {\cal M}^*$,
\begin{align}
  \bbig{ A u, v} &= \bbig{u, A v}  &&  (u,v \in V).        \label{eq:bXuv2}
\end{align}
\end{lemma}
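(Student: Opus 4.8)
The plan is to reduce the claim directly to the defining property \eqref{eq:bXuv} of the bilinear form $\b{\;,\;}$ together with the fact that the antiautomorphism $\dagger$ acts as the identity on $\cal M$ and on ${\cal M}^*$.

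First I would fix $A \in {\cal M} \cup {\cal M}^*$ and recall from \eqref{eq:bXuv} that $\bbig{Au,v} = \bbig{u, A^\dagger v}$ for all $u,v \in V$. Next I would invoke Lemma \ref{lem:dagger0}(iii), which asserts that $\dagger$ fixes every element of $\cal M$ and every element of ${\cal M}^*$; hence $A^\dagger = A$ for our chosen $A$. Substituting $A^\dagger = A$ into the previous identity yields $\bbig{Au,v} = \bbig{u, Av}$, which is exactly \eqref{eq:bXuv2}.

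I do not anticipate any genuine obstacle: the entire content of the lemma is carried by Lemma \ref{lem:dagger0}(iii) (which itself rests on the symmetry of $\Phi$ via Definitions \ref{def:D}, \ref{def:sym} and on Lemma \ref{lem:generate}(iii)) together with the construction of a bilinear form associated with $\dagger$. The one point to be careful about is that the hypothesis is $A \in {\cal M}\cup{\cal M}^*$, the set-theoretic union, not the algebra generated by ${\cal M}$ and ${\cal M}^*$; so one must not assert \eqref{eq:bXuv2} for arbitrary products mixing elements of the two subalgebras, only for a single element lying in one of them. The argument above handles exactly one such $A$ at a time and so respects this restriction.
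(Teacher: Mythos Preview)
Your proposal is correct and takes essentially the same approach as the paper, which simply cites Definition \ref{def:sym} and \eqref{eq:bXuv}. The only cosmetic difference is that you route through Lemma \ref{lem:dagger0}(iii) rather than directly invoking Definition \ref{def:sym}; since Lemma \ref{lem:dagger0}(iii) is exactly the statement that $\dagger$ fixes every element of ${\cal M}$ and ${\cal M}^*$, this is the same argument.
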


\begin{proof}
By Definition \ref{def:sym} and \eqref{eq:bXuv}.
\end{proof}

\begin{lemma}     \label{lem:EsiwEsjw}    \samepage
\ifDRAFT {\rm lem:EsiwEsjw}. \fi
For $\xi \in E_0 V$,
\begin{align}
 \bbig{E^*_i \xi, E^*_j \xi} &= \delta_{i,j} \nu^{-1} k_i  ||\xi||^2
          &&                  (0 \leq i,j \leq d).                    \label{eq:EsiwEsjw}
\end{align}
\end{lemma}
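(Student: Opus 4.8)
The plan is to compute $\bbig{E^*_i \xi, E^*_j \xi}$ by moving the idempotents across the form using Lemma~\ref{lem:bilin1} and then invoking one of the established reduction rules. First I would note that since $E^*_i$ is idempotent and lies in ${\cal M}^*$, Lemma~\ref{lem:bilin1} gives
\[
  \bbig{E^*_i \xi, E^*_j \xi} = \bbig{\xi, E^*_i E^*_j \xi} = \delta_{i,j} \bbig{\xi, E^*_i \xi},
\]
so the off-diagonal case is immediate and it remains to evaluate $\bbig{\xi, E^*_i \xi}$ for a nonzero $\xi \in E_0 V$. Since $\xi \in E_0 V$ we have $E_0 \xi = \xi$, and applying Lemma~\ref{lem:bilin1} again with $E_0 \in {\cal M}$ yields $\bbig{\xi, E^*_i \xi} = \bbig{E_0 \xi, E^*_i \xi} = \bbig{\xi, E_0 E^*_i \xi}$; writing $E_0 E^*_i = E_0 E^*_i E_0$ on $E_0 V$ (legitimate because $E_0 \xi = \xi$) we get $\bbig{\xi, E^*_i \xi} = \bbig{\xi, E_0 E^*_i E_0 \xi}$.

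Now I would bring in the scalar identity. By Lemma~\ref{lem:E0EsiE0}(ii) we have $E_0 E^*_i E_0 = m^*_i E_0$, so $\bbig{\xi, E_0 E^*_i E_0 \xi} = m^*_i \bbig{\xi, E_0 \xi} = m^*_i \bbig{\xi, \xi} = m^*_i \|\xi\|^2$. Finally I would convert $m^*_i$ to the desired form using Lemma~\ref{lem:kimi}(i), which states $k_i = \nu m^*_i$, hence $m^*_i = \nu^{-1} k_i$. Combining everything gives $\bbig{E^*_i \xi, E^*_j \xi} = \delta_{i,j} \nu^{-1} k_i \|\xi\|^2$, as claimed.

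The argument is essentially a short chain of substitutions, so there is no serious obstacle; the only point requiring a little care is the justification that $E_0 E^*_i$ and $E_0 E^*_i E_0$ act the same way on $\xi$ (because $E_0 \xi = \xi$), and keeping track of which idempotents belong to ${\cal M}$ versus ${\cal M}^*$ so that Lemma~\ref{lem:bilin1} applies. One could also streamline the whole computation by writing $\bbig{E^*_i \xi, E^*_j \xi} = \bbig{E_0 \xi, E^*_j E^*_i E_0 \xi}$ in one step and then using $E_0 E^*_j E^*_i E_0 = \delta_{i,j} E_0 E^*_i E_0 = \delta_{i,j} m^*_i E_0$ directly; I expect the final write-up to take this slightly more compact route.
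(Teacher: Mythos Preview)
Your proposal is correct and matches the paper's proof essentially line for line: the paper writes $\bbig{E^*_i \xi, E^*_j \xi} = \bbig{E^*_i E_0 \xi, E^*_j E_0 \xi} = \bbig{\xi, E_0 E^*_i E^*_j E_0 \xi} = \delta_{i,j}\bbig{\xi, E_0 E^*_i E_0 \xi}$ and then invokes Lemmas~\ref{lem:E0EsiE0}(ii) and~\ref{lem:kimi}(i), exactly the ``streamlined'' route you anticipated at the end.
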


\begin{proof}
Using \eqref{eq:bXuv2} and $E_0 \xi = \xi$,
\[
\bbig{ E^*_i \xi, E^*_j \xi } 
 = \bbig{ E^*_i E_0 \xi, E^*_j E_0 \xi }
 = \bbig{ \xi, E_0 E^*_i E^*_j E_0 \xi }
 = \delta_{i,j} \bbig{ \xi, E_0 E^*_i E_0 \xi}.
\]
By this and Lemmas \ref{lem:E0EsiE0}(ii), \ref{lem:kimi}(i) we get the result.
\end{proof}

\begin{lemma}    \label{lem:bsym}    \samepage
\ifDRAFT {\rm lem:sym}.  \fi
The bilinear form $\b{ \; , \;}$ is symmetric.
\end{lemma}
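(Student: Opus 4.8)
The plan is to exploit the fact that $\dagger$ is an involutive antiautomorphism of $\text{\rm End}(V)$ (Lemma \ref{lem:dagger0}(ii)) and that it fixes the generating sets $\{E_i\}$, $\{E^*_i\}$, together with the defining relation \eqref{eq:bXuv} for the form. The key observation is that the map $(u,v)\mapsto\b{v,u}$ is itself a bilinear form satisfying the same adjoint-compatibility property as $\b{\;,\;}$. Indeed, for $A\in\text{\rm End}(V)$ and $u,v\in V$ we have, using \eqref{eq:bXuv} applied to $A^\dagger$ and then Lemma \ref{lem:dagger0}(ii),
\[
 \b{v, A u} = \b{(A^\dagger)^\dagger v, u} \overset{?}{=} \b{u, A^\dagger v},
\]
so defining $\b{\;,\;}'$ by $\b{u,v}' = \b{v,u}$ one checks $\b{Au,v}' = \b{v,Au} = \b{(A^\dagger)^\dagger v, u}$; using \eqref{eq:bXuv} with $A^\dagger$ in place of $A$ gives $\b{A^\dagger(A^\dagger)^\dagger\!\cdot}$—more cleanly: $\b{Au,v}' = \b{v,Au} = \b{A^\dagger v, u}$ is false in general, so instead use $\b{v,Au}=\b{A^\dagger v,u}$? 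That is exactly \eqref{eq:bXuv} read with roles of the two slots swapped and $A$ replaced by $A^\dagger$: $\b{A^\dagger v, u} = \b{v,(A^\dagger)^\dagger u}=\b{v,Au}$. Hence $\b{Au,v}'=\b{v,Au}=\b{A^\dagger v,u}=\b{u,A^\dagger v}'$, which is precisely the defining property \eqref{eq:bXuv} for the form $\b{\;,\;}'$ with respect to $\dagger$.

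By the uniqueness clause for a bilinear form associated with an antiautomorphism (stated in Section \ref{sec:bilin}), the form $\b{\;,\;}'$ must equal a nonzero scalar multiple of $\b{\;,\;}$; write $\b{v,u} = c\,\b{u,v}$ for all $u,v\in V$ and some fixed $0\neq c\in\F$. Applying this identity twice yields $\b{u,v} = c\,\b{v,u} = c^2\,\b{u,v}$ for all $u,v$, and since $\b{\;,\;}$ is nondegenerate (hence not identically zero) we conclude $c^2 = 1$, so $c = 1$ or $c = -1$. It remains to rule out $c = -1$, i.e.\ to rule out that $\b{\;,\;}$ is alternating.

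To eliminate $c=-1$ I would produce a vector with nonzero norm. Take a nonzero $\xi\in E_0 V$. By Lemma \ref{lem:EsiwEsjw} (applied with $i=j=0$, using $k_0=1$ from Lemma \ref{lem:ki}(iii)) we get $\b{E^*_0\xi, E^*_0\xi} = \nu^{-1}\|\xi\|^2$. On the other hand, $\|E^*_0\xi\|^2$ and $\|\xi\|^2$ cannot both vanish: if $c=-1$ the form is alternating and $\|v\|^2=0$ for every $v$, which would force the displayed Gram matrix computations in Lemma \ref{lem:EsiwEsjw} to be trivially consistent, so I need a genuinely nonzero inner product elsewhere. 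The cleanest source is $\b{E^*_0\xi,\xi}$: compute, via \eqref{eq:bXuv2} and $E_0\xi=\xi$, that
\[
 \b{E^*_0\xi,\xi} = \b{E^*_0\xi, E_0\xi} = \b{\xi, E_0 E^*_0 E_0\,\xi} = \nu^{-1}\b{\xi,\xi} = \nu^{-1}\|\xi\|^2,
\]
using Lemma \ref{lem:nuE0Es0E0}; and also $\b{E^*_0\xi,\xi} = \b{E_0\xi, E^*_0\xi}$ (shifting $E^*_0$ the other way, again by \eqref{eq:bXuv2}), which similarly evaluates to $\nu^{-1}\|\xi\|^2$. The obstacle is thus to show $\|\xi\|^2\neq 0$ for suitable $\xi$: this follows because $\b{\;,\;}$ restricted to $E_0 V$ cannot be identically zero—otherwise, expanding any $u\in V$ as $u=\sum_i E_i u$ and using $\b{E_i u, E_j v}=\b{u,E_iE_jv}=\delta_{i,j}\b{u,E_iv}$, the form would be block-diagonal with a zero block at index $0$, and pairing against $E^*_0$-vectors via the nonvanishing relations $E_0E^*_0\neq0$ (Definition \ref{def:ips}) would contradict nondegeneracy. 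Once some $\|\xi\|^2\neq0$ is secured, $c=-1$ would give $\|\xi\|^2 = -\|\xi\|^2$, forcing $2\|\xi\|^2=0$; in characteristic $\neq 2$ this is the desired contradiction, and in characteristic $2$ we have $-1=1$ so $c=1$ automatically. The main work is the nondegeneracy argument showing $\b{\;,\;}$ is nonzero on $E_0 V$.
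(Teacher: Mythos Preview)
Your argument is correct but takes a genuinely different route from the paper. The paper's proof is a one-liner given Lemma~\ref{lem:EsiwEsjw}: it simply observes that the Gram matrix of the $\Phi$-standard basis $\{E^*_i\xi\}_{i=0}^d$ is diagonal (the formula $\b{E^*_i\xi,E^*_j\xi}=\delta_{i,j}\nu^{-1}k_i\|\xi\|^2$ is manifestly symmetric in $i,j$, and its derivation uses only \eqref{eq:bXuv2}, not symmetry of the form), and a symmetric Gram matrix on a basis forces the form to be symmetric. Your approach is the classical abstract one: define the flipped form $\b{u,v}'=\b{v,u}$, check via $(A^\dagger)^\dagger=A$ that it is again associated with $\dagger$, invoke uniqueness up to scalar to get $\b{v,u}=c\,\b{u,v}$ with $c^2=1$, and then rule out $c=-1$ by exhibiting a nonzero $\|\xi\|^2$. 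That last step is where your writeup wanders (the false starts and the detour through $E^*_0$ are unnecessary): the clean version is that for nonzero $\xi\in E_0V$ and any $v\in V$ one has $\b{\xi,v}=\b{\xi,E_0v}=\lambda\b{\xi,\xi}$ where $E_0v=\lambda\xi$, so $\b{\xi,\xi}=0$ would put $\xi$ in the radical, contradicting nondegeneracy. What your approach buys is generality---it works for any involutive antiautomorphism fixing a rank-one idempotent---while the paper's approach buys brevity, since the reduction rules have already done the explicit computation in Lemma~\ref{lem:EsiwEsjw}.
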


\begin{proof}
Consider a $\Phi$-standard basis $\{E^*_i \xi\}_{i=0}^d$ of $V$, where $0 \neq \xi \in E_0 V$.
By Lemma \ref{lem:EsiwEsjw}, 
$\bbig{E^*_i \xi, E^*_j \xi} = \bbig{ E^*_j \xi, E^*_i \xi}$ for $0 \leq i,j \leq d$.
Therefore $\b{u,v} = \b{v,u}$ for $u$, $v \in V$.
\end{proof}

\begin{lemma}    \label{lem:nonzero1}    \samepage
\ifDRAFT {\rm lem:nonzero1}. \fi
The following hold for $0 \neq \xi \in E_0 V$ and
$0 \neq \xi^* \in E^*_0 V$:
\begin{itemize}
\item[\rm (i)]
each of $||\xi||^2$, $||\xi^*||^2$, $\bbig{\xi, \xi^*}$ is nonzero;
\item[\rm (ii)]
$E^*_0 \xi = \frac{ \b{\xi, \xi^*} } { || \xi^* ||^2 } \, \xi^*$;
\item[\rm (iii)]
$E_0 \xi^* = \frac{ \b{\xi, \xi^*} } { || \xi ||^2 } \, \xi$;
\item[\rm (iv)]
$ || \xi ||^2 || \xi^* ||^2 = \nu \, \bbig{\xi, \xi^*}^2$.
\end{itemize}
\end{lemma}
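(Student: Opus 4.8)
The plan is to work entirely inside the two-dimensional picture furnished by $E_0$ and $E^*_0$, using the scalar $\nu$ and Lemma \ref{lem:nuE0Es0E0}. First I would set up the basic relations. Since $E_0 V$ is one-dimensional and spanned by $\xi$, and $E^*_0 V$ is one-dimensional and spanned by $\xi^*$, there are scalars making $E^*_0 \xi$ a multiple of $\xi^*$ and $E_0 \xi^*$ a multiple of $\xi$; the content of (ii), (iii) is to identify these scalars. I would compute $\b{\xi,\xi^*}$ in two ways. On one hand, using $E_0\xi=\xi$ and $(E_0)^\dagger=E_0$ together with \eqref{eq:bXuv2}, we get $\b{\xi,\xi^*}=\b{E_0\xi,\xi^*}=\b{\xi,E_0\xi^*}$; writing $E_0\xi^*=c\,\xi$ for the appropriate scalar $c$ yields $\b{\xi,\xi^*}=c\,||\xi||^2$. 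Symmetrically, using $E^*_0\xi^*=\xi^*$ and $(E^*_0)^\dagger=E^*_0$, we get $\b{\xi,\xi^*}=\b{E^*_0\xi,\xi^*}=c^*\,||\xi^*||^2$ where $E^*_0\xi=c^*\,\xi^*$.

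Next I would pin down the product $c\,c^*$ using Lemma \ref{lem:nuE0Es0E0}. Apply $\nu E^*_0 E_0 E^*_0 = E^*_0$ to $\xi^*$: the left side is $\nu E^*_0 E_0 \xi^*=\nu\,c\,E^*_0\xi=\nu\,c\,c^*\,\xi^*$, and the right side is $\xi^*$, so $\nu\,c\,c^*=1$. In particular $c\neq0$ and $c^*\neq0$, hence $E_0\xi^*\neq0$ and $E^*_0\xi\neq0$. Now part (i): combining $\b{\xi,\xi^*}=c\,||\xi||^2$ with $c\neq0$ shows $\b{\xi,\xi^*}\neq0$ would follow once $||\xi||^2\neq0$, and conversely; to get both nonzero simultaneously, note $\b{\xi,\xi^*}=c\,||\xi||^2=c^*\,||\xi^*||^2$, so it suffices to show $\b{\xi,\xi^*}\neq0$. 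If $\b{\xi,\xi^*}=0$ then $c\,||\xi||^2=0$; since $c\neq0$ this gives $||\xi||^2=0$, and then $\b{\xi,\eta}=0$ for every $\eta\in V$ can be derived: indeed $V=E_0 V\oplus(\sum_{i\geq1}E_i V)$ is not quite enough directly, so instead I would invoke nondegeneracy of $\b{\,,\,}$ more carefully — write any $v\in V$, use $\b{\xi,v}=\b{E_0\xi,v}=\b{\xi,E_0 v}$, and $E_0 v\in E_0V=\F\xi$, so $\b{\xi,v}\in\F\,||\xi||^2=0$, contradicting nondegeneracy. Hence $||\xi||^2\neq0$, and then $c=\b{\xi,\xi^*}/||\xi||^2$ with $\b{\xi,\xi^*}\neq0$ (else $c=0$). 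The dual computation with $\xi^*$ gives $||\xi^*||^2\neq0$. This establishes (i), and with $c=\b{\xi,\xi^*}/||\xi||^2$, $c^*=\b{\xi,\xi^*}/||\xi^*||^2$ we get (ii) and (iii) by substitution into $E^*_0\xi=c^*\xi^*$, $E_0\xi^*=c\xi$. Finally (iv) is immediate: from $\nu\,c\,c^*=1$ we get $\nu\cdot\frac{\b{\xi,\xi^*}}{||\xi||^2}\cdot\frac{\b{\xi,\xi^*}}{||\xi^*||^2}=1$, i.e. $||\xi||^2\,||\xi^*||^2=\nu\,\b{\xi,\xi^*}^2$.

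The main obstacle I anticipate is the logical bootstrapping in part (i): one cannot show $||\xi||^2\neq0$ and $\b{\xi,\xi^*}\neq0$ independently, since each is needed to get the other via the scalar $c$. The clean way around this is the nondegeneracy argument sketched above — showing $||\xi||^2=0$ forces $\xi$ to be in the radical of $\b{\,,\,}$ (using $(E_0)^\dagger=E_0$ and one-dimensionality of $E_0V$), which is impossible. Once that single fact is in hand, everything else is a short substitution. An alternative to this step: observe that $\{E^*_i\xi\}_{i=0}^d$ is a $\Phi$-standard basis (Lemma \ref{lem:Esiu}), Lemma \ref{lem:EsiwEsjw} gives $||E^*_i\xi||^2=\nu^{-1}k_i||\xi||^2$, and if $||\xi||^2=0$ then all basis vectors are isotropic and mutually orthogonal, making the form degenerate — same contradiction, phrased via the already-established orthogonality relations.
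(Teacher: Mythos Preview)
Your proof is correct and close in spirit to the paper's, with only minor organizational differences. The paper establishes $||\xi||^2\neq 0$ exactly via your ``alternative'' route (Lemma~\ref{lem:EsiwEsjw} plus nondegeneracy); your primary argument---that $||\xi||^2=0$ would force $\b{\xi,v}=\b{\xi,E_0 v}\in\F\,||\xi||^2=0$ for all $v$---is a slightly more self-contained version of the same idea. For the nonvanishing of the scalars $c,c^*$, you deduce it from the relation $\nu\,c\,c^*=1$ (Lemma~\ref{lem:nuE0Es0E0}), whereas the paper instead invokes Lemma~\ref{lem:Esiu} to get $E^*_0\xi\neq 0$ directly; your choice has the pleasant side effect of making (iv) immediate, while the paper derives (iv) at the end by computing $E_0 E^*_0\xi$ in two ways using (ii) and (iii). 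Either packaging works.
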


\begin{proof}
(i)
Observe $||\xi||^2 \neq 0$ by Lemma \ref{lem:EsiwEsjw} and since $\b{ \; , \; }$ is nonzero.
Applying this to $\Phi^*$ we get $||\xi^*||^2 \neq 0$.
To see that $\bbig{\xi, \xi^*} \neq 0$,
observe that $\xi^*$ is a basis of $E^*_0 V$ so there exists a scalar $\alpha$ such that
$E^*_0 \xi = \alpha \xi^*$.
Recall $E^*_0 \xi \neq 0$ by Lemma \ref{lem:Esiu} so $\alpha \neq 0$.
Using \eqref{eq:bXuv2} and $E^*_0 \xi^* = \xi^*$ we routinely find that 
$\bbig{\xi, \xi^*} = \alpha ||\xi^*||^2$ and it follows $\bbig{\xi, \xi^*} \neq 0$.

(ii)
In the proof of part (i) we found $E^*_0 \xi = \alpha \xi^*$ where $\b{\xi, \xi^*} = \alpha ||\xi^*||^2$.
The result follows.

(iii)
Apply (ii) to $\Phi^*$.

(iv)
Using $\xi = E_0 \xi$ and Lemma \ref{lem:nuE0Es0E0} one finds that $\nu^{-1} \xi = E_0 E^*_0 \xi$.
To finish the proof, evaluate $E_0 E^*_0 \xi$ using (ii), (iii).
\end{proof}

\begin{definition}    \label{def:dualstandard}    \samepage
\ifDRAFT {\rm def:dualstandard}. \fi
By a {\em dual $\Phi$-standard basis} of $V$ we mean the dual of a
$\Phi$-standard basis with respect to $\b{ \; , \; }$.
\end{definition}

Shortly we will describe the dual $\Phi$-standard bases.
We will use the following definition.

\begin{definition}    \label{def:partner}    \samepage
\ifDRAFT {\rm def:partner}. \fi
Note that for nonzero $\xi$, $\zeta \in E_0V$ the following are equivalent:
\begin{align*}
& \text{\rm (i)  $\bbig{\xi, \zeta} = \nu$};
&& \text{\rm (ii)  $\zeta = \nu \xi / ||\xi||^2$};
&& \text{\rm (iii)  $\xi =  \nu \zeta / ||\zeta||^2$}.
\end{align*}
We say that $\xi$, $\zeta$ are {\em partners} whenever they satisfy (i)--(iii).
\end{definition}

\begin{lemma}    \label{lem:dualstandard3}    \samepage
\ifDRAFT {\rm lem:dualstandard3}. \fi
For nonzero $\xi$, $\zeta$ in $E_0 V$
the following are equivalent:
\begin{itemize}
\item[\rm (i)]
the bases $\{E^*_i \xi\}_{i=0}^d$ and $\{ k_i^{-1} E^*_i \zeta\}_{i=0}^d$ are dual
with respect to $\b{ \; , \; }$;
\item[\rm (ii)]
$\xi$, $\zeta$ are partners.
\end{itemize}
\end{lemma}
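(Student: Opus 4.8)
The plan is to compute the inner product matrix between the two candidate bases directly and then read off the duality condition. Fix nonzero $\xi,\zeta\in E_0V$. Since $\xi,\zeta\in E_0V$ we have $E_0\xi=\xi$ and $E_0\zeta=\zeta$, and since $E^*_i,E_0\in{\cal M}\cup{\cal M}^*$ we may apply \eqref{eq:bXuv2} to move these idempotents across the form. First I would establish, for $0\le i,j\le d$,
\[
 \bbig{E^*_i\xi,E^*_j\zeta}=\bbig{E^*_iE_0\xi,E^*_jE_0\zeta}=\bbig{\xi,E_0E^*_iE^*_jE_0\zeta}=\delta_{i,j}\,\bbig{\xi,E_0E^*_iE_0\zeta},
\]
where the last step uses $E^*_iE^*_j=\delta_{i,j}E^*_i$. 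By Lemma \ref{lem:E0EsiE0}(ii) together with Lemma \ref{lem:kimi}(i) we have $E_0E^*_iE_0=m^*_iE_0=\nu^{-1}k_iE_0$, and since $E_0\zeta=\zeta$ this yields
\[
 \bbig{E^*_i\xi,E^*_j\zeta}=\delta_{i,j}\,\nu^{-1}k_i\,\bbig{\xi,\zeta}\qquad(0\le i,j\le d).
\]

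Next, I would divide the $j$-th entry by $k_j$, which is legitimate since $k_j\neq0$ by Lemma \ref{lem:ki}(i), to obtain $\bbig{E^*_i\xi,k_j^{-1}E^*_j\zeta}=\delta_{i,j}\,\nu^{-1}\bbig{\xi,\zeta}$ for all $0\le i,j\le d$. By Lemma \ref{lem:bsym} the form $\b{ \; , \; }$ is symmetric, so Definition \ref{def:dualbasis} applies: the bases $\{E^*_i\xi\}_{i=0}^d$ and $\{k_i^{-1}E^*_i\zeta\}_{i=0}^d$ are dual with respect to $\b{ \; , \; }$ exactly when all these numbers equal $\delta_{i,j}$. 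Taking $i=j$ this forces $\nu^{-1}\bbig{\xi,\zeta}=1$; conversely, if $\bbig{\xi,\zeta}=\nu$ then all the numbers equal $\delta_{i,j}$. Hence (i) holds if and only if $\bbig{\xi,\zeta}=\nu$, which by Definition \ref{def:partner}(i) is precisely condition (ii). This completes the argument.

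There is essentially no obstacle here; the proof is a short computation built on the reduction rule $E_0E^*_iE_0=\nu^{-1}k_iE_0$ and the basic property \eqref{eq:bXuv2} of the form. The only points deserving a word of care are that $\b{ \; , \; }$ is symmetric and nondegenerate (so that ``dual basis'' is meaningful), supplied by Lemma \ref{lem:bsym} and the construction of a form associated with $\dagger$, and that the scalars $k_j$ are invertible (Lemma \ref{lem:ki}(i)). One could alternatively derive the two-vector inner product formula by first reducing $\zeta$ to a scalar multiple of $\xi$ and invoking Lemma \ref{lem:EsiwEsjw}, but the direct computation above is cleaner and keeps $\xi$, $\zeta$ on equal footing.
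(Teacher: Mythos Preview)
Your proof is correct and follows essentially the same idea as the paper: compute the inner product matrix between the two candidate bases and read off the duality condition. The paper takes precisely the alternative route you mention at the end---it writes $\zeta=\alpha\xi$ (using $\dim E_0V=1$) and then invokes Lemma~\ref{lem:EsiwEsjw} to get $\bbig{E^*_i\xi,k_j^{-1}E^*_j\zeta}=\delta_{i,j}\,\alpha\nu^{-1}\|\xi\|^2$, so that (i) holds iff $\alpha\|\xi\|^2=\nu$, which is the partner condition---whereas you compute $\bbig{E^*_i\xi,E^*_j\zeta}$ directly with $\xi,\zeta$ kept separate; the difference is purely organizational.
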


\begin{proof}
The vector space $E_0 V$ has dimension $1$, so there exists a scalar $\alpha$
such that $\zeta = \alpha \xi$.
By this and Lemma \ref{lem:EsiwEsjw},
\[
 \bbig{ E^*_i \xi,  k_j^{-1} E^*_j \zeta } = \delta_{i,j} \alpha \nu^{-1} ||\xi||^2.
\]
So (i) holds if and only if $\alpha ||\xi||^2 = \nu$.
By this and Definition \ref{def:partner} we obtain the result.
\end{proof}

\begin{lemma}    \label{lem:dualstandard}    \samepage
\ifDRAFT {\rm lem:dualstandard}. \fi
A given basis of $V$ is a dual $\Phi$-standard basis if and only if it has the form
\begin{align}
 \{ k_i^{-1} E^*_i \zeta\}_{i=0}^d,  \qquad \qquad 0 \neq \zeta \in E_0 V.  \label{eq:dualstdbasis}
\end{align}
\end{lemma}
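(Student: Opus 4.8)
The plan is to deduce both directions of the equivalence from Lemma \ref{lem:dualstandard3}, the uniqueness of dual bases (Lemma \ref{lem:dualbasis}, valid since the form is symmetric by Lemma \ref{lem:bsym}), and the observation that every nonzero vector in $E_0V$ has a partner in the sense of Definition \ref{def:partner} (which requires knowing the relevant norm is nonzero, from Lemma \ref{lem:nonzero1}(i)).

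For the forward direction, I would start with a dual $\Phi$-standard basis, which by Definition \ref{def:dualstandard} is the dual with respect to $\b{\;,\;}$ of some $\Phi$-standard basis $\{E^*_i\xi\}_{i=0}^d$ with $0\neq\xi\in E_0V$. Since $||\xi||^2\neq 0$ by Lemma \ref{lem:nonzero1}(i), the vector $\zeta=\nu\xi/||\xi||^2$ is a nonzero element of $E_0V$ that is a partner of $\xi$. Then Lemma \ref{lem:dualstandard3} gives that $\{k_i^{-1}E^*_i\zeta\}_{i=0}^d$ is dual to $\{E^*_i\xi\}_{i=0}^d$, and by uniqueness of the dual (Lemma \ref{lem:dualbasis}) the given basis must equal $\{k_i^{-1}E^*_i\zeta\}_{i=0}^d$, which has the asserted form \eqref{eq:dualstdbasis}.

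For the reverse direction, I would take $0\neq\zeta\in E_0V$ and first note that $\{k_i^{-1}E^*_i\zeta\}_{i=0}^d$ is indeed a basis of $V$: $\{E^*_i\zeta\}_{i=0}^d$ is a basis by Lemma \ref{lem:Esiu} and each $k_i\neq 0$ by Lemma \ref{lem:ki}(i). Using $||\zeta||^2\neq 0$ from Lemma \ref{lem:nonzero1}(i), set $\xi=\nu\zeta/||\zeta||^2$, a nonzero partner of $\zeta$ lying in $E_0V$; then Lemma \ref{lem:dualstandard3} shows $\{k_i^{-1}E^*_i\zeta\}_{i=0}^d$ is the dual of the $\Phi$-standard basis $\{E^*_i\xi\}_{i=0}^d$, so it is a dual $\Phi$-standard basis by Definition \ref{def:dualstandard}.

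This argument is essentially bookkeeping assembled from earlier results; the only points that need care are invoking Lemma \ref{lem:nonzero1}(i) to ensure the norms are nonzero so the partner can be formed, and using the uniqueness of the dual basis so that the forward direction pins down the dual $\Phi$-standard basis exactly rather than merely exhibiting one basis dual to $\{E^*_i\xi\}_{i=0}^d$.
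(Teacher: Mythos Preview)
Your proof is correct and follows exactly the route the paper intends: its one-line proof ``Use Lemma~\ref{lem:dualstandard3}'' is precisely the argument you have spelled out, producing the partner via Definition~\ref{def:partner} and invoking uniqueness of duals from Lemma~\ref{lem:dualbasis}. The extra care you take with Lemma~\ref{lem:nonzero1}(i) and Lemma~\ref{lem:ki}(i) is appropriate bookkeeping that the paper leaves implicit.
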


\begin{proof}
Use Lemma \ref{lem:dualstandard3}.
\end{proof}

We mention a result for later use.

\begin{lemma}    \label{lem:EsixiEjxis}     \samepage
\ifDRAFT {\rm lem:EsixiEjxis}. \fi
For $0 \neq \xi \in E_0V$ and $0 \neq \xi^* \in E^*_0 V$,
\begin{align*}
  \bbig{ E^*_i \xi, E_j \xi^* } &= \nu^{-1} p_i (j) k^*_j \bbig{ \xi, \xi^* }   &&  (0 \leq i,j \leq d).
\end{align*}
\end{lemma}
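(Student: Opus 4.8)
The plan is to compute $\bbig{E^*_i \xi, E_j \xi^*}$ directly by inserting the rank-one idempotents $E_0$ and $E^*_0$, then push everything onto the one-dimensional space $E_0 E^*_0 V$ where the answer becomes a scalar times $\bbig{\xi, \xi^*}$. First I would use Lemma \ref{lem:EsiE0V} to write $E^*_i \xi = E^*_i E_0 \xi$ (since $\xi \in E_0 V$) and dually $E_j \xi^* = E_j E^*_0 \xi^*$. Then, moving one factor across the form via Lemma \ref{lem:bilin1}, I get
\[
 \bbig{ E^*_i \xi, E_j \xi^*} = \bbig{ E^*_i E_0 \xi, E_j E^*_0 \xi^* }
 = \bbig{ \xi, E_0 E^*_i E_j E^*_0 \xi^* },
\]
using $E_0 \xi = \xi$ and pulling $E_0 E^*_i$ and $E_j E^*_0$ over to act on $\xi^*$ (here I use that $E_0$, $E^*_i$, $E_j$, $E^*_0$ all lie in ${\cal M} \cup {\cal M}^*$ and are therefore self-adjoint for $\b{\;,\;}$).

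Next I would invoke Lemma \ref{lem:E0EsiEjEs0}(i), which states $E_0 E^*_i E_j E^*_0 = p_i(j) m_j E_0 E^*_0$, to rewrite the right-hand side as $p_i(j) m_j \bbig{\xi, E_0 E^*_0 \xi^*}$. Now I need to simplify $\bbig{\xi, E_0 E^*_0 \xi^*}$. Since $\xi^* \in E^*_0 V$ we have $E^*_0 \xi^* = \xi^*$, so this is $\bbig{\xi, E_0 \xi^*}$, and by Lemma \ref{lem:nonzero1}(iii) we have $E_0 \xi^* = \dfrac{\b{\xi,\xi^*}}{\|\xi\|^2}\,\xi$, giving $\bbig{\xi, E_0 \xi^*} = \dfrac{\b{\xi,\xi^*}}{\|\xi\|^2}\,\|\xi\|^2 = \b{\xi,\xi^*}$. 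Alternatively — and perhaps cleaner — I would move $E_0$ back across the form and use $\bbig{\xi, E_0 E^*_0 \xi^*} = \bbig{E_0 \xi, E^*_0 \xi^*} = \bbig{\xi, \xi^*}$ directly, since $E_0\xi=\xi$ and $E^*_0\xi^*=\xi^*$. This collapses the expression to $p_i(j) m_j \bbig{\xi,\xi^*}$.

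Finally I would convert $m_j$ into the desired normalization: by Lemma \ref{lem:kimi}(ii), $k^*_j = \nu m_j$, hence $m_j = \nu^{-1} k^*_j$, and substituting gives $\bbig{E^*_i \xi, E_j \xi^*} = \nu^{-1} p_i(j) k^*_j \bbig{\xi,\xi^*}$, which is exactly the claim. I do not anticipate a genuine obstacle here; the only point requiring a little care is the bookkeeping of which idempotents are being moved across the bilinear form and the justification that they are $\dagger$-fixed (which is precisely Lemma \ref{lem:bilin1}), together with correctly citing the key identity Lemma \ref{lem:E0EsiEjEs0}(i). If one wanted to avoid Lemma \ref{lem:nonzero1}(iii) entirely, the second route above keeps the proof to essentially three line-long invocations: Lemma \ref{lem:bilin1}, Lemma \ref{lem:E0EsiEjEs0}(i), and Lemma \ref{lem:kimi}(ii).
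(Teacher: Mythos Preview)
Your proof is correct and follows essentially the same route as the paper's own proof: move the idempotents across the bilinear form using Lemma~\ref{lem:bilin1} to obtain $\bbig{\xi, E_0 E^*_i E_j E^*_0 \xi^*}$, apply Lemma~\ref{lem:E0EsiEjEs0}(i), and finish with Lemma~\ref{lem:kimi}(ii). Your ``cleaner'' alternative for collapsing $\bbig{\xi, E_0 E^*_0 \xi^*}$ to $\bbig{\xi,\xi^*}$ is in fact exactly what the paper does, so the detour through Lemma~\ref{lem:nonzero1}(iii) is unnecessary.
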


\begin{proof}
Using $E_0 \xi = \xi$, $E^*_0 \xi^* = \xi^*$ and 
Lemma \ref{lem:E0EsiEjEs0}(i),
\[
\bbig{E^*_i \xi, E_j \xi^* }
 = \bbig{ \xi , E_0 E^*_i E_j E^*_0 \xi^* }
 = p_i (j) m_j \bbig{\xi, \xi^*}.
\]
By this and Lemma \ref{lem:kimi}(ii) we obtain the result.
\end{proof}

\section{Four bases of $V$}
\label{sec:4bases}

We continue to discuss a symmetric idempotent system
$\Phi = (\{E_i\}_{i=0}^d; \{E^*_i\}_{i=0}^d)$ on $V$.
Recall the elements $A_i$ from Definition \ref{def:Ai}.
Recall the matrices $K$, $K^*$, $U$, $U^*$ from Definitions \ref{def:matrices}, \ref{def:U},
and the matrices $B_i$, $B^*_i$, $H_i$, $H^*_i$ from Definitions \ref{def:matrices2}, \ref{def:matrices3}.
Recall the bilinear form $\b{\; , \;}$ from above Lemma \ref{lem:bilin1}.

Throughout this section, we fix nonzero vectors $\xi, \zeta \in E_0 V$ and $\xi^*, \zeta^* \in E^*_0 V$,
and consider the following four bases of $V$:
\begin{equation}
\begin{array}{c|c}
\text{basis type} & \text{basis}
\\ \hline
\text{$\Phi$-standard}    & \{ E^*_i \xi \}_{i=0}^d             \rule{0mm}{3ex}
\\
\text{dual $\Phi$-standard} & \{ k_i^{-1} E^*_i \zeta\}_{i=0}^d         \rule{0mm}{3ex}
\\
\text{$\Phi^*$-standard} & \{E_i \xi^*\}_{i=0}^d         \rule{0mm}{3ex}
\\
\text{dual $\Phi^*$-standard} &  \{ (k^*_i)^{-1} E_i \zeta^* \}_{i=0}^d         \rule{0mm}{3ex}
\end{array}     
    \label{eq:4bases}
\end{equation}
In this section we display the matrices that represent $\{A_r\}_{r=0}^d$, $\{A^*_r\}_{r=0}^d$, 
$\{E_r\}_{r=0}^d$, $\{E^*_r\}_{r=0}^d$ with respect to these bases.
We display the inner product matrices between these bases.
We display the transition matrices between these bases.

We introduce some notation.
For $0 \leq i,j \leq d$ define $\Delta_{i,j} \in \Mat_{d+1}(\F)$
that has $(i,j)$-entry $1$ and all other entries $0$.

\begin{proposition}    \label{prop:matrixAr}    \samepage
\ifDRAFT {\rm prop:matrixAr}. \fi
In the table below we give some matrix representations.
For $0 \leq r \leq d$, each entry in the table is the matrix that represents the map
in the given column with respect to the basis in the given row:
\[
\begin{array}{c|ccccc}
\text{\rm basis} & & A_r & A^*_r  & E_r & E^*_r
\\ \hline
 \{ E^*_i \xi \}_{i=0}^d
  & & B_r & H^*_r  & \nu^{-1} U K^* \Delta_{r,r} U^* K & \Delta_{r,r}    \rule{0mm}{3ex}
\\
\{ k_i^{-1} E^*_i \zeta \}
 & & B_r^{\sf t} & H^*_r  & (U^*)^{-1} \Delta_{r,r} U^* & \Delta_{r,r}     \rule{0mm}{3ex}
\\
\{ E_i \xi^* \}_{i=0}^d
  & & H_r & B^*_r   & \Delta_{r,r} & \nu^{-1} U^* K \Delta_{r,r} U K^*        \rule{0mm}{3ex}
\\
\{ (k^*_i)^{-1} E_i \zeta^* \}_{i=0}^d
& & H_r & (B^*_r)^{\sf t}    & \Delta_{r,r} & U^{-1} \Delta_{r,r} U                     \rule{0mm}{3ex}
\end{array}
\]
\end{proposition}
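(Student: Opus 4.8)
The plan is to compute the first row of the table directly, to deduce the second row from the first by a change of basis, and to obtain the third and fourth rows by applying the results for the first two rows to $\Phi^*$.

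For the first row, with the $\Phi$-standard basis $\{E^*_i \xi\}_{i=0}^d$, three of the four entries are quick. Since $E^*_r E^*_j = \delta_{r,j} E^*_j$, the matrix of $E^*_r$ is $\Delta_{r,r}$. By Lemma \ref{lem:AiEj}(ii) we have $A^*_r E^*_j \xi = q_r(j) E^*_j \xi$, so the matrix of $A^*_r$ is the diagonal matrix $H^*_r$. Using $\xi = E_0 \xi$ together with Lemma \ref{lem:AjEsiE0}(i), I would write $A_r E^*_j \xi = A_r E^*_j E_0 \xi = \sum_{h=0}^d p^h_{j r} E^*_h \xi$; by Lemma \ref{lem:phijphji}(i) this says the matrix of $A_r$ is $B_r$.

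The remaining first-row entry, for $E_r$, is the one requiring real work. By Lemma \ref{lem:EiEsi}(ii) we have $E_r = \nu^{-1} \sum_{j=0}^d q_r(j) A_j$, so the matrix of $E_r$ is $\nu^{-1} \sum_{j=0}^d q_r(j) B_j$, and it remains to prove the matrix identity $\sum_{j=0}^d q_r(j) B_j = U K^* \Delta_{r,r} U^* K$. To do this I would write $B_j = P^{-1} H_j P$ using the left-hand equation in \eqref{eq:DrP}, note that $\sum_{j=0}^d q_r(j) H_j$ is diagonal with $(i,i)$-entry $\sum_{j=0}^d q_r(j) p_j(i) = \delta_{r,i} \nu$ by Lemma \ref{lem:pihqhj}(ii), so that $\sum_{j=0}^d q_r(j) H_j = \nu \Delta_{r,r}$, and then substitute $P^{-1} = \nu^{-1} Q = \nu^{-1} U K^*$ (from Lemma \ref{lem:PtKs}(i) and Lemma \ref{lem:U3}(iii)) and $P = U^* K$ (Lemma \ref{lem:U3}(i)).

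For the second row: since $\xi$ and $\zeta$ both lie in the one-dimensional space $E_0 V$, we have $\zeta = \alpha \xi$ for a unique nonzero scalar $\alpha$, hence the transition matrix from $\{E^*_i \xi\}_{i=0}^d$ to $\{k_i^{-1} E^*_i \zeta\}_{i=0}^d$ is $T = \alpha K^{-1}$. Conjugating the first-row matrices by $T$ (equivalently, by $K^{-1}$) gives the second row: $\Delta_{r,r}$ and $H^*_r$ are diagonal and so commute with $K$ and are unchanged; $K B_r K^{-1} = B_r^{\sf t}$ by \eqref{eq:KBr}; and $\nu^{-1} K U K^* \Delta_{r,r} U^* = (U^*)^{-1} \Delta_{r,r} U^*$, where $\nu^{-1} K U K^* = (U^*)^{-1}$ is obtained by rearranging $U^* K U K^* = \nu I$ (Lemma \ref{lem:U2}(iii)). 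Finally, applying the first and second rows to $\Phi^*$ yields the third and fourth rows: a $\Phi$-standard basis of $V$ becomes a $\Phi^*$-standard basis, and the starred objects in the statement are precisely the $\Phi^*$-analogues of the unstarred ones ($A_i^* = A^*_i$, $E_i^* = E_i$, $k_i^* = k^*_i$, $B_i^* = B^*_i$, $H_i^* = H^*_i$, and, by Definitions \ref{def:matrices} and \ref{def:U}, the $\Phi^*$-analogues of $K$ and $U$ are $K^*$ and $U^*$). I expect the main obstacle to be the $E_r$ entry of the first row (and dually the $E^*_r$ entry of the third row), since it is the only place where a genuine matrix identity must be assembled from the relations of Section \ref{sec:matrices}; once that is in hand, everything else is bookkeeping, the only delicate point being to keep the duality conventions straight.
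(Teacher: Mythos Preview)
Your argument is correct and follows the same overall architecture as the paper: compute the first row directly, obtain the second row by conjugation, and get rows three and four by passing to $\Phi^*$.

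The one place you diverge is the $E_r$ entry in the first row. The paper computes $E_r E^*_j \xi$ directly from Lemma~\ref{lem:E0EsiEj}(iii), which gives $E_r E^*_j E_0 = \nu^{-1} p_j(r) A^*_r E_0$, and then expands $A^*_r$ via Lemma~\ref{lem:Asi} to read off the $(h,j)$-entry as $\nu^{-1} q_r(h) p_j(r)$; Lemma~\ref{lem:U3}(i),(iii) then packages this as $\nu^{-1} U K^* \Delta_{r,r} U^* K$. You instead write $E_r = \nu^{-1}\sum_j q_r(j) A_j$, use the already-established representation $A_j \mapsto B_j$, and then prove the matrix identity $\sum_j q_r(j) B_j = U K^* \Delta_{r,r} U^* K$ via $B_j = P^{-1} H_j P$ and Lemma~\ref{lem:pihqhj}(ii). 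Your route is slightly more indirect but has the virtue of reusing the $A_r$ computation rather than starting over; the paper's route is a one-line vector calculation. A second minor difference: for the $A_r$ entry in row two the paper recomputes directly from Lemmas~\ref{lem:khphij}(i) and~\ref{lem:AjEsiE0}(i), whereas you obtain the whole second row uniformly by conjugating with $K$, which is tidier.
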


\begin{proof}
We first consider the matrices representing $A_r$.
The matrix representing $A_r$ with respect to $\{E^*_i \xi\}_{i=0}^d$ is obtained using
Lemma \ref{lem:AjEsiE0}(i) and Definition \ref{def:matrices2}.
The matrix representing $A_r$ with respect to $\{k_i^{-1} E^*_i \zeta\}_{i=0}^d$ is obtained
using Lemmas \ref{lem:khphij}(i) and \ref{lem:AjEsiE0}(i).
The matrices representing $A_r$ with respect to $\{E_i \xi^*\}_{i=0}^d$ and $\{(k^*_i)^{-1} E_i \zeta^*\}_{i=0}^d$ 
are obtained using Lemma \ref{lem:AiEj}(i) and Definition \ref{def:matrices3}.
Applying these result to $\Phi^*$ we obtain the matrices representing $A^*_r$.
Next we consider the matrices representing $E_r$.
The matrix representing $E_r$ with respect to $\{E^*_i \xi\}_{i=0}^d$ is obtained using
Lemmas  \ref{lem:E0EsiEj}(iii), \ref{lem:Asi}, \ref{lem:U3}(i),(iii).
Multiply this matrix on the left (resp.\ right) by $K$ (resp.\  $K^{-1}$)
and use Lemma \ref{lem:U2}(iii) to obtain the matrix representing $E_r$
with respect to $\{k_i^{-1} E^*_i \zeta\}_{i=0}^d$.
The matrices representing $E_r$ with respect to $\{E_i \xi^*\}_{i=0}^d$ and $\{ (k^*_i)^{-1} E_i \zeta^* \}_{i=0}^d$
are routinely obtained.
Applying these results to $\Phi^*$ we obtain the matrices representing $E^*_r$.
\end{proof}

\begin{proposition}   \label{prop:innermatrix0}    \samepage
\ifDRAFT {\rm prop:innermatrix0}. \fi
In the table below we give the inner product matrices between the bases in \eqref{eq:4bases}.
Each entry of the table is the inner product matrix from the basis in the given row
to the basis in the given column:
\[
\begin{array}{c|cccc}
 & \{ E^*_i \xi \}_{i=0}^d & \{ k_i^{-1}  E^*_i \zeta \}_{i=0}^d 
   & \{ E_i \xi^* \}_{i=0}^d & \{ (k^*_i)^{-1} E_i \zeta^* \}_{i=0}^d
\\ \hline
\{ E^*_i \xi \}_{i=0}^d 
  & \frac{ ||\xi||^2 } { \nu } K 
  & \frac{ \b{ \xi, \zeta} } { \nu }  I 
  & \frac{ \b{\xi, \xi^*} } { \nu } K U K^* 
  & \frac{ \b{\xi, \zeta^*} } { \nu }   K U  \rule{0mm}{3.5ex}
\\
\{ k_i^{-1} E^*_i \zeta \}_{i=0}^d 
  &  \frac{ \b{ \zeta, \xi} } { \nu }   I 
  & \frac{ || \zeta ||^2 } { \nu  } K^{-1} 
  & \frac{ \b{\zeta, \xi^*} } { \nu } U K^* 
  & \frac{ \b{ \zeta, \zeta^*}  } { \nu } U  \rule{0mm}{3ex}
\\
\{ E_i \xi^* \}_{i=0}^d 
  &   \frac{ \b{ \xi^*, \xi } } { \nu } K^* U^* K
  & \frac{ \b{ \xi^* , \zeta } } { \nu } K^* U^*
  & \frac{ ||\xi^*||^2 } { \nu } K^* 
  & \frac{ \b{ \xi^*, \zeta^* } } { \nu }  I    \rule{0mm}{3.5ex}
\\
\{ (k^*_i)^{-1} E_i \zeta^* \}_{i=0}^d 
  & \frac{ \b{ \zeta^*, \xi } } { \nu } U^* K 
  &  \frac{ \b{ \zeta^*, \zeta}  } { \nu } U^*
  &  \frac{ \b{ \zeta^*, \xi^* } } { \nu }  I 
  & \frac{ ||\zeta^*||^2 } { \nu } (K^*)^{-1}   \rule{0mm}{3.5ex}
\end{array}
\]
\end{proposition}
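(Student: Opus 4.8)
The plan is to compute each entry of the table directly, using the two inner-product formulas already in hand: Lemma \ref{lem:EsiwEsjw}, which handles inner products among vectors of the form $E^*_i\xi$ with $\xi\in E_0V$ (hence, after scaling, the $\Phi$-standard and dual $\Phi$-standard bases), and Lemma \ref{lem:EsixiEjxis}, which handles the mixed inner products $\bbig{E^*_i\xi,E_j\xi^*}$. I would first note that although Lemma \ref{lem:EsixiEjxis} is stated for the distinguished vectors $\xi,\xi^*$, its proof uses only $E_0\xi=\xi$ and $E^*_0\xi^*=\xi^*$, so it applies verbatim with $\zeta$ in place of $\xi$ and/or $\zeta^*$ in place of $\xi^*$. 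Throughout, the basis vectors are $u_i=E^*_i\xi$ (resp.\ $k_i^{-1}E^*_i\zeta$, $E_i\xi^*$, $(k^*_i)^{-1}E_i\zeta^*$), so each entry is a matrix whose $(i,j)$-entry is the indicated pairing of basis vectors.

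For the four diagonal blocks: Lemma \ref{lem:EsiwEsjw} directly gives the matrix $\nu^{-1}\|\xi\|^2K$ for $\{E^*_i\xi\}_{i=0}^d$ against itself, and inserting the factors $k_i^{-1}$ gives $\nu^{-1}\|\zeta\|^2K^{-1}$ for the dual $\Phi$-standard basis; applying these two computations to $\Phi^*$ yields the $(3,3)$ and $(4,4)$ entries. For the $(1,2)$ entry, since $E_0V$ is one-dimensional write $\zeta=\alpha\xi$, so that Lemma \ref{lem:EsiwEsjw} together with $\bbig{\xi,\zeta}=\alpha\|\xi\|^2$ collapses the matrix to $\nu^{-1}\bbig{\xi,\zeta}I$; the $(3,4)$ entry follows by the $\Phi\leftrightarrow\Phi^*$ duality. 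For the mixed entries $(1,3)$, $(1,4)$, $(2,3)$, $(2,4)$ I would apply Lemma \ref{lem:EsixiEjxis} with the appropriate substitution among $\xi,\zeta$ and $\xi^*,\zeta^*$, so that the $(i,j)$-entry becomes a scalar times $p_i(j)$, possibly pre-multiplied by $k_i^{-1}$ (when the row basis is a dual $\Phi$-standard basis) and post-multiplied by $k^*_j$, the latter cancelling the normalizing factor $(k^*_j)^{-1}$ of the dual $\Phi^*$-standard basis whenever that is the column basis. The key point is to recognize that the matrix with $(i,j)$-entry $p_i(j)$ is $P^{\sf t}$ (not $P$, which has $(i,j)$-entry $p_j(i)$), and then to rewrite $P^{\sf t}K^*$, $P^{\sf t}$, $K^{-1}P^{\sf t}K^*$, $K^{-1}P^{\sf t}$ as $KUK^*$, $KU$, $UK^*$, $U$ respectively, using $P^{\sf t}=KU$ from Lemma \ref{lem:U3}(ii) and $K^{-1}P^{\sf t}=U$ from Definition \ref{def:U}.

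Finally, the six lower-triangular entries are obtained from the matching upper-triangular ones by transposing, since the form $\b{\;,\;}$ is symmetric by Lemma \ref{lem:bsym}: the inner product matrix from a basis $B'$ to a basis $B$ is the transpose of that from $B$ to $B'$. To bring these transposes into the displayed form one uses $K^{\sf t}=K$, $(K^*)^{\sf t}=K^*$, $U^{\sf t}=U^*$ from Lemma \ref{lem:U2}(i), and $\bbig{u,v}=\bbig{v,u}$. The whole argument is bookkeeping; the one place to stay vigilant is keeping the orientation of $P$ versus $P^{\sf t}$ straight and choosing, among the several equivalent expressions for $U$ and $U^*$ in Definition \ref{def:U}, the one matching the table entry — so the "main obstacle" here is really just not dropping or mis-transposing a factor, rather than any new idea beyond Lemmas \ref{lem:EsiwEsjw}, \ref{lem:EsixiEjxis}, \ref{lem:bsym}, \ref{lem:U2}, \ref{lem:U3} and the $\Phi\leftrightarrow\Phi^*$ duality.
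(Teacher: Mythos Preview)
Your proposal is correct and follows essentially the same approach as the paper's proof: use Lemmas \ref{lem:EsiwEsjw} and \ref{lem:EsixiEjxis} (together with the one-dimensionality of $E_0V$ and $E^*_0V$ to pass between $\xi,\zeta$ and $\xi^*,\zeta^*$) to obtain the inner products in terms of $P$, $K$, $K^*$, and then rewrite via Lemma \ref{lem:U3} in terms of $U$, $U^*$. Your additional use of symmetry of the form and $U^{\sf t}=U^*$ to read off the lower-triangular entries, and of the $\Phi\leftrightarrow\Phi^*$ duality for the last two rows, is just an explicit organization of the same bookkeeping the paper compresses into one sentence.
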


\begin{proof}
Note that $\zeta$ (resp.\ $\zeta^*$) is a nonzero scalar multiple of $\xi$
(resp.\ $\xi^*$).
Using this and Lemmas \ref{lem:EsiwEsjw}, \ref{lem:EsixiEjxis}
we represent the inner products in terms of $P$, $Q$, $K$, $K^*$.
Now eliminate $P$, $Q$ using Lemma \ref{lem:U3} to get the result.
\end{proof}

In the diagram below we display the inner product matrices between
the four bases in \eqref{eq:4bases}:
\vspace{1.5cm}
\begin{center}
\scriptsize
\begin{picture}(300,150)
 \linethickness{0.5pt}
 \put(-5,0){$\{E_i \xi^* \}_{i=0}^d$}
 \put(-5,150){$\{E^*_i \xi \}_{i=0}^d$}
 \put(193,150){$\{k_i^{-1} E^*_i \zeta \}_{i=0}^d$}
 \put(193,0){$\{ (k^*_i)^{-1} E_i \zeta^* \}_{i=0}^d$}
 \put(30,15){\line(53,40){75}}
 \put(30,15){\vector(53,40){30}}
 \put(40,2){\line(1,0){150}}
 \put(40,150){\line(1,0){150}}
 \put(25,140){\vector(53,-40){30}}
 \put(10,15){\line(0,1){125}}
 \put(10,15){\vector(0,1){30}}
 \put(10,130){\vector(0,-1){30}}
 \put(210,15){\line(0,1){125}}
 \put(193,140){\line(-51,-40){77}}
 \put(193,140){\vector(-51,-40){30}}
 \put(210,140){\vector(0,-1){30}}
 \put(25,140){\line(53,-40){168}}
 \put(190,16){\vector(-53,40){30}}
 \put(210,15){\vector(0,1){30}}
\cbezier(15,160)(15,190)(-25,190)(-5,160)
\cbezier(210,160)(210,190)(250,190)(230,160)
\cbezier(210,-5)(210,-35)(250,-35)(230,-5)
\cbezier(15,-5)(15,-35)(-25,-35)(-5,-5)
 \put(80,-35){\normalsize \rm Inner products}
 \put(0,-48){$\{u_i\}_{i=0}^d \xrightarrow{\;\;\;M\;\;\;} \{v_i\}_{i=0}^d$
   {\rm means} $M_{i j} = \b{u_i, v_j}$ $(0 \leq i,j \leq d)$}
 \put(20,-60){\rm The direction arrow is left off if $M$ is symmetric}
 \put(100,-10){$ \frac{ \b{ \xi^*, \zeta^* } } { \nu } I$}
 \put(60,25){$\frac{ \b{\xi^*, \zeta} } { \nu } K^* U^* $}
 \put(-50,40){$\frac{ \b{\xi^*, \xi} } { \nu } K^* U^* K$}
 \put(-45,100){$\frac{ \b{ \xi, \xi^* } } { \nu } K U K^*$}
 \put(55,125){$ \frac{ \b{ \xi, \zeta^* }   } { \nu } K U$}
 \put(100,155){$ \frac{ \b{ \xi, \zeta } } { \nu } I $}
 \put(218,40){$\frac{ \b{ \zeta^*, \zeta}  } { \nu } U^*$}
 \put(218,110){$ \frac{ \b{ \zeta, \zeta^*}  } { \nu } U $}
 \put(150,50){$ \frac{ \b{ \zeta^*, \xi }  } { \nu  } U^* K$}
 \put(155,100){$ \frac{ \b{ \zeta, \xi^* } } { \nu  } U K^*$}
 \put(-45,170){$\frac{ ||\xi||^2 } { \nu } K$}
 \put(240,170){$ \frac{ ||\zeta||^2 } { ^\nu } K^{-1}$}
 \put(-50, -20){$\frac{ ||\xi^*||^2 } { \nu } K^*$}
 \put(240,-20){$\frac{ ||\zeta^*||^2 } { \nu }  (K^*)^{-1}$}
\end{picture}
\end{center}

\vspace{1.5cm}

\begin{proposition}         \label{prop:transmatrix0}   \samepage
\ifDRAFT {\rm prop:transmatrix0}. \fi
In the table below we give the transition matrices between the
four bases in \eqref{eq:4bases}.
Each entry of the table is the transition matrix from the
basis in the given row to the basis in the given column:
\[
\begin{array}{c|cccc}
 & \{ E^*_i \xi \}_{i=0}^d & \{ k_i^{-1} E^*_i \zeta \}_{i=0}^d 
   & \{ E_i \xi^* \}_{i=0}^d & \{ (k^*_i)^{-1} E_i \zeta^* \}_{i=0}^d
\\ \hline
\{ E^*_i \xi \}_{i=0}^d 
& I 
& \frac{ \b{\xi, \zeta} } { ||\xi||^2 } K^{-1} 
& \frac{ \b{\xi, \xi^* } } { ||\xi||^2 } U K^* 
& \frac{ \b{\xi, \zeta^*} } { ||\xi||^2 } U  \rule{0mm}{3ex}
\\
\{ k_i^{-1}  E^*_i \zeta \}_{i=0}^d 
&  \frac{ \b{ \zeta, \xi} } { ||\zeta||^2 } K 
&  I 
& \frac{ \b{ \zeta, \xi^* } } { ||\zeta||^2 } K U K^* 
& \frac{ \b{ \zeta, \zeta^* } } { ||\zeta||^2 } K U  \rule{0mm}{3.5ex}
\\
\{ E_i \xi^* \}_{i=0}^d  
& \frac{ \b{ \xi^*, \xi } } { ||\xi^*||^2 } U^* K 
&  \frac{ \b{ \xi^*, \zeta } } { ||\xi^*||^2 } U^*
& I 
& \frac{ \b{ \xi^*, \zeta^*} } { ||\xi^*||^2 } (K^*)^{-1}   \rule{0mm}{3.5ex}
\\
\{ (k^*_i)^{-1} E_i \zeta^* \}_{i=0}^d 
&  \frac{ \b{ \zeta^*, \xi } } { ||\zeta^*||^2 } K^* U^* K
& \frac{ \b{ \zeta^*, \zeta } } { ||\zeta^*||^2 } K^* U^*
& \frac{ \b{ \zeta^*, \xi^* }  } { ||\zeta^*||^2 } K^* 
& I    \rule{0mm}{3.5ex}
\end{array}
\]
\end{proposition}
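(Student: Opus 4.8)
The plan is to obtain every transition matrix from the corresponding inner product matrix in Proposition \ref{prop:innermatrix0}, via the elementary identity relating the two. Since $\b{\;,\;}$ is symmetric by Lemma \ref{lem:bsym}, Lemma \ref{lem:trans} applies: for bases $\{u_i\}_{i=0}^d$ and $\{v_i\}_{i=0}^d$ of $V$, the inner product matrix from $\{u_i\}_{i=0}^d$ to $\{v_i\}_{i=0}^d$ factors as $GT$, where $G$ is the inner product matrix from $\{u_i\}_{i=0}^d$ to itself and $T$ is the transition matrix from $\{u_i\}_{i=0}^d$ to $\{v_i\}_{i=0}^d$. Hence $T = G^{-1}M$, with both $G$ and $M$ read off Proposition \ref{prop:innermatrix0}.

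First I would record the four ``diagonal'' inner product matrices from that proposition, namely $\nu^{-1}||\xi||^2 K$, $\nu^{-1}||\zeta||^2 K^{-1}$, $\nu^{-1}||\xi^*||^2 K^*$, and $\nu^{-1}||\zeta^*||^2 (K^*)^{-1}$ for the four bases in \eqref{eq:4bases} respectively. Each of these is invertible: $K$ and $K^*$ are invertible because their diagonal entries $k_i$, $k^*_i$ are nonzero by Lemma \ref{lem:ki}(i) applied to $\Phi$ and to $\Phi^*$, and $||\xi||^2$, $||\zeta||^2$, $||\xi^*||^2$, $||\zeta^*||^2$ are nonzero by Lemma \ref{lem:nonzero1}(i). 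Thus each $G^{-1}$ is a nonzero scalar times a power of $K$ or $K^*$.

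Then for each of the sixteen cells I would left-multiply the matching entry of Proposition \ref{prop:innermatrix0} by the inverse of the Gram matrix of the row basis. In each case the factor of $\nu$ cancels, one of the four norm-scalars cancels against the numerator, and the surviving $K^{\pm1}$ or $(K^*)^{\pm1}$ combines with the matrix part; since $K$, $K^*$ are diagonal and the entries of Proposition \ref{prop:innermatrix0} are already written in terms of $U$, $U^*$, $K$, $K^*$, no further identity is needed and the claimed entry drops out. For instance, the first row, third column gives $\bigl(\nu^{-1}||\xi||^2 K\bigr)^{-1}\cdot\nu^{-1}\b{\xi,\xi^*}\,K U K^* = \frac{\b{\xi,\xi^*}}{||\xi||^2}\,U K^*$, as stated, and the diagonal cells reduce to $I$ precisely because the two Gram matrices involved there coincide.

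I do not anticipate a genuine obstacle: the argument is routine once Lemma \ref{lem:trans} is invoked, and the work is entirely scalar-and-diagonal bookkeeping. The only points requiring a little care are checking that each Gram matrix is invertible, so that $T = G^{-1}M$ is justified, and tracking which of the four norm-scalars belongs to which row so the cancellations come out correctly; a convenient sanity check is that the transition matrix from any basis to itself must equal $I$, which pins down the normalization in each row.
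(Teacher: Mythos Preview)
Your proposal is correct and matches the paper's approach exactly: the paper's proof is the one-line ``Use Lemma \ref{lem:trans} and Proposition \ref{prop:innermatrix0},'' and you have spelled out precisely that computation, including the invertibility checks and a sample entry.
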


\begin{proof}
Use Lemma \ref{lem:trans} and Proposition \ref{prop:innermatrix0}.
\end{proof}

In the diagram below we display the transition matrices between
the four bases in \eqref{eq:4bases}:
\vspace{0.5cm}
\begin{center}
\scriptsize
\begin{picture}(300,150)
 \linethickness{0.5pt}
 \put(-5,0){$\{E_i \xi^* \}_{i=0}^d$}
 \put(-5,150){$\{E^*_i \xi \}_{i=0}^d$}
 \put(193,150){$\{ k_i^{-1} E^*_i \zeta \}_{i=0}^d$}
 \put(193,0){$\{ (k^*_i)^{-1} E_i \zeta^* \}_{i=0}^d$}
 \put(30,15){\line(53,40){75}}
 \put(30,15){\vector(53,40){30}}
 \put(40,2){\line(1,0){150}}
 \put(40,2){\vector(1,0){30}}
 \put(40,150){\line(1,0){150}}
 \put(40,150){\vector(1,0){30}}
 \put(25,140){\vector(53,-40){30}}
 \put(10,15){\line(0,1){125}}
 \put(10,15){\vector(0,1){30}}
 \put(10,130){\vector(0,-1){30}}
 \put(210,15){\line(0,1){125}}
 \put(193,140){\line(-51,-40){77}}
 \put(193,140){\vector(-51,-40){30}}
 \put(190,150){\vector(-1,0){30}}
 \put(210,140){\vector(0,-1){30}}
 \put(25,140){\line(53,-40){168}}
 \put(190,2){\vector(-1,0){30}}
 \put(190,16){\vector(-53,40){30}}
 \put(210,15){\vector(0,1){30}}
 \put(75,-35){\normalsize \rm Transition matrices}
 \put(0,-48){$\{u_i\}_{i=0}^d \xrightarrow{\;\;\;M\;\;\;} \{v_i\}_{i=0}^d$
   {\rm means} $v_j = \sum_{i=0}^d M_{i j} u_i$ $(0 \leq j \leq d)$}
 \put(50,-12){$\frac{ \b{ \xi^*, \zeta^* } } { ||\xi^*||^2 } (K^*)^{-1}$}
 \put(60,30){$\frac{ \b{ \xi^*, \zeta} } { ||\xi^*||^2 } U^*$}
 \put(-37,40){$\frac{  \b{ \xi^*, \xi } } { ||\xi^*||^2 } U^* K$}
 \put(-35,100){$\frac  { \b{ \xi, \xi^* } } { ||\xi||^2 }  U K^*$}
 \put(55,125){$\frac{ \b{ \xi, \zeta^*}  } { ||\xi||^2 } U$}
 \put(55,160){$\frac{ \b{ \xi, \zeta } } { ||\xi||^2 } K^{-1}$}
 \put(150,157){$ \frac{ \b{ \zeta, \xi} } { ||\zeta||^2 }  K$}
 \put(150,-12){$ \frac{ \b{ \zeta^*, \xi^* }  } { ||\zeta^*||^2 } K^* $}
 \put(218,40){$\frac{ \b{ \zeta^*, \zeta }  }  { ||\zeta^*||^2 } K^* U^* $}
 \put(218,110){$\frac{ \b{ \zeta,  \zeta^* } } { ||\zeta||^2 } K U $}
 \put(110,25){$ \frac{ \b{ \zeta^*, \xi} } { ||\zeta^*||^2 }   K^* U^* K$}
 \put(113,120){$\frac{ \b{ \zeta, \xi^* } } { ||\zeta||^2 }  K U K^*$}
\end{picture}
\end{center}

\vspace{1.5cm}

\section{$P$-polynomial and $Q$-polynomial idempotent systems}
\label{sec:Ppoly}

We continue to discuss a symmetric idempotent system
$\Phi = (\{E_i\}_{i=0}^d; \{E^*_i\}_{i=0}^d)$ on $V$.

\begin{definition}    \label{def:Ppoly}     \samepage
\ifDRAFT {\rm def:Ppoly}. \fi
We say that $\Phi$ is {\em $P$-polynomial}
whenever $p^h_{i j}$ is zero (resp.\ nonzero) if one of $h,i,j$ is greater
than (resp.\ equal to) the sum of the other two $(0 \leq h,i,j \leq d)$.
\end{definition}

For the moment, assume that $d \geq 1$ and $\Phi$ is $P$-polynomial.
Then the first intersection matrix $B_1$ has the form
\[
 B_1 =
  \begin{pmatrix}
    a_0 & b_0 &    & & & \text{\bf 0}                  \\
    c_1 & a_1 & b_1   \\
         & c_2  & \cdot & \cdot  \\
         &      & \cdot & \cdot & \cdot \\
         &       &         & \cdot & \cdot & b_{d-1} \\
     \text{\bf 0}   &        &          &        & c_d & a_d   \\
  \end{pmatrix},
\]
where 
\begin{align*}
c_i &= p^{i}_{1, i-1} \quad (1 \leq i \leq d),
&
a_i &= p^i_{1, i}   \quad (0 \leq i \leq d),
&
b_i &= p^{i}_{1, i+1} \quad (0 \leq i \leq d-1). 
\end{align*}
Moreover $c_i \neq 0$ for $1 \leq i \leq d$ and
$b_i \neq 0$ for $0 \leq i \leq d-1$.
So $B_1$ is irreducible tridiagonal.
Shortly we will show that this feature of $B_1$ characterizes the $P$-polynomial property.

\begin{lemma}    \label{lem:A1Ai}    \samepage
\ifDRAFT {\rm lem:A1Ai}. \fi
Assume that $d \geq 1$ and $\Phi$ is $P$-polynomial.
Then
\begin{align*}
 A_1 A_0 &= a_0 A_0 + c_1 A_1,
\\
 A_1 A_i &= b_{i-1} A_{i-1} + a_i A_i + c_{i+1} A_{i+1}  && (1 \leq i \leq d-1),  
\\
 A_1 A_d &= b_{d-1} A_{d-1} + a_d A_d.
\end{align*}
\end{lemma}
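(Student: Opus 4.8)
The plan is to start from the defining relation for the intersection numbers and then use the $P$-polynomial hypothesis to kill all coefficients outside a tridiagonal band. By Lemma \ref{lem:defp} we have $A_1 A_i = \sum_{h=0}^d p^h_{1i} A_h$ for each $i$ with $0 \leq i \leq d$, so the whole content of the lemma is to identify, for each fixed $i$, which coefficients $p^h_{1i}$ can be nonzero and to match the surviving ones with the scalars $a_i$, $b_{i-1}$, $c_{i+1}$ introduced just before the lemma.

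For the vanishing I would apply Definition \ref{def:Ppoly} to the triple $(h,1,i)$. The possibility $h > 1+i$ and the possibility $i > h+1$ together force $p^h_{1i} = 0$ unless $i-1 \leq h \leq i+1$; the only remaining possibility, $1 > h+i$, occurs solely when $h=i=0$, and then $p^0_{1,0} = 0$ too (this last point also follows from Lemma \ref{lem:phi0}(i), which gives $p^0_{1,0} = \delta_{0,1} = 0$). Hence in the sum only the terms with $h \in \{i-1,i,i+1\} \cap \{0,1,\dots,d\}$ remain.

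It then remains to read off the surviving coefficients from the definitions $a_i = p^i_{1,i}$, $b_i = p^i_{1,i+1}$, $c_i = p^i_{1,i-1}$: the coefficient of $A_{i-1}$ is $p^{i-1}_{1,i} = b_{i-1}$, that of $A_i$ is $p^i_{1,i} = a_i$, and that of $A_{i+1}$ is $p^{i+1}_{1,i} = c_{i+1}$. Taking $i=0$ yields the first displayed equation (the $A_{-1}$-term being absent, and indeed $a_0 = p^0_{1,0} = 0$), taking $1 \leq i \leq d-1$ yields the middle family, and taking $i=d$ yields the last equation (the $A_{d+1}$-term being absent); this is precisely the assertion.

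I do not expect a genuine obstacle here. The only point requiring care is the index bookkeeping: matching $p^{i-1}_{1,i}$ with $b_{i-1}$ and $p^{i+1}_{1,i}$ with $c_{i+1}$ rather than with $b_i$, $c_i$, and separately handling the two boundary cases $i=0$ and $i=d$, where one neighbour index falls outside $\{0,\dots,d\}$ and the corresponding term simply drops out.
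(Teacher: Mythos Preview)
Your proposal is correct and follows exactly the same approach as the paper, which simply cites Lemma~\ref{lem:defp} together with the comments below Definition~\ref{def:Ppoly} (i.e.\ the tridiagonal shape of $B_1$ and the definitions of $a_i,b_i,c_i$). Your parenthetical observation that $a_0=p^0_{1,0}=0$ is true but not needed for the statement, since the first displayed equation holds regardless of whether $a_0$ vanishes.
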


\begin{proof}
By Lemma \ref{lem:defp} and the comments below Definition \ref{def:Ppoly}.
\end{proof}

For elements $A$, $B$ in any algebra,
we say that $B$ is an {\em affine transformation} of $A$ whenever
there exist scalars $\alpha$, $\beta$ such that $\alpha \neq 0$
and $B = \alpha A + \beta I$.

\begin{proposition}    \label{prop:Ppoly}    \samepage
\ifDRAFT {\rm prop:Ppoly}. \fi
Assume that $d \geq 1$.
Then for $A \in \text{\rm End}(V)$ the following are equivalent:
\begin{itemize}
\item[\rm (i)]
$\Phi$ is $P$-polynomial and $A$ is an affine transformation of $A_1$;
\item[\rm (ii)]
for $0 \leq i \leq d$ there exists $f_i \in \F[x]$ such that $\text{\rm deg}(f_i)=i$
and $A_i = f_i (A)$.
\end{itemize}
\end{proposition}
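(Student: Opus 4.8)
The plan is to prove the two implications separately, using the three-term recurrence of Lemma~\ref{lem:A1Ai} as the engine in both directions.

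For the direction (i)~$\Rightarrow$~(ii), first I would reduce to the case $A = A_1$: if $A = \alpha A_1 + \beta I$ with $\alpha \neq 0$, then $A_1 = \alpha^{-1}(A - \beta I)$, so any polynomial expression in $A_1$ converts into a polynomial expression in $A$ of the same degree (a degree-preserving change of variable $x \mapsto \alpha^{-1}(x-\beta)$). Thus it suffices to construct $f_i \in \F[x]$ with $\deg f_i = i$ and $A_i = f_i(A_1)$. This I would do by induction on $i$. For $i=0$ take $f_0 = 1$ since $A_0 = I$ by Lemma~\ref{lem:A0As0}; for $i=1$ take $f_1 = x$. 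For the inductive step with $1 \le i \le d-1$, solve the middle relation of Lemma~\ref{lem:A1Ai} for $A_{i+1}$: since $c_{i+1} \neq 0$ (stated in the comments below Definition~\ref{def:Ppoly}), we get
\[
 A_{i+1} = c_{i+1}^{-1}\bigl(A_1 A_i - a_i A_i - b_{i-1} A_{i-1}\bigr),
\]
and substituting $A_i = f_i(A_1)$, $A_{i-1} = f_{i-1}(A_1)$ from the induction hypothesis yields $A_{i+1} = f_{i+1}(A_1)$ where $f_{i+1}(x) = c_{i+1}^{-1}\bigl(x f_i(x) - a_i f_i(x) - b_{i-1} f_{i-1}(x)\bigr)$; the leading term is $c_{i+1}^{-1} x^{i+1}$, so $\deg f_{i+1} = i+1$.

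For the direction (ii)~$\Rightarrow$~(i), I assume polynomials $f_i$ with $\deg f_i = i$ and $A_i = f_i(A)$ are given. Since $\{A_i\}_{i=0}^d$ is a basis of $\cal M$ (Lemma~\ref{lem:AiAsi}) and $A = A_1 \cdot (\text{leading coeff of }f_1)^{-1} + \cdots$ can be solved from $A_1 = f_1(A)$ with $f_1$ linear and $A_0 = f_0(A) = I$ (note $\deg f_0 = 0$ forces $f_0$ to be a nonzero constant, and $A_0 = I$ by Lemma~\ref{lem:A0As0}, so that constant is $1$), we see $A = \alpha A_1 + \beta I$ with $\alpha \neq 0$, i.e.\ $A$ is an affine transformation of $A_1$. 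It remains to show $\Phi$ is $P$-polynomial. Here I would argue as follows: from $A_i = f_i(A)$ with $\deg f_i = i$, the products $A_i A_j = f_i(A) f_j(A) = (f_i f_j)(A)$ lie in the span of $I, A, A^2, \dots, A^{i+j}$, hence — rewriting powers of $A$ back in terms of $A_1$ and then using the relations to express everything in the $\{A_h\}$ basis — in the span of $\{A_h : 0 \le h \le i+j\}$. Comparing with $A_i A_j = \sum_h p^h_{ij} A_h$ and using that $\{A_h\}$ is a basis gives $p^h_{ij} = 0$ whenever $h > i+j$. By the symmetry $p^h_{ij} = p^h_{ji}$ (Lemma~\ref{lem:phijphji}(i)) and by applying the statement just proved with the roles of the indices permuted via the weight identity $k_h p^h_{ij} = k_i p^i_{jh} = k_j p^j_{hi}$ of Lemma~\ref{lem:khphij}(i) together with $k_i \neq 0$ (Lemma~\ref{lem:ki}(i)), the vanishing $p^h_{ij} = 0$ when $h > i+j$ transfers to: $p^h_{ij} = 0$ whenever one of $h,i,j$ exceeds the sum of the other two. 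For the nonvanishing when one index equals the sum of the other two, I would track the leading coefficient: $A_i A_j = (f_i f_j)(A)$ has an $A^{i+j}$-term with nonzero coefficient (product of the two leading coefficients), and since expressing $A^{i+j}$ in the $\{A_h\}$ basis produces $A_{i+j}$ with a nonzero coefficient (inductively, $A^m$ has a nonzero $A_m$-component because $f_m$ has degree exactly $m$, so $A^m$ and $A_m$ agree modulo lower terms up to a nonzero scalar), the coefficient $p^{i+j}_{ij}$ is a nonzero product; then again use Lemma~\ref{lem:khphij}(i) to propagate nonvanishing to the other two extreme cases.

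The main obstacle I anticipate is the bookkeeping in (ii)~$\Rightarrow$~(i): one must pass cleanly between the "power basis" $\{I, A, A^2, \dots\}$ of $\cal M$ and the $\{A_h\}$ basis, and argue that degree exactly $i$ of $f_i$ forces the change-of-basis matrix to be triangular with nonzero diagonal, so that both the vanishing for $h > i+j$ and the nonvanishing for $h = i+j$ can be read off. The transfer of these statements to the fully symmetric form in Definition~\ref{def:Ppoly} via Lemma~\ref{lem:khphij}(i) is then a short step, but it is essential and should be stated explicitly.
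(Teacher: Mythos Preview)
Your proposal is correct and follows essentially the same path as the paper. For (i)$\Rightarrow$(ii) both arguments use the three-term recurrence of Lemma~\ref{lem:A1Ai}; for (ii)$\Rightarrow$(i) both extract the affine relation from $\deg f_1 = 1$, use degree considerations on $f_i f_j$ to control $p^h_{ij}$, and finish with Lemma~\ref{lem:khphij}(i). The only cosmetic difference is that the paper packages the vanishing and nonvanishing together by proving the polynomial identity $f_i f_j = \sum_{h} p^h_{ij} f_h$ directly (observing that the difference has degree at most $d$ and vanishes at $A$, hence is identically zero since $I, A, \dots, A^d$ are independent), whereas you argue via the triangular change of basis between $\{A^h\}$ and $\{A_h\}$; the content is the same.
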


\begin{proof}
(i) $\Rightarrow$ (ii)
By Lemma \ref{lem:A1Ai} and since $A_0 = I$.

(ii) $\Rightarrow$ (i)
The elements $\{A_i\}_{i=0}^d$ are linearly independent by Lemma \ref{lem:AiAsi},
so the elements $\{A^i\}_{i=0}^d$ are linearly independent.
Pick integers $i$, $j$ $(0 \leq i,j \leq d)$ such that $i+j \leq d$.
We show that
\begin{align}
  f_i  f_j &= \sum_{h=0}^d p^h_{i j}  f_h       \label{eq:fifj}
\end{align}
Define a polynomial $g = f_i f_j - \sum_{h=0}^d p^h_{i j} f_h$.
The degree of $g$ is at most $d$,  and $g(A)=0$.
Therefore $g=0$.
We have shown \eqref{eq:fifj}.
In \eqref{eq:fifj} we examine the degrees to find
\[
   i + j = \text{\rm max} \{h \,|\, 0 \leq h \leq d, \; p^h_{i j} \neq 0 \}.
\]
By this and Lemma \ref{lem:khphij}(i),
we find that $\Phi$ is $P$-polynomial.
Since $A_1 = f_1 (A)$ and $\text{\rm deg} (f_1)= 1$,
$A$ is an affine transformation of $A_1$.
\end{proof}

\begin{proposition}     \label{prop:Ppoly2}    \samepage
\ifDRAFT {\rm prop:Ppoly2}. \fi
Assume that $d \geq 1$ and $\Phi$ is $P$-polynomial.
Then the following hold:
\begin{itemize}
\item[\rm (i)]
$\{A_1^i\}_{i=0}^d$ form a basis for the vector space $\cal M$, 
where $\cal M$ is from Definition \ref{def:D};
\item[\rm (ii)]
$\{p_1 (j)\}_{j=0}^d$ are mutually distinct;
\item[\rm (iii)]
$\{E_i V\}_{i=0}^d$ are the eigenspaces of $A_1$;
\item[\rm (iv)]
$A_1$ is multiplicity-free;
\item[\rm (v)]
$\{E_i\}_{i=0}^d$ are the primitive idempotents of $A_1$.
\end{itemize}
\end{proposition}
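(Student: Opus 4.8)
The plan is to derive all five parts from two ingredients: Proposition~\ref{prop:Ppoly} applied with $A=A_1$, and the expansion $A_1=\sum_{j=0}^d p_1(j)E_j$ from Lemma~\ref{lem:pij}.

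First I would prove (i). Since $A_1$ is trivially an affine transformation of itself and $\Phi$ is $P$-polynomial by hypothesis, Proposition~\ref{prop:Ppoly} gives, for $0\le i\le d$, a polynomial $f_i\in\F[x]$ with $\deg f_i=i$ and $A_i=f_i(A_1)$. Because the $f_i$ have the distinct degrees $0,1,\dots,d$, each $A_i$ is a linear combination of $A_1^0,\dots,A_1^i$ with nonzero coefficient on $A_1^i$; hence the change of basis between the sequence $\{A_1^i\}_{i=0}^d$ and the sequence $\{A_i\}_{i=0}^d$ inside ${\cal M}$ is triangular with nonzero diagonal entries, so invertible. Since $\{A_i\}_{i=0}^d$ is a basis of ${\cal M}$ by Lemma~\ref{lem:AiAsi}, and each $A_1^i$ lies in ${\cal M}$ because ${\cal M}$ is an algebra containing $A_1$, it follows that $\{A_1^i\}_{i=0}^d$ is also a basis of ${\cal M}$.

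Next, for (ii), I would iterate Lemma~\ref{lem:pij}: from $A_1=\sum_{j=0}^d p_1(j)E_j$ together with $E_jE_k=\delta_{j,k}E_j$ one obtains $A_1^i=\sum_{j=0}^d p_1(j)^i\,E_j$ for $0\le i\le d$ (the case $i=0$ being $I=\sum_j E_j$). Thus the transition matrix from the basis $\{E_j\}_{j=0}^d$ of ${\cal M}$ to the basis $\{A_1^i\}_{i=0}^d$ from (i) is the Vandermonde matrix with $(i,j)$-entry $p_1(j)^i$. Its invertibility forces the scalars $\{p_1(j)\}_{j=0}^d$ to be mutually distinct. This Vandermonde step is the one place the argument is not purely formal bookkeeping, and it is where I expect the substance of the proposition to reside; everything else reduces to prior lemmas.

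Finally, (iii)--(v) follow quickly. By Lemma~\ref{lem:AiEj} we have $A_1E_j=p_1(j)E_j$, so $E_jV$ is contained in the eigenspace of $A_1$ for the eigenvalue $p_1(j)$; since $V=\sum_{j=0}^d E_jV$ is a direct sum, the eigenspaces of $A_1$ span $V$, and since the $p_1(j)$ are mutually distinct by (ii), the eigenspace of $A_1$ for $p_1(j)$ cannot meet any $E_kV$ with $k\ne j$, hence coincides with the one-dimensional space $E_jV$. This is (iii), and it shows that $A_1$ is diagonalizable with all eigenspaces of dimension one, i.e.\ $A_1$ is multiplicity-free, which is (iv). For (v), the primitive idempotents of $A_1$ associated with the ordering $\{E_jV\}_{j=0}^d$ of its eigenspaces are, by Definition~\ref{def:primitiveofA} and Lemma~\ref{lem:decomp}, the unique system of mutually orthogonal rank~$1$ idempotents corresponding to the decomposition $\{E_jV\}_{j=0}^d$ of $V$; but $\{E_j\}_{j=0}^d$ is precisely that system, so the primitive idempotents of $A_1$ are $\{E_j\}_{j=0}^d$.
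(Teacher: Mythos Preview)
Your proof is correct and follows essentially the same approach as the paper: both use Proposition~\ref{prop:Ppoly} with $A=A_1$ together with Lemma~\ref{lem:AiAsi} for part (i), and both derive (iii)--(v) from (ii) via Lemma~\ref{lem:AiEj}. The only cosmetic difference is in (ii): you argue via the Vandermonde transition matrix from $\{E_j\}$ to $\{A_1^i\}$, whereas the paper observes that the characteristic polynomial of $A_1$ is $\prod_j(x-p_1(j))$ and that by (i) the minimal polynomial has degree $d+1$, forcing distinct roots---but these are two phrasings of the same linear-independence fact.
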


\begin{proof}
(i)
By Lemma \ref{lem:AiAsi} and Proposition \ref{prop:Ppoly}(ii).

(ii)
By Lemma \ref{lem:AiEj}, $p_1 (j)$ is the eigenvalue of $A_1$ 
corresponding to $E_j V$ for $0 \leq j \leq d$.
So the characteristic polynomial of $A_1$ is $\prod_{j=0}^d (x - p_1 (j))$.
By (i) the minimal polynomial of $A_1$ has degree $d+1$.
By these comments, the minimal polynomial of $A_1$ is $\prod_{j=0}^d (x - p_1 (j))$.
The result follows.

(iii)
By Lemma \ref{lem:AiEj}(i) and (ii) above.

(iv)
By (iii) above and since $E_i V$ has dimension one for $0 \leq i \leq d$.

(v)
By (iii), (iv) above. 
\end{proof}

\begin{proposition}    \label{prop:PpolyB1}    \samepage
\ifDRAFT {\rm prop:PpolyB1}. \fi
For $d \geq 1$ the following are equivalent:
\begin{itemize}
\item[\rm (i)]
$\Phi$ is $P$-polynomial;
\item[\rm (ii)]
the first intersection matrix $B_1$ is irreducible tridiagonal.
\end{itemize}
\end{proposition}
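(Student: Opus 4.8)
The plan is to prove the two implications of Proposition~\ref{prop:PpolyB1} separately, letting Proposition~\ref{prop:Ppoly} carry the substantive part of the nontrivial direction.

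For (i)~$\Rightarrow$~(ii) there is essentially nothing new: this is the content of the discussion immediately preceding Lemma~\ref{lem:A1Ai}. If $\Phi$ is $P$-polynomial, then the $(h,j)$-entry $p^h_{1j}$ of $B_1$ is zero whenever $|h-j|\geq 2$ (one of $h,1,j$ then exceeds the sum of the other two) and nonzero whenever $|h-j|=1$ (one of $h,1,j$ then equals the sum of the other two), so $B_1$ is irreducible tridiagonal. I would simply cite that paragraph.

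For (ii)~$\Rightarrow$~(i), the first observation is that, with no polynomiality hypothesis, $B_1$ represents left multiplication by $A_1$ on $\mathcal{M}$ with respect to the basis $\{A_i\}_{i=0}^d$: by Lemma~\ref{lem:defp} and Definition~\ref{def:matrices2} we have $A_1 A_i=\sum_{h=0}^d p^h_{1i}A_h=\sum_{h=0}^d (B_1)_{h,i}A_h$. Now assume $B_1$ is irreducible tridiagonal and set $c_i=(B_1)_{i,i-1}$ $(1\leq i\leq d)$, $a_i=(B_1)_{i,i}$ $(0\leq i\leq d)$, $b_i=(B_1)_{i,i+1}$ $(0\leq i\leq d-1)$, so $c_i\neq 0$ and $b_i\neq 0$. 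Then $A_1 A_i=b_{i-1}A_{i-1}+a_iA_i+c_{i+1}A_{i+1}$, with the usual truncation at $i=0$ and $i=d$. Since $A_0=I$ by Lemma~\ref{lem:A0As0} and each $c_{i+1}$ is invertible, solving these relations recursively for $A_{i+1}$ produces polynomials $f_i\in\F[x]$ with $A_i=f_i(A_1)$: take $f_0=1$, $f_1=x$, and $f_{i+1}=c_{i+1}^{-1}\bigl(xf_i-a_if_i-b_{i-1}f_{i-1}\bigr)$. An easy induction then gives $\deg f_i=i$ for $0\leq i\leq d$, the point being that the degree-$(i{+}1)$ leading term of $xf_i$ is not cancelled because $c_{i+1}\neq 0$ and the competing terms have degree at most $i$. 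This is exactly hypothesis (ii) of Proposition~\ref{prop:Ppoly} with $A=A_1$, so that proposition forces $\Phi$ to be $P$-polynomial, completing the argument.

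I do not expect a genuine obstacle here, since the real work is already packaged in Proposition~\ref{prop:Ppoly}; the only delicate points are clerical. I would double-check the identification of the sub- and super-diagonal entries of $B_1$ with the coefficients $c_i,a_i,b_i$, and confirm the degree count $\deg f_i=i$. As a sanity check I would verify the start of the recursion against Lemma~\ref{lem:phi0}, which gives $p^h_{1,0}=\delta_{h,1}$, i.e.\ the first column of $B_1$ equals the standard basis vector $e_1$; this is consistent with $A_1A_0=A_1$ and shows the recursion producing $A_{i+1}$ from $A_1A_i$ begins cleanly.
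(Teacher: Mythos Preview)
Your proposal is correct and follows essentially the same route as the paper: for (i)$\Rightarrow$(ii) you cite the paragraph before Lemma~\ref{lem:A1Ai}, and for (ii)$\Rightarrow$(i) you read off the three-term recurrence from the irreducible tridiagonal shape of $B_1$, build polynomials $f_i$ of degree $i$ with $A_i=f_i(A_1)$, and then invoke Proposition~\ref{prop:Ppoly} with $A=A_1$. The only difference is that you spell out the recursion for the $f_i$ explicitly, whereas the paper leaves that step implicit.
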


\begin{proof}
(i) $\Rightarrow$ (ii)
We saw this above Lemma \ref{lem:A1Ai}.

(ii) $\Rightarrow$ (i)
Since $B_1$ is irreducible tridiagonal, we have the equations in Lemma \ref{lem:A1Ai}.
So for $0 \leq i \leq d$ there exists $f_i \in \F[x]$ such that $\text{\rm deg}(f_i) = i$ 
and $A_i = f_i (A_1)$.
By Proposition \ref{prop:Ppoly} (with $A = A_1$) we see that $\Phi$ is $P$-polynomial.
\end{proof}

\begin{definition}    \label{def:Qpoly}    \samepage
\ifDRAFT {\rm def:Qpoly}. \fi
We say that $\Phi$ is {\em $Q$-polynomial}
whenever $q^h_{i j}$ is zero (resp.\ nonzero) if one of $h,i,j$ is greater
than (resp.\ equal to) the sum of the other two $(0 \leq h,i,j \leq d)$.
\end{definition}

\begin{lemma}    \label{lem:Qpoly}   \samepage
\ifDRAFT {\rm lem:Qpoly}. \fi
$\Phi$ is $Q$-polynomial if and only if $\Phi^*$ is $P$-polynomial.
\end{lemma}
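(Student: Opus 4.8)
The plan is to unwind the definitions and reduce the statement to the single identity $q^h_{ij}=(p^h_{ij})^*$ from Definition \ref{def:qhij}, together with the standing conventions that an object attached to $\Phi^*$ is obtained from the corresponding object attached to $\Phi$ by applying $*$, and that $(\Phi^*)^*=\Phi$. So this is purely a bookkeeping argument about the duality convention, and I would expect the final written proof to be one or two lines long: ``Apply Definition \ref{def:Ppoly} to $\Phi^*$ and use Definitions \ref{def:qhij} and \ref{def:Qpoly}.''

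First I would recall that the intersection numbers of $\Phi$ are the scalars $p^h_{ij}$ determined by $A_iA_j=\sum_h p^h_{ij}A_h$ (Lemma \ref{lem:defp}, Definition \ref{def:intersectionnumber}). Applying this to $\Phi^*$, the intersection numbers of $\Phi^*$ are the scalars $(p^h_{ij})^*$ appearing in $A^*_iA^*_j=\sum_h (p^h_{ij})^* A^*_h$; by Lemma \ref{lem:AsiAsj} and Definition \ref{def:qhij} these are precisely the Krein parameters $q^h_{ij}$ of $\Phi$. Then Definition \ref{def:Ppoly}, read for $\Phi^*$, says exactly that $\Phi^*$ is $P$-polynomial whenever its intersection numbers $q^h_{ij}$ are zero (resp.\ nonzero) if one of $h,i,j$ is greater than (resp.\ equal to) the sum of the other two; comparing with Definition \ref{def:Qpoly}, this is the defining condition for $\Phi$ to be $Q$-polynomial. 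This gives the ``only if'' direction, and the ``if'' direction follows either by running the same chain of equivalences in reverse, or more cleanly by applying the ``only if'' direction to $\Phi^*$ and invoking $(\Phi^*)^*=\Phi$.

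There is essentially no obstacle; the one point that requires a moment's care is checking that ``the intersection numbers of $\Phi^*$'' really are the $q^h_{ij}$ as indexed, rather than some permutation of them, and this is exactly what Definition \ref{def:qhij} records (via Lemma \ref{lem:AsiAsj}). Everything else is a direct comparison of two definitions that are phrased identically except for the substitution $p\leftrightarrow q$.
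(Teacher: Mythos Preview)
Your proposal is correct and matches the paper's approach exactly: the paper's proof simply reads ``Immediate from Definitions \ref{def:qhij}, \ref{def:Ppoly}, \ref{def:Qpoly},'' which is precisely the bookkeeping comparison you spell out. Your extra care in verifying that the intersection numbers of $\Phi^*$ are the $q^h_{ij}$ (via Definition \ref{def:qhij} and Lemma \ref{lem:AsiAsj}) is appropriate but more detail than the paper provides.
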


\begin{proof}
Immediate from Definitions  \ref{def:qhij},
\ref{def:Ppoly}, \ref{def:Qpoly}.
\end{proof}

\section{Leonard pairs and Leonard systems}
\label{sec:LP}

In this section we recall the notion of a Leonard pair and a Leonard system.

\begin{definition}  \cite[Definition 1.1]{T:Leonard}
\label{def:LP}    \samepage
\ifDRAFT {\rm def:LP}. \fi
By a {\em Leonard pair} on $V$ we mean an ordered pair
$A,A^*$ of elements in $\text{\rm End}(V)$
that satisfy the following {\rm (i), (ii)}.
\begin{itemize}
\item[\rm (i)]
There exists a basis of $V$ with respect to which the matrix representing $A$
is irreducible tridiagonal and the matrix representing $A^*$ is diagonal.
\item[\rm (ii)]
There exists a basis of $V$ with respect to which the matrix representing $A^*$
is irreducible tridiagonal and the matrix representing $A$ is diagonal.
\end{itemize}
\end{definition}

Let $A,A^*$ denote a Leonard pair on $V$.
By\cite[Lemma 1.3]{T:Leonard} each of $A$, $A^*$ is multiplicity-free.
Let $\{E_i\}_{i=0}^d$ denote an ordering of the primitive idempotents of $A$.
For $0 \leq i \leq d$ pick a nonzero $v_i \in E_i V$.
Then $\{v_i\}_{i=0}^d$ form a basis of $V$.
We say that the ordering $\{E_i\}_{i=0}^d$ is {\em standard} whenever
$\{v_i\}_{i=0}^d$ satisfies Definition \ref{def:LP}(ii).
In this case, the ordering $\{E_{d-i} \}_{i=0}^d$ is standard and no further
ordering is standard.
A standard ordering of the primitive idempotents of $A^*$ is similarly defined.

\begin{definition}   {\rm \cite[Definition 1.4]{T:Leonard} }
 \label{def:LS}   \samepage
\ifDRAFT {\rm def:LS}. \fi
By a {\em Leonard system} on $V$  we mean a sequence
\begin{equation}
   (A; \{E_i\}_{i=0}^d; A^*; \{E^*_i\}_{i=0}^d)     \label{eq:Phi}
\end{equation}
of elements in $\text{\rm End}(V)$
that satisfy the following (i)--(iii):
\begin{itemize}
\item[\rm (i)]
$A,A^*$ is a Leonard pair on $V$;
\item[\rm (ii)]
$\{E_i\}_{i=0}^d$ is a standard ordering of the primitive idempotents of $A$;
\item[\rm (iii)]
$\{E^*_i\}_{i=0}^d$ is a standard ordering of the primitive idempotents of $A^*$.
\end{itemize}
\end{definition}

For the rest of this section
let $(A; \{E_i\}_{i=0}^d; A^*; \{E^*_i\}_{i=0}^d)$ denote a Leonard system on $V$.
Note that $(A^*; \{E^*_i\}_{i=0}^d; A; \{E_i\}_{i=0}^d)$ is a Leonard system on $V$.
 
\begin{lemma}   {\rm   \cite[Lemma 9.2]{T:qRacah} }
 \label{lem:E0EsiE02}    \samepage
\ifDRAFT {\rm lem:E0EsiE02}. \fi
The following hold:
\begin{itemize}
\item[\rm (i)]
$E_0 E^*_i E_0  \neq 0 \quad (0 \leq i \leq d)$;
\item[\rm (ii)]
$E^*_0 E_i E^*_0 \neq 0 \quad (0 \leq i \leq d)$.
\end{itemize}
\end{lemma}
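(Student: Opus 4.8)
The plan is to prove part~(ii), namely $E^*_0 E_i E^*_0 \neq 0$ for $0 \leq i \leq d$, and then deduce part~(i) by applying part~(ii) to the Leonard system $(A^*; \{E^*_i\}_{i=0}^d; A; \{E_i\}_{i=0}^d)$. Recall from \cite[Lemma 1.3]{T:Leonard} that $A$ and $A^*$ are multiplicity-free, so $E_i V$ and $E^*_i V$ are one-dimensional for $0 \leq i \leq d$. For $0 \leq k \leq d$ choose a nonzero $v^*_k \in E^*_k V$; then $\{v^*_k\}_{k=0}^d$ is a basis of $V$, and since $\{E^*_i\}_{i=0}^d$ is a standard ordering of the primitive idempotents of $A^*$, the matrix $T \in \Matd$ representing $A$ with respect to $\{v^*_k\}_{k=0}^d$ is irreducible tridiagonal.

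Fix a nonzero $\eta \in E^*_0 V$; so $\eta$ is a nonzero scalar multiple of $v^*_0$. The first step is to show that $\{A^k \eta\}_{k=0}^d$ is a basis of $V$, i.e.\ that $\eta$ is a cyclic vector for $A$. This is a short induction using the matrix $T$: because the subdiagonal entries of $T$ are nonzero, one checks that $A^k\eta$ lies in the span of $v^*_0,\dots,v^*_k$ with nonzero $v^*_k$-component, so the $d+1$ vectors $A^k\eta$ are linearly independent. It follows at once that $E_i \eta \neq 0$ for all $i$: otherwise $\eta$ would lie in $\sum_{j\neq i} E_j V$, an $A$-invariant subspace of dimension $d$, and then all the $A^k\eta$ would lie there, contradicting cyclicity. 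Write $E_i\eta = c_i v_i$, where $v_i$ is a basis of $E_i V$ and $c_i \neq 0$.

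The second step is to show $E^*_0 v_i \neq 0$ for all $i$. Since $v_i$ is an eigenvector of $A$, its coordinate vector in the basis $\{v^*_k\}_{k=0}^d$ is a nonzero eigenvector of $T$; and an eigenvector of an irreducible tridiagonal matrix necessarily has nonzero first coordinate, because if the first coordinate vanished then, reading the eigenvalue equation off the successive rows of $T$ and using that the off-diagonal entries are nonzero, every coordinate would vanish. Hence the $v^*_0$-coordinate of $v_i$ is nonzero, which is precisely the statement $E^*_0 v_i \neq 0$. Combining the two steps, and using $E^*_0 \eta = \eta$, we get
\[
 E^*_0 E_i E^*_0 \eta = E^*_0 E_i \eta = c_i \, E^*_0 v_i \neq 0,
\]
so $E^*_0 E_i E^*_0 \neq 0$, which is part~(ii); part~(i) follows as noted above.

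The step I expect to be the main obstacle is noticing that $E^*_0 E_i E^*_0 \neq 0$ really needs two independent inputs --- $E_i E^*_0 \neq 0$ (the cyclicity argument) and $E^*_0 E_i \neq 0$ (the tridiagonal-eigenvector argument) --- and is not amenable to a single self-dual argument, since within one Leonard system the statements ``$v^*_0$ has nonzero $v_i$-component'' and ``$v_i$ has nonzero $v^*_0$-component'' are not interchangeable. Once these are separated, each is an elementary fact about irreducible tridiagonal matrices, and the remaining manipulation with the rank-$1$ idempotents is routine.
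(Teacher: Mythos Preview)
Your proof is correct. Note, however, that the paper does not actually prove this lemma: it is simply quoted from \cite[Lemma~9.2]{T:qRacah}, so there is no ``paper's own proof'' to compare against in detail.

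Your self-contained argument is the standard one and is essentially the argument given in the cited reference. The two ingredients you isolate --- that a nonzero vector in $E^*_0 V$ is cyclic for $A$ (giving $E_i E^*_0 \neq 0$), and that any eigenvector of an irreducible tridiagonal matrix has nonzero top coordinate (giving $E^*_0 E_i \neq 0$) --- are exactly the facts one needs, and your combination of them via $E^*_0 E_i E^*_0 \eta = c_i\, E^*_0 v_i$ is clean. The duality reduction of (i) to (ii) is also standard. Your closing remark about needing two separate inputs is apt: in the absence of the antiautomorphism $\dagger$ (which is established only \emph{after} this lemma in the logical development of \cite{T:qRacah}), the two nonvanishing statements are genuinely independent.
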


\begin{lemma}    {\rm \cite[Theorem 6.1 and Lemma 6.3]{T:qRacah} }
\label{lem:anti}    \samepage
\ifDRAFT {\rm lem::anti}. \fi
There exists a unique antiautomorphism $\dagger$ of $\text{\rm End}(V)$
that fixes each of $A$, $A^*$.
Moreover $\dagger$ fixes each of $E_i$, $E^*_i$  for $0 \leq i \leq d$.
\end{lemma}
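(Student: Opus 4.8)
\emph{Existence.} The plan is to build $\dagger$ by hand from a convenient basis, to read off the claim about the $E_i$, $E^*_i$ by interpolation, and to get uniqueness from the fact that $A$ and $A^*$ generate $\text{\rm End}(V)$. Since $\{E^*_i\}_{i=0}^d$ is a standard ordering of the primitive idempotents of $A^*$, there is a basis $\{v_i\}_{i=0}^d$ of $V$ with $v_i\in E^*_i V$ for which the matrix $M$ representing $A$ is irreducible tridiagonal; the matrix $\Theta^*$ representing $A^*$ in this basis is diagonal. Let $\sigma:\text{\rm End}(V)\to\Matd$ be the algebra isomorphism afforded by $\{v_i\}_{i=0}^d$, and let $\tau$ be the antiautomorphism $X^\tau=\sigma^{-1}\bigl((\sigma X)^{\sf t}\bigr)$ of $\text{\rm End}(V)$. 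Then $(A^*)^\tau=A^*$ since $\Theta^*$ is symmetric, while $\sigma(A^\tau)=M^{\sf t}$. The next step is to correct $\tau$ by an inner automorphism: because $M$ is irreducible tridiagonal all its sub- and super-diagonal entries are nonzero, so the recursion $d_0=1$, $d_{i+1}=d_i M_{i,i+1}/M_{i+1,i}$ yields a nonsingular diagonal matrix $D=\text{\rm diag}(d_0,\dots,d_d)$ with $DMD^{-1}=M^{\sf t}$ (one checks $d_i M_{i,i+1}d_{i+1}^{-1}=M_{i+1,i}$ from the recursion, the remaining entries being automatic). Let $S\in\text{\rm End}(V)$ have matrix $D$ with respect to $\{v_i\}_{i=0}^d$; then $S$ is invertible, $SAS^{-1}=A^\tau$, and $SA^*=A^*S$ because $D$ and $\Theta^*$ are diagonal. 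I would then set $X^\dagger=S^{-1}X^\tau S$. Being the composition of $\tau$ with conjugation by $S^{-1}$, this $\dagger$ is an antiautomorphism of $\text{\rm End}(V)$, and $A^\dagger=S^{-1}(SAS^{-1})S=A$, $(A^*)^\dagger=S^{-1}A^*S=A^*$.

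\emph{The idempotents.} Any antiautomorphism is $\F$-linear, fixes $I$, and satisfies $(B^n)^\dagger=(B^\dagger)^n$, hence fixes every polynomial in $A$ and every polynomial in $A^*$. Since $A$ is multiplicity-free by \cite[Lemma 1.3]{T:Leonard}, each $E_i$ equals the interpolation polynomial $\prod_{j\neq i}(A-\theta_j I)/(\theta_i-\theta_j)$ in $A$, where $\theta_0,\dots,\theta_d$ are the eigenvalues of $A$; thus $E_i^\dagger=E_i$, and symmetrically $(E^*_i)^\dagger=E^*_i$.

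\emph{Uniqueness.} This reduces to showing $A$ and $A^*$ generate $\text{\rm End}(V)$ as an $\F$-algebra: for then, given any antiautomorphism $\mu$ fixing $A$ and $A^*$, both $\dagger\circ\mu$ and $\dagger\circ\dagger$ are automorphisms fixing $\{A,A^*\}$, hence equal the identity, so $\mu=\dagger^{-1}=\dagger$. To prove the generation claim, let ${\cal T}$ be the subalgebra generated by $A$ and $A^*$; each $E^*_k$ lies in ${\cal T}$, being a polynomial in $A^*$. Because $M$ is irreducible tridiagonal, each of $E^*_k A E^*_{k+1}$ and $E^*_{k+1}AE^*_k$ is nonzero, hence --- the eigenspaces $E^*_\ell V$ being one-dimensional --- an isomorphism between the relevant pair of $A^*$-eigenspaces. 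Composing a suitable string of these maps gives, for each pair $i,j$, an element of ${\cal T}$ restricting to an isomorphism $E^*_j V\to E^*_i V$; in particular $E^*_i{\cal T}E^*_j\neq 0$. Since $\dim E^*_i\text{\rm End}(V)E^*_j=1$ and $\text{\rm End}(V)=\bigoplus_{i,j}E^*_i\text{\rm End}(V)E^*_j$ by Lemma \ref{lem:sumEiAEj} applied to $\{E^*_i\}_{i=0}^d$, this forces ${\cal T}=\text{\rm End}(V)$.

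The main obstacle is the generation fact; everything else is bookkeeping. The delicate point there is that a product of the nonzero one-step shift maps $E^*_k A E^*_{k+1}$ (and their downward analogues) really lands nonzero inside the one-dimensional block $E^*_i\text{\rm End}(V)E^*_j$ --- which uses that a nonzero map between one-dimensional spaces is invertible and that invertible maps compose to an invertible map. A secondary point of care is choosing the diagonal conjugator $S$ so that it carries $A$ to $A^\tau$ while leaving $A^*$ fixed.
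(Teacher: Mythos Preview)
Your proof is correct. The paper itself does not prove this lemma; it simply cites \cite[Theorem~6.1 and Lemma~6.3]{T:qRacah}. So there is nothing in the present paper to compare against, and what you have written is a self-contained argument that the paper chose to outsource.

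Your approach is in fact the standard one and matches the spirit of the cited reference: build $\dagger$ as ``transpose in a $\Phi$-standard basis, corrected by a diagonal conjugation'' so that both $A$ and $A^*$ become fixed, then deduce uniqueness from the fact that $A,A^*$ generate $\text{End}(V)$. The generation argument you give --- walking along the irreducible tridiagonal via the one-step shifts $E^*_kAE^*_{k\pm1}$ to fill every one-dimensional block $E^*_i\text{End}(V)E^*_j$ --- is exactly the mechanism behind Lemma~\ref{lem:EsiE0Esj} in the idempotent-system setting, so your proof dovetails nicely with the rest of the paper. One cosmetic remark: when you say ``an isomorphism between the relevant pair of $A^*$-eigenspaces,'' it would be slightly cleaner to say that $E^*_kAE^*_{k+1}$, viewed as a map $E^*_{k+1}V\to E^*_kV$, is nonzero between one-dimensional spaces and hence bijective; the composition of such bijections is then visibly nonzero, which is all you need.
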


\begin{lemma}    {\rm \cite[Theorem 13.4]{T:qRacah} }
\label{lem:vi}    \samepage
\ifDRAFT {\rm lem:vi}. \fi
There exist polynomials $\{f_i\}_{i=0}^d$ in $\F[x]$ such that
$\text{\rm deg}(f_i) = i$ and $f_i (A) E^*_0 E_0 = E^*_i E_0$
for $0 \leq i \leq d$.
\end{lemma}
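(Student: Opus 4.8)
The plan is to transfer everything to an eigenbasis of $A^*$ that is adapted to $E_0$, and then read off the $f_i$ from the irreducible tridiagonal shape of $A$ that the standard‑ordering hypothesis provides.

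First I would fix a nonzero $\eta\in E_0V$ and set $u_i=E^*_i\eta$ for $0\le i\le d$. By Lemma~\ref{lem:E0EsiE02}(i) we have $E_0E^*_iE_0\neq0$, hence $E^*_iE_0\neq0$; since $E_0V$ is one‑dimensional and spanned by $\eta$ (so $E_0\eta=\eta$), this forces $u_i=E^*_iE_0\eta\neq0$. As $u_i\in E^*_iV$ and $V=\sum_{i=0}^d E^*_iV$ (direct sum), the sequence $\{u_i\}_{i=0}^d$ is a basis of $V$; note $u_0=E^*_0\eta=E^*_0E_0\eta$. Because $\{E^*_i\}_{i=0}^d$ is a standard ordering of the primitive idempotents of $A^*$ (see the discussion following Definition~\ref{def:LP}) and $u_i$ spans $E^*_iV$ for each $i$, the matrix representing $A$ with respect to $\{u_i\}_{i=0}^d$ is irreducible tridiagonal; that is,
\[
 Au_i = b_{i-1}u_{i-1}+a_iu_i+c_{i+1}u_{i+1}\qquad(0\le i\le d),
\]
with $u_{-1}=u_{d+1}=0$, where $b_i\neq0$ $(0\le i\le d-1)$ and $c_i\neq0$ $(1\le i\le d)$.

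Next I would construct the polynomials $f_i$ from the three‑term recurrence in this display. Set $f_{-1}=0$, $f_0=1$, and for $0\le i\le d-1$ define
\[
 f_{i+1}(x)=c_{i+1}^{-1}\bigl((x-a_i)f_i(x)-b_{i-1}f_{i-1}(x)\bigr),
\]
where the term $b_{-1}f_{-1}$ is interpreted as $0$. An easy induction on $i$, rearranging the displayed recurrence as $c_{i+1}u_{i+1}=\bigl((A-a_iI)f_i(A)-b_{i-1}f_{i-1}(A)\bigr)u_0$, shows $f_i(A)u_0=u_i$ for $0\le i\le d$; and since $c_{i+1}\neq0$ the leading term $c_{i+1}^{-1}x^{i+1}$ is never cancelled, so $\deg f_i=i$. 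In terms of $\eta$ this reads $f_i(A)E^*_0E_0\eta=E^*_iE_0\eta$.

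Finally I would upgrade this to an identity of operators. For arbitrary $v\in V$ write $v=E_0v+(I-E_0)v$. Both $f_i(A)E^*_0E_0$ and $E^*_iE_0$ annihilate $(I-E_0)v$, while $E_0v$ is a scalar multiple of $\eta$, so by the previous step $f_i(A)E^*_0E_0(E_0v)=E^*_iE_0(E_0v)$. Hence $f_i(A)E^*_0E_0=E^*_iE_0$ for $0\le i\le d$, as required. The one point needing care is the use of the standard‑ordering hypothesis: it is precisely what guarantees that $A$ is \emph{irreducible} tridiagonal with respect to \emph{every} eigenbasis of $A^*$, and it is the nonvanishing of the subdiagonal entries $c_i$ that makes $\deg f_i$ exactly $i$ rather than merely at most $i$.
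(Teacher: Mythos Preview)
Your proof is correct. The paper does not supply its own proof of this lemma; it simply quotes the result from \cite[Theorem~13.4]{T:qRacah}, so there is no in-paper argument to compare against. Your approach---fix a nonzero $\eta\in E_0V$, use Lemma~\ref{lem:E0EsiE02}(i) to see that $\{E^*_i\eta\}_{i=0}^d$ is a basis, invoke the standard-ordering hypothesis (Definition~\ref{def:LS}(iii)) to get an irreducible tridiagonal matrix for $A$ in this basis, and then read off the $f_i$ from the resulting three-term recurrence---is exactly the standard route and matches in spirit what is done in the cited reference. The final step, passing from the vector identity $f_i(A)E^*_0E_0\eta=E^*_iE_0\eta$ to the operator identity by noting that both sides factor through the rank-one projection $E_0$, is clean and correct. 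Your remark that the nonvanishing of the subdiagonal entries $c_i$ is what forces $\deg f_i=i$ (rather than $\le i$) is also on point.
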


\begin{lemma}   {\rm \cite[Theorem 4.2]{NT:split} }
\label{lem:NTsplit}   \samepage
\ifDRAFT {\rm lem:NTsplit}. \fi
For elements $B$, $B^*$ in $\text{\rm End}(V)$ the following are equivalent:
\begin{itemize}
\item[\rm (i)]
$(B; \{E_i\}_{i=0}^d; B^*; \{E^*_i\}_{i=0}^d)$ is a Leonard system;
\item[\rm (ii)]
$B$ (resp.\ $B^*$) is an affine transformation of $A$ (resp.\ $A^*$).
\end{itemize}
\end{lemma}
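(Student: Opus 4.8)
The plan is to treat the two implications separately, the reverse one by direct verification and the forward one via the split decomposition of a Leonard system. For (ii) $\Rightarrow$ (i), write $B = \alpha A + \beta I$ and $B^* = \alpha^* A^* + \beta^* I$ with $\alpha,\alpha^*$ nonzero. The key elementary fact is that passing from a matrix $M$ to $\alpha M + \beta I$ with $\alpha\neq 0$ preserves the property of being irreducible tridiagonal (it only translates the diagonal and rescales the already-nonzero off-diagonal entries), preserves the property of being diagonal, and preserves eigenspaces. Feeding the two bases from Definition \ref{def:LP} for the pair $A,A^*$ through this observation shows $B,B^*$ is a Leonard pair; the eigenspace part shows the primitive idempotents of $B$ (resp.\ $B^*$) are $\{E_i\}_{i=0}^d$ (resp.\ $\{E^*_i\}_{i=0}^d$); and the tridiagonal/diagonal part applied to the bases witnessing standardness for the given Leonard system shows those same orderings are standard for $(B;\{E_i\}_{i=0}^d;B^*;\{E^*_i\}_{i=0}^d)$, which is therefore a Leonard system.

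For (i) $\Rightarrow$ (ii) I would proceed as follows, assuming $d\geq 1$ (the case $d=0$ is immediate). Abbreviate $\Phi=(A;\{E_i\}_{i=0}^d;A^*;\{E^*_i\}_{i=0}^d)$ and $\Phi'=(B;\{E_i\}_{i=0}^d;B^*;\{E^*_i\}_{i=0}^d)$, both Leonard systems, and write $A=\sum_{i=0}^d\theta_i E_i$, $B=\sum_{i=0}^d\sigma_i E_i$, where $\theta_i$ (resp.\ $\sigma_i$) is the eigenvalue of $A$ (resp.\ $B$) on $E_i V$; the $\theta_i$ are mutually distinct, as are the $\sigma_i$. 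I would then recall the split decomposition $\{U_i\}_{i=0}^d$ of $\Phi$ (see \cite{T:Leonard} and \cite{T:qRacah}), where $U_i=(E^*_0V+\cdots+E^*_iV)\cap(E_iV+\cdots+E_dV)$: each $U_i$ is one-dimensional, $V=\sum_{i=0}^d U_i$ is direct, and for $0\leq i\leq d-1$ the map $v\mapsto(A-\theta_i I)v$ restricts to a bijection $U_i\to U_{i+1}$ while $(A-\theta_d I)U_d=0$. The crucial point is that $\{U_i\}_{i=0}^d$ is built from the ordered idempotent sequences alone, so $\Phi$ and $\Phi'$ have the same split decomposition; applying the split decomposition property to $\Phi'$ then gives that $v\mapsto(B-\sigma_i I)v$ is a bijection $U_i\to U_{i+1}$ for $0\leq i\leq d-1$, with $(B-\sigma_d I)U_d=0$.

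Next, fix $0\neq u_0\in U_0$ and set $u_{i+1}=(A-\theta_i I)u_i$ for $0\leq i\leq d-1$; then $\{u_i\}_{i=0}^d$ is a basis with $u_i$ spanning $U_i$ and $Au_i=\theta_i u_i+u_{i+1}$ (reading $u_{d+1}=0$). Since $(B-\sigma_i I)u_i\in U_{i+1}$, there are scalars $c_1,\ldots,c_d$, all nonzero by the bijectivity above, with $Bu_i=\sigma_i u_i+c_{i+1}u_{i+1}$ for $0\leq i\leq d-1$ and $Bu_d=\sigma_d u_d$. Now $A$ and $B$ commute, both lying in the commutative algebra spanned by $\{E_i\}_{i=0}^d$; expanding $A(Bu_i)=B(Au_i)$ and comparing coefficients of $u_{i+2}$ gives $c_{i+1}=c_{i+2}$, and comparing coefficients of $u_{i+1}$ gives $\sigma_i-\sigma_{i+1}=c_{i+1}(\theta_i-\theta_{i+1})$, over the appropriate ranges of $i$. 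Hence $c_1=\cdots=c_d=:\alpha\neq 0$ and $\sigma_i=\alpha\theta_i+\beta$ with $\beta=\sigma_0-\alpha\theta_0$; comparing the actions on each $u_i$ then yields $B=\alpha A+\beta I$, so $B$ is an affine transformation of $A$. Running the same argument on the dual Leonard systems $(A^*;\{E^*_i\}_{i=0}^d;A;\{E_i\}_{i=0}^d)$ and $(B^*;\{E^*_i\}_{i=0}^d;B;\{E_i\}_{i=0}^d)$ shows $B^*$ is an affine transformation of $A^*$.

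The main obstacle, and the only genuinely nonelementary input, is the split decomposition together with the observation that it is determined by the idempotent sequences alone, so that $\Phi$ and $\Phi'$ share it; for this I would lean on \cite{T:Leonard} and \cite{T:qRacah}. Everything after that is bookkeeping with two commuting bidiagonal actions. (Alternatively one could just cite \cite[Theorem 4.2]{NT:split}, or, once the correspondence of Section \ref{sec:IPSLS} is available, note that $\Phi$ and $\Phi'$ produce the same symmetric idempotent system, of whose $A_1$ both $A$ and $B$ are affine transformations by Proposition \ref{prop:Ppoly}.)
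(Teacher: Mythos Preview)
The paper does not prove this lemma; it simply cites \cite[Theorem~4.2]{NT:split} and states the result without argument. Your proposal therefore supplies what the paper omits, and it is correct: the backward direction is the easy observation that affine transformations preserve irreducible tridiagonality, diagonality, and primitive idempotents, while for the forward direction your use of the split decomposition $U_i=(E^*_0V+\cdots+E^*_iV)\cap(E_iV+\cdots+E_dV)$, together with the commutativity of $A$ and $B$ in the algebra spanned by $\{E_i\}_{i=0}^d$, is exactly the mechanism behind the cited result---the title of \cite{NT:split} is ``The split decomposition of a tridiagonal pair.'' Your identification of the one genuinely nontrivial input (that the $U_i$ depend only on the idempotent sequences, hence are shared by $\Phi$ and $\Phi'$) is accurate, and the bidiagonal bookkeeping that follows is clean. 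The alternative you mention at the end---routing through the symmetric idempotent system and Proposition~\ref{prop:Ppoly}---would be circular here, since Lemma~\ref{lem:NTsplit} is invoked in Section~\ref{sec:IPSLS} before that correspondence is fully established.
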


\section{Idempotent systems and Leonard systems}
\label{sec:IPSLS}

In this section we show that a Leonard system is essentially the same thing
as a symmetric idempotent system that is $P$-polynomial and $Q$-polynomial.

\begin{theorem}     \label{thm:LP}     \samepage
\ifDRAFT {\rm thm:LP}. \fi
Let  $\Phi = (\{E_i\}_{i=0}^d; \{E^*_i\}_{i=0}^d)$ denote a sequence
of elements in $\text{\rm End}(V)$.
Then the following are equivalent:
\begin{itemize}
\item[\rm (i)]
$\Phi$ is a symmetric idempotent system that is $P$-polynomial and $Q$-polynomial;
\item[\rm (ii)]
there exist $A$, $A^*$ in $\text{\rm End}(V)$ such that
$(A; \{E_i\}_{i=0}^d; A^*; \{E^*_i\}_{i=0}^d)$ is a Leonard system.
\end{itemize}
\end{theorem}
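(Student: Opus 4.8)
We may assume $d\geq 1$, the case $d=0$ being trivial since then $V$ is
one-dimensional and $E_0=E^*_0=I$.
The plan is to prove the two implications separately: in the forward direction
I take $A=A_1$ and $A^*=A^*_1$ from Definition~\ref{def:Ai}, and in the reverse
direction I identify the $A_i$ with polynomials in the given $A$ and invoke
Proposition~\ref{prop:Ppoly}.

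\medskip\noindent\emph{(i)$\Rightarrow$(ii).}
Assume $\Phi$ is a symmetric idempotent system that is $P$-polynomial and
$Q$-polynomial, and put $A=A_1$, $A^*=A^*_1$.
First I would show that $A,A^*$ is a Leonard pair.
By Proposition~\ref{prop:PpolyB1} the first intersection matrix $B_1$ is
irreducible tridiagonal, so by Proposition~\ref{prop:matrixAr}, with respect to
a $\Phi$-standard basis $\{E^*_i\xi\}_{i=0}^d$, the element $A_1$ is represented
by the irreducible tridiagonal matrix $B_1$ and $A^*_1$ by the diagonal matrix
$H^*_1$; this gives Definition~\ref{def:LP}(i).
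Since $\Phi$ is $Q$-polynomial, $\Phi^*$ is $P$-polynomial by
Lemma~\ref{lem:Qpoly}, so the same reasoning applied to $\Phi^*$ shows that,
with respect to a $\Phi^*$-standard basis $\{E_i\xi^*\}_{i=0}^d$, the element
$A^*_1$ is represented by the irreducible tridiagonal matrix $B^*_1$ and $A_1$
by the diagonal matrix $H_1$; this gives Definition~\ref{def:LP}(ii).
Hence $A,A^*$ is a Leonard pair.
Next, Proposition~\ref{prop:Ppoly2}(v), applied to $\Phi$ and to $\Phi^*$, shows
that $\{E_i\}_{i=0}^d$ and $\{E^*_i\}_{i=0}^d$ are the primitive idempotents of
$A_1$ and $A^*_1$ respectively.
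These orderings are standard: by Lemma~\ref{lem:Esiu} applied to $\Phi^*$ the
vectors $E_i\xi^*$ are nonzero elements of $E_iV$, and with respect to
$\{E_i\xi^*\}_{i=0}^d$ the element $A^*_1$ is irreducible tridiagonal while $A_1$
is diagonal, which is precisely the standardness condition for $\{E_i\}_{i=0}^d$;
symmetrically $\{E^*_i\xi\}_{i=0}^d$ witnesses that $\{E^*_i\}_{i=0}^d$ is a
standard ordering of the primitive idempotents of $A^*_1$.
Therefore $(A;\{E_i\}_{i=0}^d;A^*;\{E^*_i\}_{i=0}^d)$ is a Leonard system.

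\medskip\noindent\emph{(ii)$\Rightarrow$(i).}
Assume $(A;\{E_i\}_{i=0}^d;A^*;\{E^*_i\}_{i=0}^d)$ is a Leonard system.
By Definitions~\ref{def:LS} and~\ref{def:primitiveofA} each of $\{E_i\}_{i=0}^d$,
$\{E^*_i\}_{i=0}^d$ is a system of mutually orthogonal rank $1$ idempotents, and
together with Lemma~\ref{lem:E0EsiE02} this shows $\Phi$ is an idempotent system;
by Lemma~\ref{lem:anti} there is an antiautomorphism of $\text{\rm End}(V)$ that
fixes each $E_i$, $E^*_i$, so $\Phi$ is symmetric and the constructions of the
earlier sections are available, in particular the elements $A_i$ of
Definition~\ref{def:Ai}.
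The crucial step is to show $A_i=f_i(A)$ for suitable $f_i\in\F[x]$ with
$\text{\rm deg}(f_i)=i$.
By Lemma~\ref{lem:vi} there exist such $f_i$ with $f_i(A)E^*_0E_0=E^*_iE_0$;
since $\{E_i\}_{i=0}^d$ are the primitive idempotents of $A$ we have
$A\in{\cal M}$ (a linear combination of the $E_i$), hence $f_i(A)\in{\cal M}$,
while $A_iE^*_0E_0=E^*_iE_0$ by Lemma~\ref{lem:AiEs0E0}(i).
Using Lemma~\ref{lem:rho} (so $YE^*_0E_0=Y^\rho E_0$ for $Y\in{\cal M}$) together
with the bijectivity of $\rho$ and of $Y\mapsto YE_0$ from
Lemmas~\ref{lem:rhorhospre} and~\ref{lem:DtoDEs0}, we conclude $f_i(A)=A_i$.
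Then Proposition~\ref{prop:Ppoly}, applied with this $A$, shows that $\Phi$ is
$P$-polynomial.
Finally $(A^*;\{E^*_i\}_{i=0}^d;A;\{E_i\}_{i=0}^d)$ is also a Leonard system, so
the same argument applied to $\Phi^*$ gives $A^*_i=f^*_i(A^*)$ with
$\text{\rm deg}(f^*_i)=i$, whence $\Phi^*$ is $P$-polynomial and $\Phi$ is
$Q$-polynomial by Lemma~\ref{lem:Qpoly}.

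\medskip\noindent
I expect the only delicate points to be the two pieces of bookkeeping flagged
above: checking that the idempotent orderings satisfy the definition of a
standard ordering in the forward direction, and the identification $A_i=f_i(A)$
in the reverse direction, which rests on $A\in{\cal M}$ and on $\rho$ being a
bijection.
All of the structural work is already done in
Propositions~\ref{prop:matrixAr}, \ref{prop:Ppoly2}, \ref{prop:PpolyB1}, and
\ref{prop:Ppoly}, so the remaining argument is essentially organizational.
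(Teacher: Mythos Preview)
Your proof is correct and follows essentially the same route as the paper's: in the forward direction you take $A=A_1$, $A^*=A^*_1$ and use Propositions~\ref{prop:matrixAr}, \ref{prop:PpolyB1}, \ref{prop:Ppoly2} (together with their duals via $\Phi^*$), and in the reverse direction you use Lemmas~\ref{lem:E0EsiE02}, \ref{lem:anti}, \ref{lem:vi} plus the uniqueness coming from Lemmas~\ref{lem:rho}, \ref{lem:rhorhospre}, \ref{lem:AiEs0E0}(i) to identify $f_i(A)=A_i$ and invoke Proposition~\ref{prop:Ppoly}. Your write-up is slightly more explicit than the paper's in two places (verifying the standardness of the orderings, and spelling out that $A\in{\cal M}$ because $A=\sum_i\theta_iE_i$), but the argument is the same.
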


\begin{proof}
We assume $d \geq 1$; otherwise the assertion is obvious.

(i) $\Rightarrow$ (ii)
We show that  $(A_1; \{E_i\}_{i=0}^d; A^*_1; \{E^*_i\}_{i=0}^d)$ is
a Leonard system on $V$, where $A_1$, $A^*_1$ are from
Definition \ref{def:Ai}.
By Proposition \ref{prop:matrixAr}, with respect to a $\Phi$-standard basis of $V$
the matrix representing $A_1$ is $B_1$ 
and the matrix representing $A^*_1$ is $H^*_1$.
By Definition \ref{def:matrices2} the matrix $H^*_1$ is diagonal,
and by Proposition \ref{prop:PpolyB1} the matrix $B_1$ is irreducible tridiagonal.
Thus with respect to a $\Phi$-standard basis the matrix representing $A_1$ is irreducible tridiagonal
and the matrix representing $A^*_1$ is diagonal.
Applying this to $\Phi^*$, with respect to a $\Phi^*$-standard basis the matrix representing $A^*_1$
is irreducible tridiagonal and the matrix representing $A_1$ is diagonal.
By these comments $A_1, A^*_1$ is a Leonard pair on $V$.
By Proposition \ref{prop:Ppoly2}(v) and the construction,
$\{E_i\}_{i=0}^d$ (resp.\ $\{ E^*_i\}_{i=0}^d$)  is a standard ordering of the primitive idempotents
of $A_1$ (resp.\ $A^*_1$).
We have shown that $(A_1; \{E_i\}_{i=0}^d; A^*_1; \{E^*_i\}_{i=0}^d)$ is a Leonard system on $V$.

(ii) $\Rightarrow$ (i)
By Lemmas \ref{lem:E0EsiE02} and \ref{lem:anti}, $\Phi$ is a symmetric idempotent system on $V$.
By Lemma \ref{lem:vi} there exist polynomials $\{f_i\}_{i=0}^d$ in $\F[x]$ such that
$\text{\rm deg}(f_i) = i$ and 
$f_i (A) E^*_0 E_0 = E^*_i E_0$ for $0 \leq i \leq d$.
By Lemmas \ref{lem:rho}, \ref{lem:rhorhospre}, \ref{lem:AiEs0E0}(i),
$A_i$ is the unique element in $\cal M$ such that $A_i E^*_0 E_0 = E^*_i E_0$
$(0 \leq i \leq d)$.
By these comments $f_i (A) = A_i$ for $0 \leq i \leq d$.
By this and Proposition \ref{prop:Ppoly}, $\Phi$ is $P$-polynomial.
Apply this to the Leonard system  $(A^*; \{E^*_i\}_{i=0}^d; A; \{E_i\}_{i=0}^d)$
to find that $\Phi$ is $Q$-polynomial.
\end{proof}

\begin{lemma}   \label{lem:LP2}    \samepage
\ifDRAFT {\rm lem:LP2}. \fi
Assume that $d \geq 1$ and the equivalent conditions {\rm (i), (ii)} hold in Theorem \ref{thm:LP}.
Then for $A$, $A^*$ in $\text{\rm End}(V)$ the following are equivalent:
\begin{itemize}
\item[\rm (i)]
$(A; \{E_i\}_{i=0}^d; A^*; \{E^*_i\}_{i=0}^d)$ is a Leonard system on $V$;
\item[\rm (ii)]
$A$ (resp.\ $A^*$) is an affine transformation of $A_1$ (resp.\ $A^*_1$),
where $A_1$, $A^*_1$ are from Definition \ref{def:Ai}.
\end{itemize}
\end{lemma}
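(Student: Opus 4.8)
The plan is to reduce the assertion to Lemma \ref{lem:NTsplit}, applied to the Leonard system that is produced in the proof of Theorem \ref{thm:LP}. Since the equivalent conditions (i), (ii) of Theorem \ref{thm:LP} are assumed to hold, in particular condition (i) holds; that is, $\Phi$ is a symmetric idempotent system that is $P$-polynomial and $Q$-polynomial. First I would recall, from the proof of the implication (i)$\,\Rightarrow\,$(ii) in Theorem \ref{thm:LP}, that with $A_1$, $A^*_1$ as in Definition \ref{def:Ai} the sequence $(A_1; \{E_i\}_{i=0}^d; A^*_1; \{E^*_i\}_{i=0}^d)$ is a Leonard system on $V$. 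That argument uses Proposition \ref{prop:matrixAr} (to identify the matrices representing $A_1$, $A^*_1$ with respect to a $\Phi$-standard and a $\Phi^*$-standard basis), Proposition \ref{prop:PpolyB1} (to see that $B_1$ is irreducible tridiagonal), and Proposition \ref{prop:Ppoly2}(v) together with its dual (to see that $\{E_i\}_{i=0}^d$ and $\{E^*_i\}_{i=0}^d$ are standard orderings of the primitive idempotents of $A_1$ and $A^*_1$, respectively).

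Next I would invoke Lemma \ref{lem:NTsplit} with the Leonard system $(A_1; \{E_i\}_{i=0}^d; A^*_1; \{E^*_i\}_{i=0}^d)$ playing the role of the generic Leonard system appearing there. Lemma \ref{lem:NTsplit} then says precisely that, for elements $A$, $A^*$ in $\text{\rm End}(V)$, the sequence $(A; \{E_i\}_{i=0}^d; A^*; \{E^*_i\}_{i=0}^d)$ is a Leonard system if and only if $A$ (resp.\ $A^*$) is an affine transformation of $A_1$ (resp.\ $A^*_1$). This is exactly the equivalence of (i) and (ii) in the statement, so the proof would be complete.

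The only point requiring care is ensuring that the reference Leonard system fed into Lemma \ref{lem:NTsplit} carries the very idempotent systems $\{E_i\}_{i=0}^d$ and $\{E^*_i\}_{i=0}^d$ of $\Phi$, in the orders that make Definition \ref{def:LS} hold; this is guaranteed by Proposition \ref{prop:Ppoly2}(v) and its dual, as noted above. Beyond this bookkeeping there is no real obstacle, since all the substantive content has already been established in Theorem \ref{thm:LP} and in Lemma \ref{lem:NTsplit}.
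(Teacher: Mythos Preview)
Your proposal is correct and follows essentially the same approach as the paper's own proof: recall from the proof of Theorem \ref{thm:LP} that $(A_1; \{E_i\}_{i=0}^d; A^*_1; \{E^*_i\}_{i=0}^d)$ is a Leonard system, and then apply Lemma \ref{lem:NTsplit} with this as the reference Leonard system to obtain the desired equivalence.
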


\begin{proof}
(i) $\Rightarrow$ (ii)
In the proof of Theorem \ref{thm:LP} we have shown that
$(A_1; \{E_i\}_{i=0}^d; A^*_1; \{E^*_i\}_{i=0}^d)$ is a Leonard system on $V$.
By this and Lemma \ref{lem:NTsplit}, $A$ (resp.\ $A^*$) is an
affine transformation of $A_1$ (resp.\ $A^*_1$).

(ii) $\Rightarrow$ (i)
By Lemma \ref{lem:NTsplit}.
\end{proof}

\bigskip

{

\small

}

\bigskip\bigskip\noindent
Kazumasa Nomura\\
Tokyo Medical and Dental University\\
Kohnodai, Ichikawa, 272-0827 Japan\\
email: knomura@pop11.odn.ne.jp

\bigskip\noindent
Paul Terwilliger\\
Department of Mathematics\\
University of Wisconsin\\
480 Lincoln Drive\\ 
Madison, Wisconsin, 53706 USA\\
email: terwilli@math.wisc.edu

\bigskip\noindent
{\bf Keywords.}
Association scheme, Leonard pair, Bose-Mesner algebra
\\
\noindent
{\bf 2010 Mathematics Subject Classification.} 17B37, 15A21


\begin{thebibliography}{10}

\bibitem{BI}
E. Bannai, T. Ito,
Algebraic Combinatorics I, Association Schemes,
Benjamin/Cummings Publishing Co.\ Inc., 
Menlo Park, CA, 1984.


\bibitem{BM}
R.C. Bose, D.M. Mesner,
On linear associative algebras corresponding to
association schemes of partially balanced designs,
Ann.\ Math.\ Statist.\ 30 (1959) 21--39.


\bibitem{BN}
R.C. Bose, K.R. Nair,
Partially balanced incomplete block design,
Sankhy\={a} 4 (1939) 337--372.

\bibitem{BS}
R.C. Bose, T. Shimamoto,
Classification and analysis of partially balanced
imcomplete block designs with two associate classes,
J. Amer.\ Statist.\ Assoc.\ 47 (1952) 151--184.





\bibitem{BCN}
A.E. Brower, A.M. Cohen, A. Neumaier,
Distance-Regular Graphs,
Springer-Verlag, Berlin, 1989. 




\bibitem{Del}
Ph.\ Delsarte,
An algebraic approach to the association schemes of coding theory,
Philips Reseach Report Suppl.\ 10 (1973).


\bibitem{Hig}
D.G. Higman,
Coherent configrations, Part I: Ordinary representation theory,
Geom.\ Dedicata 4 (1975) 1--32.


\bibitem{Kawada}
Y.\ Kawada,
\"{U}ber den Dualit\"{a}tssatz der Charaktere nichtcommutativer grouppen,
Proc.\ Physico-Math.\ Soc.\ Japan 24 (1942) 97--109.



\bibitem{NT:split}
K. Nomura, P. Terwilliger,
The split decomposition of a tridiagonal pair,
Linear Algebra Appl.\ 424 (2007) 339-345.


\bibitem{RC}
D.K. Ray-Chaudhuri,
Applicaitons of the geomery of quadrics for
constructing PBIB designs,
Ann.\ Math.\ Statist.\ 33 (1962) 1175--1186.




\bibitem{Rot}
J. Rotman,
Advanced Modern Algebra,
2nd edition,
AMS, Providence, RI, 2010.


\bibitem{T:subconst1}
P. Terwilliger,
The subconstituent algebra of an association scheme I,
J. Algebraic Combin. 1 (1992) 363--388.

\bibitem{T:Leonard}
P.\ Terwilliger,
Two linear transformations each tridiagonal with respect to an eigenbasis
of the other, 
{\em Linear Algebra Appl.} {\bf 330} (2001), 149--203.

\bibitem{T:qRacah}
P. Terwilliger,
Leonard pairs and the $q$-Racah polynomials,
Linear Algebra Appl.\ 387 (2004) 235--276.



\bibitem{Wi}
H. Wielandt,
Finite permutation groups,
Academic Press, New York (1964).




\end{thebibliography}
\end{document}